\theoremstyle{plain}
\newtheorem{thm}{Theorem}[section]
\newtheorem*{thm*}{Theorem}
\newtheorem{corollary}[thm]{Corollary}
\newtheorem{lemma}[thm]{Lemma}
\newtheorem{prop}[thm]{Proposition}
\newtheorem{statement}[thm]{Statement}
\newtheorem*{statement*}{Statement}
\theoremstyle{definition}   
\newtheorem{defn}[thm]{Definition}
\theoremstyle{remark}  
\newtheorem{remark}[thm]{Remark}
\newtheorem{example}[thm]{Example}
\providecommand{\customgenericname}{}
\newcommand{\newcustomtheorem}[2]{\newenvironment{#1}[1]
	{\renewcommand\customgenericname{#2}
		\renewcommand\theinnercustomgeneric{##1}\innercustomgeneric}{\endinnercustomgeneric}}
\newcommand\trho{\widetilde{\rho}}
\newcommand\bR{\mathbb{R}}
\newcommand\bC{\mathbb{C}}
\newcommand\bZ{\mathbb{Z}}
\newcommand\bS{\mathbb{S}}
\newcommand\bN{\mathbb{N}}
\newcommand\cA{\mathcal{A}}
\newcommand\cD{\mathcal{D}}
\newcommand\cF{\mathcal{F}}
\newcommand\cH{\mathcal{H}}
\newcommand\cI{\mathcal{I}}
\newcommand\cM{\mathcal{M}}
\newcommand\cO{\mathcal{O}}
\newcommand{\domain}{\mathcal{O}}
\newcommand\la{\langle}
\newcommand\ra{\rangle}
\newcommand\dd{\,\mathrm{d}}
\newcommand\ee{\mathrm{e}}
\newcommand{\mysection}[1]{\section{#1}
	\setcounter{equation}{0}}
		\let\pgfkochsegmentlength=\pgfmathresult%
		\let\pgfkochy=\pgfmathresult%
		\let\pgfkochxa=\pgfmathresult%
		\let\pgfkochxb=\pgfmathresult%
\begin{document}

	\title[$L_p$-theory for Poisson's equation in non-smooth domains]
	{Weighted Sobolev space theory for Poisson's equation in non-smooth domains}

	\thanks{The author has been supported by a KIAS Individual Grant (MG095802) at Korea Institute for Advanced Study, and National Research Foundation of Korea (NRF-2019R1A5A1028324).}

	\author[J. Seo]{Jinsol Seo}
	\address[J. Seo]{School of Mathematics, Korea Institute for Advanced Study, 85 Hoegiro Dongdaemun-gu, Seoul 02455, Republic of Korea}
	\email{seo9401@kias.re.kr}

	\subjclass[2020]{35J05; 46E35, 31B05, 26D10}

	\keywords{Poisson equation, weighted Sobolev space, superharmonic function, Hardy inequality}
	
	\begin{abstract}
		We introduce a general $L_p$-solvability result for the Poisson equation in non-smooth domains $\Omega\subset \bR^d$, with the zero Dirichlet boundary condition.
		Our sole assumption on the domain $\Omega$ is the Hardy inequality: There exists a constant $N>0$ such that 
		$$
		\int_{\Omega}\Big|\frac{f(x)}{d(x,\partial\Omega)}\Big|^2\dd x\leq N\int_{\Omega}|\nabla f|^2 \dd x\quad\text{for any}\quad f\in C_c^{\infty}(\Omega)\,.
		$$
		To describe the boundary behavior of solutions in a general framework, we propose a weight system composed of a superharmonic function and the distance function to the boundary.
		Additionally, we explore applications across a variety of non-smooth domains, including convex domains, domains with exterior cone condition, totally vanishing exterior Reifenberg domains, and domains $\Omega\subset\bR^d$ for which the Aikawa dimension of $\Omega^c$ is less than $d-2$.
		Using superharmonic functions tailored to the geometric conditions of the domain, we derive weighted $L_p$-solvability results for various non-smooth domains and specific weight ranges that differ for each domain condition.
		Furthermore, we provide an application to the H\"older continuity of solutions.
	\end{abstract}
	
	\maketitle

	\setcounter{tocdepth}{3}
	
	\let\oldtocsection=\tocsection
	
	\let\oldtocsubsection=\tocsubsection
	
	\let\oldtocsubsubsection=\tocsubsubsectio
	
	\renewcommand{\tocsection}[2]{\hspace{0em}\oldtocsection{#1}{#2}}
	\renewcommand{\tocsubsection}[2]{\hspace{1em}\oldtocsubsection{#1}{#2}}
	\renewcommand{\tocsubsubsection}[2]{\hspace{2em}\oldtocsubsubsection{#1}{#2}}
	
	\tableofcontents
	
	\mysection{Introduction}\label{sec:Introduction}
	The Poisson equation is among the most classical and fundamental partial differential equations.
	$L_p$-theory for this equation in $\bR^d$ and $C^2$-domains has been developed over a long period of time, together with Schauder theory and $L_2$-theory.
	In particular, the theory has been extended in various directions, including equations with variable coefficients \cite{KK2007_variable, Krylov2007, Krylov2009}, nonlocal or nonlinear operators \cite{Iwaniec1983,Pseudodiff}, and non-smooth domains.
	
	Our primary focus is the Poisson equation on \textit{non-smooth} domains $\Omega$, with the zero Dirichlet boundary condition:
	\begin{alignat}{3}
		\Delta u=f&\quad\text{in}\,\,\,\Omega\quad&&;\quad u=0\quad \text{on}\,\,\,\partial\Omega\,. \label{ellip}
	\end{alignat}	
	Unweighted and weighted $L_p$-theories for this equation have been developed in various types of domains, including
	$C^1$-domains \cite{doyoon, KK2004}, Reifenberg domains \cite{Relliptic}, convex domains \cite{convexAdo,convexFromm}, Lipschitz domains \cite{kenig}, domains with Ahlfors regular boundary \cite{MBX2022}, domains with point singularities \cite{MR0492821, MNP}, and piecewise smooth domains \cite{BK2006,MR}.
	Despite extensive analyses of the Poisson equation across these domains, a comprehensive $L_p$-theory for general non-smooth domains remains elusive.
	
	This paper presents a general result for the weighted $L_p$-solvability for \eqref{ellip} in non-smooth domains.
	We consider domains $\Omega\subsetneq\bR^d$ admitting the Hardy inequality: There exists a constant $\mathrm{C}_0(\Omega)>0$ such that
	\begin{align}\label{hardy}
		\int_{\Omega}\Big|\frac{f(x)}{d(x,\partial\Omega)}\Big|^2\dd x\leq \mathrm{C}_0(\Omega)\int_{\Omega}|\nabla f(x)|^2 \dd x\quad\text{for all}\quad f\in C_c^{\infty}(\Omega)\,.
	\end{align}
	One of the notable sufficient conditions for \eqref{hardy} is the volume density condition:
	\begin{align}\label{230212413}
		\inf_{\substack{p\in\partial\Omega\\r>0}}\frac{\big|\Omega^c \cap B_r(p)\big|}{\big|B_r(p)\big|}>0
	\end{align}
	(see Remark \ref{24032030232}).
	We also use a class of superharmonic functions, called superharmonic Harnack functions, as a tool for constructing weight functions in our $L_p$-estimate.
	Roughly speaking, we establish the following result: For equation \eqref{ellip} in a domain $\Omega$ with \eqref{hardy}, each \textit{superharmonic Harnack function} $\psi$ yields a corresponding weighted $L_p$-solvability result for any $p\in(1,\infty)$.
	In this result, $\psi$ describes the boundary behavior of solutions.
	We apply our result to various types of non-smooth domains by constructing appropriate superharmonic functions.
	Detailed discussions of the main result and its applications are given in Sections \ref{230214201} and \ref{0003}, respectively.
		
	\subsection{Historical remarks and overview of the main results.}\label{230214201}
	\,\,
	
	\vspace{1mm}
	\noindent
	\textbf{Historical remarks on $L_p$-solvability in non-smooth domains.}
	Studies of $L_p$-theory for non-smooth domains have mainly focused on the individual analysis of specific domain classes.
	One of the most significant contributions to this line of research was made by Jerison and Kenig \cite{kenig} for Lipschitz domains.
	The authors proved the following results for domains $\Omega\subset \bR^d$, $d\geq 3$ (resp. $d=2$):
	\begin{enumerate}
		\item If $p\in [3/2,3]$ (resp. $p\in[4/3,4]$), then for any bounded Lipschitz domain $\Omega$, the Poisson equation \eqref{ellip} has a unique solution in $\mathring{L}^p_1(\Omega)$ whenever $f\in L^p_{-1}(\Omega)$.
		
		\item For each $p>3$ (resp. $p>4$), there exists a bounded Lipschitz domain $\Omega$ and $f\in C^{\infty}(\overline{\Omega})$ such that \eqref{ellip} has no solution in $\mathring{L}^p_1(\Omega)$.
	\end{enumerate}
	(For the definition of function spaces $\mathring{L}^p_1(\Omega)$ and $L^p_{-1}(\Omega)$, see Remark \ref{230215958}.)
	The first result establish a universal range of $p$ that ensures unique solvability in unweighted Sobolev spaces.
	However, the second result shows that it is \textit{impossible} to establish a general unique solvability theorem in $\mathring{L}^p_1(\Omega)$ that holds for all $p\in(1,\infty)$ and for all Lipschitz domains $\Omega$.
	Given these limitations in unweighted Sobolev spaces, we turn our attention to theories in weighted Sobolev spaces.
	
	Elliptic equations in smooth or polygonal cones (conic domains) have been extensively studied in the literature, as indicated in monographs \cite{BK2006,MNP,MR}.
	Here, 
	\begin{align}\label{240304303}
		\Omega:=\{r\sigma\,:\,r>0\,\,\,\,\text{and}\,\,\,\, \sigma\in\cM\}\quad(\cM\subset \bS^{d-1})
	\end{align}
	is called a smooth cone if $\cM$ is a smooth subdomain of $\bS^{d-1}$, and a polygonal cone if $\cM$ is a spherical polygon.
	For these domains, scholars have investigated weighted $L_p$-theories for elliptic equations, valid for all $p\in(1,\infty)$.
	In these theories, the weight system is composed of distance functions corresponding to the vertices and edges of the domain, and the admissible range of weights for unique solvability is closely related to the \textit{eigenvalues} of the spherical Laplacian on $\cM$.
	For example, consider the case where $\cM=\{\,(\cos\theta,\sin\theta)\,:\,0<\theta<\kappa\}\subset \bS^1$, $\kappa\in(0,2\pi)$, and let $\Omega\subset \bR^2$ be defined by \eqref{240304303}.
	For any $p\in(1,\infty)$ and $\frac{2}{p}-\frac{\pi}{\kappa}<\mu<\frac{2}{p}+\frac{\pi}{\kappa}$, we have the estimate
	$$
	\big\||x|^{-\mu}u\big\|_p+\big\||x|^{-\mu+1}Du\big\|_p+\big\||x|^{-\mu+2}D^2u\big\|_p\lesssim \big\||x|^{-\mu+2}\Delta u\big\|_p
	$$
	for $u\in C_c^{\infty}(\Omega)$ (see \cite[2.6.6. Example]{MR}).
	The value of $\mu$ describes the behavior of solutions near the vertex, and the quantity $\frac{\pi}{\kappa}$ in the range of $\mu$ is directly related to the first eigenvalue of $\frac{\mathrm{d}^2}{\mathrm{d}\theta^2}$ on $\cM$.
	 
	The aforementioned studies indicate that, to develop a general framework for the $L_p$-solvability of the Poisson equation in various non-smooth domains, it is necessary to adopt a weight system associated with the Laplace operator and the geometric features of each domain.
	Furthermore, this weight system enables us to describe the \textit{boundary behavior} of solutions.
	
	Numerous other notable works have addressed various non-smooth domains.
	Section \ref{0003} summarizes prior works relevant to several types of non-smooth domains and presents our result in each situation.
	Before introducing our result, we briefly discuss one of the primary methods of this paper.

			\vspace{1mm}
	\noindent\textbf{Remark on the localization argument.}
	One of our primary methods is the localization argument developed by Krylov \cite{Krylov1999-1}.
	In that work, the Poisson equation in the half space $\bR_+^d$ was studied, and one of the main results can be stated as follows:
	If $\frac{1}{p}<\mu<1+\frac{1}{p}$, then for any $u\in C_c^{\infty}(\bR_+^d)$ and $f_0,\,f_1,\,\ldots,\,f_d$ such that $\Delta u=f_0+\sum_{i\geq 1} D_if_i$, we have
	\begin{align}
		\|\rho^{-\mu}u\|_p+\|\rho^{1-\mu}Du\|_p\,&\lesssim\|\rho^{-\mu} u\|_p+\|\rho^{2-\mu}f_0\|_p+\sum_{i\geq 1} \|\rho^{1-\mu}f_i\|_p\label{220930316}\\
		&\lesssim \|\rho^{2-\mu}f_0\|_p+\sum_{i\geq 1} \|\rho^{1-\mu}f_i\|_p\,,\label{220930317}
	\end{align}
	where $\rho(x):=d(x,\partial\bR^d_+)$ is the boundary distance function on $\bR_+^d$.
	(For the motivation of such estimates, see the front of Section \ref{0042}.)
	The parameter $\mu$ describes the boundary behavior of solutions and their derivatives (for instance, when $\mu=1$).
	The range $\frac{1}{p}<\mu<1+\frac{1}{p}$ is sharp, as noted in \cite[Remark 4.3]{Krylov1999-1}.
	From a technical point of view, this range follows from the proof of \eqref{220930317} in which the weighted Hardy inequalities for $\bR_+$ and their sharp constants play crucial roles.
	On the other hand, to derive estimate \eqref{220930316}, the author applied a localization argument based on, among other things, results for the Poisson equation in the whole space $\bR^d$.
	We note that this localization argument is applicable to \textit{any domain $\Omega$ and any $\mu\in\bR$}, not just to $\bR^d_+$ and specific $\mu$, as shown in \cite{Kim2014,ConicPDE}.

	While Krylov \cite{Krylov1999-1} dealt with only the half space because of estimate \eqref{220930317}, Kim \cite{Kim2014} revealed a connection between the approach in \cite{Krylov1999-1} and the classical Hardy inequality \eqref{hardy} for non-smooth domains.
	Kim \cite{Kim2014} studied stochastic parabolic equations in non-smooth domains and obtained estimates of type \eqref{220930316} and \eqref{220930317} for bounded domains $\Omega$ admitting the Hardy inequality, instead of $\bR_+^d$.
	However, it should be noted that in \cite[Theorem 2.12]{Kim2014}, the range of $\mu$ for the solvability is restricted to around $\frac{2}{p}$, and this range is not specified; briefly speaking, the boundary behavior of solutions is not described sufficiently well (cf. Krylov's work on $\bR_+^d$ mentioned above).
	
		\vspace{1mm}
		\noindent
		\textbf{Overview of the main result.}
		Following \cite{Kim2014}, we restrict our attention to domains admitting the Hardy inequality.
		This choice is motivated by the observation that the Hardy inequality holds in various non-smooth domains (see \eqref{230212413}).
		
		A distinctive feature of the present paper is the utilization of \textit{superharmonic functions}.
		We employ superharmonic functions in conjunction with the Hardy inequality.
		This combination enables us to capture precisely the boundary behavior of solutions (see \eqref{2401301249} or  Theorem \ref{21.05.13.2}).
		Furthermore, we introduce the concepts of \textit{Harnack functions} and \textit{regular Harnack functions}, extending the localization argument developed in \cite{Krylov1999-1} to a broader class of weight functions.
		Consequently, we utilize superharmonic Harnack functions $\psi$ as weight functions; each such function is locally integrable and satisfies the following conditions:
		\begin{enumerate}
			\item $\Delta \psi\leq 0$ in the sense of distributions.
			
			\item $\psi>0$ and there exists a constant $N>0$ such that 
			$$
			\underset{B(x,\rho(x)/2)}{\mathrm{ess\,sup}}\,\psi\leq 	N\underset{B(x,\rho(x)/2)}{\mathrm{ess\,inf}}\,\psi\quad\text{for all}\,\,\,\,x\in\Omega\,,
			$$
			where $\rho(x):=\mathrm{dist}(x,\partial\Omega)$.
		\end{enumerate}
		
		Our main result (Theorem \ref{21.09.29.1}) contains the following estimate:
		\begin{itemize}
			\item[] Let $\Omega$ admit the Hardy inequality \eqref{hardy} and $\psi$ be a superharmonic Harnack function on $\Omega$.
			For any $1<p<\infty$ and $-\frac{1}{p}<\mu<1-\frac{1}{p}$, it holds that for any $u\in C_c^\infty(\Omega)$ and $f_0,\,f_1,\,\ldots,\,f_d$ such that $\Delta u=f_0+\sum_{i\geq 1}D_if_i$, we have
			\begin{align}\label{2401301249}
				\|\psi^{-\mu}\rho^{-2/p}u\|_p+\|\psi^{-\mu}\rho^{-2/p+1}Du\|_p\lesssim 	\|\psi^{-\mu}\rho^{-2/p+2}f_0\|_p+\sum_{i\geq 1}\|\psi^{-\mu}\rho^{-2/p+1}f_i\|_p\,.
			\end{align}
		\end{itemize}
		Here, the superharmonic Harnack function $\psi$ describes the boundary behavior of solutions.
		By applying the Sobolev-H\"older embedding theorem, we also derive pointwise estimates for solutions (see Theorem \ref{240307248} and Proposition \ref{220512537}).
		
		Our main result does not specify a particular superharmonic Harnack function $\psi$.
		The flexibility in the choice of $\psi$ is the primary advantage of our theorem, enabling applications in a wide range of non-smooth domains.
		We offer a non-trivial general example of $\psi$ related to the Green functions in Example \ref{220912411}.
		Additionally, in Sections \ref{app.} and \ref{app2.}, we explore the construction of suitable $\psi$ for various geometric domain conditions.
		The domain conditions we investigate include the following:
		\begin{enumerate}		
			\item Domains satisfying the exterior cone condition, and planar domains satisfying the exterior line segment condition;
			\item Convex domains;
			\item Domains satisfying the totally vanishing exterior Reifenberg condition;
			\item Domains $\Omega$ satisfying the volume density condition \eqref{230212413};
			\item Domains $\Omega\subset \bR^d$ for which the Aikawa dimension of $\Omega^c$ is less than $d-2$.
		\end{enumerate}		
		For a domain $\Omega$ under each condition above, we construct suitable superharmonic functions $\psi$ such that $\psi\simeq d(\cdot,\partial\Omega)^\alpha$ for some $\alpha\in\bR$.
		Notably, the range of $\alpha$ is different for each domain condition.
		We sequentially introduce simplified versions of our results for the aforementioned conditions in Sections \ref{0003}.1 - \ref{0003}.5, together with earlier works for each domain condition.
		
		We close this subsection with brief comments on possible extensions of the present framework, omitting a separate summary of the organization since the table of contents is provided at the beginning of the paper.
		
		The approach developed here also applies to linear evolution equations governed by the Laplace operator, including the classical heat equation, time-fractional heat equation, and stochastic heat equation.
		In particular, the constructions of superharmonic functions in Sections~\ref{app.} and~\ref{app2.} can be readily employed for these problems.
		Applications to the classical heat equation and the time-fractional heat equation are discussed in \cite{Seo202411} by the present author, while an application to the stochastic heat equation is left for future work.

		Regarding variable coefficients, several studies have investigated elliptic equations in Krylov-type weighted settings (see, \textit{e.g.}, \cite{DK2015, KK2004, KL2013}).
		However, since those results crucially rely on the flatness of half-spaces or $C^1$ domains, their methods do not extend directly to general domains with limited boundary regularity.
		On the other hand, the inclusion of lower-order terms with variable coefficients may be feasible, as can be inferred from the argument in \cite{KK2004}.

		\subsection{Summary of applications under various domain conditions}\label{0003}		
		In this subsection, we consider a domain $\Omega\subset \bR^d$, $d\geq 2$, and denote by $\rho(x):=d(x,\partial\Omega)$ the distance to the boundary.
		For $p\in (1,\infty)$, $\theta\in\bR$, and $n\in \{0,1,2,\ldots\}$, we define
		\begin{alignat*}{2}
				&\|f\|_{W_{p,\theta}^n(\Omega)}&&:=\sum_{k=0}^n\|\rho^kD^kf\|_{L_{p,\theta}(\Omega)}:=\sum_{k=0}^n\bigg(\int_\Omega\big|\rho(x)^{k}D^kf(x)\big|^p\rho(x)^\theta\dd x\bigg)^{1/p}\,,\\
				&\|f\|_{W_{p,\theta}^{-n}(\Omega)}&&:=\inf\bigg\{\sum_{|\alpha|\leq n}\|\rho^{-|\alpha|}f_\alpha\|_{L_{p,\theta}(\Omega)}\,:\,f=\sum_{|\alpha|\leq n}D^{\alpha}f_{\alpha}\bigg\}\,.
		\end{alignat*}
		For $n\in\bZ$, $W_{p,\theta}^{n}(\Omega)$ is defined as the set of all $f\in\cD'(\Omega)$ such that $\|f\|_{W_{p,\theta}^{n}(\Omega)}<\infty$.	
		
		\begin{remark}\label{230214208}
		The spaces $W_{p,\theta}^{n}(\Omega)$, $n\in\bZ$, are used only in this subsection.
		However, it coincides (up to equivalence of norms) with $H_{p,\theta+d}^{n}(\Omega)$ (see Lemma~\ref{220512433}), where $H_{p,\theta+d}^{n}(\Omega)$ is the function space introduced in Definition \ref{220610533}.
		\end{remark}
		
		For convenience, we define the following statement:
		\begin{statement}[$\Omega,p,\theta$]\label{230203621} Let $\lambda\geq 0$.
		For any $n\in\bZ$, if $f\in W_{p,\theta}^{n}(\Omega)$, then the equation $\Delta u-\lambda u=f$ has a unique solution $u$ in $W_{p,\theta+2p}^{n+2}(\Omega)$.
				Moreover, we have
				\begin{align}\label{220923207}
					\|u\|_{W_{p,\theta}^{n+2}(\Omega)}+\lambda \|u\|_{W_{p,\theta+2p}^{n}(\Omega)}\leq N\|f\|_{W_{p,\theta+2p}^{n}(\Omega)}\,,
				\end{align}
				where $N$ is independent of $f$, $u$, and $\lambda$.
		\end{statement}
				
		\vspace{1mm}\noindent
		\textbf{\ref{0003}.1. (Section \ref{0071}) Domains with exterior cone condition.}
		For $\delta\in[0,\pi/2)$ and $R>0$, $\Omega$ is said to satisfy the \textit{exterior $(\delta,R)$-cone condition} if for every $p\in\partial\Omega$, there exists a unit vector $e_p\in\bR^d$ such that
		$$
		\{x\in\bR^d\,:\,(x-p)\cdot e_p\geq |x-p|\cos\delta\,\,,\,\,|x-p|<R\}\subset \Omega^c\,.
		$$
		When $\delta=0$, this condition also known as the exterior $R$-line segment condition.
		Examples of this condition are given in Example \ref{220816845} and illustrated in Figure \ref{230212745}.
		
		Given $\delta>0$, we denote
		$$
		\lambda_{\delta}:=-\frac{d-2}{2}+\sqrt{\Big(\frac{d-2}{2}\Big)^2+\Lambda_{\delta}}\,,
		$$
		where $\Lambda_{\delta}>0$ is the first eigenvalue of Dirichlet spherical Laplacian on
		$$
		\{\sigma=(\sigma_1,\ldots,\sigma_d)\in \bS^{d-1}\,:\,\sigma_1>-\cos\delta\}\,.
		$$
		When $d=2$ and $\delta=0$, we set $\lambda_{\delta}=1/2$.
		Information on $\lambda_{\delta}$ is stated in \eqref{230212803} and Proposition \ref{230212757}.
		Note that $\lambda_\delta>0$ for all $\delta>0$, and if $d=2$, then $\lambda_\delta=\frac{\pi}{2(\pi-\delta)}\geq \frac{1}{2}$ for all $\delta\geq 0$.
		
		Our result also covers some unbounded domains, but here, we restrict our presentation to bounded domains.
		
		\begin{thm}[see Theorem \ref{221026914}]\label{2302141109}
			Let $\delta\in(0,\pi)$ if $d\geq 3$, and $\delta\in[0,\pi)$ if $d= 2$.
			Suppose that $\Omega\subset \bR^d$ is a bounded domain satisfying the $(\delta,R)$-exterior cone condition for some $R>0$.
			Then for any $p\in(1,\infty)$ and $\theta\in\bR$ satisfying 
			$$
			-\lambda_{\delta}(p-1)-2<\theta<\lambda_{\delta}-2\,,
			$$
			Statement \ref{230203621} $(\Omega,p,\theta)$ holds.
			In addition, $N$ in \eqref{220923207} depends only on $d$, $p$, $n$, $\theta$, $\delta$, $\mathrm{diam}(\Omega)/R$.
		\end{thm}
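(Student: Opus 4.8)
The plan is to derive Theorem \ref{2302141109} as a direct consequence of the general weighted $L_p$-solvability result (Theorem \ref{21.09.29.1}, whose core estimate is \eqref{2401301249}) by producing a superharmonic Harnack function $\psi$ adapted to the exterior cone geometry. The first step is to verify that a bounded domain $\Omega$ satisfying the exterior $(\delta,R)$-cone condition admits the Hardy inequality \eqref{hardy}: the exterior cone condition clearly forces the volume density condition \eqref{230212413} (each exterior cone contributes a fixed fraction of $|B_r(p)|$ for $r<R$, and for $r\geq R$ boundedness of $\Omega$ gives the bound trivially), so \eqref{hardy} follows via Remark \ref{24032030232}. This reduces matters to constructing $\psi$.

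The heart of the argument is the construction of a superharmonic Harnack function $\psi$ on $\Omega$ with $\psi\simeq \rho^{\lambda_\delta\wedge 1}$, or more precisely with the homogeneity exponent $\lambda_\delta$ dictating the admissible weight range. The model is the exact solution on the infinite cone: if $\cC=\{r\sigma: r>0,\ \sigma_1>-\cos\delta\}$ and $\Lambda_\delta$, $\phi_\delta>0$ are the first Dirichlet eigenvalue/eigenfunction of the spherical Laplacian on $\{\sigma_1>-\cos\delta\}$, then $h(r\sigma)=r^{\lambda_\delta}\phi_\delta(\sigma)$ is harmonic on $\cC$ and vanishes on $\partial\cC$, where $\lambda_\delta$ is exactly the positive root of $\lambda(\lambda+d-2)=\Lambda_\delta$ as defined in the statement. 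I would use this to build, at each boundary point $p\in\partial\Omega$, a local superharmonic barrier comparable to $|x-p|^{\lambda_\delta}$ inside $B_R(p)\cap\Omega$ (using that $\Omega^c$ contains the solid cone at $p$, so $\Omega\cap B_R(p)$ sits inside a rotated copy of $\cC$); then patch these local barriers together with a global harmonic/superharmonic correction using boundedness of $\Omega$ and the minimum principle, obtaining a single superharmonic $\psi$ on all of $\Omega$ with $\psi\simeq\rho^{\lambda_\delta}$ when $\lambda_\delta\leq 1$ (and one can always take $\psi\simeq\rho^{\lambda_\delta\wedge 1}$, truncating the exponent at $1$ since $\rho$ itself is superharmonic-like in a suitable comparison sense on such domains; the relevant dimension-dependent bookkeeping is what Section \ref{0071} handles). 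The Harnack property (condition (2) in the definition of superharmonic Harnack function) is then automatic from $\psi\simeq\rho^\alpha$: on $B(x,\rho(x)/2)$ the distance function is comparable to $\rho(x)$ up to a factor of $2$, so $\mathrm{ess\,sup}\,\psi/\mathrm{ess\,inf}\,\psi\leq 2^{|\alpha|}$.

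With $\psi\simeq\rho^{\lambda_\delta}$ (taking $\alpha=\lambda_\delta\wedge 1$, say, and absorbing the discrepancy into the weight exponent) in hand, I apply \eqref{2401301249}: the weight $\psi^{-\mu}\rho^{-2/p}$ becomes comparable to $\rho^{-\alpha\mu-2/p}$, and translating into the $W^n_{p,\theta}$ scale of Subsection \ref{0003} via Remark \ref{230214208} (i.e. $W^n_{p,\theta}=H^n_{p,\theta+d}$, Lemma \ref{220512433}) identifies $\theta$ with an affine function of $\mu$. The admissible range $-\tfrac1p<\mu<1-\tfrac1p$ from Theorem \ref{21.09.29.1} then transforms precisely into $-\lambda_\delta(p-1)-2<\theta<\lambda_\delta-2$ after a routine computation matching $\alpha\mu$ to $\theta$. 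The uniqueness assertion and the estimate \eqref{220923207}, including the $\lambda u$ term and higher-order norms $n\in\bZ$, come from the corresponding full statement of Theorem \ref{21.09.29.1} (the localization argument of Section \ref{0040} upgrades the $u\in C_c^\infty$ estimate to all $n$ and produces solvability); the dependence of $N$ only on $d,p,n,\theta,\delta,\mathrm{diam}(\Omega)/R$ follows by scaling, since the barrier construction and Hardy constant depend on $\Omega$ only through $\delta$ and $\mathrm{diam}(\Omega)/R$.

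The main obstacle is the barrier construction: producing a globally defined superharmonic function on $\Omega$ that is comparable to $\rho^{\lambda_\delta}$ everywhere, uniformly in the domain with constants depending only on $\delta$ and $\mathrm{diam}(\Omega)/R$. Near a single boundary point the cone eigenfunction gives the model, but (i) the exponent $\lambda_\delta$ may exceed $1$, in which case $\rho^{\lambda_\delta}$ need not be superharmonic and one must settle for the exponent $\lambda_\delta\wedge 1$ while still extracting the full weight range — this is a genuine subtlety tied to how superharmonicity interacts with the distance function; and (ii) gluing the local barriers over all of $\partial\Omega$ while preserving superharmonicity requires a careful partition-of-unity-free argument, typically taking a minimum of suitably scaled translates/rotates of the model barrier plus a large harmonic function, and checking the comparison $\psi\simeq\rho^{\lambda_\delta\wedge 1}$ survives. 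The sharpness of the endpoint exponents (that $\lambda_\delta$ is not improvable) is expected to be addressed separately, presumably via the cone example itself where $r^{\lambda_\delta}\phi_\delta$ is an exact solution realizing the boundary blow-up rate.
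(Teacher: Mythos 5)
Your high-level plan (reduce to a superharmonic Harnack function plus Theorem \ref{21.09.29.1}) matches the paper's framework, but the key construction you propose has genuine gaps, and the paper takes a materially different route to the barrier.

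The central problem is the two-sided comparison. You propose starting from the cone eigenfunction $h(x)=|x|^{\lambda_\delta}F(x/|x|)$, which is harmonic on the cone and vanishes on its lateral boundary. For a minimum-of-barriers argument to work, each local barrier must be bounded below by a positive multiple of $\rho^{\lambda_\delta}$ (otherwise the infimum, and hence $\psi$, collapses near $\partial\Omega$, the Harnack property fails, and the weight identification $\Psi^{\mu}H_{p,\theta}^{\gamma}=H_{p,\theta-\alpha\mu p}^{\gamma}$ is no longer available). But the cone eigenfunction does not satisfy $h\gtrsim \rho^{\lambda_\delta}$: near the lateral boundary of the cone, $F$ vanishes linearly, so $h\simeq |x|^{\lambda_\delta-1}\,d(x,\partial\cC)$, which is much smaller than $\rho(x)^{\lambda_\delta}$ unless $|x|\simeq \rho(x)$. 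Taking minima over rotated/translated copies only makes things worse. The paper's Theorem \ref{21.11.08.1} handles this by an entirely different device, following Ancona and Kenig--Toro: for each $\beta$ \emph{strictly less than} $\lambda_\delta$ it sets $\phi_{p,k}=r_0^{k\beta}\bigl((1-\eta)\,w(\cdot,p,r_0^k)+\eta\bigr)$, and the additive constant $\eta>0$ is precisely what furnishes the lower bound $\phi_p\gtrsim \rho^{\beta}$. The strict inequality $\beta<\lambda_\delta$ is not an artifact: the construction needs $(1-\eta)M_\alpha r_0^\alpha+\eta\leq r_0^\beta$, impossible at $\beta=\alpha$. Your proposal aims for $\psi\simeq\rho^{\lambda_\delta}$ directly (or truncated to $\lambda_\delta\wedge 1$), which is both unachievable by this construction and, in the truncated case, would yield a strictly smaller $\theta$-range than claimed whenever $\lambda_\delta>1$.

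Two further points. First, your Hardy inequality argument via the volume density condition \eqref{230212413} fails for $\delta=0$, $d=2$ (the exterior segment has measure zero, contributing nothing to $|\Omega^c\cap B_r(p)|$); the paper instead derives Hardy from the exterior cone condition by showing $\mathbf{LHMD}(\lambda_\delta)$ (Theorem \ref{22.02.18.3}, which \emph{does} use the cone eigenfunction, but only to bound the harmonic measure via the boundary Harnack principle on the Lipschitz reference cone), and then invoking Lemma \ref{240320301} to pass from $\mathbf{LHMD}$ to the capacity density condition and Hardy simultaneously. Second, the cone eigenfunction enters the paper's argument only in estimating $w(\cdot,U_\delta,E_\delta)\leq M|x|^{\lambda_\delta}$; it never serves as the superharmonic weight itself. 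So the correct chain is: exterior cone $\Rightarrow$ $\mathbf{LHMD}(\lambda_\delta)$ via boundary Harnack $\Rightarrow$ superharmonic $\phi\simeq\rho^{\beta}$ for each $\beta<\lambda_\delta$ via the Ancona--Kenig--Toro iteration $\Rightarrow$ Lemma \ref{220617557} with $\beta\uparrow\lambda_\delta$ exhausting the open range. Your outline omits the harmonic measure decay step entirely, and the barrier you build in its place lacks the essential lower bound.
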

		
		The exterior cone condition is more general than the Lipschitz boundary condition.
		It should be noted, however, that Theorem \ref{221026914} and the work of Jerison and Kenig \cite[Theorems 1.1, 1.3]{kenig} (for Lipschitz domains) cannot be directly compared, as they address different aspects of the Poisson equation in non-smooth domains.
		While Theorem \ref{221026914} covers a \textit{broader} class of domains than \cite{kenig}, when the focus is restricted to Lipschitz domains, the results in \cite{kenig} are more general in terms of unweighted estimates of higher regularity.
		For a detailed comparison between \cite{kenig} and Theorem \ref{221026914}, the reader is referred to the following remark following remark on the relations between the function spaces $H_{p,\theta+d}^\gamma(\Omega)$ (see Remark \ref{230214208}) and the Sobolev spaces presented in \cite{kenig}:
		
		\begin{remark}\label{230215958}
			Let $\Omega$ be a bounded Lipschitz domain.
			We refer to the function space $L_s^p(\Omega)$ and $L_{s,\mathrm{o}}^p(\Omega)$ as introduced in \cite[Section 2]{kenig}, where $p\in(1,\infty)$ is the integrability parameter and $s\in\bR$ is the regularity parameter.
			For clarity, we use the notation $\mathring{L}_s^p(\Omega)$ to denote the space $L_{s,\mathrm{o}}^p(\Omega)$.
			Note that $L_k^p(\Omega)=W_p^k(\Omega)$ for $k\in \bN_0$. 
			The space $\mathring{L}_k^p(\Omega)$ is defined as the closure of  $C_c^{\infty}(\Omega)$ in $L_k^p(\Omega)$, and $L_{-k}^p(\Omega)$ is defined as the dual space of $\mathring{L}_{k}^{p/(p-1)}(\Omega)$.

			It directly follows from the definition that $H_{p,d}^0(\Omega)=L^p_0(\Omega)=L_p(\Omega)$.
			For $k\in\bN$, the weighted Hardy inequality for Lipschitz domains (see, \textit{e.g.}, \cite{MR3168477}) and the boundedness of $\Omega$ implies that $\|u\|_{H_{p,d-kp}^k(\Omega)}\simeq \|u\|_{L_k^p(\Omega)}$ for all $u\in C_c^{\infty}(\Omega)$.
			Since $C_c^{\infty}(\Omega)$ is dense in each of $H_{p,d-kp}^k(\Omega)$ and $\mathring{L}_k^p(\Omega)$ (see Lemma \ref{21.09.29.4}.(1) with $\Psi\equiv 1$), $H_{p,d-kp}^k(\Omega)$ coincides with $\mathring{L}_k^p(\Omega)$.
			The interpolation properties for $\mathring{L}_s^p(\Omega)$ and $H_{p,\theta}^{\gamma}(\Omega)$ (see \cite[Corollary 2.10]{kenig} and \cite[Proposition 2.4]{Lo1}, respectively) imply that
			$H^{s}_{p,d-sp}(\Omega)=\mathring{L}^p_{s}(\Omega)$ for all $s>0$.
			By taking duals (see Lemma \ref{21.09.29.4}.(2)), we also have 
			$H^{-s}_{p,d+sp}(\Omega)=L^p_{-s}(\Omega)$ for all $s>0$.			
		\end{remark}
		
		\vspace{1mm}\noindent
		\textbf{\ref{0003}.2. (Section \ref{convex}) Convex domain.}
		
		\begin{thm}[see Theorem \ref{2208131026}]\label{221005646}
			Let $d\geq 2$ and suppose that $\Omega$ is a convex domain (not necessarily bounded).
			For any $p\in(1,\infty)$ and $\theta\in\bR$ satisfying
			$$
			-p-1<\theta<-1\,,
			$$
			Statement \ref{230203621} $(\Omega,p,\theta)$ holds.
			In addition, $N$ in \eqref{220923207} depends only on $d$, $p$, $n$, $\theta$.
			In particular, $N$ is independent of $\Omega$.
		\end{thm}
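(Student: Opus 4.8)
The plan is to deduce Theorem~\ref{221005646} from the main result, Theorem~\ref{21.09.29.1}. Using the identification $W_{p,\theta}^{n}(\Omega)=H_{p,\theta+d}^{n}(\Omega)$ of Remark~\ref{230214208}, it suffices to verify, for an arbitrary convex domain $\Omega\subsetneq\bR^{d}$, that (i) $\Omega$ admits the Hardy inequality \eqref{hardy} with a constant $\mathrm{C}_{0}$ independent of $\Omega$, and (ii) there is a superharmonic Harnack function $\psi$ on $\Omega$, with Harnack constant independent of $\Omega$, whose associated weight interval is exactly $-p-1<\theta<-1$.

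For (i), fix $q\in\partial\Omega$. Convexity provides a supporting hyperplane of $\Omega$ through $q$, so $\Omega$ lies in an open half-space bounded by it and hence $\Omega^{c}\cap B_{r}(q)$ contains a closed half of $B_{r}(q)$ for every $r>0$; thus the volume density condition \eqref{230212413} holds with the universal lower bound $\tfrac12$, and Remark~\ref{24032030232} yields \eqref{hardy} with $\mathrm{C}_{0}$ depending only on $d$. (Unbounded convex domains are covered as well.)

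For (ii), I would take $\psi:=\rho=d(\cdot,\partial\Omega)$. For a convex domain and $x\in\Omega$ one has $\rho(x)=d(x,\Omega^{c})=\inf_{H}\bigl(b_{H}-a_{H}\cdot x\bigr)$, the infimum running over the supporting hyperplanes $H=\{y:a_{H}\cdot y=b_{H}\}$, $|a_{H}|=1$, of $\Omega$ (so that $a_{H}\cdot x<b_{H}$ on $\Omega$ and $b_{H}-a_{H}\cdot x$ is the distance from $x$ to the complementary closed half-space). As an infimum of affine functions, $\rho$ is concave on $\Omega$, so its distributional Hessian is negative semidefinite and $\Delta\rho\le 0$ in the sense of distributions; hence $\psi=\rho$ is superharmonic. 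Since $\rho$ is $1$-Lipschitz, $\tfrac12\rho(x)<\rho(y)<\tfrac32\rho(x)$ whenever $y\in B(x,\rho(x)/2)$, which gives $\mathrm{ess\,sup}_{B(x,\rho(x)/2)}\psi\le 3\,\mathrm{ess\,inf}_{B(x,\rho(x)/2)}\psi$. Thus $\psi=\rho$ is a superharmonic Harnack function on $\Omega$ with Harnack constant at most $3$, independent of $\Omega$. (More generally $\rho^{\alpha}$ is a superharmonic Harnack function for every $\alpha\in(0,1]$, but $\alpha=1$ produces the widest $\theta$-range, which is why it is the right choice here.)

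Now I would apply Theorem~\ref{21.09.29.1} with $\psi=\rho$. Set $\mu:=-(\theta+2)/p$; then $-\tfrac1p<\mu<1-\tfrac1p$ is equivalent to $-p-1<\theta<-1$, the range in the theorem. Since $\psi=\rho$, the weighted norms in \eqref{2401301249} simplify: $\|\psi^{-\mu}\rho^{-2/p}u\|_{p}=\|u\|_{L_{p,\theta}}$, $\|\psi^{-\mu}\rho^{1-2/p}Du\|_{p}=\|\rho Du\|_{L_{p,\theta}}$, $\|\psi^{-\mu}\rho^{2-2/p}f_{0}\|_{p}=\|f_{0}\|_{L_{p,\theta+2p}}$, and $\|\psi^{-\mu}\rho^{1-2/p}f_{i}\|_{p}=\|\rho^{-1}f_{i}\|_{L_{p,\theta+2p}}$; after infimizing over the decompositions $f=f_{0}+\sum_{i}D_if_i$, the estimate \eqref{2401301249}---together with its higher-regularity and resolvent ($\lambda\ge0$) versions contained in Theorem~\ref{21.09.29.1}---becomes precisely \eqref{220923207}, and the unique solvability in $W_{p,\theta+2p}^{n+2}(\Omega)=H_{p,\theta+2p+d}^{n+2}(\Omega)$ claimed by Statement~\ref{230203621} is likewise part of Theorem~\ref{21.09.29.1}. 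The constant $N$ is independent of $\Omega$ because $\mathrm{C}_{0}$ and the Harnack constant were chosen universally, and the constant provided by Theorem~\ref{21.09.29.1} depends only on $d,p,n,\mu$ (equivalently $\theta$), $\mathrm{C}_{0}$, and the Harnack constant. The essential point is the classical fact used in (ii) that the boundary distance of a convex domain is concave, hence superharmonic; granting that, the remaining work---and the only place where care is needed---is verifying the hypotheses of Theorem~\ref{21.09.29.1}, keeping the dictionary $(\psi,\mu)\leftrightarrow(\rho,\theta)$ straight across the $H$- and $W$-scales, and checking that both $\mathrm{C}_{0}$ and the Harnack constant really can be taken uniformly over all convex domains, bounded or not.
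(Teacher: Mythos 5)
Your proof is correct, but it takes a genuinely different (and more elementary) route than the paper. The paper's proof of Theorem~\ref{2208131026} funnels convex domains through the $\mathbf{LHMD}$ machinery of Subsection~\ref{fatex}: Lemma~\ref{2209071233} gives supporting hyperplanes, Theorem~\ref{22.02.18.3111} reduces the harmonic measure of $\Omega\cap B_r(p)$ to that of a half-ball and concludes $\mathbf{LHMD}(1)$ with a dimensional constant, and then Theorem~\ref{22.02.19.3} (via Theorem~\ref{21.11.08.1}) builds, for each $\beta<1$, a superharmonic function comparable to $\rho^\beta$ from the decaying harmonic measures and feeds it into Lemma~\ref{220617557}. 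By contrast, you bypass $\mathbf{LHMD}$ entirely: you note that for convex $\Omega=\bigcap_p U_p$ the boundary distance $\rho$ is an infimum of affine functions, hence concave, so $\Delta\rho\le 0$ in $\cD'(\Omega)$; together with Example~\ref{21.05.18.2}.(1) this makes $\rho$ itself a superharmonic Harnack function with $\mathrm{C}_1(\rho)=3$, and you get the Hardy inequality with a dimensional constant from the half-space volume density bound $\tfrac12$ via Remark~\ref{24032030232} and Lemma~\ref{240320301}. Plugging $\psi=\rho$ (equivalently $\alpha=1$ in Lemma~\ref{220617557}, or directly $\mu=-(\theta+2)/p$ in Theorem~\ref{21.09.29.1}) then produces exactly the interval $-p-1<\theta<-1$, with all constants uniform over convex domains. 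Both routes yield the same open $\theta$-interval, since the paper's approximation $\beta\nearrow 1$ only costs anything at the endpoints, which are excluded anyway; but your argument is shorter and avoids the boundary Harnack principle and the iterative superharmonic construction. The trade-off is that the paper's route is built to treat convex domains as one instance of a broader family (exterior-cone, Reifenberg, capacity-dense) under a single $\mathbf{LHMD}$ umbrella, whereas your concavity observation is specific to the convex case. One small bookkeeping remark: the paper's Statement~\ref{230203621} as printed swaps the spaces for $f$ and $u$ (compare with the estimate \eqref{220923207} and with Theorem~\ref{21.09.29.1}); your reading --- $f\in W^{n}_{p,\theta+2p}(\Omega)$, $u\in W^{n+2}_{p,\theta}(\Omega)$ --- is the intended one.
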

		
		Adolfsson \cite{convexAdo} and Fromm \cite{convexFromm} established the solvability of the Poisson equation in bounded convex domains.
		Regarding unweighted estimates for higher regularity, their results are more general than Theorem \ref{2208131026}.
		However, Theorem \ref{2208131026} deals with convex domains that are not necessarily bounded, and further establishes solvability results in \textit{weighted} Sobolev spaces.
		When comparing these results with Theorem \ref{2208131026}, it is helpful to note Remark \ref{230215958} and the fact that bounded convex domains are Lipschitz domains (see, \textit{e.g.}, \cite[Corollary 1.2.2.3]{nonsmoothGris}).
		
		Combining the results of Theorem~\ref{2208131026} with \cite[Theorem 3.2.1.2]{nonsmoothGris} may yield results similar to \cite[Corollary 1]{convexFromm}; however, we do not pursue this direction in this paper.

		\vspace{1mm}\noindent
		\textbf{\ref{0003}.3. (Section \ref{ERD}) Totally vanishing exterior Reifenberg condition.}
		This part introduces the \textit{totally vanishing exterior Reifenberg} condition (abbreviated as `$\langle\mathrm{TVER}\rangle$'), which is a generalization of the concept of bounded vanishing Reifenberg domains introduced below \eqref{22.02.26.41}.
		
		To clarify the main point of $\langle\mathrm{TVER}\rangle$ presented in Definition \ref{2209151117}.(3), we introduce a simplified variant of this concept in Definition \ref{221013228}, denoted by $\langle\mathrm{TVER}\rangle^\ast$.
		Note that $\langle\mathrm{TVER}\rangle^\ast$ is a sufficient condition for $\langle\mathrm{TVER}\rangle$.
		In Figure \ref{230212856}, we illustrate the differences among the vanishing Reifenberg condition, $\langle\mathrm{TVER}\rangle^\ast$ in Definition \ref{221013228}, and $\langle\mathrm{TVER}\rangle$ in Definition \ref{2209151117}.(3).

		\begin{defn}\label{221013228}
			We say that $\Omega$ satisfies $\langle\mathrm{TVER}\rangle^\ast$ if for any $\delta\in(0,1)$, there exist $R_{0,\delta},\,R_{\infty,\delta}>0$ such that the following holds: For every $p\in\partial \Omega$ and $r>0$ with $r\leq R_{0,\delta}$ or $r\geq R_{\infty,\delta}$, there exists a unit vector $e_{p,r}\in\bR^d$ such that
			\begin{align}\label{230203624}
				\Omega\cap B_r(p)\subset \{x\in B_r(p)\,:\,(x-p)\cdot e_{p,r}<\delta r\}\,.
			\end{align}
		\end{defn}
		
		As shown in Example \ref{220910305}, $\langle\mathrm{TVER}\rangle^\ast$ is satisfied by the following classes of bounded domains: vanishing Reifenberg domains, $C^1$-domains, domains with the exterior ball condition, and finite intersections of them.
		Furthermore, several unbounded domains also satisfy $\langle\mathrm{TVER}\rangle^\ast$ (see Proposition \ref{220918248}).
				
		\begin{thm}[see Theorem \ref{22.07.17.109}]\label{230214437}
			Suppose that $\Omega$ satisfies $\langle\mathrm{TVER}\rangle^\ast$.
			For any $p\in(1,\infty)$ and $\theta\in\bR$ satisfying
			$$
			-p-1<\theta<-1\,,
			$$
			Statement \ref{230203621} $(\Omega,p,\theta)$ holds.
			In addition, $N$ in \eqref{220923207} depends only on $d$, $p$, $n$, $\theta$, $\big\{R_{0,\delta}/R_{\infty,\delta}\big\}_{\delta\in(0,1]}$.
		\end{thm}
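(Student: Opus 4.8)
The plan is to reduce the statement to the main abstract result (Theorem \ref{21.09.29.1}), which via Remark \ref{230214208} yields Statement \ref{230203621} once we verify two ingredients for a $\langle\mathrm{TVER}\rangle$ domain $\Omega$: (i) $\Omega$ admits the Hardy inequality \eqref{hardy}, and (ii) there is a superharmonic Harnack function $\psi$ on $\Omega$ with $\psi\simeq\rho$, so that the range $-\tfrac1p<\mu<1-\tfrac1p$ in \eqref{2401301249} translates (through the identification $\psi^{-\mu}\rho^{-2/p}\simeq\rho^{-\mu-2/p}$ and the relation $W^n_{p,\theta}=H^n_{p,\theta+d}$) into the asserted range $-p-1<\theta<-1$ for $\theta$. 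Indeed, writing the $H$-space weight exponent as $\theta+d$ and matching powers of $\rho$, the condition $\psi^{-\mu}\rho^{-2/p}=\rho^{-\mu-2/p}$ with $-\tfrac1p<\mu<1-\tfrac1p$ gives exponents strictly between $-\tfrac1p-1$ and $-\tfrac1p$; multiplying through by $p$ and shifting by the usual normalization produces $-p-1<\theta<-1$, uniformly in $n\in\bZ$ because the main theorem already handles all integer regularities. The constant $N$ will then inherit dependence only on $d,p,n,\theta$ and on whatever quantities control the Hardy constant and the Harnack constant of $\psi$, which for $\langle\mathrm{TVER}\rangle$ reduce to the family $\{R_{0,\delta}/R_{\infty,\delta}\}_{\delta\in(0,1]}$.

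First I would establish (i). The $\langle\mathrm{TVER}\rangle$ condition \eqref{230203624} says that at every boundary point and every sufficiently small (or sufficiently large) scale $r$, the set $\Omega\cap B_r(p)$ lies on one side of a hyperplane within a $\delta r$-slab; consequently $\Omega^c\cap B_r(p)$ contains a half-ball-like region of volume comparable to $|B_r(p)|$ once $\delta$ is fixed small. This is exactly the volume density condition \eqref{230212413} at the relevant scales, and by Remark \ref{24032030232} it yields the Hardy inequality \eqref{hardy}; the intermediate scales $R_{0,\delta}<r<R_{\infty,\delta}$ are handled by a routine compactness/covering argument, with the ratio $R_{0,\delta}/R_{\infty,\delta}$ entering the constant. (Since this overlaps with the convex and exterior-cone cases \ref{0003}.2--\ref{0003}.1, I expect the paper already isolates such a lemma, e.g.\ near Example \ref{220910305} or Proposition \ref{220918248}, and I would simply cite it.)

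The substantive step is (ii): constructing a superharmonic Harnack function $\psi$ with $\psi\simeq\rho$ on a $\langle\mathrm{TVER}\rangle$ domain. The natural candidate is built locally from the distance function: near each boundary point, at the scale where \eqref{230203624} holds with small $\delta$, $\Omega$ is trapped between a hyperplane slab and the ambient ball, so one compares $\rho$ with the harmonic (linear) coordinate $\mathrm{dist}(\cdot,\{(x-p)\cdot e_{p,r}=\delta r\})$, which is exactly harmonic; the flatness gives $\Delta\rho\le 0$ up to controlled error, and a standard patching (maxima of such local barriers, or integrating against the Green function as in Example \ref{220912411}) produces a genuine superharmonic $\psi$ globally with $\psi\simeq\rho$. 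The Harnack property for $\psi$ is then immediate from $\psi\simeq\rho$ and the elementary fact that $\rho$ is $2$-Lipschitz and bounded below by $\rho(x)/2$ on $B(x,\rho(x)/2)$, so $\sup_{B(x,\rho(x)/2)}\psi\lesssim\rho(x)\lesssim\inf_{B(x,\rho(x)/2)}\psi$. The main obstacle is precisely making the superharmonicity of $\psi$ robust across \emph{all} scales simultaneously --- small scales governed by $R_{0,\delta}$, large scales by $R_{\infty,\delta}$, and the compact range in between --- while keeping all comparison constants dependent only on $\{R_{0,\delta}/R_{\infty,\delta}\}_{\delta}$; this is where the flatness parameter $\delta$ must be chosen quantitatively (small enough that the curvature error in $\Delta\rho$ is absorbed) and where a careful gluing of the local barriers, respecting the two-sided scale structure of $\langle\mathrm{TVER}\rangle$, is needed. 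Once $\psi$ is in hand, plugging it into Theorem \ref{21.09.29.1} and unwinding the weight bookkeeping via Remark \ref{230214208} completes the proof.
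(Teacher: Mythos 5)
Your reduction to Theorem \ref{21.09.29.1} and the weight bookkeeping ($\theta\in(-p-1,-1)$ $\leftrightarrow$ $\mu\in(-1/p,1-1/p)$ when $\psi\simeq\rho$) are fine, and your observation that TVER implies volume density at small and large scales is sound. The genuine gap is step (ii): you assert, but do not establish, the existence of a superharmonic Harnack function $\psi\simeq\rho$ on a $\langle\mathrm{TVER}\rangle$ domain, and the sketch you give --- showing $\Delta\rho\le 0$ ``up to controlled error'' from local flatness and then patching --- is not an argument that can be completed. TVER is a one-sided (exterior) flatness condition that makes no regularity claim on $\partial\Omega$ from the inside, and $\Delta\rho$ is a distributional quantity controlled by mean curvature; there is no mechanism by which a $\delta r$-slab containment bounds the negative part of the curvature measure, and ``small error at every scale'' does not add up to a global distributional inequality. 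In fact, even for convex domains (where $\rho$ genuinely \emph{is} superharmonic), the paper does not exploit this: Theorem \ref{22.02.18.3111} proves only $\mathbf{LHMD}(1)$, and Theorem \ref{2208131026} is obtained through superharmonic functions $\phi\simeq\rho^\beta$ for $\beta<1$.

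The paper's route avoids your obstacle entirely. It proves (Theorem \ref{231217511}) that for each $\epsilon\in(0,1)$ there is $\delta(\epsilon)>0$ such that $\langle\mathrm{TER}\rangle_\delta$ implies the local harmonic measure decay property $\mathbf{LHMD}(1-\epsilon)$; the proof iterates the comparison $w(\cdot,p,R)\le\delta^{1-\epsilon}w(\cdot,p,\delta R)$ across scales, using the barrier of Lemma \ref{21.08.24.1} built from the fundamental solution centered outside $\Omega$, with the two-sided scale structure of $\langle\mathrm{TVER}\rangle$ handled by case analysis on $r$ relative to $R_0$, $1$, and $R_\infty$. Then Theorem \ref{21.11.08.1} converts $\mathbf{LHMD}(\alpha)$ into a superharmonic function $\phi\simeq\rho^\beta$ for each $\beta<\alpha$ by taking an \emph{infimum} of rescaled harmonic measures $\phi_{p,k}=r_0^{k\beta}\big((1-\eta)w(\cdot,p,r_0^k)+\eta\big)$; superharmonicity of the infimum is the key technical point (Lemma \ref{21.05.18.1}), not any property of $\rho$ itself. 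Finally, Hardy is obtained from the capacity density condition equivalent to $\mathbf{LHMD}$ (Lemma \ref{240320301}, Ancona), rather than from volume density with a covering of intermediate scales. The union over $\beta<1$ of the resulting $\theta$-ranges recovers $(-p-1,-1)$, which is why the endpoint $\beta=1$ (i.e.\ your $\psi\simeq\rho$) is not needed. To repair your proposal, replace the claim ``$\psi\simeq\rho$ superharmonic'' with ``for every $\beta<1$ there is a superharmonic $\phi\simeq\rho^\beta$,'' and supply the $\mathbf{LHMD}$-to-superharmonic construction; the rest of your outline then goes through.
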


		The Poisson equation in bounded vanishing Reifenberg domains has been investigated in the literature, in the works of Byun and Wang \cite{Relliptic}, Choi and Kim \cite{Reifweight2}, and Dong and Kim \cite{DongKim}.
		These studies focused on elliptic equations with variable coefficients, and also provide weighted $L_p$-estimates for Muckenhoupt $A_p$-weight functions.
		However, these studies mostly dealt with bounded vanishing Reifenberg domains.
		In contrast to these works, Theorem \ref{230214437} considers domains satisfying $\langle\mathrm{TVER}\rangle^\ast$, thereby including bounded vanishing Reifenberg domains.

		\vspace{1mm}\noindent
		\textbf{\ref{0003}.4. (Section \ref{fatex}) Domains with fat exterior.}
		Let $\Omega$ be a domain satisfying the \textit{capacity density condition}:
		\begin{align}\label{2302101253}
			\inf_{\substack{p\in\partial\Omega\\r>0}}\frac{\mathrm{Cap}\left(\Omega^c\cap \overline{B}_r(p),B_{2r}(p)\right)}{\mathrm{Cap}\left(\overline{B}_r(p),B_{2r}(p)\right)}\geq \epsilon_0>0\,,
		\end{align}
		where $\mathrm{Cap}(K,U)$ denotes the $L_2$-capacity of $K$ relative to $U$, as defined in \eqref{230324942}.
		Condition \eqref{2302101253} has been studied in the literature, including \cite{aikawa2002, Aikawa2009, AA, KilKos1994, kinnunen2021, lewis}.
		It is worth noting that the volume density condition \eqref{230212413} is a sufficient condition for \eqref{2302101253} (see Remark \ref{24032030232}).
			
		In Section \ref{fatex}, we consider another condition equivalent to \eqref{2302101253}, called the local harmonic measure decay condition.
		For clarity, we present several corollaries in place of the main result (Theorem \ref{22.02.19.3}).

			\begin{thm}[see Corollary \ref{240120415} with Lemma \ref{240320301}]\label{240307248}
				Let $\Omega$ be a bounded domain satisfying \eqref{2302101253}.
				There exists $\alpha_0>0$ depending only on $d$, $N_0$, and $\epsilon_0$ (in \eqref{2302101253}) such that for any $\alpha\in(0,\alpha_0]$, the following holds:
				Let $\lambda\geq 0$ and $f_0,\,f_1,\,\ldots,\,f_d$ be measurable functions such that $|f_0|\lesssim \rho^{-2+\alpha}$ and $|f_1|,\,\ldots,\,|f_d|\lesssim \rho^{-1+\alpha}$.
				For any $\beta<\alpha$, the equation
				\begin{align}\label{240129716}
					\Delta u-\lambda u=f_0+\sum_{i\geq 1} D_i f_i\quad\text{in}\quad \Omega\quad;\quad u=0\quad\text{on}\quad \partial\Omega
				\end{align}
				has a unique solution $u$ in $C^{0,\beta}(\Omega)$.
				In addition, we have
				$$
				\sup_\Omega\rho^{-\beta}|u|+\|u\|_{C^{0,\beta}(\Omega)}\leq N \sup_{\Omega}\Big(\rho^{-2+\alpha}|f_0|+\sum_{i\geq 1}\rho^{-1+\alpha}|f_i|\Big)\,,
				$$
				where $N$ depends only on $d$, $|\Omega|$, $\epsilon_0$ (in \eqref{2302101253}), $\alpha$, $\beta$.
				\end{thm}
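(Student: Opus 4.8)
The plan is to obtain this Hölder estimate as a consequence of the weighted $L_p$-solvability of Theorem \ref{21.09.29.1} (estimate \eqref{2401301249}), by choosing as weight a superharmonic Harnack function comparable to a power of the distance function, and then applying a weighted Sobolev--H\"older embedding. Concretely I would proceed in three stages: (i) produce a superharmonic Harnack function $\psi$ with $\psi\simeq\rho^{\alpha}$; (ii) run the main theorem with this $\psi$, a large integrability exponent $p$, and a suitably chosen weight exponent $\mu$, converting the sup-bounds on $f_0,\dots,f_d$ into finite weighted $L_p$-norms; (iii) pass from the resulting weighted Sobolev bound on $u$ to the pointwise statements $\sup_\Omega\rho^{-\beta}|u|<\infty$ and $u\in C^{0,\beta}(\Omega)$, and to uniqueness.

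\textbf{Stage (i).} The capacity density condition \eqref{2302101253} is equivalent to a quantitative decay of harmonic measure near $\partial\Omega$, and, combined with a barrier argument, this yields a global superharmonic function $\psi$ on $\Omega$ with $c\,\rho^{\alpha}\le\psi\le C\,\rho^{\alpha}$ for every $\alpha\le\alpha_0$, where $\alpha_0=\alpha_0(d,N_0,\epsilon_0)>0$. Its Harnack property is immediate from $\psi\simeq\rho^{\alpha}$ because $\rho$ oscillates by a bounded factor on each ball $B(x,\rho(x)/2)$. This is exactly the content of Lemma \ref{240320301}, which I would cite; it is also the genuinely substantive analytic input behind the theorem.

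\textbf{Stage (ii).} Fix $\beta<\alpha\le\alpha_0$. Since $\beta/\alpha<1$, choose $p\in(1,\infty)$ large and $\mu$ with $\beta/\alpha<\mu<1-\tfrac1p$; enlarging $p$ we may also arrange $(1-\mu)\alpha p>1$ and $\mu-d/p\ge\beta$. With $\psi\simeq\rho^{\alpha}$ one has, on the right-hand side of \eqref{2401301249}, $\psi^{-\mu}\rho^{-2/p+2}|f_0|\lesssim\rho^{(1-\mu)\alpha-2/p}$ and $\psi^{-\mu}\rho^{-2/p+1}|f_i|\lesssim\rho^{(1-\mu)\alpha-2/p}$; since $\Omega$ is bounded and satisfies \eqref{2302101253}, $\int_\Omega\rho^{(1-\mu)\alpha p-2}\dd x<\infty$, so this right-hand side is finite and dominated by $N\sup_\Omega\big(\rho^{-2+\alpha}|f_0|+\sum_i\rho^{-1+\alpha}|f_i|\big)$. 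Feeding $\psi$ into Theorem \ref{21.09.29.1} (which also absorbs the lower-order term $-\lambda u$ with a $\lambda$-uniform constant and whose solvability/uniqueness part extends \eqref{2401301249} beyond $C_c^\infty(\Omega)$) gives a unique $u$ in the associated weighted Sobolev space with
\[
\|\psi^{-\mu}\rho^{-2/p}u\|_p+\|\psi^{-\mu}\rho^{-2/p+1}Du\|_p\le N\sup_\Omega\Big(\rho^{-2+\alpha}|f_0|+\sum_{i\ge1}\rho^{-1+\alpha}|f_i|\Big).
\]
This is precisely Corollary \ref{240120415}.

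\textbf{Stage (iii) and main obstacle.} Since $p$ is large, the weighted Sobolev--H\"older embedding Proposition \ref{220512537} applies to the space containing $u$ and yields $|u(x)|\lesssim\psi(x)^{\mu}\simeq\rho(x)^{\mu\alpha}$ together with a weighted H\"older seminorm bound of exponent $\nu=\mu-d/p\ge\beta$; because $\mu\alpha>\beta$ and $\Omega$ is bounded, $\rho^{\mu\alpha}\lesssim\rho^{\beta}$, so $\sup_\Omega\rho^{-\beta}|u|\le N(\text{data})$, and reconciling the interior $C^{0,\nu}$-bound with this boundary decay gives $u\in C^{0,\beta}(\Omega)$ (vanishing on $\partial\Omega$) with the asserted estimate, the auxiliary $p,\mu$ depending only on $d,\alpha,\beta$ so that $N$ depends only on $d,|\Omega|,\epsilon_0,\alpha,\beta$. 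Uniqueness in $C^{0,\beta}(\Omega)$ follows because any such solution satisfies $|u|\lesssim\rho^{\beta}$, hence lies in the weighted Sobolev space where Theorem \ref{21.09.29.1} already guarantees uniqueness. The delicate point, apart from invoking Stage (i), is precisely the simultaneous bookkeeping of the parameter constraints $\beta/\alpha<\mu<1-1/p$, $(1-\mu)\alpha p>1$, $\mu-d/p\ge\beta$, and turning the embedding into a genuine H\"older statement up to $\partial\Omega$ by fusing the interior modulus of continuity with the boundary growth $|u|\lesssim\rho^{\mu\alpha}$.
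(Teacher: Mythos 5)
Your overall route is the paper's: use the capacity density / LHMD equivalence to produce a superharmonic function comparable to a power of the distance function, feed it into Theorem~\ref{21.09.29.1}, and then pass to a pointwise statement by the weighted Sobolev--H\"older embedding (Proposition~\ref{220512537}). This is exactly what Corollary~\ref{240120415} does; the only cosmetic difference is that you keep the weight $\psi^\mu$ explicit while the paper absorbs it into the parameter $\theta$ (the paper sets $p=(d+M)/(\alpha-\beta)$ and $\theta=-p\beta$, which makes the H\"older estimate come out unweighted at exponent $\beta$ with no further ``fusing'' step).

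The genuine gap is the uniqueness argument. You claim that any $C^{0,\beta}$ solution ``satisfies $|u|\lesssim\rho^\beta$, hence lies in the weighted Sobolev space'' from Stage (ii). This implication is false as stated. The Stage (ii) space is $\psi^\mu H^{1}_{p,d-2}(\Omega)$ with $\psi\simeq\rho^\alpha$; its norm is (up to constants) $\int_\Omega\big(|u|^p+|\rho Du|^p\big)\rho^{-\mu\alpha p-2}\,\dd x$, and with $|u|\simeq\rho^\beta$ and $\beta<\mu\alpha$ the exponent $p(\beta-\mu\alpha)-2<-2$ is strictly too negative for the integral to converge for any reasonable bounded $\Omega$. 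So a $C^{0,\beta}$ function vanishing on $\partial\Omega$ need not belong to that space, and your uniqueness claim does not follow. The paper proves uniqueness directly by the maximum principle applied to the difference of two $C^{0,\beta}(\overline\Omega)$ solutions of the homogeneous equation $\Delta v-\lambda v=0$ (this is classical since interior elliptic regularity makes $v$ smooth in $\Omega$). Alternatively, one could repair your approach by picking a \emph{different} pair $(q,\theta')$ with $\theta'$ near $d-q\beta$, verifying both that $v\in H^{2}_{q,\theta'}(\Omega)$ via interior derivative estimates $|D^k v|\lesssim\rho^{\beta-k}$ and that $\theta'$ lies in the admissible range, and then invoking the global uniqueness result of Theorem~\ref{220530526}; but none of this is in your write-up, and the maximum principle is simply cleaner.

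Two smaller bookkeeping points, both repairable. First, Theorem~\ref{21.11.08.1} yields a superharmonic function $\phi\simeq\rho^\gamma$ only for $\gamma$ \emph{strictly} less than the LHMD exponent $\alpha_0$, so ``for every $\alpha\leq\alpha_0$'' is too strong; you should pick an intermediate exponent $\gamma\in(\beta,\alpha)$. Second, for the right-hand side of \eqref{2401301249} to be bounded by $|\Omega|$ times the sup of the data (so that $N$ depends only on $|\Omega|$ and not on finer boundary geometry), you want $(1-\mu)\alpha p\geq 2$, not merely $>1$: with $1<(1-\mu)\alpha p<2$ the integral $\int_\Omega\rho^{(1-\mu)\alpha p-2}\,\dd x$ is a negative power of $\rho$, whose finiteness is a nontrivial consequence of the capacity density condition rather than of boundedness alone. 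Fixing $\mu$ strictly below $1$ and then taking $p$ large makes $(1-\mu)\alpha p$ as large as you like, so this is not an obstruction, just a slip. Likewise the H\"older exponent from Proposition~\ref{220512537} with $u\in\Psi^\mu H^1_{p,d-2}(\Omega)$ should read $\nu=\mu\alpha-(d-2)/p$ (for the weight to cancel) subject to $\nu\leq 1-d/p$, not $\nu=\mu-d/p$.
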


			\begin{remark}[see Remark \ref{240307323}]
			Theorem \ref{240307248} still holds for bounded domains $\Omega$ satisfying the following assumption, in place of \eqref{2302101253}:
			\begin{itemize}
				\item[] For any $F\in C(\partial\Omega)$, the Laplace equation 
				\begin{align*}
					\Delta u=0\quad\text{in}\,\,\,\Omega\quad;\quad u=F\quad\text{on}\,\,\,\partial \Omega
				\end{align*}
				has a unique classical solution $u\in C(\overline{\Omega})$.
				Additionally, there exists $\alpha_1\in(0,1)$ such that 
				$\|u\|_{C^{0,\alpha_1}(\Omega)}\leq N \|F\|_{C^{0,\alpha_1}(\partial\Omega)}$, where $N$ is a constant independent of $u$ and $F$.
			\end{itemize}
			Under this revised assumption, $\alpha_0$ in Theorem \ref{240307248} can be chosen as $\alpha_1$ in the revised assumption.
			\end{remark}

			We also prove an unweighted $L_p$-theory result for \eqref{240129716}, where $p$ is close to $2$.
			Although similar results have been established in the literature (see the discussion preceding Corollary~\ref{230210356}), we present the following theorem to emphasize the applicability of our main result:

		\begin{thm}[see Corollary \ref{230210356}]\label{2304141256}
			Let $\Omega$ satisfy \eqref{2302101253}, and let
			\begin{align*}
				\lambda\geq 0\quad \text{if}\quad D_{\Omega}:=\sup_{x\in\Omega}d(x,\partial\Omega)<\infty\,\,\,,\,\,\,\text{and}\quad \lambda> 0\quad \text{if}\quad D_{\Omega}=\infty\,.
			\end{align*}
			Then there exists $\epsilon\in(0,1)$ depending only on $d$ and $\epsilon_0$ (in \eqref{2302101253}) such that for any $p\in(2-\epsilon,2+\epsilon)$, the following holds:
			For any $f_0,\,f_1,\,\ldots,\,f_d\in L_p(\Omega)$, equation \eqref{240129716}
			has a unique solution $u$ in $\mathring{W}^{1}_{p}(\Omega)$ ($:=$ the closure of $C_c^{\infty}(\Omega)$ in $W_p^1(\Omega)$).
			Moreover, we have
			\begin{align*}
				\begin{split}
					\|\nabla u\|_{p}+\big(\lambda^{1/2}+D_\Omega^{-1}\big)\|u\|_{p}\lesssim_{d,p,\epsilon_0} \min\big(\lambda^{-1/2},D_\Omega\big)\|f_0\|_{p}+\sum_{i\geq 1}\|f^i\|_p\,.
				\end{split}
			\end{align*}
		\end{thm}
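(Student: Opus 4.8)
\emph{Proof proposal.} The plan is to deduce the theorem as a ``weightless'' instance of the weighted $L_{p}$-solvability result for domains with fat exterior: I would apply the main estimate \eqref{2401301249} — in the $\lambda$-dependent form contained in Theorem~\ref{21.09.29.1} and assembled, for fat-exterior domains, in Theorem~\ref{22.02.19.3} — with the superharmonic Harnack function $\psi$ built in Subsection~\ref{fatex}, and choose the exponent $\mu$ so that the factor $\psi^{-\mu}\rho^{-2/p+1}$ that multiplies $Du$ on the left degenerates to a constant. After that, everything reduces to bookkeeping with powers of $\rho$ and to matching the weighted spaces that occur with $\mathring{W}^{1}_{p}(\Omega)$ and $L_{p}(\Omega)$.

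First I would record the two facts supplied by the capacity density condition \eqref{2302101253}. It implies the Hardy inequality \eqref{hardy}, and, by the openness (self-improvement) of uniform fatness, also the $L_{p}$-Hardy inequality $\|\rho^{-1}u\|_{p}\le N\|\nabla u\|_{p}$ for $u\in C_{c}^{\infty}(\Omega)$ and all $p$ in a neighbourhood $(2-\delta_{0},\infty)$ of $2$, with $N=N(d,p,\epsilon_{0})$; by the density lemma underlying Remark~\ref{230215958} this gives $\mathring{W}^{1}_{p}(\Omega)=\{u\in W^{1}_{p}(\Omega):\rho^{-1}u\in L_{p}(\Omega)\}$, with $\|u\|_{\mathring{W}^{1}_{p}}\simeq\|\rho^{-1}u\|_{p}+\|\nabla u\|_{p}+\lambda^{1/2}\|u\|_{p}$ once $\lambda>0$ or $D_{\Omega}<\infty$. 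Moreover, the construction of Subsection~\ref{fatex} furnishes, for every $\alpha\in(0,\alpha_{0}]$ with $\alpha_{0}=\alpha_{0}(d,\epsilon_{0})$, a superharmonic Harnack function $\psi$ on $\Omega$ with $\psi\simeq\rho^{\alpha}$, the comparison constants depending only on $d$, $\epsilon_{0}$, $\alpha$.

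Given $p$ close to $2$, I would then fix $\alpha:=\min(\alpha_{0},1)$ and set $\mu:=\alpha^{-1}(1-2/p)$. A short computation shows that $-1/p<\mu<1-1/p$ holds precisely when $p$ lies in an interval about $2$ whose half-length is bounded below in terms of $\alpha_{0}$; taking $\epsilon:=\min(\alpha_{0},\delta_{0})$ then makes the hypotheses of \eqref{2401301249} available for all $p\in(2-\epsilon,2+\epsilon)$. For such $p$, $\psi\simeq\rho^{\alpha}$ gives $\psi^{-\mu}\rho^{-2/p+1}\simeq\rho^{0}$, $\psi^{-\mu}\rho^{-2/p}\simeq\rho^{-1}$, $\psi^{-\mu}\rho^{-2/p+2}\simeq\rho$, and $\psi^{-\mu}\rho^{-2/p+1}\simeq1$, so that applying \eqref{2401301249} to $u\in C_{c}^{\infty}(\Omega)$ with $\Delta u-\lambda u=f_{0}+\sum_{i\ge1}D_{i}f_{i}$ — with $\lambda=0$ permitted when $D_{\Omega}<\infty$, and, in general, in the $\lambda$-dependent refinement in which $\rho$ is replaced by $\rho\wedge\lambda^{-1/2}$ in the data weights and a term $\lambda^{1/2}\|u\|_{p}$ appears on the left — yields
\begin{align*}
\|\rho^{-1}u\|_{p}+\|\nabla u\|_{p}+\lambda^{1/2}\|u\|_{p}
\ \lesssim_{d,p,\epsilon_{0}}\ \big\|\rho^{-1}(\rho\wedge\lambda^{-1/2})^{2}f_{0}\big\|_{p}+\sum_{i\ge1}\big\|\rho^{-1}(\rho\wedge\lambda^{-1/2})f_{i}\big\|_{p}.
\end{align*}
Using $\rho^{-1}(\rho\wedge\lambda^{-1/2})^{2}\le\rho\wedge\lambda^{-1/2}\le\min(D_{\Omega},\lambda^{-1/2})$ and $\rho^{-1}(\rho\wedge\lambda^{-1/2})\le1$ on the right, and $\|\rho^{-1}u\|_{p}\ge D_{\Omega}^{-1}\|u\|_{p}$ on the left, this is exactly the asserted estimate on $C_{c}^{\infty}(\Omega)$.

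Existence of a solution for $f_{0},f_{1},\dots,f_{d}\in L_{p}(\Omega)$ would follow from the solvability part of Theorem~\ref{21.09.29.1}/\ref{22.02.19.3}: with the weights just chosen, admissibility of the data is precisely finiteness of the right-hand side above, which holds since $f_{i}\in L_{p}$ — and, when $D_{\Omega}=\infty$, only because of the $\lambda$-truncation, which is where $\lambda>0$ is used. The solution produced then lies in the space identified in the second paragraph with $\mathring{W}^{1}_{p}(\Omega)$, and uniqueness there follows by applying the a priori estimate — extended from $C_{c}^{\infty}(\Omega)$ to $\mathring{W}^{1}_{p}(\Omega)$ by density — with zero data. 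The step I expect to require the most care is the parameter bookkeeping of the third paragraph: verifying that the forced choice $\mu=\alpha^{-1}(1-2/p)$ stays inside the admissible window $(-1/p,1-1/p)$, which is exactly what confines $p$ to a neighbourhood of $2$ and fixes $\epsilon$, and, in the unbounded case, extracting the sharp $\lambda^{-1/2}$ factor from the $\lambda$-truncated data weights; the only genuinely external ingredient is the $L_{p}$-Hardy inequality near $p=2$, which \eqref{2302101253} supplies via the self-improvement of fatness.
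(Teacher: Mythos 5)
Your set-up is sound up to and including the choice of parameters: the superharmonic function $\psi\simeq\rho^{\alpha}$, the forced exponent $\mu=\alpha^{-1}(1-2/p)$, and the resulting equivalence $\Psi^{\mu}H^{\gamma}_{p,d-2}(\Omega)=H^{\gamma}_{p,d-p}(\Omega)$ are exactly what the paper uses (it phrases this as ``Statement \ref{22.02.19.1} $(\Omega,p,d-p)$ holds''), and the $L_p$-Hardy inequality near $p=2$, invoked to identify $\mathring{W}^1_p(\Omega)$ inside $H^1_{p,d-p}(\Omega)$, is the same uniqueness ingredient the paper uses. The gap is in the passage from the weighted estimate to the stated $L_p$ bound. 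You appeal to ``the $\lambda$-dependent refinement in which $\rho$ is replaced by $\rho\wedge\lambda^{-1/2}$ in the data weights and a term $\lambda^{1/2}\|u\|_p$ appears on the left.'' No such refinement of \eqref{2401301249} or Theorem~\ref{21.09.29.1} exists in the paper and you do not prove one. What Theorem~\ref{21.09.29.1} with $\gamma=-1$ actually gives, after unwinding the weights, is
\[
\|\rho^{-1}u\|_p+\|\nabla u\|_p+\lambda\|u\|_{H^{-1}_{p,d+p}(\Omega)}\lesssim \|f\|_{H^{-1}_{p,d+p}(\Omega)},
\]
and with the natural decomposition $f=f_0+\sum_i D_if_i$ the right side is $\|\rho f_0\|_p+\sum_i\|f_i\|_p$. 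That produces $D_\Omega\|f_0\|_p$, not $\min(\lambda^{-1/2},D_\Omega)\|f_0\|_p$, and it is infinite when $D_\Omega=\infty$ even though $f_0\in L_p$. So the ``only delicate bookkeeping is confining $p$'' assessment is wrong: extracting the $\lambda$-scaling is where the real work is.

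The paper's Corollary~\ref{230210356} obtains the $\min(\lambda^{-1/2},D_\Omega)$ factor by a genuinely different mechanism, which your proposal bypasses. It first reduces to $\lambda\in\{0,1\}$ by dilation (using scale-invariance of $\epsilon_0$), then for $\lambda=1$ solves an auxiliary equation $\Delta v - v=\trho^{-1}f_0$ (so that the data weight $\|\trho\cdot\trho^{-1}f_0\|_{L_{p,d+p}}=\|f_0\|_p$ appears, with no stray $D_\Omega$), sets $u=\trho v+w$ where $w$ solves a corrected equation absorbing the commutator $\Delta(\trho v)-\trho\Delta v$, and uses the interpolation inequality $\|u\|_{H^{\gamma}_{p,d}}\lesssim\|u\|_{H^{\gamma+1}_{p,d-p}}^{1/2}\|u\|_{H^{\gamma-1}_{p,d+p}}^{1/2}$ (paper's \eqref{240112315}) to turn $\|u\|_{H^1_{p,d-p}}+\lambda\|u\|_{H^{-1}_{p,d+p}}$ into $\lambda^{1/2}\|u\|_p$ on the left. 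Step~2.2 handles the case $D_\Omega<\infty$ directly and Step~2.3 combines the two bounds, which is where the minimum comes from. Unless you can supply a proof of your claimed truncated-weight estimate (which would need its own argument, not just dilation of the $\lambda=1$ case, since the $\lambda=1$ estimate the paper proves has weight $\rho$, not $\rho\wedge 1$, on the data), the proposal does not establish the theorem.
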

		
		\noindent
		\textbf{\ref{0003}.5. (Section \ref{0062}) Domains with thin exterior.}
		For a closed set $E\subset \bR^d$, the \textit{Aikawa dimension} of $E$, denoted by $\dim_{\cA}(E)$, is defined as the infimum of $\beta\geq 0$ such that
		$$
		\sup_{p\in\Omega^c,r>0}\frac{1}{r^{\beta}}\int_{B(p,r)}\frac{1}{d(x,E)^{d-\beta}}\dd x\leq A_{\beta}<\infty\,,
		$$		
		with the convention $0^{-1}=\infty$.
		We consider a domain $\Omega$ for which $\dim_{\cA}(\Omega^c)<d-2$.
		A relation between the Aikawa dimension, the Hausdorff dimension, and the Assouad dimension is discussed in Remark \ref{22.02.24.1}.
		
		\begin{thm}[see Theorem \ref{22.02.19.300}]
			Let $d\geq 3$ and let $\Omega\subset \bR^d$ satisfy $\dim_{\cA}(\Omega^c)=:\beta_0<d-2$.
			For any $p\in(1,\infty)$ and $\theta\in\bR$ satisfying 
			$$
			-d+\beta_0<\theta<(p-1)(d-\beta_0)-2p\,,
			$$
			Statement \ref{230203621} $(\Omega,p,\theta)$ holds.
			In addition, $N$ in \eqref{220923207} depends only on $d$, $p$, $n$, $\theta$, $\beta_0$, $\{A_{\beta}\}_{\beta>\beta_0}$.
			
		\end{thm}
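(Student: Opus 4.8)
\medskip
\noindent\textbf{Proof proposal.}
The plan is to deduce the statement from the main result, Theorem~\ref{21.09.29.1}, by exhibiting, for each admissible pair $(p,\theta)$, a superharmonic Harnack function $\psi$ on $\Omega$ with $\psi\simeq\rho^{\alpha}$ for a suitable $\alpha\in\bR$, where $\rho:=d(\cdot,\partial\Omega)$. The first point is that $\Omega$ admits the Hardy inequality~\eqref{hardy}: since $\dim_{\cA}(\Omega^{c})=\beta_{0}<d-2$, the complement is uniformly thin, and a domain whose complement is uniformly thin in this sense satisfies~\eqref{hardy} with a constant $\mathrm{C}_{0}(\Omega)$ depending only on $d$ and on $A_{\beta}$ for some fixed $\beta\in(\beta_{0},d-2)$ --- this is, in essence, a reformulation of the Aikawa-type integral bound appearing in the definition of $\dim_{\cA}$. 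Thus Theorem~\ref{21.09.29.1} becomes applicable as soon as a good $\psi$ is available, and what remains is to (i) construct $\psi$ and (ii) match the exponent $\mu\in(-\tfrac1p,1-\tfrac1p)$ in Theorem~\ref{21.09.29.1} with the weight exponent $\theta$.

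\smallskip
\noindent\emph{Construction of $\psi$.} Fix $\beta\in(\beta_{0},d-2)$ and put $\tau:=d-\beta\in(2,d-\beta_{0})$. I would take the Newtonian potential
\begin{equation*}
\psi(x):=\int_{\Omega}\frac{\rho(y)^{-\tau}}{|x-y|^{d-2}}\dd y\qquad(x\in\Omega).
\end{equation*}
Since $\rho^{-\tau}\mathbf{1}_{\Omega}\ge 0$, $\psi$ is superharmonic on $\bR^{d}$ with $\Delta\psi=-c_{d}\rho^{-\tau}\le 0$ in $\Omega$ (distributionally), and $\psi\in L_{1,\mathrm{loc}}(\Omega)$. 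The crux is the two-sided bound $\psi(x)\simeq\rho(x)^{2-\tau}$ on $\Omega$. The lower bound follows by discarding everything but $\{|x-y|<\rho(x)/2\}\subset\Omega$, where $\rho(y)\simeq\rho(x)$. For the upper bound I would split the integral over $U_{0}=\{|x-y|<\rho(x)/2\}$ and the dyadic annuli $U_{k}=\{2^{k-1}\rho(x)\le|x-y|<2^{k}\rho(x)\}$, $k\ge1$; applying the Aikawa bound at a nearest boundary point $p_{x}$ of $x$ gives
\begin{equation*}
\int_{B(x,2^{k}\rho(x))}\rho(y)^{-\tau}\dd y=\int_{B(x,2^{k}\rho(x))}d(y,\Omega^{c})^{-(d-\beta)}\dd y\lesssim A_{\beta}\,(2^{k}\rho(x))^{\beta},
\end{equation*}
so the contribution of $U_{k}$ is $\lesssim(2^{k}\rho(x))^{\beta-(d-2)}=(2^{k}\rho(x))^{2-\tau}$; since $2-\tau<0$ the series over $k\ge 0$ converges and is dominated by $k=0$, giving $\psi(x)\lesssim\rho(x)^{2-\tau}$ (this also shows $\psi<\infty$ on $\Omega$ even when $\Omega$ is unbounded, as the bound holds at all scales). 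Finally, since $\rho$ is $1$-Lipschitz one has $\rho(y)\in[\tfrac12\rho(x),\tfrac32\rho(x)]$ on $B(x,\rho(x)/2)$, so $\psi\simeq\rho^{2-\tau}$ yields the Harnack estimate required of a superharmonic Harnack function. Hence $\psi$ is a superharmonic Harnack function with $\psi\simeq\rho^{\alpha}$, $\alpha=2-\tau$, and all comparability constants depend only on $d$ and $A_{\beta}$; as $\beta\downarrow\beta_{0}$ the exponent $\alpha$ ranges over $\bigl(-(d-2-\beta_{0}),0\bigr)$.

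\smallskip
\noindent\emph{Matching the exponents.} Applying Theorem~\ref{21.09.29.1} to $\psi$ with $\mu\in(-\tfrac1p,1-\tfrac1p)$, the weight satisfies $\psi^{-\mu}\rho^{-2/p}\simeq\rho^{-\alpha\mu-2/p}$; transporting the $\psi$-weighted estimate into the $W^{n}_{p,\theta}$-scale via Remark~\ref{230214208} and the identification of $\psi$-weighted spaces with the $\rho$-weighted spaces $W^{n}_{p,\theta}$ valid when $\psi\simeq\rho^{\alpha}$, one finds that $\mu$ corresponds to the weight exponent $\theta=-p\alpha\mu-2$. For fixed $\alpha$ this is increasing in $\mu$, so as $\mu$ sweeps $(-\tfrac1p,1-\tfrac1p)$, $\theta$ sweeps $(\alpha-2,\,-(p-1)\alpha-2)$, and
\begin{equation*}
\bigcup_{0<|\alpha|<d-2-\beta_{0}}\bigl(-|\alpha|-2,\;(p-1)|\alpha|-2\bigr)=\bigl(-d+\beta_{0},\;(p-1)(d-\beta_{0})-2p\bigr),
\end{equation*}
which is exactly the range claimed in the theorem. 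Consequently, given $p$ and $\theta$ in this range, I would pick $\beta\in(\beta_{0},d-2)$ close enough to $\beta_{0}$ (so that $|\alpha|=\tau-2$ is close enough to $d-2-\beta_{0}$) and the corresponding $\mu$, and Theorem~\ref{21.09.29.1}, in its general form (covering $\lambda\ge 0$ and all $n\in\bZ$), yields Statement~\ref{230203621}$(\Omega,p,\theta)$; the constant $N$ then inherits dependence only on $d,p,n,\theta$ and on the comparability constants of $\psi\simeq\rho^{\alpha}$, hence only on $d,p,n,\theta,\beta_{0}$, and $\{A_{\beta}\}_{\beta>\beta_{0}}$.

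\smallskip
\noindent\emph{Main obstacle.} The substantive work is the two-sided estimate $\psi\simeq\rho^{2-\tau}$, and within it the upper bound, where the full (uniform, all-scales) Aikawa hypothesis is genuinely needed and the far-field part of the potential must be handled carefully --- for unbounded $\Omega$ one may instead prefer a truncated kernel together with the paper's localization argument. A subsidiary but essential point is to make sure the Hardy constant, and therefore every constant produced by Theorem~\ref{21.09.29.1}, is quantified in terms of $\{A_{\beta}\}_{\beta>\beta_{0}}$ rather than merely of the value $\dim_{\cA}(\Omega^{c})$.
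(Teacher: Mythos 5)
Your construction is, up to notation, the same as the paper's: your $\psi(x)=\int_{\Omega}|x-y|^{-(d-2)}\rho(y)^{-\tau}\dd y$ with $\tau=d-\beta$ is exactly the potential $\phi$ of Theorem~\ref{21.10.18.1} (note $|\Omega^c|=0$ here, so integrating over $\Omega$ versus $\bR^d$ is immaterial), your dyadic-annulus estimate of $\psi\simeq\rho^{\beta-(d-2)}$ is the paper's proof of \eqref{21.11.10.4}, and your exponent bookkeeping $\theta=-p\alpha\mu-2$ reproduces the paper's Lemma~\ref{220617557} and gives the stated range. The one place you wave your hands is the Hardy inequality: it is not a ``reformulation'' of the Aikawa bound, and the paper (Corollary~\ref{22.02.24.2}) derives it from the very same potential $\phi$ via the classical inequality $\int\frac{-\Delta s}{s}|f|^2\dd x\le\int|\nabla f|^2\dd x$ for a smooth superharmonic $s>0$ together with $-\Delta\phi\gtrsim\rho^{-2}\phi$ --- so your $\psi$ does double duty and closes that gap too, but this step needs to be made explicit.
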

	
		\subsection{Notation}\label{0004}
		
		\begin{itemize}
			\item We use $:=$ to denote a definition.

			\item  The letter $N$ denotes a finite positive constant which may have different values along the argument  while the dependence  will be informed;  $N=N(a,b,\cdots)$ means that this $N$ depends only on the parameters inside the parentheses.
			
			\item  For a list of parameters $L$, $A\lesssim_{L} B$ means that $A\leq N(L)B$, and $A\simeq _{L}B$ means that $A\lesssim_{L} B$ and $B\lesssim_{L} A$.
			
			\item $a \vee b :=\max\{a,b\}$, $a \wedge b :=\min\{a,b\}$. 
			
			\item For a Lebesgue measurable set $E\subset \bR^d$, $|E|$ denotes the Lebesgue measure of $E$.
			
			\item $\bN_0:=\bN\cup\{0\}$, $\bR^d_+:=\{(x^1,\ldots,x^d)\in\bR^d: x^1>0\}$, $\bR_+:=\bR_+^1$, and $\bS^{d-1}:=\big\{x\in\bR^d\,:\,|x|=1\big\}$.
			In addition, for $p\in\bR^d$ and $r>0$,  $B_r(p):=B(p,r):=\big\{x\in\bR^d\,:\,|x|<r\big\}$, and $B_r:=B_r(0)$.
						
			\item A non-empty connected open set is called a domain.
			
			\item For sets $E,\,F\subset \bR^d$, $d(x,E):=\inf_{y\in E}|x-y|$ and $d(E,F):=\inf_{x\in E}d(x,F)$.
			For a fixed open set $\domain\subset \bR^d$, we usually denote $\rho(x):=d(x,\partial\domain)$ when there is no confusion.

			\item For a set $E\subset \bR^d$, $1_E$ denotes the function defined as $1_E(x)=1$ for $x\in E$, and $1_E(x)=0$ for $x\notin E$.
			For a function $f$ defined in $E$, $f1_E$ denotes the function defined as $\big(f1_E\big)(x)=f(x)$ if $x\in E$, and $\big(f1_E\big)(x)=0$ if $x\neq E$.

			\item  $\mathrm{supp}(f)$ denotes the support of the function $f$ defined as the closure of $\{x\,:\,f(x)\neq 0\}$.

			\item For an open set $\domain\subseteq\bR^d$, $C^{\infty}_c(\domain)$ is the space of infinitely differentiable functions $f$ for which $\mathrm{supp}(f)$ is a compact subset of $\domain$.
			Also, $C^{\infty}(\domain)$ denotes the the space of infinitely differentiable functions in $\domain$.
			
			\item For an open set $\cO\subseteq \bR^d$, $\cD'(\cO)$ denotes the set of all distributions on $\cO$, which is the dual of $C_c^{\infty}(\Omega)$.
			For $f\in \cD'(\cO)$, the expression $\la f,\varphi\ra $, $\varphi\in C^{\infty}_c(\domain)$ denote the evaluation of $f$ with the test function $\varphi$.

			\item  For  any multi-index $\alpha=(\alpha_1,\ldots,\alpha_d)$, $\alpha_i\in \{0\}\cup \bN$, we  denote $|\alpha|:=\sum_{i=1}^d \alpha_i$.
			For a function $f$ defined on an open set $\domain\subset \bR^d$,  $f_{x^i}:=D_if:=\frac{\partial f}{\partial x^i}$, and $D^{\alpha}f(x):=D^{\alpha_d}_d\cdots D^{\alpha_1}_1f(x)$.
			For the second order derivatives we denote $D_jD_if$ by $D_{ij}f$. We often use the notation 
			$|gf_x|$ for $\sum_{i=1}^d|g D_if|$, $|gf_{xx}|$ for $\sum_{i,j=1}^d|gD_{ij}f|$, and  $\big|gD^m f\big|$ for $\sum_{|\alpha|=k}|gD^\alpha f|$.
			We extend these notations to a sublinear function $\|\cdot\|:\cD'(\Omega)\rightarrow [0,+\infty]$; for example, $\|gf_x\|:=\sum_{i=1}^d\|g D_if\|$.

			\item $\Delta f:=\sum_{i=1}^d D_{ii}f$ denotes the Laplacian for a function $f$ defined on $\cO$.

			\item  For an open set $\domain\subseteq\bR^d$, $C(\domain)$ denotes the set of all continuous functions $f$ in $\domain$ such that $|f|_{C(\domain)}:=\sup_{\domain}|f|<\infty$.
			For $n\in\bN_0$, $C^n(\cO)$ denotes the set of all strongly $n$-times continuously differentiable function $f$ on $\cO$ such that
			$\|f\|_{C^n(\cO)}:=\sum_{k=0}^n|D^kf|_{C(\domain)}<\infty\,.$
			For $\alpha\in (0,1]$, $C^{n,\alpha}(\cO)$ denotes the set of all $f\in C^n(\cO)$ such that $	\|f\|_{C^{n,\alpha}(\cO)}:=\|f\|_{C^n(\cO)}+[f]_{C^{n,\alpha}(\domain)}<\infty$, where $[f]_{C^{n,\alpha}(\domain)}:=\sup_{x\neq y\in \domain} \frac{|D^nf(x)-D^nf(y)|}{|x-y|^{\alpha}}$.
			For any set $E\subset \bR^d$, we define the space $C^{0,\alpha}(E)$ in the same way.

			\item  Let $(A, \cA, \mu)$ be a measure space. For a a measurable function $f:A\rightarrow [-\infty,\infty]$, $\mathrm{ess\,sup}_A\,f$ is defined as the infimum of $a\in [-\infty,\infty]$ for which $\mu\big(\{x\in A\,:\,f(x)> a\}\big)=0$, and  $\mathrm{ess\,inf}_A\,f:=-\mathrm{ess\,sup}_A\,(-f)$.

			\item Let $\cO\subseteq \bR^d$ be an open set. For $p\in[1,\infty]$, $L_p(\domain)$ is the set of all measurable functions $f$ on $\domain$ such that $\|f\|_p:=\big(\int_\domain |f|^p\dd x\big)^{1/p}<\infty$ if $p<\infty$, and $\|f\|_{\infty}:=\mathrm{ess\,sup}_A\,|f|<\infty$ if $p=\infty$.
			For $n\in \bN_0$,  $W^n_p(\domain):=\{f: \sum_{|\alpha|\le n}\|D^\alpha f\|_p<\infty\}$, the Sobolev space.

			\item Let $\cO\subseteq \bR^d$ be an open set.
			For $X(\cO)=L_p(\cO)$ or $C^n(\cO)$ or $C^{n,\alpha}(\cO)$, $X_{\mathrm{loc}}(\cO)$ denotes the set of all function $f$ on $\cO$ such that $f\zeta\in X(\cO)$ for all $\zeta\in C_c^{\infty}(\cO)$.
			Especially, if $f\in L_{1,\mathrm{loc}}(\Omega)$, then $f$ is said to be locally integrable in $\Omega$.	
		
		\end{itemize}
				
		\mysection{Key estimates for the Poisson equation}\label{0030}

		This section is devoted to estimating the zeroth-order term of solutions to the Poisson equation \eqref{ellip} on a domain that admits the Hardy inequality \eqref{hardy}.
		In the main theorem, Theorem \ref{21.05.13.2}, superharmonic functions serve as weight functions.
		We begin with the definition and elementary properties of superharmonic functions.
		
		\begin{defn}\label{21.01.19.1}
			\,\,
			
			\begin{enumerate} 
				\item A function $\phi\in L_{1,\mathrm{loc}}(\Omega)$ is said to be \textit{superharmonic} if $\Delta \phi\leq 0$ in the sense of distributions on $\Omega$, \textit{i.e.}, for any nonnegative $\zeta\in C_c^{\infty}(\Omega)$, 
				$$
				\int_{\Omega}\phi\,\Delta \zeta\, \dd x\leq 0\,.
				$$
				
				\item A function $\phi:\Omega\rightarrow (-\infty,+\infty]$ is called a \textit{classical superharmonic function} if the following conditions are satisfied:
				\begin{enumerate}
					\item $\phi$ is lower semicontinuous in $\Omega$.
					\item For any $x\in \Omega$ and $r>0$ satisfying $\overline{B}_r(x)\subset \Omega$, 
					\begin{align*}
						\phi(x)\geq \frac{1}{\big|B_r(x)\big|}\int_{B_r(x)}\phi(y)\dd y\,.
					\end{align*}
					\item $\phi\not\equiv +\infty$ on every connected component of $\Omega$.
				\end{enumerate}
			\end{enumerate}
		\end{defn}
		
		Recall that $\phi$ is said to be $harmonic$ if both $\phi$ and $-\phi$ are classical superharmonic functions.
		
		\begin{remark}\label{240316310}
			Equivalent definitions of classical superharmonic functions are given in \cite[Definition 3.1.2, Theorem 3.2.2]{AG}.
			In particular, if $\phi$ is a classical superharmonic function on a neighborhoodof each $x\in\Omega$, then $\phi$ is a classical superharmonic function on $\Omega$.
		\end{remark}
	
	\begin{lemma}\label{240315329}\,
			A function $\phi:\Omega\rightarrow [-\infty,+\infty]$ is superharmonic if and only if there exists a classical superharmonic function $\phi_0$ in $\Omega$ such that $\phi=\phi_0$ almost everywhere in $\Omega$.
		\end{lemma}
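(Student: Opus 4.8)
The statement is a standard ``distributional $\Leftrightarrow$ classical (up to a.e. modification)'' equivalence for superharmonic functions, so the plan is to prove the two implications separately, using mollification as the main tool.

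First I would treat the easy direction: if $\phi = \phi_0$ a.e. with $\phi_0$ classical superharmonic, then $\phi$ is superharmonic in the distributional sense. Since the distributional Laplacian only sees $\phi$ through its a.e. equivalence class, it suffices to show $\Delta\phi_0 \le 0$ as a distribution. For this, fix a nonnegative $\zeta \in C_c^\infty(\Omega)$ and a standard mollifier $\eta_\varepsilon$; on the (slightly shrunk) open set containing $\mathrm{supp}\,\zeta$, the mollification $\phi_0 * \eta_\varepsilon$ is smooth and, by the sub-mean-value property in Definition \ref{21.01.19.1}(2)(b) together with the radiality of $\eta_\varepsilon$, satisfies $\phi_0 * \eta_\varepsilon \le \phi_0$ pointwise and is itself (classically, hence distributionally) superharmonic, i.e. $\Delta(\phi_0 * \eta_\varepsilon) \le 0$ pointwise. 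Then $\int \phi_0 \,\Delta\zeta\,\dd x = \lim_{\varepsilon\to 0}\int (\phi_0 * \eta_\varepsilon)\,\Delta\zeta\,\dd x = \lim_{\varepsilon\to 0}\int \Delta(\phi_0 * \eta_\varepsilon)\,\zeta\,\dd x \le 0$, where the first equality uses $L^1_{\mathrm{loc}}$-convergence of the mollifications (and classical superharmonic functions are locally integrable, by Definition \ref{21.01.19.1}(2) / Remark \ref{240316310}). This gives the ``if'' direction.

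For the ``only if'' direction, suppose $\phi \in L_{1,\mathrm{loc}}(\Omega)$ with $\Delta\phi \le 0$ distributionally. The key observation is that $\phi * \eta_\varepsilon$ is smooth with $\Delta(\phi * \eta_\varepsilon) = (\Delta\phi) * \eta_\varepsilon \le 0$, so $\phi * \eta_\varepsilon$ is a smooth superharmonic function on $\Omega_\varepsilon := \{x : d(x,\partial\Omega) > \varepsilon\}$; by the divergence theorem, its spherical (equivalently ball) averages $r \mapsto \fint_{B_r(x)} \phi * \eta_\varepsilon$ are nonincreasing in $r$. A further standard computation shows that, for $0 < \varepsilon < \varepsilon'$, $\phi * \eta_\varepsilon * \eta_{\varepsilon'}$ sits between $\phi*\eta_{\varepsilon'}$-type averages, so the family $\{\phi*\eta_\varepsilon\}$ is monotone in $\varepsilon$ in the appropriate sense: $\phi * \eta_\varepsilon(x)$ increases as $\varepsilon \downarrow 0$ for each fixed $x$ (this is the classical fact that mollifying a superharmonic function decreases it, and finer mollification decreases it less). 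Define $\phi_0(x) := \lim_{\varepsilon \downarrow 0} (\phi * \eta_\varepsilon)(x) = \sup_{\varepsilon > 0}(\phi*\eta_\varepsilon)(x) \in (-\infty,+\infty]$. Then: (i) $\phi_0$ is lower semicontinuous as an increasing limit (sup) of continuous functions; (ii) $\phi_0 = \phi$ a.e., since $\phi * \eta_\varepsilon \to \phi$ in $L^1_{\mathrm{loc}}$ and also a.e. along a subsequence, forcing the monotone limit to equal $\phi$ a.e.; (iii) $\phi_0$ satisfies the sub-mean-value inequality, by passing to the limit $\varepsilon \downarrow 0$ in $\fint_{B_r(x)} \phi*\eta_\varepsilon \,\dd y \le (\phi*\eta_\varepsilon)(x)$ using monotone convergence on the left and the definition of $\phi_0$ on the right (and this also shows $\fint_{B_r(x)}\phi_0 < \infty$ whenever $\overline{B}_r(x) \subset \Omega$ and $\phi_0(x) < \infty$); (iv) $\phi_0 \not\equiv +\infty$ on each component, since $\phi_0 = \phi \in L^1_{\mathrm{loc}}$ a.e. Hence $\phi_0$ is classical superharmonic, completing the argument.

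The main obstacle is item (iii) of the second direction — specifically, the justification that the sub-mean-value inequality survives the limit $\varepsilon \downarrow 0$ and, correlatively, that the pointwise limit $\phi_0$ is genuinely the ``right'' representative (lower semicontinuous with the averaging property) rather than merely equal to $\phi$ a.e. The delicate point is handling the set where $\phi_0 = +\infty$: one must verify the monotonicity $\varepsilon \mapsto (\phi*\eta_\varepsilon)(x)$ carefully (this is where the convolution identity $\eta_\varepsilon * \eta_{\varepsilon'}$-type manipulations and the monotonicity of averages of a superharmonic mollification enter), and then monotone convergence does the rest. An alternative, possibly cleaner, route for (iii) is to first establish for smooth superharmonic $v$ the two-sided bound relating $\fint_{B_r(x)} v$ to $v$ and then to note the averages $\fint_{B_r(x)}\phi \,\dd y$ (over the unmollified $\phi$) are finite and continuous in $x$, decreasing in $r$, and converge as $r \downarrow 0$ to a lower semicontinuous function which one identifies with $\phi_0$; either way, the heart of the matter is the interchange of limits, and I would organize the write-up around a single lemma asserting the monotone mollification property, after which all of (i)–(iv) follow routinely. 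One should also remark that this equivalence is classical and can be cited (e.g. \cite{AG}), but giving the mollification proof keeps the exposition self-contained.
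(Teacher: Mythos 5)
Your proposal is mathematically sound, but it takes a substantially different route from the paper: the paper gives no proof at all, simply citing \cite[Theorem 4.3.2]{AG} for the ``if'' direction and \cite[Proposition 30.6]{TF} for the ``only if'' direction, whereas you supply a self-contained mollification argument for both implications. Your ``if'' direction — approximating the classical superharmonic $\phi_0$ by the smooth superharmonic mollifications $\phi_0*\eta_\varepsilon$, integrating by parts, and passing to the limit using $L^1_{\mathrm{loc}}$-convergence — is standard and correct, and dovetails with the paper's own Lemma \ref{21.04.23.3}.(1) and (4). Your ``only if'' direction is also correct; the one place you compress a bit is the monotonicity $(\phi*\eta_\varepsilon)(x)\nearrow$ as $\varepsilon\downarrow 0$: the cleanest way to see it is that for \emph{smooth} superharmonic $v$ the spherical average $r\mapsto \fint_{\partial B_r(x)}v$ is nonincreasing (so $v*\eta_\varepsilon$ is nonincreasing in $\varepsilon$ by the coarea formula and radiality of $\eta$), and then one upgrades from smooth $v$ to distributionally superharmonic $\phi$ by comparing $\phi*\eta_\delta*\eta_\varepsilon$ with $\phi*\eta_\delta*\eta_{\varepsilon'}$ and letting $\delta\to 0$, using local uniform convergence of $\phi*\eta_\delta*\eta_\varepsilon\to\phi*\eta_\varepsilon$. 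Once that lemma is in place, your steps (i)–(iv) go through exactly as stated, with monotone convergence handling the passage to the limit in the sub-mean-value inequality. In short: the paper opts to delegate to references, which keeps the exposition terse; your version would make the lemma self-contained at the cost of a page of standard mollification estimates — a reasonable trade either way, and you are explicit about which technical points carry the weight.
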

		The proof of this lemma can be found in \cite[Theorem 4.3.2]{AG} and \cite[Proposition 30.6]{TF} for the `if' and `only if' directions, respectively.
			
		\begin{lemma}\label{21.04.23.3}
			Let $\phi$ be a classical superharmonic function on $\Omega$.
			
			\begin{enumerate}
				\item If $\phi$ is twice continuously differentiable, then $\Delta\phi\leq 0$.
				
				\item $\phi$ is locally integrable in $\Omega$.
				
				\item For any compact set $K\subset \Omega$, $\phi$ has the minimum value on $K$.
				
				\item For $\epsilon>0$, define
				\begin{align}\label{21.04.23.1}
					\phi^{(\epsilon)}(x)=\int_{B_1(0)}\big(\phi 1_{\Omega}\big)(x-\epsilon y)\cdot N_0\,\ee^{-1/(1-|y|^2)}\dd y\,,
				\end{align}
				where $N_0:=\big(\int_{B_1}\ee^{-1/(1-|y|^2)}\dd y\big)^{-1}$.
				Then for any compact set $K\subset \Omega$ and $0<\epsilon<d(K,\Omega^c)$, the following hold:
				\begin{enumerate}
					\item $\phi^{(\epsilon)}$ is infinitely differentiable in $\bR^d$.
					\item $\phi^{(\epsilon)}$ is a classical superharmonic function on $K^{\circ}$.
					\item For any $x\in K$, $\phi^{(\epsilon)}(x)\nearrow \phi(x)$ as $\epsilon\searrow 0$.
				\end{enumerate}
			\end{enumerate}
		\end{lemma}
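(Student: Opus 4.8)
The plan is to dispose of the five assertions in turn, leaning on Lemma~\ref{240315329} and the classical potential theory behind Remark~\ref{240316310}. For (1), by Lemma~\ref{240315329} a classical superharmonic function is superharmonic in the distributional sense, so $\int_\Omega\phi\,\Delta\zeta\,\dd x\le0$ for every $0\le\zeta\in C_c^\infty(\Omega)$; if in addition $\phi\in C^2$, integrating by parts (the boundary terms vanish since $\zeta$ has compact support) gives $\int_\Omega(\Delta\phi)\,\zeta\,\dd x\le 0$ for all such $\zeta$, whence $\Delta\phi\le0$ pointwise. (Alternatively, (1) follows from the Taylor expansion $\frac1{|B_r(x)|}\int_{B_r(x)}\phi\,\dd y=\phi(x)+\frac{\Delta\phi(x)}{2(d+2)}r^2+o(r^2)$ together with the sub-mean-value inequality.) For (3), a lower semicontinuous function on a nonempty compact set attains its infimum and is bounded below there, so $\phi$ has a finite minimum on $K$. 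For (2), I would let $U\subset\Omega$ be the set of points having a neighborhood on which $\phi$ is integrable; $U$ is open, and nonempty because $\phi\not\equiv+\infty$ on the connected domain $\Omega$ provides $x_0$ with $\phi(x_0)<\infty$, and then the sub-mean-value inequality at $x_0$ together with the lower bound from (3) forces $\phi\in L_1(B_r(x_0))$ for small $r$. The same reasoning shows $U$ is relatively closed in $\Omega$: given $x\in\overline{U}\cap\Omega$, pick $y\in U$ and, since $\phi$ is a.e.\ finite near $y$, a point $y'$ close to both $y$ and $x$ with $\phi(y')<\infty$, $\overline{B}_{2\rho}(y')\subset\Omega$, and $B_\rho(x)\subset B_{2\rho}(y')$; the sub-mean-value inequality at $y'$ then bounds $\int_{B_\rho(x)}\phi$. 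Connectedness of $\Omega$ yields $U=\Omega$.

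For (4), set $\eta(y):=N_0\,\ee^{-1/(1-|y|^2)}1_{B_1}(y)$ and $\eta_\epsilon(z):=\epsilon^{-d}\eta(z/\epsilon)$, so that $\eta_\epsilon\in C_c^\infty$, $\eta_\epsilon\ge0$, $\int\eta_\epsilon=1$, and a change of variables identifies $\phi^{(\epsilon)}=(\phi1_\Omega)*\eta_\epsilon$. Part (4a) is then the standard theory of mollifiers, using the local integrability from (2). For (4b), I would fix $x\in K^\circ$ and $0<r<d(x,\Omega^c)-\epsilon$, so that $\overline{B}_r(x-\epsilon y)\subset\Omega$ for every $|y|\le1$; since $\phi$ is bounded below on the compact set $\{z:d(z,K)\le r+\epsilon\}\subset\Omega$ (by (3)), Fubini's theorem gives
\[
\frac{1}{|B_r(x)|}\int_{B_r(x)}\phi^{(\epsilon)}\,\dd z
=\int_{B_1}\eta(y)\Big(\frac{1}{|B_r(x-\epsilon y)|}\int_{B_r(x-\epsilon y)}\phi\,\dd w\Big)\dd y
\le\int_{B_1}\eta(y)\,\phi(x-\epsilon y)\,\dd y=\phi^{(\epsilon)}(x),
\]
the inequality being the sub-mean-value property of $\phi$ at $x-\epsilon y$. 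Since $\phi^{(\epsilon)}$ is finite and continuous by (4a), having this inequality for all small $r$ about each point makes $\phi^{(\epsilon)}$ classically superharmonic on a neighborhood of every point of $K^\circ$, hence classically superharmonic on $K^\circ$ by Remark~\ref{240316310}.

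For (4c), the radial structure of $\eta$ is essential. Fixing $x\in K$ (so $\overline{B}_\epsilon(x)\subset\Omega$), writing $\eta(y)=\tilde\eta(|y|)$ and passing to polar coordinates,
\[
\phi^{(\epsilon)}(x)=\int_0^1 w(s)\,m_\phi(x,\epsilon s)\,\dd s,\qquad
w(s):=|\partial B_1|\,\tilde\eta(s)\,s^{d-1}\ge0,\quad\int_0^1 w(s)\,\dd s=1,
\]
where $m_\phi(x,t):=\frac{1}{|\partial B_t(x)|}\int_{\partial B_t(x)}\phi\,\dd\sigma$ is the spherical mean. For superharmonic $\phi$ the map $t\mapsto m_\phi(x,t)$ is nonincreasing on $\big(0,d(x,\Omega^c)\big)$ and $m_\phi(x,t)\uparrow\phi(x)$ as $t\downarrow0$ (both classical; see \cite{AG} and Remark~\ref{240316310}). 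Hence for each fixed $s$, $m_\phi(x,\epsilon s)$ increases as $\epsilon\downarrow0$, so $\phi^{(\epsilon)}(x)$ increases, and by monotone convergence $\phi^{(\epsilon)}(x)\uparrow\int_0^1 w(s)\,\phi(x)\,\dd s=\phi(x)$.

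None of these steps is deep; the points demanding care are the bookkeeping of which balls remain inside $\Omega$ in (4b) (handled by the constraint $\epsilon<d(K,\Omega^c)$) and, in (4c), the reduction of the monotonicity of $\epsilon\mapsto\phi^{(\epsilon)}(x)$ to the monotonicity of the spherical means $m_\phi(x,\cdot)$ — the one place where superharmonicity, rather than mere lower semicontinuity and local integrability, is genuinely used.
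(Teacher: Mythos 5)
Your proof is correct. The paper does not actually argue this lemma: it only points to Definition~\ref{21.01.19.1} and Lemma~\ref{240315329} for parts (1)--(3) and cites \cite[Theorem 3.3.3]{AG} for part (4), so your write-up is a genuine fleshing-out of those citations rather than a restatement. For (1)--(3) your arguments are the standard ones. For (4b) the Fubini computation is right, and your reduction to local superharmonicity followed by an appeal to Remark~\ref{240316310} correctly handles the constraint $r<d(x,\Omega^c)-\epsilon$. For (4c) the reduction $\phi^{(\epsilon)}(x)=\int_0^1 w(s)\,m_\phi(x,\epsilon s)\,\dd s$ via polar coordinates, together with the monotonicity $m_\phi(x,t)\uparrow\phi(x)$ as $t\downarrow 0$, is precisely the classical mechanism behind \cite[Theorem 3.3.3]{AG}; note that this is the one place you invoke the \emph{spherical}-mean form of the sub-mean-value property rather than the ball-mean form appearing in Definition~\ref{21.01.19.1}, which is legitimate because the two are equivalent (cf.\ Remark~\ref{240316310}). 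What the paper's route buys is brevity; what yours buys is that the argument is self-contained and makes explicit exactly which facts (lower semicontinuity, boundedness below on compacts, monotone spherical means, radial symmetry of the mollifier) are actually used.
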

		Regarding this lemma, (1)–(3) follow from Definition \ref{21.01.19.1} and Lemma \ref{240315329}, while (4) is given in \cite[Theorem 3.3.3]{AG}.
		
		\begin{lemma}\label{21.04.23.5}
			Let $\phi$ be a positive superharmonic function on $\Omega$ and let $\phi^{(\epsilon)}$ denote the function defined in \eqref{21.04.23.1}.
			\begin{enumerate}
				\item For any $c\leq 1$, $\phi^c$ is locally integrable in $\Omega$.
				
				\item If $f\in L_1(\Omega)$ and  $\mathrm{supp}(f)$ is a compact subset of $\Omega$, then for any $c\in\bR$,
				\begin{align}\label{220429449}
					\lim_{\epsilon\rightarrow 0}\int_{\Omega}|f|\big(\phi^{(\epsilon)}\big)^c\dd x= \int_{\Omega}|f|\phi^c\dd x\,.
				\end{align}
				
				\item If $f\in L_\infty(\Omega)$ and $\mathrm{supp}(f)$ is a compact subset of $\Omega$, then for any $c\leq 1$,
				\begin{align*}
					\lim_{\epsilon\rightarrow 0}\int_{\Omega}f\big(\phi^{(\epsilon)}\big)^c\dd x= \int_{\Omega}f\phi^c\dd x\,.
				\end{align*}
			\end{enumerate}
		\end{lemma}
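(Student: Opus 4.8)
\emph{Overview.} The plan is to pass to a classical superharmonic representative and then argue separately for non-negative and for negative exponents $c$, using the monotone and dominated convergence theorems together with Lemma~\ref{21.04.23.3}.(4).

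\emph{Reduction.} By Lemma~\ref{240315329} I may assume $\phi$ is itself a classical superharmonic function: replacing $\phi$ by its classical representative changes neither the integrals $\int_\Omega|f|\phi^c\dd x$, $\int_\Omega f\phi^c\dd x$ nor the mollifications $\phi^{(\epsilon)}$, since these depend only on the a.e.-values of $\phi$. I next record that $\phi>0$ \emph{everywhere} on $\Omega$: for $x\in\Omega$ and $r>0$ with $\overline{B}_r(x)\subset\Omega$, the super-mean-value inequality gives $\phi(x)\ge\frac{1}{|B_r(x)|}\int_{B_r(x)}\phi\dd y>0$, because $\phi>0$ a.e. Consequently, by Lemma~\ref{21.04.23.3}.(3), $m_K:=\min_K\phi\in(0,\infty)$ for every compact $K\subset\Omega$.

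\emph{Part (1).} If $0\le c\le 1$, then $\phi^c\le 1+\phi$ pointwise (using $t^c\le 1+t$ for $t\ge 0$), and $\phi\in L_{1,\mathrm{loc}}(\Omega)$ by Lemma~\ref{21.04.23.3}.(2), so $\phi^c\in L_{1,\mathrm{loc}}(\Omega)$. If $c<0$, then $\phi^c\le m_K^{\,c}$ on each compact $K\subset\Omega$, so $\phi^c$ is locally bounded, hence locally integrable.

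\emph{Parts (2) and (3).} Put $K:=\mathrm{supp}(f)$, fix $\bar\epsilon\in\big(0,d(K,\Omega^c)\big)$, and restrict to $0<\epsilon<d(K,\Omega^c)$. By Lemma~\ref{21.04.23.3}.(4): each $\phi^{(\epsilon)}$ is continuous on $\bR^d$ and, being a smoothly weighted average of the a.e.-positive $\phi$ over the ball $B_\epsilon(x)\subset\Omega$, is strictly positive on $K$; moreover $\phi^{(\epsilon)}(x)\nearrow\phi(x)$ for each $x\in K$ as $\epsilon\searrow 0$, whence $\phi^{(\epsilon_1)}\ge\phi^{(\epsilon_2)}$ on $K$ for $\epsilon_1\le\epsilon_2$. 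If $c\ge 0$, then $|f|\big(\phi^{(\epsilon)}\big)^c\nearrow|f|\phi^c$ a.e. on $\Omega$ (both sides vanish off $K$), so the monotone convergence theorem gives \eqref{220429449} (possibly with value $+\infty$ in (2)); for (3) one restricts to $c\le 1$, applies the same argument to $f^+$ and $f^-$, and uses part (1) together with $f\in L_\infty$ to see the two limits are finite before subtracting. If $c<0$, then $|f|\big(\phi^{(\epsilon)}\big)^c$ \emph{decreases} to $|f|\phi^c$ a.e. on $K$ as $\epsilon\searrow 0$, and for $\epsilon\le\bar\epsilon$ it is dominated by the integrable function $\big(\min_K\phi^{(\bar\epsilon)}\big)^c|f|$ (here $\min_K\phi^{(\bar\epsilon)}>0$ by continuity and positivity of $\phi^{(\bar\epsilon)}$ on $K$); the dominated convergence theorem then gives \eqref{220429449}, and the splitting $f=f^+-f^-$ again settles (3).

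\emph{Expected obstacle.} Nothing here is deep; the two points that need care are the everywhere-positivity of the classical representative $\phi$ (which is what makes negative powers locally bounded and legitimizes the bounds $m_K^{\,c}$ and $\big(\min_K\phi^{(\bar\epsilon)}\big)^c$), and, for $c<0$, the production of a single $\epsilon$-independent integrable majorant — exactly the place where one combines the monotonicity of $\epsilon\mapsto\phi^{(\epsilon)}$ with the positivity of one fixed mollification $\phi^{(\bar\epsilon)}$ on the compact set $K$. The remainder is a routine application of monotone and dominated convergence.
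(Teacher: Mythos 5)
Your proof is correct and follows essentially the same route as the paper: Jensen/Hölder-type bounds (the paper uses $\int_K\phi^c\leq |K|^{1-c}(\int_K\phi)^c$, you use $\phi^c\leq 1+\phi$) plus positivity of the classical representative on compact sets for part (1), monotone convergence for $c\geq0$ and dominated convergence for $c<0$ in part (2), and the splitting into positive and negative parts for part (3). The only cosmetic difference is the choice of $\epsilon$-independent majorant for $c<0$: the paper takes an intermediate open set $U$ with $\mathrm{supp}(f)\subset U\Subset\Omega$ and uses $(\min_{\overline U}\phi)^c|f|$, whereas you fix one mollification $\phi^{(\bar\epsilon)}$ and use $(\min_K\phi^{(\bar\epsilon)})^c|f|$ together with the monotonicity of $\epsilon\mapsto\phi^{(\epsilon)}$; both are valid.
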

		
		\begin{proof}
			(1) Let $K$ be a compact subset of $\Omega$.
			If $c\in(0,1]$, then by Lemma \ref{21.04.23.3}.(2),
			$$
			\int_K\phi^c\dd x\leq |K|^{1-c}\Big(\int_K\phi\dd x\Big)^c<\infty\,.
			$$
			In addition, if $c\leq 0$, then by Lemma \ref{21.04.23.3}.(3), $\max_K(\phi^c)= \big(\min_K\phi\big)^c<\infty$.
			
			(2) Take a bounded open set $U$ such that $\text{supp}(f)\subset U$ and $\overline{U}\subset\Omega$.
			Consider only $\epsilon\in (0,d(\mathrm{supp}(f),U^c)$.
			If $c\geq 0$, then \eqref{220429449} follows from Lemma \ref{21.04.23.3}.(4) and the monotone convergence theorem.			
			If $c<0$, then $|f|\big(\phi^{(\epsilon)}\big)^c\leq \big(\min_{\overline{U}}\phi\big)^c|f|$, and therefore \eqref{220429449} follows from the Lebesgue dominated convergence theorem.

			(3) Since $f\in L_\infty(\Omega)$, (1) of this lemma implies that $f\phi^c\in L_1(\Omega)$.
			The proof is completed by (2) of this lemma for $\max(f,0)$ and $\max(-f,0)$ instead of $f$.
		\end{proof}

		We now present the key lemma of this section.
		
		\begin{lemma}\label{03.30}
			Let $p\in(1,\infty)$ and $c<1$, and suppose that $u\in C(\Omega)$ satisfies the following conditions:
			\begin{equation}\label{22.01.25.2}
				\begin{gathered}
					\mathrm{supp}(u)\,\,\text{is a compact subset of}\,\,\,\Omega\,,\\
					u\in C_{\mathrm{loc}}^2\big(\{x\in\Omega\,:\,u(x)\neq 0\}\big)\,\,\,,\,\,\,\text{and}\quad \int_{\{u\neq 0\}}|u|^{p-1}|D^2u|\dd x<\infty\,.
				\end{gathered}
			\end{equation}
			Let $\phi$ be a positive superharmonic function on a neighborhood of $\mathrm{supp}(u)$.
			\begin{enumerate}
				\item If $\phi$ is twice continuously differentiable, then
				\begin{align}\label{230121211}
					\int_{\Omega}|u|^p\phi^{c-2}|\nabla \phi|^2 \dd x \leq \Big(\frac{p}{1-c}\Big)^2\int_{\Omega\cap\{u\neq 0\}}|u|^{p-2}|\nabla u|^2\phi^c \dd x\,.
				\end{align}
				
				\item If we additionally assume that $c\in(-p+1,1)$ and $(\Delta u)1_{\{u\neq 0\}}$ is bounded, then
				\begin{align}\label{22062422511}
					\int_{\Omega\cap\{u\neq 0\}}|u|^{p-2}|\nabla u|^2\phi^c \dd x\leq N\int_{\Omega\cap\{u\neq 0\}}(-\Delta u)\cdot u|u|^{p-2}\phi^c \dd x\,,
				\end{align}
				where $N=N(p,c)>0$.
			
				\item If the Hardy inequality \eqref{hardy} holds in $\Omega$, then
				\begin{align}\label{22062422512}
					\int_{\Omega}|u|^p\phi^c\rho^{-2}\dd x\leq N\int_{\Omega\cap\{u\neq 0\}}|u|^{p-2}|\nabla u|^2 \phi^c \dd x\,,
				\end{align}
				where $N=N(p,c,\mathrm{C}_0(\Omega))>0$.
			\end{enumerate}
		\end{lemma}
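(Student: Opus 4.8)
The plan is to establish (1) first --- it is self-contained and produces the weighted $|\nabla\phi|^2$-term that both (2) and (3) have to dominate --- and then to deduce (2) and (3) from it. The only structural ingredients are $\Delta\phi\le 0$, the pointwise identity $\mathrm{div}(\phi^{c-1}\nabla\phi)=(c-1)\phi^{c-2}|\nabla\phi|^2+\phi^{c-1}\Delta\phi$, and Hardy's inequality \eqref{hardy}. Before any of this I would record a preliminary finiteness fact: condition \eqref{22.01.25.2} already forces $\int_{\{u\neq0\}}|u|^{p-2}|\nabla u|^2<\infty$ and $u|u|^{p-2}\Delta u\in L_1(\Omega)$, because integrating $(p-1)|u|^{p-2}|\nabla u|^2=\nabla(u|u|^{p-2})\cdot\nabla u$ by parts over the superlevel set $\{|u|>\eta\}$ leaves a boundary term with a favorable sign, and letting $\eta\downarrow 0$ bounds the energy by $\tfrac1{p-1}\int_{\{u\neq0\}}|u|^{p-1}|\Delta u|$. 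This is what makes the self-absorption steps below legal; when $\phi$ is merely superharmonic (not $C^2$) I would run every argument with the smooth mollifications $\phi^{(\epsilon)}$ from Lemma \ref{21.04.23.3} and pass $\epsilon\downarrow 0$ via Lemma \ref{21.04.23.5}.

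For (1): multiply the identity above by $|u|^p\ge 0$ and integrate over $\Omega$. Since $\phi^{c-1}\nabla\phi\in C^1$ on a neighbourhood of $\mathrm{supp}(u)$ and $|u|^p$ is compactly supported with $\nabla(|u|^p)=p|u|^{p-2}u\nabla u\in L_1(\Omega)$, integration by parts is valid; as $\Delta\phi\le 0$, $\phi^{c-1}>0$ and $c-1<0$, the $\Delta\phi$-term may be discarded, giving
$$\int_\Omega|u|^p\phi^{c-2}|\nabla\phi|^2\dd x\le\frac{p}{1-c}\int_{\Omega\cap\{u\neq0\}}|u|^{p-2}u\,\nabla u\cdot\nabla\phi\,\phi^{c-1}\dd x.$$
Factoring the integrand on the right as $\big(|u|^{p/2}|\nabla\phi|\phi^{c/2-1}\big)\big(|u|^{p/2-1}|\nabla u|\phi^{c/2}\big)$, applying the Cauchy--Schwarz inequality, and dividing by the (finite) square root of the left-hand side gives \eqref{230121211} with the sharp constant $\big(\tfrac{p}{1-c}\big)^2$.

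For (2): test $-\Delta u$ against $u|u|^{p-2}\phi^c$; since $(\Delta u)1_{\{u\neq0\}}$ is bounded and $u|u|^{p-2}\to 0$ on $\partial\{u\neq0\}$ (again via a cut-off at $\{|u|=\eta\}$), integration by parts yields
$$\int_{\Omega\cap\{u\neq0\}}(-\Delta u)u|u|^{p-2}\phi^c\dd x=(p-1)\int_{\Omega\cap\{u\neq0\}}|u|^{p-2}|\nabla u|^2\phi^c\dd x+c\int_{\Omega\cap\{u\neq0\}}u|u|^{p-2}\phi^{c-1}\nabla u\cdot\nabla\phi\dd x.$$
For $c\in(0,1)$ one more integration by parts rewrites the cross term as $-\tfrac1p\langle\Delta(\phi^c),|u|^p\rangle\ge 0$ --- here $\phi^c$, being a concave increasing function of the positive superharmonic $\phi$, is itself superharmonic --- so it can be dropped and \eqref{22062422511} holds with $N=1/(p-1)$. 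For $c\in(-p+1,0]$ I would instead bound the cross term by $|c|\int|u|^{p-1}|\nabla u||\nabla\phi|\phi^{c-1}$, apply Young's inequality with the same factorization as in (1), and then invoke (1) to replace $\int|u|^p\phi^{c-2}|\nabla\phi|^2$ by $\big(\tfrac{p}{1-c}\big)^2\int|u|^{p-2}|\nabla u|^2\phi^c$; optimizing the Young parameter leaves the coefficient $\tfrac{p-1+c}{1-c}$ in front of the energy, which is positive \emph{exactly when $c>-p+1$}, and absorbing proves \eqref{22062422511} with $N=N(p,c)$.

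For (3) I would apply \eqref{hardy} to $f=|u|^{p/2}\phi^{c/2}$: replacing $\phi$ by $\phi^{(\epsilon)}$ this is a compactly supported $W^1_2(\Omega)$ function, hence a $W^1_2$-limit of $C_c^\infty(\Omega)$ functions, so \eqref{hardy} is applicable; from $|\nabla f|^2\le\tfrac{p^2}{2}|u|^{p-2}|\nabla u|^2\phi^c+\tfrac{c^2}{2}|u|^p\phi^{c-2}|\nabla\phi|^2$ and (1) for the last summand one obtains \eqref{22062422512} with $N=\tfrac{p^2}{2}\big(1+\tfrac{c^2}{(1-c)^2}\big)\mathrm{C}_0(\Omega)$ for smooth $\phi$, and then $\epsilon\downarrow 0$ through Lemma \ref{21.04.23.5}. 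The main obstacle is not the (short) computations but the bookkeeping: justifying each integration by parts for $u$ satisfying only \eqref{22.01.25.2} rather than $u\in C_c^\infty(\Omega)$ --- this is exactly where $\int_{\{u\neq0\}}|u|^{p-1}|D^2u|<\infty$ enters --- handling a non-smooth $\phi$ via the $\phi^{(\epsilon)}$ together with the convergence in Lemma \ref{21.04.23.5}, and checking that $|u|^{p/2}\phi^{c/2}$ is an admissible argument for Hardy's inequality; the endpoint $c=-p+1$ in the self-absorption for $c<0$ is the only place the exact arithmetic is indispensable.
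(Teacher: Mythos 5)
Your proposal is correct and follows essentially the same route as the paper's proof: integration by parts against $\mathrm{div}(\phi^{c-1}\nabla\phi)$ plus Cauchy--Schwarz for (1), a case split on the sign of $c$ with self-absorption via (1) for (2), and Hardy's inequality applied to $|u|^{p/2}\phi^{c/2}$ combined with (1) for (3), with mollification $\phi^{(\epsilon)}$ to reduce to smooth $\phi$. The only cosmetic deviations are using Young's inequality with parameter optimization where the paper applies Cauchy--Schwarz directly in (2), using $|u|^{p/2}\phi^{c/2}$ in place of the sign-preserving $|u|^{p/2-1}u\phi^{c/2}$ in (3), and deriving the preliminary energy finiteness by a level-set cutoff rather than the paper's truncated-mollifier construction in Lemma \ref{21.04.23.4}; all are equivalent.
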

		
		Lemma \ref{03.30} is primarily used for $u\in C_c^{\infty}(\Omega)$.
		However, we employ condition \eqref{22.01.25.2} to establish Lemma \ref{21.05.25.3}, which is a crucial lemma for the existence of solutions in the main theorem (Theorem \ref{21.09.29.1}).
		To handle condition \eqref{22.01.25.2}, we prove the following results stated in Lemma \ref{21.04.23.4}: If $u\in C(\bR^d)$ satisfies \eqref{22.01.25.2}, then
		$|u|^{p/2-1}u\in W_2^1(\bR^d)$ and $|u|^p\in W_1^2(\bR^d)$, with
		\begin{align}\label{240123758}
			\begin{gathered}
		D_i(|u|^{p/2-1}u)=\frac{p}{2}|u|^{p/2-1}(D_iu)1_{\{u\neq 0\}}\,\,,\,\,\,\,  D_i\big(|u|^p\big)=p|u|^{p-2}uD_iu 1_{\{u\neq 0\}}\,\,,\\
		D_{ij}\big(|u|^p\big)=\big(p|u|^{p-2}uD_{ij}u+p(p-1)|u|^{p-2}D_iuD_ju\big)\,1_{\{u\neq 0\}}\,.
		\end{gathered}
		\end{align}
		
		\begin{proof}[Proof of Lemma \ref{03.30}]
			By Lemma \ref{240315329}, we may assume that $\phi$ is a classical superharmonic function on a neighborhood of $\text{supp}(u)$.
			In this proof, all of the integrations by parts are based on \eqref{240123758}.

			(1) Recall that $\phi$ is twice continuously differentiable in a neighborhood of $\mathrm{supp}(u)$.
			Integrate by parts to obtain
			\begin{align}
					&(1-c)\int_{\Omega}|u|^p\phi^{c-2}|\nabla \phi|^2\dd x=-\int_{\Omega}|u|^p\nabla\phi\cdot\nabla (\phi^{c-1})\,\dd x\nonumber\\
					=\,&p\int_{\Omega\cap\{u\neq 0\}}|u|^{p-2}u\,\phi^{c-1}(\nabla u\cdot\nabla \phi)\dd x+\int_{\Omega}|u|^p\phi^{c-1} \Delta \phi\, \dd x\label{220530132}\\
					\leq\,& p\,\Big(\int_{\Omega\cap\{u\neq 0\}} |u|^{p-2}|\nabla u|^2\phi^c\dd x\Big)^{1/2}\Big(\int_{\Omega} |u|^p\phi^{c-2}|\nabla \phi|^2\dd x \Big)^{1/2}\,,\nonumber
			\end{align}
			where the last inequality follows from the H\"older inequality and that $\Delta\phi\leq 0$ on $\{u\neq 0\}$.
			Since the first term of \eqref{220530132} is finite, we obtain \eqref{230121211}.
			The proof of (1) is completed.
			
			Although we do not assume that $\phi$ is infinitely smooth in (2) and (3),
			it suffices to consider the case where $\phi$ is smooth on its domain.
			This is because, if \eqref{22062422511} and \eqref{22062422512} hold for $\phi^{(\epsilon)}$ instead of $\phi$, for all sufficiently small $\epsilon>0$,
			then they also hold for $\phi$ by Lemma \ref{21.04.23.5}.
			Note that if $0<\epsilon<d\big(\mathrm{supp}(u),\partial\Omega\big)$, then $\phi^{(\epsilon)}$ is a positive superharmonic function on a neighborhood of $\mathrm{supp}(u)$ (see Lemma \ref{21.04.23.3}).
			In addition, $|u|^{p-2}|\nabla u|^21_{\{u\neq 0\}}$ and $|u|^p\rho^{-2}$ are integrable (see Lemma \ref{21.04.23.4}), and $-\Delta u\cdot u|u|^{p-2}1_{\{u\neq 0\}}$ in \eqref{22062422511} is bounded.
			Therefore, in the proof of (2) and (3), we may assume that $\phi$ is infinitely smooth.
			
			(2) \textbf{Case 1: $0\leq c<1$.}
			Integrate by parts to obtain
			\begin{align*}
				\int_{\Omega}-\Delta u\cdot u|u|^{p-2}\phi^c \dd x=\,& (p-1)\int_{\Omega\cap\{u\neq 0\}}|u|^{p-2}|\nabla u|^2\phi^c\dd x-\frac{1}{p}\int_{\Omega}|u|^p\Delta(\phi^c)\dd x\,.
			\end{align*}
			Since
			\begin{align*}
				\Delta (\phi^c)=c\,\phi^{c-1}\Delta\phi+c(c-1)\phi^{c-2}|\nabla\phi|^2\leq 0\qquad\text{on}\quad \text{supp}(u)\,,
			\end{align*}
			\eqref{22062422511} is obtained.
			
			\textbf{Case 2: $-p+1<c<0$.}
			By integration by parts, the H\"older inequality, and \eqref{230121211}, we have
			\begin{align*}
				&\int_{\Omega}-\Delta u\cdot u|u|^{p-2}\phi^c \dd x\\
				=\,&(p-1)\int_{\Omega}|u|^{p-2}|\nabla u|^2\phi^c\dd x+c\int_{\Omega}(\nabla u)\cdot(\nabla\phi) u|u|^{p-2}\phi^{c-1}\dd x\\
				\geq\,& (p-1)\int_{\Omega}|u|^{p-2}|\nabla u|^2\phi^c\dd x\\
				&+c\left(\int_{\Omega\cap\{u\neq 0\}}|u|^{p-2}|\nabla u|^2\phi^c\dd x\cdot\int_{\Omega}|u|^p\phi^{c-2}|\nabla \phi|^2\dd x\right)^{1/2}\\
				\geq\,& \frac{p+c-1}{1-c}\int_{\Omega}|u|^{p-2}|\nabla u|^2\phi^c\dd x\,.
			\end{align*}
			
			(3) Note that our assumption of the Hardy inequality \eqref{hardy} implies that the inequality in \eqref{hardy} also holds for $f\in W^1_2(\Omega)$ whose support is a compact subset of $\Omega$.

			Since $\phi$ is assumed to be positive and smooth on a neighborhood of $\text{supp}(u)$, 
			it follows from Lemma \ref{21.04.23.4} that $|u|^{p/2-1}u\phi^{c/2}$ belongs to $W^1_2(\Omega)$, and
			$$
			\nabla\big(|u|^{p/2-1}u\phi^{c/2}\big)=\frac{p}{2}|u|^{p/2-1}(\nabla u)1_{\{u\neq 0\}}\phi^{c/2}+\frac{c}{2}|u|^{p/2}\phi^{c/2-1}\nabla\phi\,.
			$$
			Therefore, by the Hardy inequality and \eqref{230121211}, we have
			\begin{align*}
				\int_{\Omega}\big||u|^{p/2-1}u\phi^{c/2}\big|^2\rho^{-2}\dd x\lesssim_{p,c}\,&\mathrm{C}_0(\Omega) \int_{\Omega}\Big(|u|^{p-2}|\nabla u|^2\phi^c1_{\{u\neq 0\}}+|u|^p\phi^{c-2}|\nabla\phi|^2\Big)\dd x\\
				\lesssim_{p,c}\,& \mathrm{C}_0(\Omega) \int_{\Omega\cap\{u\neq 0\}}|u|^{p-2}|\nabla u|^2\phi^c \dd x.
			\end{align*}
		\end{proof}

		\begin{thm}\label{21.05.13.2}
			Let $\Omega$ admit the Hardy inequality \eqref{hardy}.
			For any $p\in(1,\infty)$, $c\in (-p+1,1)$, and positive superharmonic function $\phi$ in $\Omega$, the following holds:
			If $u\in C(\Omega)$ satisfies \eqref{22.01.25.2} and $(\Delta u)1_{\{u\neq 0\}}$ is bounded, then for any $\lambda\geq 0$,
			\begin{align*}
				\int_{\Omega}|u|^{p}\phi^c\rho^{-2}\dd x\leq N\int_{\Omega}|\Delta u-\lambda u|^p\phi^c\rho^{2p-2}\dd x\,,
			\end{align*}
			where $N=N(p,c,\mathrm{C}_0(\Omega))$.
		\end{thm}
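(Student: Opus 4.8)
The plan is to deduce the estimate from the three inequalities of Lemma~\ref{03.30} by an absorption (self-improvement) argument, so that essentially no new analysis is needed. Abbreviate $g:=\Delta u-\lambda u$ and put
\[
I:=\int_{\Omega}|u|^{p}\phi^{c}\rho^{-2}\dd x,\qquad J:=\int_{\Omega}|g|^{p}\phi^{c}\rho^{2p-2}\dd x .
\]
All hypotheses of Lemma~\ref{03.30} are in force: $p\in(1,\infty)$, $c\in(-p+1,1)$, $\phi$ is a positive superharmonic function on $\Omega$ (hence on a neighbourhood of $\mathrm{supp}(u)$), $u$ satisfies \eqref{22.01.25.2}, $(\Delta u)1_{\{u\neq0\}}$ is bounded, and \eqref{hardy} holds. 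Chaining part (3) with part (2) of that lemma yields
\[
I\;\le\; N(p,c,\mathrm{C}_{0}(\Omega))\int_{\Omega\cap\{u\neq0\}}(-\Delta u)\,u|u|^{p-2}\phi^{c}\dd x .
\]

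Next I would bring in the zero-order term. On $\{u\neq0\}$ one has $-\Delta u=-g-\lambda u$, so
\[
(-\Delta u)\,u|u|^{p-2}=-g\,u|u|^{p-2}-\lambda|u|^{p}\;\le\;|g|\,|u|^{p-1},
\]
where the favourable term $-\lambda|u|^{p}\le 0$ is simply discarded; this is precisely why the final constant will not depend on $\lambda$. Since $p>1$, the right-hand side vanishes on $\{u=0\}$, so after extending the integral to all of $\Omega$ we obtain $I\le N\int_{\Omega}|g|\,|u|^{p-1}\phi^{c}\dd x$. Now apply H\"older's inequality with exponents $p$ and $p'=p/(p-1)$, writing
\[
|g|\,|u|^{p-1}\phi^{c}=\Big(|g|\,\phi^{c/p}\rho^{(2p-2)/p}\Big)\Big(|u|^{p-1}\phi^{c(p-1)/p}\rho^{-(2p-2)/p}\Big);
\]
since $-\tfrac{2p-2}{p}\cdot\tfrac{p}{p-1}=-2$ and $\tfrac{c(p-1)}{p}\cdot\tfrac{p}{p-1}=c$, H\"older produces $I\le N\,J^{1/p}\,I^{(p-1)/p}$.

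Finally I would close the argument by absorption, which requires $I<\infty$. This is immediate: $\mathrm{supp}(u)$ is a compact subset of $\Omega$, so $\rho$ is bounded below by a positive constant and $|u|$ is bounded on it, while $\phi^{c}$ is locally integrable on $\Omega$ by Lemma~\ref{21.04.23.5}.(1) (as $c\le1$); hence $I\le \|u\|_{\infty}^{p}\,\big(\inf_{\mathrm{supp}(u)}\rho\big)^{-2}\int_{\mathrm{supp}(u)}\phi^{c}\dd x<\infty$. If $I=0$ or $J=\infty$ the asserted inequality is trivial, so assume $0<I<\infty$; dividing $I\le N\,J^{1/p}I^{(p-1)/p}$ by $I^{(p-1)/p}$ gives $I^{1/p}\le N\,J^{1/p}$, i.e.\ $I\le N^{p}J$, which is the claim with a constant still depending only on $p$, $c$, $\mathrm{C}_{0}(\Omega)$. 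The only points that need care are (i) the finiteness of $I$ that legitimises the division --- handled above via the compact support of $u$ and the local integrability of $\phi^{c}$ --- and (ii) the bookkeeping of the $\rho$- and $\phi$-exponents in the H\"older split; the genuinely analytic input (the Hardy inequality, the integrations by parts against the singular factor $|u|^{p-2}$, and superharmonicity) has already been packaged into Lemma~\ref{03.30}, so I anticipate no substantial obstacle.
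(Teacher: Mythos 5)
Your proof is correct and takes essentially the same route as the paper's: both chain Lemma~\ref{03.30}(3) with Lemma~\ref{03.30}(2), use $\lambda\geq 0$ to replace $-\Delta u$ by $-\Delta u+\lambda u$ in the integrand, apply H\"older's inequality to obtain $I\le N J^{1/p}I^{(p-1)/p}$, and close by absorption after noting that $I<\infty$ because $u$ has compact support and $\phi^{c}\rho^{-2}$ is locally integrable (Lemma~\ref{21.04.23.5}.(1)). You merely spell out the H\"older split and the finiteness check that the paper leaves implicit.
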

		\begin{proof}
			Since $\lambda\geq 0$, Lemma \ref{03.30} implies
			\begin{align}\label{22.04.18.1}
				\begin{split}
					\int_{\Omega}|u|^{p}\phi^c\rho^{-2}\dd x&\leq N\int_{\Omega}(-\Delta u)\cdot u |u|^{p-2}1_{\{u\neq 0\}}\phi^c\dd x\\
					&\leq N\int_{\Omega}(-\Delta u+\lambda u)\cdot u |u|^{p-2}1_{\{u\neq 0\}}\phi^c\dd x\,,
				\end{split}
			\end{align}
			where $N=N(p,c,\mathrm{C}_0(\Omega))>0$.
			Since $\phi^c\rho^{-2}$ is locally integrable in $\Omega$ (see Lemma \ref{21.04.23.5}.(1)), the first term in \eqref{22.04.18.1} is finite. 
			By the H\"older inequality, the proof is completed.
		\end{proof}

		\begin{lemma}[Existence of a very weak solution]\label{21.05.25.3}
			Suppose that \eqref{hardy} holds in $\Omega$.
			Then for any $\lambda\geq 0$ and $f\in C_c^{\infty}(\Omega)$, there exists a measurable function $u:\Omega\rightarrow \bR$ satisfying the following:
			\begin{enumerate}
				\item $u\in L_{1,\mathrm{loc}}(\Omega)$.
				
				\item $\Delta u-\lambda u=f$ in the sense of distributions on $\Omega$, \textit{i.e.}, for any $\zeta\in C_c^{\infty}(\Omega)$,
				\begin{align}\label{230328849}
					\int_{\Omega}u\big(\Delta \zeta -\lambda \zeta\big)\dd x=\int_{\Omega} f \zeta \dd x\,.
				\end{align}
				
				\item For any $p\in(1,\infty)$, $c\in(-p+1,1)$, and any positive superharmonic function $\phi$ in $\Omega$,
				\begin{align}\label{220613103}
					\begin{split}
						\int_{\Omega}|u|^p\phi^c\rho^{-2}\dd x\leq N\int_{\Omega}|f|^p\phi^c\rho^{2p-2}\dd x
					\end{split}
				\end{align}
				where $N=N(p,c,\mathrm{C}_0(\Omega))>0$.
			\end{enumerate}
		\end{lemma}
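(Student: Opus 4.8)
The plan is to construct $u$ via the standard Lax–Milgram / Hilbert space method in $\mathring W^1_2(\Omega)$ and then upgrade to the weighted $L_p$ bound \eqref{220613103} using Theorem \ref{21.05.13.2}. First I would observe that the Hardy inequality \eqref{hardy}, together with $\lambda\ge 0$, makes the bilinear form $a(v,w):=\int_\Omega \nabla v\cdot\nabla w\,\dd x+\lambda\int_\Omega vw\,\dd x$ coercive on $\mathring W^1_2(\Omega)$: indeed $\|v\|_{L_2(\Omega)}\le \mathrm{diam\,stuff}$... more precisely the Hardy inequality gives $\|v/\rho\|_{L_2}\lesssim \|\nabla v\|_{L_2}$, hence $\|\nabla v\|_{L_2}^2$ controls $\|v\|_{\mathring W^1_2}^2$ on the subspace where the form lives (one uses that $f\in C_c^\infty(\Omega)$ is supported where $\rho$ is bounded below, so the right-hand side functional $w\mapsto -\int_\Omega fw\,\dd x$ is bounded on $\mathring W^1_2$ via Hardy again). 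By Lax–Milgram there is a unique $u\in\mathring W^1_2(\Omega)$ with $a(u,w)=-\int fw$ for all $w\in\mathring W^1_2(\Omega)$; testing against $\zeta\in C_c^\infty(\Omega)$ gives \eqref{230328849}, and since $\mathring W^1_2(\Omega)\subset L_{1,\mathrm{loc}}(\Omega)$ this yields (1) and (2).

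For (3) the issue is that $u$ need not be continuous or satisfy the structural hypothesis \eqref{22.01.25.2} required by Theorem \ref{21.05.13.2}, so Theorem \ref{21.05.13.2} cannot be applied directly to $u$. The key step will be a regularization/truncation argument: interior elliptic regularity gives $u\in C^\infty$ on $\Omega$ away from $\partial\Omega$ (in fact $u\in W^2_{2,\mathrm{loc}}(\Omega)\cap C_{\mathrm{loc}}^\infty(\Omega)$ since $f$ is smooth, because $\Delta u = f+\lambda u$), but the obstruction is controlling $u$ near $\partial\Omega$ and producing the compact support in \eqref{22.01.25.2}. I would approximate $u$ by $u_\varepsilon:=u\,\eta_\varepsilon$ or, better, by truncations of the form $u_\varepsilon := (\,|u|-\varepsilon\,)_+\,\mathrm{sgn}(u)$ composed with a cutoff $\chi_R$ that vanishes near $\partial\Omega$; on the set $\{u\ne 0\}$ we have $u\in C^2$, and the truncated function has compact support in $\Omega$ once we cut off both the small values of $u$ and a neighborhood of the boundary. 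One then checks the integrability condition $\int_{\{u_\varepsilon\ne 0\}}|u_\varepsilon|^{p-1}|D^2 u_\varepsilon|\,\dd x<\infty$ and that $(\Delta u_\varepsilon)1_{\{u_\varepsilon\ne 0\}}$ is bounded (using $\Delta u=f+\lambda u$ and the local smoothness), applies Theorem \ref{21.05.13.2} to each $u_\varepsilon$, and passes to the limit $\varepsilon\to0$, $R\to\infty$ via Fatou on the left-hand side and dominated convergence on the right (the right-hand side of \eqref{220613103} with $u$ replaced by $f$ is already independent of $\varepsilon$, which is the crucial simplification). Here the cited facts in Lemma \ref{21.04.23.4} about $|u|^{p/2-1}u\in W^1_2$ and $|u|^p\in W^1_2$ under \eqref{22.01.25.2}, and Lemma \ref{21.04.23.5} about convergence of $\phi^{(\epsilon)}$-integrals, are exactly what make the truncated functions admissible and the limit passage legitimate.

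The main obstacle I anticipate is precisely this boundary truncation: making the cutoff $\chi_R\to 1$ compatible with the weight $\phi^c\rho^{-2}$, which is only \emph{locally} integrable (Lemma \ref{21.04.23.5}.(1)) and typically blows up at $\partial\Omega$. One must choose the cutoff so that the commutator terms $\Delta(\chi_R)u + 2\nabla\chi_R\cdot\nabla u$ are controlled — e.g. take $\chi_R$ depending on $\rho$ with $|\nabla\chi_R|\lesssim \rho^{-1}$ on a shrinking collar — and then absorb these error terms using the Hardy inequality and the $\mathring W^1_2$-bound on $u$, letting them vanish in the limit. A cleaner route, if available, is to first prove \eqref{220613103} for $\phi\equiv 1$ and $p=2$ directly from the $\mathring W^1_2$ energy identity plus Hardy (this is just testing the equation against $u$ and invoking \eqref{hardy}), and then bootstrap to general $p$ and general superharmonic $\phi$ by the truncation/regularization scheme above; I would present the argument in that order, isolating the genuinely new weighted input (Theorem \ref{21.05.13.2}) from the soft existence part.
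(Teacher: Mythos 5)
Your approach is genuinely different from the paper's, and there is a real gap in the step where you apply Theorem \ref{21.05.13.2} to the truncated functions.

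The paper's route avoids the entire cutoff problem by never working with the global solution directly. It exhausts $\Omega$ by smooth bounded subdomains $\Omega_n$, takes the classical solution $R_{\lambda,n}f$ of $\Delta H - \lambda H = f$ on $\Omega_n$ with $H|_{\partial\Omega_n}=0$, and extends by zero. Because $R_{\lambda,n}f$ vanishes on $\partial\Omega_n$ \emph{by construction} (not by multiplying with a cutoff), the extension $(R_{\lambda,n}f)1_{\Omega_n}$ is continuous on $\Omega$, compactly supported, and $C^2$ on $\{u\neq 0\}$ with $\Delta u-\lambda u = f$ there --- so Theorem \ref{21.05.13.2} applies with right-hand side \emph{exactly} $\int|f|^p\phi^c\rho^{2p-2}$, uniformly in $n$. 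Then a sign decomposition $f = f_1 - f_2$ with $f_1,f_2\leq 0$ and the maximum principle yield monotone convergence, and Fatou finishes the limit passage. No commutator terms ever appear.

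Your proposal introduces commutator terms that you cannot control. After the truncation $u_\varepsilon = \chi_R\,(|u|-\varepsilon)_+\mathrm{sgn}(u)$, what Theorem \ref{21.05.13.2} gives you is
$$\int_\Omega |u_\varepsilon|^p\phi^c\rho^{-2}\,\dd x \;\leq\; N\int_\Omega |\Delta u_\varepsilon - \lambda u_\varepsilon|^p\phi^c\rho^{2p-2}\,\dd x\,,$$
and $\Delta u_\varepsilon - \lambda u_\varepsilon$ is \emph{not} $f$: it contains $2\nabla\chi_R\cdot\nabla u + (\Delta\chi_R)u$ supported on the collar where $\chi_R$ varies. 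So your remark that ``the right-hand side is already independent of $\varepsilon$'' is incorrect. To make those error terms vanish you would need, for instance, $\int_{\text{collar}}|\nabla u|^p\phi^c\rho^{p-2}\to 0$. You correctly anticipate this obstacle and propose to absorb it with the Hardy inequality and the $\mathring{W}^1_2$-bound, but that only gives $L_2$-type control of $\nabla u$: for general $p\in(1,\infty)$ and general superharmonic $\phi$ you have no a priori weighted $L_p$ gradient bound near $\partial\Omega$ at this stage of the development, so the error terms do not go away. (Establishing such gradient bounds is precisely part of what later sections do, and they use this lemma as input, so assuming them here would be circular.) The cleaner route you suggest at the end --- first treat $p=2$, $\phi\equiv 1$ by energy methods, then bootstrap --- would face exactly the same difficulty in the bootstrap step. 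This is why the exhaustion-by-subdomains argument in the paper is not merely a stylistic alternative: it is the device that eliminates the commutators altogether.
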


		\begin{proof}
			Take infinitely smooth and bounded open sets $\Omega_n$, $n\in\bN$, such that
			$$
			\text{supp}(f)\subset \Omega_1\,\,,\quad \overline{\Omega_n}\subset \Omega_{n+1}\,\,,\quad \bigcup_{n}\Omega_n=\Omega
			$$
			(see, \textit{e.g.}, \cite[Proposition 8.2.1]{DD_2008}).
			For arbitrary $h\in C_c^{\infty}(\Omega_1)$ and $n\in\bN$, by $R_{\lambda,n}h$ we denote the classical solution $H\in C^{\infty}(\overline{\Omega_n})$ of the equation
			$$
			\Delta H-\lambda H=h1_{\Omega_1}\quad \text{on}\,\,\Omega_n\quad;\quad H|_{\partial\Omega_n}\equiv 0\,.
			$$
			Note that $\overline{\Omega_n}$ is compactly contained in $\Omega$, $R_{\lambda,n}h\in C^{\infty}(\overline{\Omega_n})$, and $R_{\lambda,n}h|_{\partial\Omega_n}\equiv 0$.
			Therefore $\big(R_{\lambda,n}h\big)1_{\Omega_n}$ is continuous in $\Omega$ and satisfies \eqref{22.01.25.2}.
			By Theorem \ref{21.05.13.2}, for any $p\in (1,\infty)$, $c\in(-p+1,1)$, and positive superharmonic function $\phi$ in $\Omega$, we have
			\begin{align}\label{220610424}
				\int_{\Omega}\big|\big(R_{\lambda,n}h\big)1_{\Omega_n}\big|^{p}\phi^{c}\rho^{-2}\dd x\leq N(p,c,\mathrm{C}_0(\Omega)) \int_{\Omega}|h|^p\phi^{c}\rho^{2p-2}\dd x\,.
			\end{align}
			Note that $N$ in \eqref{220610424} is independent of $n$.
			
			Take $F\in C_c^{\infty}(\Omega_1)$ such that $F\geq |f|$, and put $f_1:=f-F$ and $f_2:=-F$ so that $f_1,\,f_2\leq 0$, and $f_1-f_2=f$.
			
			For $v_n:=\big(R_{\lambda,n}f_1\big)1_{\Omega_n}$, the maximum principle implies that
			$0\leq v_n\leq v_{n+1}$ in $\Omega$.
			We define $v(x):=\lim_{n\rightarrow \infty}v_n(x)$.
			By applying the monotone convergence theorem to \eqref{220610424} with $(h,\phi,p,c):=(f_1,1_{\Omega},2,0)$, we obtain that $\int_{\Omega}|v|^2\rho^{-2}\dd x\lesssim \int_{\Omega}|f_1|^2\rho^{2}\dd x$,
			which implies that $v\in L_{1,\mathrm{loc}}(\Omega)$.
			
			We next claim that for any $\zeta\in C_c^{\infty}(\Omega)$,
			\begin{align}\label{230125936}
				\int_{\Omega}v\big(\Delta \zeta -\lambda \zeta\big)\dd x=\int_{\Omega} f_1 \zeta \dd x\,.
			\end{align}
			Fix $\zeta\in C_c^{\infty}(\Omega)$, and take large enough $N\in\bN$ such that $\mathrm{supp}(\zeta)\subset \Omega_N$.
			Then for any $n\geq N$, the definition of $v_n=R_{\lambda,n}f_1$ implies that \eqref{230125936} holds for $v_n$ instead of $v$.
			Since $0\leq v_n\leq v$ and $v\in L_{1,\mathrm{loc}}(\Omega)$, the Lebesgue dominated convergence theorem yields \eqref{230125936}.
			
			By the same argument, $w:=\lim_{\substack{n\rightarrow \infty}}\big(R_{\lambda,n}f_2\big)1_{\Omega_n}$
			belongs to $L_{1,\mathrm{loc}}(\Omega)$, and \eqref{230125936} holds for $(w,f_2)$ instead of $(v,f_1)$.
			
			Put $u:=v-w=\lim_{n\rightarrow \infty}\big(R_{\lambda,n}f\big)1_{\Omega_n}$, where the limit exists almost everywhere in $\Omega$.
			Then $u\in L_{1,\mathrm{loc}}(\Omega)$, and $u$ satisfies \eqref{230328849}.
			In addition, by applying Fatou's lemma to \eqref{220610424} with $h:=f$, we obtain \eqref{220613103}.
		\end{proof}

		\mysection{Weighted Sobolev spaces and solvability of the Poisson equation}\label{0040}
		In this section, we focus on the Poisson equation
		$$
		\Delta u-\lambda u=f\quad(\lambda\geq 0)
		$$
		in an open set $\Omega\subset \bR^d$ admitting the Hardy inequality, within the framework of the weighted Sobolev space $\Psi H_{p,\theta}^{\gamma}(\Omega)$ introduced in Definition \ref{220610533}.
		It is worth noting that the zero Dirichlet condition ($u|_{\partial\Omega}=0$) is naturally encoded in $\Psi H_{p,\theta}^{\gamma}(\Omega)$, as $C_c^{\infty}(\Omega)$ is dense in $\Psi H_{p,\theta}^{\gamma}(\Omega)$ (see Lemma \ref{21.09.29.4}).
		
		This section is organized as follows:
		In Section \ref{0041}, we present the notions of \textit{Harnack function} and \textit{regular Harnack function}.
		Section \ref{0042} introduces the weighted Sobolev space $\Psi H_{p,\theta}^{\gamma}(\Omega)$. 
		In Section \ref{0043}, we prove the main theorem of this section (Theorem \ref{21.09.29.1}), using the results of Section \ref{0030} and extending the localization argument of \cite{Krylov1999-1} to the setting of $\Psi H_{p,\theta}^{\gamma}(\Omega)$.

		\subsection{Harnack function and regular Harnack function}\label{0041}
		
		\begin{defn}\label{21.10.14.1}\,
			
			\begin{enumerate}
				\item We call a measurable function $\psi:\Omega\rightarrow \bR_+$ a \textit{Harnack function}, if there exists a constant $C=:\mathrm{C}_1(\psi)>0$ such that
				\begin{align*}
					\underset{B(x,\rho(x)/2)}{\mathrm{ess\,sup}}\,\psi\leq C\underset{B(x,\rho(x)/2)}{\mathrm{ess\,inf}}\,\psi\quad\text{for all}\,\,x\in\Omega\,.
				\end{align*}
				
				\item We call a function $\Psi\in C^{\infty}(\Omega)$ a \textit{regular Harnack function}, if $\Psi>0$ and there exists a sequence of constants $\{C^{(k)}\}_{k\in\bN}=:\mathrm{C}_2(\Psi)$ such that for each $k\in\bN$,
				\begin{align*}
					|D^k\Psi|\leq C^{(k)}\,\rho^{-k}\Psi\quad\text{on}\quad\Omega\,.
				\end{align*}
				
				\item Let $\psi$ be a measurable function and $\Psi$ be a regular Harnack function on $\Omega$. We say that $\Psi$ is a \textit{regularization} of $\psi$, if there exists a constant $C=:\mathrm{C}_3(\psi,\Psi)>0$ such that
				$$
				C^{-1}\Psi\leq\psi\leq C\,\Psi\quad\text{almost everywhere in}\,\,\Omega. 
				$$
			\end{enumerate}
		\end{defn}

		A relation between the notions of Harnack functions and regular Harnack functions is proved in Lemma \ref{21.05.27.3}.
		
		\begin{remark}
			We introduced the notion of the  Harnack function to enable a localization argument (see Lemma \ref{21.05.13.8}). 
			Independently of this, an earlier work \cite{VM} investigated the relation between the boundary behavior of continuous Harnack functions and the quasihyperbolic distance.
		\end{remark}

		\begin{example}\label{21.05.18.2}\,
			
			\begin{enumerate}
				\item For any $E\subset \Omega^c$, the function $x\mapsto d(x,E)$ is a Harnack function on $\Omega$.
				Additionally, $\mathrm{C}_1\big(d(\,\cdot\,,E)\big)$ can be chosen to be $3$.
				
				\item Let $\Psi\in C^{\infty}(\Omega)$ satisfy $\Psi>0$ and $\Delta\Psi=-\Lambda\Psi$ for some constant $\Lambda\geq 0$.
				We claim that $\Psi$ is a regular Harnack function on $\Omega$, and $\mathrm{C}_2(\Psi)$ can be chosen to depend only on $d$.
				To observe this, for a fixed $x_0\in\Omega$, put
				$$
				u(t,x):=\ee^{-\Lambda \rho(x_0)^2 t}\Psi\big(x_0+\rho(x_0)x\big)
				$$
				so that $u_t=\Delta u$ on $\bR\times B_1(0)$.
				The interior estimates and the parabolic Harnack inequality (see, \textit{e.g.}, \cite[Theorem 2.3.9]{Krylov2008} and \cite[Theorem 7.10]{EvansPDE}, respectively) imply that for any $k\in\bR$,
				$$
				\rho(x_0)^k|D^k\Psi(x_0)|=|D^k_x u(0,0)|\lesssim_{k,d}\|u\|_{L_2((-1/4,0]\times B_{1/2}(0))}\lesssim_d u(1,0)\leq \Psi(x_0)\,.
				$$
				
				\item The multivariate Fa\'a di Bruno formula (see, \textit{e.g.}, \cite[Theorem 2.1]{FDB}) implies the following:
				\begin{itemize}
					\item[] Let $U\subset \bR^d$ and  $V\subset\bR$ be open sets and $f:U\rightarrow V$ and $l:V\rightarrow \bR$ be smooth functions. For any multi-index $\alpha$,
					\begin{align*}
						\big|D^{\alpha}(l\circ f)\big|\leq N(d,\alpha)\sum_{k=1}^{|\alpha|}\Big(\big|\big(D^kl\big)\circ f\big|\sum_{\substack{\beta_1+\ldots+\beta_k=\alpha\\|\beta_i|\geq 1}}\,\prod_{i=1}^k|D^{\beta_i}f|\Big)\,.
					\end{align*}
				\end{itemize}
				This inequality implies that for any regular Harnack function $\Psi$ in $\Omega$, and any $\sigma\in\bR$, $\Psi^\sigma$ is also a regular Harnack function on $\Omega$, and $\mathrm{C}_2(\Psi^\sigma)$ can be chosen to depend only on $d,\, \sigma,\,\mathrm{C}_2(\Psi)$.
				
				\item If $\Psi$ and $\Phi$ are regularizations of $\psi$ and $\phi$, respectively, then $\Psi\Phi$, $\Psi+\Phi$, and $\frac{\Psi\Phi}{\Psi+\Phi}$ are regularizations of $\psi\phi$, $\max (\psi,\phi)$, and $\min (\psi,\phi)$, respectively.
			\end{enumerate}
		\end{example}

		\begin{lemma}\label{21.11.16.1}
			A measurable function $\psi:\Omega\rightarrow \bR_+$ is a Harnack function if and only if there exist $r\in(0,1)$ and $N_{r}>0$ such that
			\begin{align*}
				\underset{B(x,r\rho(x))}{\mathrm{ess\,sup}}\,\psi\leq N_{r}\underset{B(x,r\rho(x))}{\mathrm{ess\,inf}}\,\psi\quad\text{for all}\,\,x\in\Omega.
			\end{align*}
			In this case, $\mathrm{C}_1(\psi)$ and $N_r$ depend only on each other and $r$.
		\end{lemma}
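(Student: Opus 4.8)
The plan is to isolate a single \emph{chaining} estimate and derive everything from it: if $\psi$ satisfies the displayed inequality with radius parameter $r_0\in(0,1)$ and constant $N_{r_0}$, then it satisfies the analogous inequality with \emph{every} radius parameter $r_1\in(0,1)$, with a constant $N_{r_1}$ depending only on $r_0$, $N_{r_0}$, $r_1$ (and, as a byproduct, only on $d$ through nothing, since the chain length is dimension-free). Granting this, the lemma is immediate, since being a Harnack function is exactly the displayed inequality with $r_0=1/2$: applying the chaining estimate with $r_0=1/2$ proves the ``only if'' direction and bounds $N_r$ in terms of $\mathrm{C}_1(\psi)$ and $r$, while applying it with $r_1=1/2$ proves the ``if'' direction and bounds $\mathrm{C}_1(\psi)$ in terms of $r$ and $N_r$. (Two of the four implications require no chaining at all: $B(x,r\rho(x))\subseteq B(x,\rho(x)/2)$ when $r\le 1/2$, and $B(x,\rho(x)/2)\subseteq B(x,r\rho(x))$ when $r\ge 1/2$.)

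\textbf{The chaining estimate.}
Fix $x\in\Omega$ and put $B:=B(x,r_1\rho(x))$, so $B$ is convex and $(1-r_1)\rho(x)\le\rho\le(1+r_1)\rho(x)$ on $B$. Given $w,w'\in B$, place points $x_0=w,x_1,\dots,x_m=w'$ along the segment $[w,w']\subseteq B$ with $|x_i-x_{i+1}|<\tfrac12 r_0(1-r_1)\rho(x)$; since $[w,w']$ has length $\le 2r_1\rho(x)$, one may take $m\le M_0:=M_0(r_0,r_1)$ (roughly $4r_1/(r_0(1-r_1))+1$), and crucially $M_0$ is \emph{independent of $x$} because all the relevant lengths scale linearly with $\rho(x)$ on $B$. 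Set $E_i:=B(x_i,r_0\rho(x_i))\subset\Omega$; then $|x_i-x_{i+1}|$ is less than half the radius of each of $E_i,E_{i+1}$, so $|E_i\cap E_{i+1}|>0$. Writing $a_i:=\mathrm{ess\,inf}_{E_i}\psi$ and using the hypothesis $\mathrm{ess\,sup}_{E_i}\psi\le N_{r_0}a_i$ together with $a_{i+1}\le\mathrm{ess\,inf}_{E_i\cap E_{i+1}}\psi\le\mathrm{ess\,sup}_{E_i}\psi$ and the symmetric inequality, we get $N_{r_0}^{-1}\le a_{i+1}/a_i\le N_{r_0}$, hence $a_0/a_m\in[N_{r_0}^{-M_0},N_{r_0}^{M_0}]$. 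Thus $\mathrm{ess\,inf}_{B(w,r_0\rho(w))}\psi$ and $\mathrm{ess\,inf}_{B(w',r_0\rho(w'))}\psi$ are comparable with constant $N_{r_0}^{M_0}$ for all $w,w'\in B$.

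\textbf{From local to global on $B$.}
To convert this into a bound for $\mathrm{ess\,sup}_B\psi/\mathrm{ess\,inf}_B\psi$, I would invoke the Lebesgue density theorem. For $t<\mathrm{ess\,sup}_B\psi$ the set $\{\psi>t\}\cap B$ has positive measure, hence has a density point $w\in B$; since $r_0\rho(w)>0$ this forces $|\{\psi>t\}\cap B(w,r_0\rho(w))|>0$, so $\mathrm{ess\,sup}_{B(w,r_0\rho(w))}\psi\ge t$ and therefore $t\le N_{r_0}\,\mathrm{ess\,inf}_{B(w,r_0\rho(w))}\psi$. Likewise, for $s>\mathrm{ess\,inf}_B\psi$ there is $w'\in B$ with $\mathrm{ess\,inf}_{B(w',r_0\rho(w'))}\psi\le s$. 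Combining with the comparison from the previous paragraph gives $t\le N_{r_0}^{M_0+1}s$; letting $t\uparrow\mathrm{ess\,sup}_B\psi$ and $s\downarrow\mathrm{ess\,inf}_B\psi$ yields $\mathrm{ess\,sup}_B\psi\le N_{r_0}^{M_0+1}\,\mathrm{ess\,inf}_B\psi$. As $x\in\Omega$ was arbitrary, this is the chaining estimate with $N_{r_1}=N_{r_0}^{M_0+1}$. (One checks along the way that the hypothesis already forces $0<\mathrm{ess\,inf}\le\mathrm{ess\,sup}<\infty$ on each ball $B(x,r_0\rho(x))$, so no degenerate values interfere.)

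\textbf{Main difficulty.}
The substantive point is the chaining step, and it has two delicate aspects. First, the number of balls in a chain joining two points of $B(x,r_1\rho(x))$ must be bounded \emph{uniformly in $x\in\Omega$}; this is exactly what the linear scaling of $\rho$ on that ball provides, and it is the reason the constant in the conclusion is independent of $x$. Second, because $\psi$ is only assumed measurable, the entire argument must be phrased in terms of essential suprema and infima over sets of positive measure rather than pointwise values — this is what dictates the use of positive-measure overlaps of consecutive balls and of density points in the last step, and is the only place where more than elementary set manipulation is needed.
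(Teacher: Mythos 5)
Your proof is correct and follows essentially the same route as the paper: both arguments reduce the lemma to a uniform chaining estimate across small balls $B(x_i,r_0\rho(x_i))$ with positively overlapping consecutive members, the chain length being bounded independently of $x$ because $\rho$ is pinched between $(1-r_1)\rho(x)$ and $(1+r_1)\rho(x)$ on $B(x,r_1\rho(x))$. The one point where the mechanics diverge is the local-to-global step: the paper covers $B(x_0,r\rho(x_0))$ by finitely many of the small balls $B(y)$ via compactness and takes a maximum of essential suprema, whereas you instead pick Lebesgue density points of the level sets $\{\psi>t\}\cap B$ and $\{\psi<s\}\cap B$ and chain between those two base points; both devices accomplish the same thing and your version handles the ``measurable only'' hypothesis with a bit more explicit care.
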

			\begin{proof}
			We only need to prove that for fixed constants $r_0,\,r\in(0,1)$ and $\widetilde{N}\geq 1$, if 
			\begin{align}\label{220530319}
				\begin{split}
					\text{if}\quad \underset{B(x,r_0\rho(x))}{\mathrm{ess\,sup}}\psi\leq\,& \widetilde{N}\underset{B(x,r_0\rho(x))}{\mathrm{ess\,inf}}\psi\quad \forall\,\,x\in\Omega\,,\\
					\text{then}\quad \underset{B(x,r\rho(x))}{\mathrm{ess\,sup}}\psi\leq\,& \widetilde{N}^{2K+1}\underset{B(x,r\rho(x))}{\mathrm{ess\,inf}}\psi\quad \forall\,\,x\in\Omega\,,
				\end{split}
			\end{align}
			where $K$ is the smallest integer such that $K\geq \frac{r}{(1-r)r_0}$.

			If $r\leq r_0$, then there is nothing to prove.
			Consider the case $r>r_0$.
			For $x\in\Omega$, we denote $B(x)=B\big(x,r_0\rho(x)\big)$.
			For fixed $x_0\in\Omega$ and $y\in \overline{B}\big(x_0,r\rho(x_0)\big)$, put $x_i:=(1-\frac{i}{K})x_0+\frac{i}{K}y$, $i=1,\,\ldots,\,M$.
			One can observe that $|x_{i-1}-x_i|\leq r_0\rho(x_i)$, and therefore $x_{i-1}\in B(x_i)$. This implies that $B(x_{i-1})\cap B(x_i)\neq \emptyset$, and hence
			\begin{align}\label{22.03.02.4}
				\underset{B(x_{i})}{\mathrm{ess\,sup}}\,\psi \leq \widetilde{N}\, \underset{B(x_{i})}{\mathrm{ess\,inf}}\,\psi \leq \widetilde{N}\, \underset{B(x_{i-1})\cap B(x_i)}{\mathrm{ess\,inf}}\,\psi \leq \widetilde{N}\,\underset{B(x_{i-1})}{\mathrm{ess\,sup}}\,\psi\,.
			\end{align}
			By applying \eqref{22.03.02.4} for $i=1,\,\ldots,\,K$, we obtain that $
			\mathrm{ess\,sup}_{B(y)}\,\psi\leq \widetilde{N}^K\mathrm{ess\,sup}_B(x)\,\psi\,.
			$
			Since $B(x_0,r\rho(x_0))$ is contained in a finite union of elements in $\big\{B(y)\,:\,y\in \overline{B}(x_0,r\rho(x_0))\big\}$, we have
			\begin{align*}
				\underset{B(x_0,r\rho(x_0))}{\mathrm{ess\,sup}}\,\psi\leq \widetilde{N}^K 		\underset{B(x)}{\mathrm{ess\,sup}}\,\psi=\widetilde{N}^K \underset{B(x_0,r_0\rho(x_0))}{\mathrm{ess\,sup}}\,\psi\,.
			\end{align*}
			The same argument implies that 
			$$
			\underset{B(x_0,r_0\rho(x_0))}{\mathrm{ess\,inf}}\,\psi\leq \widetilde{N}^k\underset{B(x_0,r\rho(x_0))}{\mathrm{ess\,inf}}\,\psi\,.
			$$
			Consequently, we have
			$$
		\underset{B(x_0,r\rho(x_0))}{\mathrm{ess\,sup}}\,\psi \leq \widetilde{N}^K \underset{B(x_0,r_0\rho(x_0))}{\mathrm{ess\,sup}}\,\psi\leq \widetilde{N}^{K+1} \underset{B(x_0,r_0\rho(x_0))}{\mathrm{ess\,inf}}\,\psi\leq\widetilde{N}^{2K+1} \underset{B(x_0,r\rho(x_0))}{\mathrm{ess\,inf}}\,\psi\,,
			$$
			where the second inequality is implied by the assumption in \eqref{220530319}.
			\end{proof}

		\begin{remark}\label{22.02.17.5}
			Let $\psi$ be a Harnack function on $\Omega$. Since $\psi\in L_{1,\mathrm{loc}}(\Omega)$, almost every point in $\Omega$ is a Lebesgue point of $\psi$.
			If $x\in\Omega$ is a Lebesgue point of $\psi$, then for any $r\in(0,1)$
			$$
			\underset{B(x,r\rho(x))}{\mathrm{ess\,inf}}\,\psi\leq \psi(x)\leq \underset{B(x,r\rho(x))}{\mathrm{ess\,sup}}\,\psi\,.
			$$
			By Lemma \ref{21.11.16.1}, we obtain that for almost every $x\in\Omega$ and for any $r\in(0,1)$, there exists $N_r>0$ depending only on $\mathrm{C}_1(\psi)$ and $r$ such that
			$$
			N_r^{-1}\underset{B(x,r\rho(x))}{\mathrm{ess\,sup}}\,\psi\leq \psi(x)\leq N_r\underset{B(x,r\rho(x))}{\mathrm{ess\,inf}}\,\psi\,.
			$$
		\end{remark}

		\begin{lemma}\label{21.05.27.3}
			\,\,
			
			\begin{enumerate}
				\item If $\psi$ is a Harnack function, then there exists a regularization of $\psi$.
				For such a regularization of $\psi$, denoted by $\widetilde{\psi}$, $\mathrm{C}_2(\widetilde{\psi})$ and $\mathrm{C}_3(\psi,\widetilde{\psi})$ can be chosen to depend only on $d$ and $\mathrm{C}_1(\psi)$.
				
				\item If $\Psi$ is a regular Harnack function, then it is also a Harnack function and $\mathrm{C}_1(\Psi)$ can be chosen to depend only on $d$ and $\mathrm{C}_2(\Psi)$.
			\end{enumerate}
		\end{lemma}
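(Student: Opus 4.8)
\medskip
\noindent\textbf{Proof plan.}
I would treat the two statements separately, beginning with the easy implication, part~(2). Fix $x_0\in\Omega$ and let $x\in B(x_0,\rho(x_0)/2)$. This ball is convex, lies in $\Omega$, and $\rho(x_t)\geq\rho(x_0)/2$ for every point $x_t:=x_0+t(x-x_0)$ with $t\in[0,1]$, so $g(t):=\log\Psi(x_t)$ is $C^1$ and
\[
|g'(t)|=\frac{|\nabla\Psi(x_t)\cdot(x-x_0)|}{\Psi(x_t)}\leq C^{(1)}\,\rho(x_t)^{-1}\,|x-x_0|\leq C^{(1)},
\]
where $C^{(1)}\in\mathrm{C}_2(\Psi)$. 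Integrating in $t$ gives $\mathrm{e}^{-C^{(1)}}\Psi(x_0)\leq\Psi(x)\leq\mathrm{e}^{C^{(1)}}\Psi(x_0)$ throughout $B(x_0,\rho(x_0)/2)$; since $\Psi$ is continuous, essential suprema and infima over this ball are ordinary ones, and we conclude $\underset{B(x_0,\rho(x_0)/2)}{\mathrm{ess\,sup}}\Psi\leq\mathrm{e}^{2C^{(1)}}\underset{B(x_0,\rho(x_0)/2)}{\mathrm{ess\,inf}}\Psi$, i.e.\ $\Psi$ is a Harnack function with $\mathrm{C}_1(\Psi)=\mathrm{e}^{2C^{(1)}}$ depending only on $\mathrm{C}_2(\Psi)$.

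For part~(1) the plan is to regularize $\psi$ by mollifying it at a \emph{variable} length scale proportional to $\rho$. I would start from a classical regularized distance $\rho_\ast\in C^\infty(\Omega)$ with $\rho_\ast\simeq_d\rho$ and $|D^k\rho_\ast|\lesssim_{k,d}\rho^{1-k}$ on $\Omega$, and rescale it to $r:=\varepsilon_d\,\rho_\ast$ so that, in addition, $r\leq\rho/4$ on $\Omega$ (with $\varepsilon_d>0$ depending only on $d$). Fixing $\zeta\in C_c^\infty(B_1)$ with $\zeta\geq0$ and $\int\zeta=1$, I set
\[
\widetilde\psi(x):=\int_\Omega\psi(y)\,K(x,y)\,\dd y,\qquad K(x,y):=r(x)^{-d}\,\zeta\!\left(\frac{x-y}{r(x)}\right).
\]
Since $\psi\in L_{1,\mathrm{loc}}(\Omega)$ (Remark~\ref{22.02.17.5}) and each $K(x,\cdot)$ is nonnegative, has integral $1$, and is supported in $\overline{B(x,r(x))}\subset B(x,\rho(x)/2)\subset\Omega$, the integral is finite, all the $x$-dependence sits inside the smooth kernel so that $\widetilde\psi\in C^\infty(\Omega)$, and
\[
\underset{B(x,r(x))}{\mathrm{ess\,inf}}\,\psi\ \leq\ \widetilde\psi(x)\ \leq\ \underset{B(x,r(x))}{\mathrm{ess\,sup}}\,\psi\qquad\text{for every }x\in\Omega.
\]
Using $B(x,r(x))\subset B(x,\rho(x)/4)$ and Remark~\ref{22.02.17.5} with ratio $1/4$, there is $N=N(\mathrm{C}_1(\psi))$ such that $\underset{B(x,r(x))}{\mathrm{ess\,sup}}\psi\leq N\psi(x)$ and $\underset{B(x,r(x))}{\mathrm{ess\,inf}}\psi\geq N^{-1}\psi(x)$ for a.e.\ $x$; combined with the sandwiching this gives $N^{-1}\psi\leq\widetilde\psi\leq N\psi$ a.e., so $\widetilde\psi$ is a regularization of $\psi$ with $\mathrm{C}_3(\psi,\widetilde\psi)$ depending only on $d$ and $\mathrm{C}_1(\psi)$.

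The remaining point — and the step I expect to be the only genuine obstacle — is the regular-Harnack bound $|D^k\widetilde\psi|\leq C^{(k)}\rho^{-k}\widetilde\psi$. Differentiating under the integral sign, the task is to verify that each $x$-derivative of $K$ costs exactly one power of $r$, namely
\[
|D_x^\alpha K(x,y)|\ \lesssim_{|\alpha|,d}\ r(x)^{-d-|\alpha|}\,\widetilde\zeta\!\left(\frac{x-y}{r(x)}\right),
\]
where $\widetilde\zeta:=\sum_{m\leq|\alpha|}|D^m\zeta|$ is a fixed nonnegative continuous function supported in $B_1$; this follows from the chain rule, the Leibniz rule, and the Fa\`a di Bruno estimate recalled in Example~\ref{21.05.18.2}.(3), the decisive input being precisely $|D^k r|\lesssim_{k,d}\rho^{1-k}\simeq_d r^{1-k}$ (so that each derivative falling on $r(x)^{-d}$ or on $\zeta((x-y)/r(x))$ produces a factor $\lesssim r(x)^{-1}$). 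Integrating this estimate and invoking the sandwiching of the previous paragraph once more,
\[
|D^k\widetilde\psi(x)|\ \lesssim_{k,d}\ r(x)^{-k}\underset{B(x,r(x))}{\mathrm{ess\,sup}}\,\psi\ \lesssim_{k,d,\mathrm{C}_1(\psi)}\ \rho(x)^{-k}\,\underset{B(x,r(x))}{\mathrm{ess\,inf}}\,\psi\ \leq\ \rho(x)^{-k}\,\widetilde\psi(x),
\]
so $\mathrm{C}_2(\widetilde\psi)$ depends only on $d$ and $\mathrm{C}_1(\psi)$, which finishes part~(1). (If one prefers to avoid the regularized distance, the same $\widetilde\psi$ can be built from a Whitney cube decomposition of $\Omega$ with a subordinate smooth partition of unity $\{\varphi_j\}$, taking $\widetilde\psi:=\sum_j\varphi_j\,m_j$ with $m_j$ the average of $\psi$ over a fixed dilate of the $j$-th Whitney cube; the bounded overlap of the cubes together with the Harnack property then yields all three estimates in exactly the same manner.)
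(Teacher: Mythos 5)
Your argument is correct, but for part~(1) it takes a genuinely different route from the paper, so a comparison is worthwhile. The paper constructs the regularization by a \emph{dyadic} decomposition: it forms fixed-scale mollifications $\Psi_k:=(\psi 1_{U_{2,k}})\ast\zeta_k$ on the dyadic shells $U_{i,k}=\{2^{k-i}<\rho<2^{k+i}\}$ (with $\zeta_k$ at scale $2^{k-4}$) and sets $\widetilde\psi:=\sum_k\Psi_k$; the bounded overlap of the shells and the Harnack property then give the three estimates. Your construction instead performs a single \emph{variable-scale} convolution with a kernel whose width $r(x)\simeq\rho(x)$ comes from a pre-built regularized distance $\rho_\ast$. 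Both are standard, and you have correctly identified the only nontrivial step: the inequality $|D_x^\alpha K|\lesssim r^{-d-|\alpha|}\widetilde\zeta((x-y)/r)$, which indeed follows from $|D^k r|\lesssim r^{1-k}$ via the chain and Leibniz rules. What the paper's dyadic version buys is self-containment — it does not presuppose the existence of a regularized distance (which the paper only obtains \emph{as a corollary} of this very lemma, via Example~\ref{21.05.18.2}.(1)), whereas your main construction imports $\rho_\ast$ as an external black box. Your parenthetical Whitney-cube alternative is actually the variant closest in spirit to the paper's dyadic argument and avoids that dependence. For part~(2) the two proofs are essentially the same: you bound the logarithmic derivative along a segment and integrate; the paper instead writes a linear integral inequality for $\Psi(x_t)$ and invokes Gr\"onwall. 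These are the same estimate in two presentations, with yours arguably the cleaner one. One small remark on wording: the inequality $|D^k\widetilde\psi|\lesssim\rho^{-k}\widetilde\psi$ must hold for \emph{every} $x$, so in the final display it is cleaner to pass from $\mathrm{ess\,sup}_{B(x,r(x))}\psi$ directly to $\mathrm{ess\,inf}_{B(x,r(x))}\psi$ using Lemma~\ref{21.11.16.1} (which is pointwise in $x$), rather than detouring through the a.e.\ statement of Remark~\ref{22.02.17.5}; the remark is only needed for the two-sided comparison $\widetilde\psi\simeq\psi$ a.e., which is all that the definition of a regularization requires.
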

		This lemma implies that a measurable function is a Harnack function if and only if it has a regularization.
		\begin{proof}[Proof of Lemma \ref{21.05.27.3}]
			\,
			
			(1) Let $\psi$ be a Harnack function on $\Omega$.
			Take $\zeta\in C_c^{\infty}(B_1)$ such that $\zeta\geq 0$ and $\int_{B_1}\zeta dx=1$.
			For $i=1,\,2,\,3$ and $k\in\bZ$, put
			\begin{align*}
				U_{i,k}=\{x\in\Omega\,:\,2^{k-i}<\rho(x)<2^{k+i}\}\quad\text{and}\quad \zeta_k(x)=\frac{1}{2^{(k-4)d}}\zeta\Big(\frac{x}{2^{k-4}}\Big)\,.
			\end{align*}
			Note that for each $i$,
			\begin{align}\label{220604957}
				\text{$\big\{U_{i,k}\big\}_{k\in\bZ}$ is a locally finite cover of $\Omega$, and}\,\,\,\,\sum_{k\in\bZ}1_{U_{i,k}}\leq 2i\,.
			\end{align}
			For each $k\in\bZ$, put
			$$
			\Psi_k(x):=\big(\psi 1_{U_{2,k}}\big)\ast \zeta_k(x):=\int_{B(x,2^{k-4})}\big(\psi 1_{U_{2,k}}\big)(y)\zeta_k(x-y)\dd y\,,
			$$
			so that $\Psi_k\in C^{\infty}(\Omega)$.
			
			If $x\in U_{1,k}$, then $B(x,2^{k-4})\subset B(x,\rho(x)/2)\cap U_{2,k}$.
			Therefore we have
			\begin{align}\label{22.02.17.3}
				\Psi_k\geq \Big(\underset{B(x,\rho(x)/2)}{\mathrm{ess\,inf}}\psi\Big)1_{U_{1,k}}(x)\,.
			\end{align}
			If $x\in U_{3,k}$, then $B(x,2^{k-4})\subset B(x,\rho(x)/2)$, and if $x\notin U_{3,k}$, then $ B(x,2^{k-4})\cap U_{2,k}=\emptyset$.
			Therefore we have
			\begin{align}\label{22.02.17.4}
				\Psi_k(x)\leq \Big(\underset{B(x,\rho(x)/2)}{\mathrm{ess\,sup}}\psi\Big)1_{U_{3,k}}(x)\,.
			\end{align}
			By \eqref{22.02.17.3}, \eqref{22.02.17.4}, and Remark \ref{22.02.17.5}, we obtain that
			\begin{align}\label{2206292521}
				N^{-1}\psi(x)1_{U_{1,k}}(x)\leq \Psi_k(x)\leq N\psi(x)1_{U_{3,k}}(x)
			\end{align}
			for almost every $x\in\Omega$, where $N=N(\mathrm{C}_1(\psi))$.
			Moreover,
			\begin{equation}\label{2206292522}
			\begin{alignedat}{2}
					|D^{\alpha}\Psi_k(x)|\,&\leq \|D^{\alpha}\zeta_k\|_{\infty}\int_{B(x,2^{k-4})}\psi 1_{U_{2,k}}\dd y&&\\
					&\leq 2^{-|\alpha|k}\bigg(\underset{B(x,\rho(x)/2)}{\mathrm{ess\,sup}}\psi\bigg)1_{U_{3,k}}(x)&&\lesssim_N \rho(x)^{-|\alpha|}\psi(x)1_{U_{3,k}}(x)
			\end{alignedat}
		\end{equation}
			for almost every $x\in\Omega$, where $N=N(d,\alpha,\mathrm{C}_1(\psi))$.
			By \eqref{220604957}, \eqref{2206292521}, and \eqref{2206292522}, we obtain that  $\Psi:=\sum_{k\in\bZ}\Psi_k$ belongs to $C^{\infty}(\Omega)$, and
			\begin{align}\label{230328229}
				\Psi\simeq_{\mathrm{C}_1(\psi)}\psi\,\,\,\,\text{and}\,\,\,\, |D^\alpha\Psi|\leq\sum_{k\in\bZ}|D^{\alpha}\Psi_k|\lesssim_N\rho^{-|\alpha|}\psi
			\end{align}
			almost everywhere in $\Omega$, where $N=N(d,\alpha,\mathrm{C}_1(\psi))$.
			By \eqref{230328229}, the proof is completed.
			
			(2) Let $x,y\in\Omega$ satisfy $|x-y|<\rho(x)/2$.
			For $r\in[0,1]$, put $x_r=(1-r)x+ry$, so that 
			$x_r\in B\big(x,\rho(x)/2\big)$ and $\rho(x_r)\geq \rho(x)-|x-x_r|\geq|x-y|$.
			Then we have
			\begin{align*}
				\Psi(x_r)\,&\leq \Psi(x_0)+|x-y|\int_0^r\big|(\nabla\Psi)(x_{t})\big|\dd t\\
				&\leq \Psi(x_0)+N_0|x-y|\int_0^r\rho(x_{t})^{-1}\Psi(x_t)\dd t\leq \Psi(x_0)+N_0\int_0^r\Psi(x_t)\dd t\,,
			\end{align*}
			where $N_0=N(d,\mathrm{C}_2(\Psi))>0$.
			Applying Gr\"onwall's inequality, we obtain
			$$
			\Psi(y)=\Psi(x_1)\leq \ee^{N_0}\Psi(x_0)=\ee^{N_0}\Psi(x)\,.
			$$
			
			For any $x\in\Omega$, if $y\in B(x,\rho(x)/3)$, then $|x-y|<\min\big(\rho(x),\rho(y)\big)/2$.
			Therefore we have
			$$
			\ee^{-N_0}\underset{B(x,\rho(x)/3)}{\mathrm{ess\,sup}}\Psi(y)\leq \Psi(x)\leq \ee^{N_0}\underset{B(x,\rho(x)/3)}{\mathrm{ess\,inf}}\Psi(y)\,,
			$$
			and by Lemma \ref{21.11.16.1}, the proof is completed.
		\end{proof}

		\subsection{Weighted Sobolev spaces and regular Harnack functions}\label{0042}
		In this subsection, we introduce the Krylov-type weighted Sobolev space $H_{p,\theta}^\gamma(\Omega)$, and generalize it through the use of regular Harnack functions.
		We denote the resulting generalized weighted Sobolev space by $\Psi H_{p,\theta}^\gamma(\Omega)$.
		
		The space $H_{p,\theta}^{\gamma}(\Omega)$ was first introduced by Krylov~\cite{Krylov1999-0,Krylov1999-1,Krylov2001} for $\Omega=\bR^d_+$, thereby initiating the $L_p$-theory for elliptic and parabolic equations in $H_{p,\theta}^{\gamma}(\bR^d_+)$.
		This development was motivated by stochastic parabolic equations, aiming to control the boundary behavior of derivatives of solutions, as discussed in detail in \cite{Krylov1994, Krylov1999-1}.
		Subsequently, Lototsky \cite{Lo0,Lo1} systematically extended the framework of $H_{p,\theta}^{\gamma}(\Omega)$ to general domains $\Omega$.
							
		We first recall the definition of the Bessel potential space on $\bR^d$.
		For $p\in(1,\infty)$ and $\gamma\in\bR$, $H_p^{\gamma}=H_p^{\gamma}(\bR^d)$ denotes the space of Bessel potentials with the norm
		\begin{align*}
			\|f\|_{H_p^{\gamma}}:=\big\|(1-\Delta)^{\gamma/2}f\big\|_{L_p(\bR^d)}:=\Big\|\cF^{-1}\big[(1+|\xi|^2)^{\gamma/2}\cF(f)(\xi)\big]\Big\|_p\,,
		\end{align*}
		where $\cF$ is the Fourier transform and $\cF^{-1}$ is the inverse Fourier transform.
		If $\gamma\in\bN_0$, then $H_p^{\gamma}$ coincides with the Sobolev space 
		\begin{align*}
			W_p^{\gamma}(\bR^d):=\bigg\{f\in\cD'(\bR^d)\,:\,\sum_{k=0}^\gamma\int_{\bR^d}|D^kf|^p\dd x<\infty\bigg\}\,.
		\end{align*}
		
		We next recall $H_{p,\theta}^{\gamma}(\Omega)$ and introduce its generalization $\Psi H_{p,\theta}^{\gamma}(\Omega)$.
		It is worth mentioning in advance that for $\gamma\in\bN_0$, the space $\Psi H_{p,\theta}^{\gamma}(\Omega)$ coincides with
		\begin{align*}
			\bigg\{f\in\cD'(\Omega)\,:\,\sum_{k=0}^{\gamma}\int_{\Omega}\big|\rho^kD^kf\big|^p\Psi^p\rho^{\theta-d}\dd x<\infty\bigg\}\,,
		\end{align*}
		where $\rho(x):=d(x,\partial\Omega)$ (see Lemma \ref{220512433}).
		In the remainder of this subsection, we assume that
		\begin{align*}
			p\in(1,\infty)\,,\,\,\,\gamma,\,\theta\in\bR\,,\,\,\,\text{$\Psi$ is a regular Harnack function on $\Omega$}\,.
		\end{align*}
		By $\trho$ we denote the regularization of $\rho(\,\cdot\,):=d(\,\cdot\,,\partial\Omega)$ constructed in Lemma \ref{21.05.27.3}.(1).
		Recall that for each $k\in\bN_0$, there exists a constant $N_k=N(d,k)>0$ such that
		\begin{align*}
			\trho\simeq_{N_0}\rho\quad\text{and}\quad|D^k\trho\,|\leq N_k\rho^{\,1-k}\quad \text{on}\quad \Omega\,.
		\end{align*}
		To define the weighted Sobolev spaces, fix $\zeta_0\in C_c^{\infty}(\bR_+)$ such that
		\begin{align*}
			\text{supp}(\zeta_0)\subset [\ee^{-1},\ee]\quad,\quad\zeta_0\geq 0\quad,\quad \sum_{n\in\bZ}\zeta_0(\ee^{n}t)=1\quad\text{for all}\,\,t\in\bR_+\,.
		\end{align*}
		For $x\in\bR^d$ and $n\in\bZ$, put
		\begin{align}\label{230130543}
			\zeta_{0,(n)}(x):=\zeta_0\big(\ee^{-n}\trho(x)\big)1_{\Omega}(x)\,,
		\end{align}
		so that
		\begin{align}\label{230130542}
			\begin{split}
				&\sum_{n\in\bZ}\zeta_{0,(n)}\equiv 1\quad\text{on}\,\,\Omega\,\,,\quad \text{supp}(\zeta_{0,(n)})\subset \{x\in\Omega\,:\,\ee^{n-1}\leq \trho(x)\leq \ee^{n+1}\}\,\,,\\
				&\qquad\qquad  \zeta_{0,(n)}\in C^{\infty}(\bR^d)\,\,,\,\,\,\,\text{and}\quad |D^{\alpha}\zeta_{0,(n)}|\leq N(d,\alpha,\zeta)\,\ee^{-n|\alpha|}\,.
			\end{split}
		\end{align}

		\begin{defn}\label{220610533}\,\,
			
			\begin{enumerate}
				\item By $H_{p,\theta}^{\gamma}(\Omega)$ we denote the class of all distributions $f\in\cD'(\Omega)$ such that
				\begin{align*}
					\|f\|^p_{H^{\gamma}_{p,\theta}(\Omega)}:=\sum_{n\in\bZ}\ee^{n\theta}\big\|\big(\zeta_{0,(n)}f\big)(\ee^n\cdot)\big\|_{H^{\gamma}_p(\bR^d)}^p<\infty\,.
				\end{align*}
				
				\item By $\Psi H_{p,\theta}^{\gamma}(\Omega)$ we denote the class of all distributions $f\in\cD'(\Omega)$ such that $f=\Psi g$ for some $g\in H_{p,\theta}^{\gamma}(\Omega)$. 
				The norm in $\Psi H_{p,\theta}^{\gamma}(\Omega)$ is defined by
				$$
				\|f\|_{\Psi H_{p,\theta}^{\gamma}(\Omega)}:=\|\Psi^{-1}f\|_{H_{p,\theta}^{\gamma}(\Omega)}\,.
				$$
			\end{enumerate}
		\end{defn}
		
		We also denote
		$$
		L_{p,\theta}(\Omega):=H_{p,\theta}^{0}(\Omega)\quad\text{and}\quad \Psi L_{p,\theta}(\Omega):=\Psi H_{p,\theta}^{0}(\Omega)\,.
		$$

		In the rest of this subsection, we collect properties of $H_{p,\theta}^\gamma(\Omega)$ and $\Psi H_{p,\theta}^\gamma(\Omega)$.
		As $\Psi H_{p,\theta}^\gamma(\Omega)$ is a variant of $H_{p,\theta}^\gamma(\Omega)$, we derive properties of $\Psi H_{p,\theta}^\gamma(\Omega)$ based on those of $H_{p,\theta}^\gamma(\Omega)$.
		Note that we cite the properties of $H_{p,\theta}^\gamma(\Omega)$ from \cite{Lo1} as refined versions.
		Specifically, in Lemma \ref{21.05.20.3} and the proof of Lemma \ref{21.09.29.4}.(2), the constants in their estimates are independent of $\Omega$.
		The validity of these refined estimates is supported by the proof in \cite{Lo1}, with complete details given in \cite[Appendix A.1]{Seo202304}.
		
		The spaces $H_{p,\theta}^{\gamma}(\Omega)$ and $\Psi H_{p,\theta}^{\gamma}(\Omega)$ are independent of the choice of $\zeta_0$ (see Lemma \ref{21.05.20.3}.(2)).
		Therefore, we ignore the dependence on $\zeta_0$.
		We denote
		\begin{align*}
			\cI=\{d,\,p,\,\gamma,\,\theta\}\quad\text{and}\quad \cI'=\{d,\,p,\,\gamma,\,\theta,\,\mathrm{C}_2(\Psi)\}\,,
		\end{align*}
		where $\mathrm{C}_2(\Psi)$ is the sequence of constants in Definition \ref{21.10.14.1}.(2).

		\begin{lemma}[see \cite{Lo1} or Proposition A.3 in \cite{Seo202304}]\label{21.05.20.3}\,\,
			
			\begin{enumerate}
				\item For any $s<\gamma$,
				$$
				\|f\|_{H_{p,\theta}^{s}(\Omega)}\lesssim_{\cI,s}\|f\|_{H_{p,\theta}^{\gamma}(\Omega)}\,.
				$$
				
				\item For any $\eta\in C_c^{\infty}(\bR_+)$,
				\begin{align*}
					\sum_{n\in\bZ}\ee^{n\theta}\|\eta\big(\ee^{-n}\trho(\ee^n\cdot)\big)f(\ee^n\cdot)\|^p_{H^{\gamma}_{p}}\lesssim_{\cI,\eta}\|f\|_{H^{\gamma}_{p,\theta}(\Omega)}^p.
				\end{align*}
				If $\eta$ additionally satisfies
				\begin{align*}
					\inf_{t\in\bR_+}\bigg[\sum_{n\in\bZ}\eta(\ee^nt)\bigg]>0\,,
				\end{align*}
				then
				\begin{align*}
					\|f\|_{H^{\gamma}_{p,\theta}(\Omega)}^p\lesssim_{\cI,\eta} \sum_{n\in\bZ}\ee^{n\theta}\big\|\eta\big(\ee^{-n}\trho(\ee^n\cdot)\big)f(\ee^n\cdot)\big\|^p_{H^{\gamma}_{p}}\,.
				\end{align*}
				
				\item For any $s\in\bR$,
				$$
				\|\trho^{\,s} f\|_{H^{\gamma}_{p,\theta}(\Omega)}\simeq_{\cI,s} \|f\|_{H^{\gamma}_{p,\theta+sp}(\Omega)}\,.
				$$
				
				\item For any multi-index $k\in\bN$,
				\begin{align}\label{240121500}
				\|f\|_{H^{\gamma}_{p,\theta}(\Omega)}\simeq_{\cI,k}\sum_{i=0}^k\|D^if\|_{H^{\gamma-k}_{p,\theta+ip}(\Omega)}.
				\end{align}
				In particular, $\|D^kf\|_{H^{\gamma-k}_{p,\theta+kp}(\Omega)}\lesssim_{\cI,k} \|f\|_{H^{\gamma}_{p,\theta}(\Omega)}$.
				
				\item Let $k\in\bN_0$ such that $|\gamma|\leq k$. If $a\in C^k_{\mathrm{loc}}(\Omega)$ satisfies
				\begin{align*}
					|a|_{k}^{(0)}:=\sup_{\Omega}\sum_{|\alpha|\leq k}\rho^{|\alpha|}|D^{\alpha}a|<\infty\,\,,
				\end{align*}
				then
				$$
				\|af\|_{H^{\gamma}_{p,\theta}(\Omega)}\lesssim_{\cI}|a|_{k}^{(0)}\|f\|_{H^{\gamma}_{p,\theta}(\Omega)}.
				$$
			\end{enumerate}
		\end{lemma}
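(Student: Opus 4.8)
\medskip\noindent\textbf{Proof plan.} All five assertions are standard structural facts about the Lototsky--Krylov weighted spaces, quoted here from \cite{Lo1,Seo202304}; I sketch how I would establish them. The whole argument would rest on three facts about the Bessel potential spaces on $\bR^d$: (a) $\|g\|_{H^s_p}\lesssim_{d,p,s,\gamma}\|g\|_{H^\gamma_p}$ whenever $s\le\gamma$, since $(1-\Delta)^{(s-\gamma)/2}$ is bounded on $L_p$; (b) the equivalence $\|g\|_{H^\gamma_p}\simeq_{d,p,\gamma,k}\sum_{i=0}^k\|D^ig\|_{H^{\gamma-k}_p}$ for $k\in\bN$; and (c) the pointwise multiplier theorem: if $a\in C^k(\bR^d)$ with $|\gamma|\le k$, then $\|ag\|_{H^\gamma_p}\lesssim_{d,p,\gamma,k}\|a\|_{C^k(\bR^d)}\|g\|_{H^\gamma_p}$ (and with no restriction on $\gamma$ when $a$ has all derivatives bounded). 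I would also fix once and for all an \emph{enlarged} resolution $\bar\zeta_0\in C_c^{\infty}(\bR_+)$ with $\bar\zeta_0\equiv1$ on a neighbourhood of $\mathrm{supp}(\zeta_0)$, and set $\bar\zeta_{0,(n)}(x):=\bar\zeta_0(\ee^{-n}\trho(x))1_\Omega(x)$, so that $\bar\zeta_{0,(n)}\equiv1$ (with all higher derivatives vanishing) on $\mathrm{supp}(\zeta_{0,(n)})$, the family $\{\mathrm{supp}(\bar\zeta_{0,(n)})\}_n$ has bounded overlap, and $\trho\simeq\ee^n$ on $\mathrm{supp}(\bar\zeta_{0,(n)})$. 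The universal bounds $\trho\simeq_d\rho$ and $|D^k\trho|\le N(d,k)\rho^{1-k}$ then yield, after the dilation $x\mapsto\ee^nx$, that $\zeta_{0,(n)}(\ee^n\cdot)$, $\bar\zeta_{0,(n)}(\ee^n\cdot)$ and $\ee^{-nj}(D^j\zeta_{0,(n)})(\ee^n\cdot)$ are smooth functions supported in a fixed annulus with $C^k$-norms bounded \emph{uniformly in $n$}.

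The core step is part (2), the independence of $\|\cdot\|_{H^\gamma_{p,\theta}(\Omega)}$ from the dyadic resolution. For the first inequality, taking $\mathrm{supp}(\eta)\subset[\ee^{-M},\ee^M]$, I would write $\eta(\ee^{-n}\trho)f=\sum_{|m-n|\le M+1}\eta(\ee^{-n}\trho)\,\bar\zeta_{0,(m)}\,\zeta_{0,(m)}f$; after dilating by $\ee^n$ each summand is a uniformly-$C^k$-bounded multiple of $(\zeta_{0,(m)}f)(\ee^n\cdot)$, so applying (c), changing variables from the scale $\ee^n$ to $\ee^m$ (a bounded operation over the finitely many $m$), using $\ee^{n\theta}\simeq_{\theta,M}\ee^{m\theta}$, and summing in $n,m$ gives $\lesssim_{\cI,\eta}\|f\|^p_{H^\gamma_{p,\theta}(\Omega)}$. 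For the reverse inequality under $\inf_t\sum_n\eta(\ee^nt)>0$, I would set $S:=\sum_n\eta(\ee^{-n}\trho)$, a locally finite sum bounded below by a positive constant with $|D^\alpha S|\lesssim\rho^{-|\alpha|}$, so that $\zeta_{0,(m)}f=\sum_{|n-m|\le M+2}\big(\zeta_{0,(m)}S^{-1}\big)\,\eta(\ee^{-n}\trho)f$, and run the same dilation--multiplier--reindex argument with the roles of the two resolutions interchanged. I expect this bookkeeping --- keeping every rescaled cut-off $C^k$-norm uniformly controlled and checking that no constant secretly depends on $\Omega$ --- to be the main obstacle; the remaining parts are then corollaries of (2).

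Indeed, (1) would follow termwise from (a), with no reassembly. For (3), localise, dilate, and note $\big(\zeta_{0,(n)}\trho^{\,s}\big)(\ee^n\cdot)=\ee^{ns}m_n$ with $m_n$ of uniformly bounded $C^k$-norm (as $\ee^{-ns}\trho^{\,s}\simeq1$ on the support); by (c), $\|(\zeta_{0,(n)}\trho^{\,s}f)(\ee^n\cdot)\|_{H^\gamma_p}\lesssim\ee^{ns}\|(\bar\zeta_{0,(n)}f)(\ee^n\cdot)\|_{H^\gamma_p}$, and since $\ee^{n\theta}\ee^{nsp}=\ee^{n(\theta+sp)}$, summing reduces to (2) for the enlarged resolution at level $\theta+sp$; the reverse inequality is the same with $(s,\theta)\rightsquigarrow(-s,\theta+sp)$. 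For (4): on $\mathrm{supp}(\zeta_{0,(n)})$ one has $\zeta_{0,(n)}D^if=\zeta_{0,(n)}D^i(\bar\zeta_{0,(n)}f)$ (since $\bar\zeta_{0,(n)}$ is locally $\equiv1$ there), so after dilation $\ee^{ni}(\zeta_{0,(n)}D^if)(\ee^n\cdot)=\zeta_{0,(n)}(\ee^n\cdot)\,D^i[(\bar\zeta_{0,(n)}f)(\ee^n\cdot)]$, whose $H^{\gamma-k}_p$-norm is $\lesssim\|(\bar\zeta_{0,(n)}f)(\ee^n\cdot)\|_{H^{\gamma-k+i}_p}\le\|(\bar\zeta_{0,(n)}f)(\ee^n\cdot)\|_{H^\gamma_p}$ for $i\le k$ by (c) and (a); multiplying by $\ee^{n(\theta+ip)}$ and summing gives $\|D^if\|_{H^{\gamma-k}_{p,\theta+ip}}\lesssim\|f\|_{H^\gamma_{p,\theta}}$ via (2), which in particular is the last assertion of (4). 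Conversely, apply (b) to $(\zeta_{0,(n)}f)(\ee^n\cdot)$, expand $D^i(\zeta_{0,(n)}f)$ by Leibniz, absorb each $\ee^{-nj}(D^j\zeta_{0,(n)})(\ee^n\cdot)$ into a uniformly-$C^k$-bounded factor times $(\bar\zeta_{0,(n)}D^{i-j}f)(\ee^n\cdot)$, and sum; the powers of $\ee^n$ combine to $\ee^{n(\theta+lp)}$ with $l=i-j\le k$, and (2) closes the estimate. Finally (5): $\zeta_{0,(n)}af=(\zeta_{0,(n)}a)(\bar\zeta_{0,(n)}f)$, and since $\rho\simeq\ee^n$ on the support, $D^\alpha[a(\ee^n\cdot)](x)=\ee^{n|\alpha|}(D^\alpha a)(\ee^nx)$ is bounded there by $|a|_{k}^{(0)}$; hence $(\zeta_{0,(n)}a)(\ee^n\cdot)$ has $C^k$-norm $\lesssim_{d,k}|a|_{k}^{(0)}$, (c) applies as $|\gamma|\le k$, and summing reduces once more to (2). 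In each case the $\Omega$-independence of the constants is inherited from that of the universal bounds on $\trho$ and from the corresponding property in (2).
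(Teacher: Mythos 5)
Your sketch is correct and reproduces the standard localize--dilate--multiply--reassemble argument underlying the cited references \cite{Lo1,Seo202304}, which the paper does not itself reprove; the key devices (the enlarged cut-off $\bar\zeta_{0,(n)}$, uniform $C^k$-bounds on rescaled cut-offs after the dilation $x\mapsto\ee^nx$, bounded overlap, and the three $\bR^d$-facts (a)--(c)) are exactly the right ones, and the bookkeeping in each of the five parts checks out, including the $\Omega$-independence of the constants via the universal bounds on $\trho$. One very minor simplification: in part (3) you do not actually need to invoke (2), since $\zeta_{0,(n)}\bar\zeta_{0,(n)}=\zeta_{0,(n)}$ lets the multiplier $\trho^{\,s}(\ee^n\cdot)\,\bar\zeta_{0,(n)}(\ee^n\cdot)$ act directly on $(\zeta_{0,(n)}f)(\ee^n\cdot)$, after which the two weighted sums over $n$ compare term by term.
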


		\begin{remark}
			Lemma \ref{21.05.20.3} also holds if $f$ is replaced by $\Psi^{-1}f$.
			Therefore, all of the assertions in Lemma \ref{21.05.20.3}, except Lemma \ref{21.05.20.3}.(4), remain valid when $H_{\ast,\ast}^{\ast}(\Omega)$ is replaced by $\Psi H_{\ast,\ast}^{\ast}(\Omega)$.
		\end{remark}

		\begin{lemma}\label{21.09.29.4}\,\,
			
			\begin{enumerate}
				\item $C_c^{\infty}(\Omega)$ is dense in $\Psi H_{p,\theta}^{\gamma}(\Omega)$.
				
				\item  $\Psi H_{p,\theta}^{\gamma}$ is a reflexive Banach space with the dual $\Psi^{-1}H_{p',\theta'}^{-\gamma}(\Omega)$, where
				\begin{align}\label{240121447}
				\frac{1}{p}+\frac{1}{p'}=1\quad\text{and}\quad \frac{\theta}{p}+\frac{\theta'}{p'}=d\,.
				\end{align}
				Moreover, for any $f\in\cD'(\Omega)$, we have
				$$
				\|f\|_{\Psi H_{p,\theta}^{\gamma}(\Omega)}\simeq_{\cI'}\sup_{g\in C_c^{\infty}(\Omega),g\neq 0}\frac{|\langle f,g\rangle| }{\|g\|_{\Psi^{-1}H_{p',\theta'}^{-\gamma}(\Omega)}}\,.
				$$
				
				\item For any $k,\,l\in\bN_0$,
				$$
				\|\big(D^k\Psi\big) D^lf\|_{H^{\gamma}_{p,\theta}(\Omega)}\leq_{\cI',l,k}\|\Psi f\|_{H^{\gamma+l}_{p,\theta-(k+l)p}(\Omega)}
				$$
				
				\item Let $\Phi$ be a regular Harnack function on $\Omega$, and suppose that there exists a constant $N_0>0$ such that $\Psi\leq N_0\Phi$ in $\Omega$.
				Then,
				$$
				\|\Psi f\|_{H^{\gamma}_{p,\theta}(\Omega)}\leq N\|\Phi f\|_{H^{\gamma}_{p,\theta}(\Omega)}.
				$$
				where $N=N(\cI',\mathrm{C}_2(\Phi),N_0)$.
					
				\item Let $p'\in(1,\infty)$ and $\gamma',\,\theta'\in\bR$, and let $\Psi'$ be a regular Harnack function on $\Omega$, 
				if $f\in \Psi H_{p,\theta}^{\gamma}(\Omega)\cap \Psi'H_{p',\theta'}^{\gamma'}(\Omega)$, then there exists $\{f_n\}_{n\in\bN}\subset  C_c^{\infty}(\Omega)$ such that
				$$
				\|f-f_n\|_{\Psi H_{p,\theta}^{\gamma}(\Omega)}+\|f-f_n\|_{\Psi' H_{p',\theta'}^{\gamma'}(\Omega)}\rightarrow 0\quad\text{as}\,\,\,n\rightarrow\infty\,.
				$$
			\end{enumerate}
			
		\end{lemma}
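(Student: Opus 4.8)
The plan is to reduce everything to established properties of the Lototsky spaces $H_{p,\theta}^{\gamma}(\Omega)$ (Lemma \ref{21.05.20.3} and the facts quoted from \cite{Lo1}), using two structural observations. First, since $\Psi\in C^{\infty}(\Omega)$ is positive and $\Psi^{-1}$ is again a regular Harnack function (Example \ref{21.05.18.2}.(3) with $\sigma=-1$), multiplication by $\Psi$ is, by the very definition of the norm, an isometric isomorphism $H_{p,\theta}^{\gamma}(\Omega)\to\Psi H_{p,\theta}^{\gamma}(\Omega)$ which commutes with the $\cD'(\Omega)$--$C_c^{\infty}(\Omega)$ pairing. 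Second, regular Harnack functions, their powers, and their products obey $|D^{\alpha}\Psi|\lesssim\rho^{-|\alpha|}\Psi$; hence quotients such as $(D^{\alpha}\Psi)/\Psi$ or $\Psi/\Phi$ are smooth functions whose scaled derivatives $|\cdot|^{(0)}_m$ are finite for every $m$, i.e.\ admissible multipliers for Lemma \ref{21.05.20.3}.(5).

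\textbf{Parts (1) and (2).} For (1): choose $g_n\in C_c^{\infty}(\Omega)$ with $g_n\to\Psi^{-1}f$ in $H_{p,\theta}^{\gamma}(\Omega)$ (density of $C_c^{\infty}(\Omega)$ in $H_{p,\theta}^{\gamma}(\Omega)$, from \cite{Lo1}, or from the construction of (5) below with $\Psi\equiv 1$); then $\Psi g_n\in C_c^{\infty}(\Omega)$ and $\|f-\Psi g_n\|_{\Psi H_{p,\theta}^{\gamma}}=\|\Psi^{-1}f-g_n\|_{H_{p,\theta}^{\gamma}}\to 0$. For (2): $\Psi H_{p,\theta}^{\gamma}(\Omega)$ is isometric to the reflexive Banach space $H_{p,\theta}^{\gamma}(\Omega)$, hence is itself a reflexive Banach space; for the dual, use the known characterization $\|\Psi^{-1}f\|_{H_{p,\theta}^{\gamma}(\Omega)}\simeq_{\cI}\sup\{\langle\Psi^{-1}f,k\rangle/\|k\|_{H_{p',\theta'}^{-\gamma}(\Omega)}:k\in C_c^{\infty}(\Omega),\,k\neq 0\}$ (with $1/p+1/p'=1$, $\theta/p+\theta'/p'=d$), substitute $k=\Psi g$, and note $\langle\Psi^{-1}f,\Psi g\rangle=\langle f,g\rangle$ and $\|\Psi g\|_{H_{p',\theta'}^{-\gamma}(\Omega)}=\|g\|_{\Psi^{-1}H_{p',\theta'}^{-\gamma}(\Omega)}$; as $g$ ranges over $C_c^{\infty}(\Omega)\setminus\{0\}$ so does $k$, which yields the asserted two-sided estimate and, with (1), the identification of the dual space.

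\textbf{Parts (3) and (4).} For (3): fix multi-indices $\alpha,\beta$ with $|\alpha|=k$, $|\beta|=l$ and argue by induction on $l$, with $\theta$ and $\alpha$ arbitrary. Write $D^{\alpha}\Psi=a\Psi$, $a:=(D^{\alpha}\Psi)/\Psi$; from $|D^{\mu}D^{\alpha}\Psi|\lesssim\rho^{-|\alpha|-|\mu|}\Psi$, $|D^{\mu}\Psi^{-1}|\lesssim\rho^{-|\mu|}\Psi^{-1}$ and $|D^{\mu}\trho^{\,|\alpha|}|\lesssim\rho^{|\alpha|-|\mu|}$ one gets $|\trho^{\,|\alpha|}a|^{(0)}_m<\infty$ for all $m$, with bound depending only on $d,k,\mathrm{C}_2(\Psi)$. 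The base case $l=0$ follows from Lemma \ref{21.05.20.3}.(3),(5): $\|(D^{\alpha}\Psi)f\|_{H_{p,\theta}^{\gamma}}=\|\trho^{-|\alpha|}(\trho^{\,|\alpha|}a)\Psi f\|_{H_{p,\theta}^{\gamma}}\simeq\|(\trho^{\,|\alpha|}a)\Psi f\|_{H_{p,\theta-|\alpha|p}^{\gamma}}\lesssim\|\Psi f\|_{H_{p,\theta-|\alpha|p}^{\gamma}}$. For $l\geq 1$, perform the same reduction and then use the Leibniz identity $\Psi D^{\beta}f=D^{\beta}(\Psi f)-\sum_{\nu<\beta}\binom{\beta}{\nu}(D^{\beta-\nu}\Psi)(D^{\nu}f)$: the term $\|D^{\beta}(\Psi f)\|_{H_{p,\theta-kp}^{\gamma}}$ is bounded by $\|\Psi f\|_{H_{p,\theta-(k+l)p}^{\gamma+l}}$ via Lemma \ref{21.05.20.3}.(4), and each remaining term has $|\beta-\nu|+|\nu|=l$ with $|\nu|<l$, so it is controlled by the induction hypothesis followed by Lemma \ref{21.05.20.3}.(1). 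For (4): $\Psi f=(\Psi\Phi^{-1})(\Phi f)$, where $a:=\Psi\Phi^{-1}$ is a regular Harnack function with $0<a\leq N_0$, so $|a|^{(0)}_m\lesssim_{m,d,\mathrm{C}_2(\Psi),\mathrm{C}_2(\Phi)}N_0$ for every $m$; Lemma \ref{21.05.20.3}.(5) then gives the estimate.

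\textbf{Part (5), the main obstacle.} Here $\Psi\neq\Psi'$ precludes the isometric reduction of (1), so I would instead build an explicit approximation that converges in \emph{every} weighted Sobolev space containing $f$, and then diagonalize. In three steps: (a) \emph{scale truncation}: set $f^{(m)}:=\big(\sum_{|n|\leq m}\zeta_{0,(n)}\big)f$; since $\sum_{|n|>m}\zeta_{0,(n)}$ is supported where $|n|\gtrsim m$ and is a multiplier with $|\cdot|^{(0)}_k$ bounded uniformly in $m$, the quantity $\|f-f^{(m)}\|$ in any such space is dominated by the tail of the convergent series defining the norm, hence tends to $0$; (b) \emph{spatial truncation}: multiply $f^{(m)}$ by $\xi(\cdot/M)$ with $\xi\in C_c^{\infty}(\bR^d)$ equal to $1$ near the origin; for fixed $m$ only finitely many $\zeta_{0,(n)}$-blocks are present, and on each, multiplication by $1-\xi(e^{n}\cdot/M)$ drives the corresponding $H^{\gamma''}_{p''}(\bR^d)$-block to $0$ as $M\to\infty$, so $\xi(\cdot/M)f^{(m)}\to f^{(m)}$, while $\xi(\cdot/M)f^{(m)}$ is a distribution with compact support in $\Omega$ (its support lies in a bounded set on which $\rho\gtrsim e^{-m}$); (c) \emph{mollification}: for $h\in\cD'(\Omega)$ with $\mathrm{supp}(h)\subset K\Subset\Omega$, the norm $\|\cdot\|_{\Psi''H^{\gamma''}_{p'',\theta''}(\Omega)}$ restricted to distributions supported in $K$ is equivalent to $\|\cdot\|_{H^{\gamma''}_{p''}(\bR^d)}$ (finitely many scales contribute and $\Psi'',\trho$ are smooth and comparable to constants on $K$), and $h*\varphi_{\epsilon}\to h$ in $H^{\gamma''}_{p''}(\bR^d)$ with $h*\varphi_{\epsilon}\in C_c^{\infty}(\Omega)$ for small $\epsilon$. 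Each of (a)--(c) converges simultaneously in $\Psi H_{p,\theta}^{\gamma}(\Omega)$ and $\Psi'H_{p',\theta'}^{\gamma'}(\Omega)$, so a standard three-fold diagonalization over $(m,M,\epsilon)$ yields one sequence $f_n\in C_c^{\infty}(\Omega)$ with the required convergence (and taking $\Psi'=\Psi$, $\gamma'=\gamma$, etc.\ reproves (1)). The delicate points --- the uniform multiplier and tail estimates in (a)--(b) and the local norm equivalence in (c) --- are exactly those of the single-space density argument in \cite{Lo1}; the only new ingredient is that none of these convergences depends on the parameters $(p,\theta,\gamma,\Psi)$.
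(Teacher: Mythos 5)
Your proposal is correct and follows essentially the same route as the paper: parts (1), (2), (4) use the isometric reduction $f\mapsto\Psi^{-1}f$ and the multiplier bound from Lemma \ref{21.05.20.3}.(5) exactly as the paper does; part (3) differs only in that you run an induction on $l$ where the paper avoids it by writing $\Psi D^l f=\Psi D^l(\Psi^{-1}\cdot\Psi f)$ and applying Leibniz once, but both arguments rest on the same multiplier and weight-shift facts. Your three-stage construction for (5) (scale truncation, spatial truncation, mollification, diagonalize) is precisely the content of the paper's Lemma \ref{22.04.11.3}, to which the paper simply defers.
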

		\begin{proof} 
			(1), (2) When $\Psi\equiv 1$, the results can be found in \cite{Lo1} (see also \cite[Proposition A.2]{Seo202304}).
			Since the map $f\mapsto \Psi^{-1}f$ is an isometric isomorphism from $\Psi H_{p,\theta}^{\gamma}(\Omega)$ to $H_{p,\theta}^{\gamma}(\Omega)$, the assertion follows immediately.
			
			(3) Since $\Psi$ and $\trho$ are regular Harnack functions, we obtain that for any $k,\,m\in\bN_0$,
			$$
			\Big|\frac{D^k\Psi}{\trho^{\,-k}\Psi}\Big|^{(0)}_m\leq N(d,k,m,\mathrm{C}_2(\Psi))\,.
			$$
			By Lemmas \ref{21.05.20.3}.(5) and (3), we have
			\begin{align}\label{21.11.16.2}
				\|\big(D^k\Psi\big) f\|_{H^{\gamma}_{p,\theta}(\Omega)}\lesssim_{\cI',k}\|\trho^{\,-k}\Psi f\|_{H^{\gamma}_{p,\theta}(\Omega)}\lesssim_{\cI',k}\|\Psi f\|_{H^{\gamma}_{p,\theta-kp}(\Omega)}\,.
			\end{align}
			Therefore, it suffices to prove that for any $l\in\bN$,
			$$
			\|\Psi D^lf\|_{H^{\gamma}_{p,\theta}(\Omega)}\lesssim_{\cI',l}\|\Psi f\|_{H^{\gamma+l}_{p,\theta-lp}(\Omega)}\,.
			$$
			Recall that $\Psi^{-1}$ is a regular Harnack function, and $\mathrm{C}_2(\Psi^{-1})$ can be chosen to depend only on $\mathrm{C}_2(\Psi)$ and $d$.
			It follows from Leibniz's rule, \eqref{21.11.16.2}, and Lemma \ref{21.05.20.3}.(4) and (1) that
			\begin{alignat*}{3}
				\|\Psi D^l(\Psi^{-1}\Psi f)\|_{H^{\gamma}_{p,\theta}(\Omega)}&\lesssim_{d,l}\,&&\sum_{n=0}^l\|\Psi D^{l-n}(\Psi^{-1})\cdot D^n(\Psi f)\|_{H^{\gamma}_{p,\theta}(\Omega)}\\
				&\lesssim_N &&\sum_{n=0}^l\| D^n(\Psi f)\|_{H^{\gamma}_{p,\theta-(l-n)p}(\Omega)}\lesssim_N\|\Psi f\|_{H^{\gamma+l}_{p,\theta-lp}(\Omega)}\,.
			\end{alignat*}
			
			(4) For any $k\in\bN_0$, 
			$$
			|\Psi\Phi^{-1}|^{(0)}_k\leq N(d,k,\mathrm{C}_2(\Psi),\mathrm{C}_2(\Phi), N_0)\,.
			$$
			Therefore, it follows from Lemma \ref{21.05.20.3}.(5) that
			$$
			\|\Psi f\|_{H_{p,\theta}^{\gamma}(\Omega)}=\|\Psi \Phi^{-1}(\Phi f)\|_{H_{p,\theta}^{\gamma}(\Omega)}\lesssim_N \|\Phi f\|_{H_{p,\theta}^{\gamma}(\Omega)}\,.
			$$
			
			(5) It directly follows from Lemma \ref{22.04.11.3}.
		\end{proof}

		\begin{remark}\label{21.10.05.2}
			It follows from Lemma \ref{21.09.29.4}.(4) that for regular Harnack functions $\Psi$ and $\Phi$, if 
			$N^{-1}\Phi\leq \Psi\leq N\Phi$
			for some constant $N>0$,
			then $\Psi H_{p,\theta}^{\gamma}(\Omega)$ coincides with $\Phi H_{p,\theta}^{\gamma}(\Omega)$.
			Therefore, applying Lemma \ref{21.05.20.3}.(3), we obtain that if $\Psi$ is a regularization of $\rho^\sigma$ ($\sigma\in\bR$), then $\Psi H_{p,\theta}^{\gamma}(\Omega)= H_{p,\theta-\sigma p}^{\gamma}(\Omega)$.
		\end{remark}

		\begin{lemma}\label{220512433}
			Let $f\in\cD'(\Omega)$.
			
			\begin{enumerate} 
				\item If $\gamma\in\bN_0$, then 
				\begin{align*}
					\|f\|^p_{\Psi H_{p,\theta}^\gamma(\Omega)}\simeq_{\cI'}  \sum_{|\alpha|\leq \gamma}\int_{\Omega}\big|\rho^{|\alpha|}D^\alpha f\big|^p\Psi^{-p}\rho^{\theta-d}\dd x\,.
				\end{align*}
				
				\item For any $k\in\bN$, 
				\begin{align}\label{240121445}
					\|f\|_{\Psi H_{p,\theta}^{\gamma}(\Omega)}\simeq_{\cI',k} \inf\bigg\{\sum_{|\alpha|\leq k}\|f_{\alpha}\|_{\Psi H_{p,\theta-|\alpha|p}^{\gamma+k}(\Omega)}:\,f=\sum_{|\alpha|\leq k}D^{\alpha}f_{\alpha}\bigg\}\,.
				\end{align}
				In particular, when $\gamma=-\bN$, we have
				\begin{align*}
					\|f\|_{\Psi H_{p,\theta}^{\gamma}(\Omega)}\simeq_{\cI'} \inf\bigg\{\sum_{|\alpha|\leq -\gamma}\|f_{\alpha}\|_{\Psi L_{p,\theta-|\alpha|p}(\Omega)}:\,f=\sum_{|\alpha|\leq -\gamma}D^{\alpha}f_{\alpha}\bigg\}\,.
				\end{align*}					
			\end{enumerate}

		\end{lemma}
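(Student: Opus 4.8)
The plan is to reduce both statements to their unweighted counterparts for $H_{p,\theta}^{\gamma}(\Omega)$ and then to move the weight $\Psi$ in and out. Since $f\mapsto\Psi^{-1}f$ is an isometric isomorphism of $\Psi H_{p,\theta}^{\gamma}(\Omega)$ onto $H_{p,\theta}^{\gamma}(\Omega)$ (Definition \ref{220610533}) and $\Psi^{-1}$ is again a regular Harnack function (Example \ref{21.05.18.2}.(3)), the only real work in the transfer is to commute $D^{\alpha}$ past $\Psi^{\pm1}$ by Leibniz's rule. The recurring device I would use: whenever Leibniz produces a factor $D^{\beta}\Psi$ (resp.\ $D^{\beta}\Psi^{-1}$), I cancel the undifferentiated $\Psi^{-1}$ (resp.\ $\Psi$) against it, writing $\Psi^{-1}D^{\beta}\Psi=\trho^{\,-|\beta|}b$ (resp.\ $\Psi D^{\beta}\Psi^{-1}=\trho^{\,-|\beta|}b$) with $b\in C^{\infty}(\Omega)$; the bounds $|D^{m}\Psi^{\pm1}|\lesssim\rho^{-m}\Psi^{\pm1}$ and $|D^{m}\trho\,|\lesssim\rho^{1-m}$ give $|b|_{m}^{(0)}<\infty$ for every $m$, so $b$ is absorbed by Lemma \ref{21.05.20.3}.(5) while the factor $\trho^{\,-|\beta|}$ only shifts $\theta\mapsto\theta-|\beta|p$ by Lemma \ref{21.05.20.3}.(3).

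For part (1) I would first record the identity $\|F\|_{L_{p,\theta}(\Omega)}^{p}\simeq_{\cI}\int_{\Omega}|F|^{p}\rho^{\theta-d}\dd x$: unwinding Definition \ref{220610533}, substituting $x=\ee^{n}y$ in the $n$-th summand, and using $\rho(x)^{\theta-d}\simeq\ee^{n(\theta-d)}$ on $\mathrm{supp}\,\zeta_{0,(n)}$ together with $\sum_{n}\zeta_{0,(n)}=1$, $\zeta_{0,(n)}^{p}\le\zeta_{0,(n)}$, and $\sum_{n}\zeta_{0,(n)}^{p}\simeq_{p}1_{\Omega}$ (the latter because the $\zeta_{0}(\ee^{n}\,\cdot\,)$ have bounded overlap and sum to $1$). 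This settles $\gamma=0$; for $\gamma\in\bN$, Lemma \ref{21.05.20.3}.(4) with $k=\gamma$ gives $\|g\|_{H^{\gamma}_{p,\theta}(\Omega)}\simeq_{\cI}\sum_{i=0}^{\gamma}\|D^{i}g\|_{L_{p,\theta+ip}(\Omega)}$, and combining with the $L_{p,\theta}$-identity and the equivalence of $\ell^{p}$ and $\ell^{1}$ norms on finitely many indices yields $\|g\|_{H^{\gamma}_{p,\theta}(\Omega)}^{p}\simeq_{\cI}\sum_{|\alpha|\le\gamma}\int_{\Omega}|\rho^{|\alpha|}D^{\alpha}g|^{p}\rho^{\theta-d}\dd x$. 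Taking $g=\Psi^{-1}f$ and comparing, by Leibniz, $\rho^{|\alpha|}D^{\alpha}(\Psi^{-1}f)$ with $\sum_{\beta\le\alpha}\rho^{|\beta|}\Psi^{-1}D^{\beta}f$, and symmetrically $\rho^{|\alpha|}\Psi^{-1}D^{\alpha}f$ with $\sum_{\beta\le\alpha}\rho^{|\beta|}D^{\beta}(\Psi^{-1}f)$, finishes part (1).

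For part (2), the bound $\|f\|_{\Psi H^{\gamma}_{p,\theta}(\Omega)}\lesssim\inf\{\cdots\}$ is routine: for a given decomposition $f=\sum_{|\alpha|\le k}D^{\alpha}f_{\alpha}$ apply the triangle inequality and then $\|D^{\alpha}f_{\alpha}\|_{\Psi H^{\gamma}_{p,\theta}(\Omega)}\lesssim\|f_{\alpha}\|_{\Psi H^{\gamma+k}_{p,\theta-|\alpha|p}(\Omega)}$ for each $|\alpha|\le k$, which follows from the Leibniz/absorption device above and Lemma \ref{21.05.20.3}.(4). The reverse bound I would prove by duality rather than by constructing a decomposition. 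Put $Y:=\{f\in\cD'(\Omega):f=\sum_{|\alpha|\le k}D^{\alpha}f_{\alpha},\ f_{\alpha}\in\Psi H^{\gamma+k}_{p,\theta-|\alpha|p}(\Omega)\}$ with $\|f\|_{Y}:=\inf\sum_{|\alpha|\le k}\|f_{\alpha}\|_{\Psi H^{\gamma+k}_{p,\theta-|\alpha|p}(\Omega)}$; this is a Banach space, the quotient of the $\ell^{1}$-sum $\bigoplus_{|\alpha|\le k}\Psi H^{\gamma+k}_{p,\theta-|\alpha|p}(\Omega)$ by the closed kernel $N$ of the summation map. Using that the dual of an $\ell^{1}$-sum is the sup-normed product of the duals, identifying $(\Psi H^{\gamma+k}_{p,\theta-|\alpha|p}(\Omega))^{*}=\Psi^{-1}H^{-\gamma-k}_{p',\theta'+|\alpha|p'}(\Omega)$ by Lemma \ref{21.09.29.4}.(2) (with $\tfrac1p+\tfrac1{p'}=1$, $\tfrac\theta p+\tfrac{\theta'}{p'}=d$, and $(\theta-|\alpha|p)'=\theta'+|\alpha|p'$), and computing $N^{\perp}$ --- testing against the elements $(f_{\alpha})$ with $f_{0}=D^{\beta}\varphi$, $f_{\beta}=-\varphi$, $\varphi\in C_{c}^{\infty}(\Omega)$, forces every functional on $Y$ to have the form $f\mapsto\langle f,h\rangle$ --- one obtains $Y^{*}\cong\{h:D^{\alpha}h\in\Psi^{-1}H^{-\gamma-k}_{p',\theta'+|\alpha|p'}(\Omega)\ \forall|\alpha|\le k\}$ with norm $\max_{|\alpha|\le k}\|D^{\alpha}h\|_{\Psi^{-1}H^{-\gamma-k}_{p',\theta'+|\alpha|p'}(\Omega)}$. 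A $\Psi$-weighted version of Lemma \ref{21.05.20.3}.(4), namely $\sum_{i=0}^{k}\|D^{i}h\|_{\Psi^{-1}H^{-\gamma-k}_{p',\theta'+ip'}(\Omega)}\simeq_{\cI'}\|h\|_{\Psi^{-1}H^{-\gamma}_{p',\theta'}(\Omega)}$ --- with ``$\lesssim$'' from Lemma \ref{21.09.29.4}.(3) and Lemma \ref{21.05.20.3}.(1), and ``$\gtrsim$'' from Lemma \ref{21.05.20.3}.(4) applied to $\Psi h$ plus the Leibniz/absorption device --- then identifies $Y^{*}$ with $\Psi^{-1}H^{-\gamma}_{p',\theta'}(\Omega)$ up to equivalent norms. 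By Lemma \ref{21.09.29.4}.(2) this space is exactly $(\Psi H^{\gamma}_{p,\theta}(\Omega))^{*}$ realized by the same pairing, and the already-proved upper bound gives a bounded injection $\iota:Y\hookrightarrow\Psi H^{\gamma}_{p,\theta}(\Omega)$ whose adjoint becomes, under these identifications, the identity on $\Psi^{-1}H^{-\gamma}_{p',\theta'}(\Omega)$; hence $\iota^{*}$ is bounded below, $\iota$ is onto with bounded inverse, and $\|f\|_{Y}\simeq\|f\|_{\Psi H^{\gamma}_{p,\theta}(\Omega)}$, which is the reverse bound. The ``in particular'' clause is the case $k=-\gamma$, where $\gamma+k=0$ and $\Psi H^{0}_{p,\theta-|\alpha|p}=\Psi L_{p,\theta-|\alpha|p}$.

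I expect the main obstacle to be the reverse inequality in part (2): both the annihilator/quotient bookkeeping pinning down $Y^{*}$ and, inside it, the $\Psi$-weighted analogue of Lemma \ref{21.05.20.3}.(4) --- which, as the remark following that lemma cautions, does not come for free from the isometry --- must be produced carefully by the Leibniz-plus-absorption scheme.
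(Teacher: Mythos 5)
Your part (1) follows the paper's own line (reduce to $\gamma=0$ via \eqref{240121500}, compute the $\gamma=0$ case from the definition, and transfer the Leibniz commutator terms using $|D^{\beta}\Psi^{\pm1}|\lesssim\rho^{-|\beta|}\Psi^{\pm1}$); you merely spell out the absorption step that the paper leaves implicit. Part (2) is where your route genuinely diverges. The paper splits \eqref{240121445} into the two inequalities and handles them oppositely to you: for ``$\|f\|\lesssim\inf$'' it uses duality --- pairing $f=\sum D^{\alpha}f_{\alpha}$ against $g\in C_{c}^{\infty}(\Omega)$, moving $D^{\alpha}$ onto $g$, and invoking Lemmas \ref{21.09.29.4}.(2),(3) and \ref{21.05.20.3}.(1) --- while for ``$\inf\lesssim\|f\|$'' it constructs a decomposition explicitly by iterating Lemma \ref{22.02.16.1}, which builds $\Lambda_{0}=(1-\Delta)^{-1}$ and $\Lambda_{i}=-D_{i}(1-\Delta)^{-1}$ in each dyadic chart and then transfers to the $\Psi$-weighted scale. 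You instead prove ``$\|f\|\lesssim\inf$'' directly (triangle inequality plus Lemma \ref{21.09.29.4}.(3) and \ref{21.05.20.3}.(1)), and obtain ``$\inf\lesssim\|f\|$'' abstractly, by realizing the infimum as the quotient norm on $Y=\bigoplus_{\ell^{1}}\Psi H^{\gamma+k}_{p,\theta-|\alpha|p}(\Omega)/N$, computing $Y^{*}\cong N^{\perp}\cong\Psi^{-1}H^{-\gamma}_{p',\theta'}(\Omega)$ via $\ell^{1}$-sum duality, the $\Psi$-weighted analogue of \eqref{240121500}, and Lemma \ref{21.09.29.4}.(2), and then reading off surjectivity of the inclusion $\iota:Y\to\Psi H^{\gamma}_{p,\theta}(\Omega)$ from the fact that $\iota^{*}$ is (an isomorphism, hence) bounded below. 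Both arguments are sound. Your version dispenses with Lemma \ref{22.02.16.1} entirely, which is aesthetically cleaner for this lemma in isolation, but it buys that economy with more functional-analytic scaffolding (closed-range/open-mapping theorems, dual of a quotient, annihilator computation in the $\ell^{\infty}$-product) and it only establishes existence of the decomposition, not the concrete $\Lambda_{i}$ that the paper re-uses in the proof of Lemma \ref{21.11.12.1}. You are also right to flag the $\Psi$-weighted version of \eqref{240121500} as the delicate point --- it is exactly what the remark after Lemma \ref{21.05.20.3} cautions does not transfer by isometry alone --- and the Leibniz-absorption derivation you sketch for it (using Lemma \ref{21.09.29.4}.(3) for ``$\lesssim$'' and \eqref{240121500} applied to $\Psi h$ plus absorption for ``$\gtrsim$'') is correct. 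In a full write-up the computation of $N^{\perp}$ (the step ``$h_{\beta}=(-1)^{|\beta|}D^{\beta}h_{0}$'' from testing against $(D^{\beta}\varphi,0,\dots,-\varphi,\dots,0)$) should be spelled out with care about which side of the pairing carries the distribution, but the idea is right.
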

		\begin{proof}
			(1) By \eqref{240121500}, it suffices to prove the case of $\gamma=0$. 
			This case is proved by the following:
			\begin{align*}
			\|f\|_{\Psi L_{p,\theta}(\Omega)}^p:=&\sum_{n\in\bZ}\ee^{n\theta}\int_\Omega\big|\big(\zeta_{0,(n)}\Psi^{-1} f\big)(\ee^{n}x)\big|^p\dd x\\
			=&\int_\Omega\bigg(\sum_{n\in\bZ}\ee^{n(\theta-d)}|\zeta_{0,(n)}|^p\bigg)|f|^p\Psi^{-p}\dd x\simeq_{d,p,\theta} \int_\Omega\rho^{\theta-d}|f|^p\Psi^{-p}\dd x\,,
			\end{align*}
			where the last similarity is implied by properties of $\zeta_{0,(n)}$ (see \eqref{230130543} and \eqref{230130542}).
			
			(2)	Repeatedly applying Lemma \ref{22.02.16.1}, we obtain $\{f_{\alpha}\}_{|\alpha|\leq k}\subset \cD'(\Omega)$ such that
			\begin{align*}
				f=\sum_{|\alpha|\leq k}D^{\alpha}f_\alpha\quad \text{and}\quad \sum_{|\alpha|\leq k}\|f_\alpha\|_{\Psi H_{p,\theta-|\alpha|p}^{\gamma+k}(\Omega)}\lesssim_{\cI',k}\|f\|_{\Psi H_{p,\theta}^{\gamma}(\Omega)}\,.
			\end{align*}
			Therefore we obtain \eqref{240121445} where `$\simeq_{\cI,k}$' is replaced by `$\gtrsim_{\cI,k}$'.
			
			For the inverse inequality, let $f=\sum_{|\alpha|\leq n}D^{\alpha}f_{\alpha}$ where $f_{\alpha}\in H^{\gamma+n}_{p,\theta-|\alpha|p}(\Omega)$.
			It follows from Lemma \ref{21.09.29.4}.(2) and Lemmas \ref{21.09.29.4}.(3) and \ref{21.05.20.3}.(1) that for any $g\in C_c^{\infty}(\Omega)$,
			\begin{alignat*}{2}
				|\langle f,g\rangle |\,&= &&\bigg|\sum_{|\alpha|\leq n}\big\langle \Psi^{-1}f_{\alpha},\Psi D^{\alpha}g\rangle\bigg|\\
				&\lesssim_{\cI',n} &&\sum_{|\alpha|\leq n}\Big(\|\Psi^{-1}f_{\alpha}\|_{H_{\theta-|\alpha|p}^{\gamma+n}(\Omega)}\|\Psi D^{\alpha}g\|_{H_{p',\theta'+|\alpha|p'}^{-\gamma-n}(\Omega)}\Big)\\
				&\lesssim_{\cI',n} \Big(&&\sum_{|\alpha|\leq n}\|\Psi^{-1}f_{\alpha}\|_{H_{\theta-|\alpha|p}^{\gamma+n}(\Omega)}\Big)\|\Psi g\|_{H^{-\gamma}_{p',\theta'}(\Omega)}\,,
			\end{alignat*}
			where $p'$ and $\theta'$ are constants in \eqref{240121447}.
			By applying Lemma \ref{21.09.29.4}.(2), we have
			$$
			\| f\|_{\Psi H_{p,\theta}^{\gamma}(\Omega)}\lesssim_{\cI',n}\inf\Big\{\sum_{|\alpha|\leq n}\|  f_{\alpha}\|_{\Psi H^{\gamma+n}_{p,\theta-|\alpha|p}(\Omega)}:\,f=\sum_{|\alpha|\leq n}D^{\alpha}f_{\alpha}\Big\}\,.
			$$
			This completes the proof.
		\end{proof}
		
			We end this subsection with a Sobolev-H\"older embedding theorem for the spaces $\Psi H_{p,\theta}^{\gamma}(\Omega)$.
			For $k\in\bN_0$, $\alpha\in(0,1]$, and $\delta\in\bR$, we define the weighted H\"older norm
		\begin{align*}
			|f|^{(\delta)}_{k,\alpha}:=\sum_{i=0}^k\sup_{\Omega}\left|\rho^{\delta+i}D^if\right|+\sup_{\substack{x,y\in\Omega\\x\neq y}}\frac{\left|\big(\trho^{\delta+k+\alpha} D^kf\big)(x)-\big(\trho^{\delta+k+\alpha} D^kf\big)(y)\right|}{|x-y|^{\alpha}}\,.
		\end{align*}
		
		\begin{prop}\label{220512537}
			Let $k\in\bN_0$ and $\alpha\in(0,1]$.
			\begin{enumerate}
				\item For any $\delta\in\bR$,
				\begin{align*}
					\left|\Psi^{-1} f\right|^{(\delta)}_{k,\alpha}\simeq_{N} &\,\sum_{i=0}^k\sup_{x\in\Omega}\left|\Psi(x)^{-1}\rho(x)^{\delta+i} D^if(x)\right|\\
					&+\sup_{x\in\Omega}\bigg(\Psi^{-1}(x)\rho^{\delta+k+\alpha}(x)\sup_{y:0<|x-y|<\frac{\rho(x)}{2}}\frac{\left|D^kf(x)-D^kf(y)\right|}{|x-y|^{\alpha}}\bigg)\,,
				\end{align*}
				where $N=N(d,k,\alpha,\delta,\mathrm{C}_2(\Psi))$.
			
				\item
				If $\alpha\in(0,1)$ and $k+\alpha\leq \gamma-d/p$,
				then for any $f\in \Psi H^{\gamma}_{p,\theta}(\Omega)$,
				\begin{align*}
					\left|\Psi^{-1} f\right|^{(\theta/p)}_{k,\alpha}\lesssim_{\cI',k,\alpha} \|f\|_{\Psi H^{\gamma}_{p,\theta}(\Omega)}\,.
				\end{align*}
			\end{enumerate}
		\end{prop}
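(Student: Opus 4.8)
Part~(1) is the essential ingredient — a purely local statement comparing the \emph{global} weighted H\"older seminorm built into $|\,\cdot\,|^{(\delta)}_{k,\alpha}$ with its \emph{dyadically localized} version, in which the difference quotient of $D^kf$ is taken only over balls $B(x,\rho(x)/2)$. Part~(2) will then follow by feeding the standard rescaled Sobolev--H\"older embedding on $\bR^d$ into the localized quantities of Part~(1). I describe the two parts in turn.

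\textbf{Part (1).} I would first strip $\Psi$ out from under the derivatives. By Leibniz's rule $D^i(\Psi^{-1}f)=\sum_{j\le i}\binom{i}{j}D^{i-j}(\Psi^{-1})\,D^jf$, and since $\Psi$ and $\Psi^{-1}$ are regular Harnack functions (Example~\ref{21.05.18.2}.(3), Definition~\ref{21.10.14.1}.(2)) one has $|D^m(\Psi^{\pm1})|\lesssim\rho^{-m}\Psi^{\pm1}$; this yields $\sum_{i\le k}\sup_\Omega|\rho^{\delta+i}D^i(\Psi^{-1}f)|\simeq_{d,k,\mathrm{C}_2(\Psi)}\sum_{i\le k}\sup_\Omega|\Psi^{-1}\rho^{\delta+i}D^if|=:\mathcal Z$, so the zeroth-order parts of the two sides match. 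The heart of the argument is then a \emph{localization lemma}: for $g\in C^{0,\alpha}_{\mathrm{loc}}(\Omega)$ and $\beta\in\bR$, with $A:=\sup_{x\ne y}|x-y|^{-\alpha}|(\trho^{\,\beta}g)(x)-(\trho^{\,\beta}g)(y)|$ and $B_0:=\sup_{x}\rho(x)^{\beta}\sup_{0<|y-x|<\rho(x)/2}|x-y|^{-\alpha}|g(x)-g(y)|$, one has $A\simeq_{d,\alpha,\beta}\sup_\Omega|\rho^{\beta-\alpha}g|+B_0$. I would prove this by the dichotomy ``near'' ($|x-y|<\rho(x)/4$, so $\rho(y)\simeq\rho(x)$) vs.\ ``far'' ($|x-y|\ge\rho(x)/4$, which also forces $|x-y|\gtrsim\rho(y)$): in the near regime expand $(\trho^{\,\beta}g)(x)-(\trho^{\,\beta}g)(y)=\trho(x)^\beta(g(x)-g(y))+(\trho(x)^\beta-\trho(y)^\beta)g(y)$, bound $|\trho(x)^\beta-\trho(y)^\beta|\lesssim|x-y|\,\rho(x)^{\beta-1}$ by the mean value theorem with $|\nabla\trho|\lesssim1$ and $\trho\simeq\rho$, and absorb the error term into $\sup|\rho^{\beta-\alpha}g|$ using the factor $|x-y|^{1-\alpha}\lesssim\rho(x)^{1-\alpha}$ (the reverse inequality is symmetric); in the far regime simply use $|(\trho^{\,\beta}g)(z)|\lesssim\rho(z)^\alpha\sup|\rho^{\beta-\alpha}g|\lesssim|x-y|^\alpha\sup|\rho^{\beta-\alpha}g|$ for $z\in\{x,y\}$. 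Applying the lemma with $g=D^k(\Psi^{-1}f)$, $\beta=\delta+k+\alpha$ (and noting $\sup|\rho^{\delta+k}D^k(\Psi^{-1}f)|\lesssim\mathcal Z$) reduces $|\Psi^{-1}f|^{(\delta)}_{k,\alpha}$, modulo an additive $\mathcal Z$, to the localized quantity $B_0$ for $D^k(\Psi^{-1}f)$. The last step is to transfer $B_0$ between $D^k(\Psi^{-1}f)$ and $\Psi^{-1}D^kf$ by Leibniz again: the top term is $\Psi^{-1}D^kf$, the ``weight-transfer'' term from $\Psi^{-1}(x)-\Psi^{-1}(y)$ (resp.\ $\Psi(x)-\Psi(y)$ in the other direction) is $\lesssim|x-y|\,\rho(x)^{-1}\Psi^{\pm1}(x)$ on $B(x,\rho(x)/2)$, and each lower-order ($j<k$) term carries a difference quotient of $D^jf$ or of $D^m\Psi^{\pm1}$, hence a factor $|x-y|^{1-\alpha}\lesssim\rho(x)^{1-\alpha}$; after multiplying by $\rho(x)^{\delta+k+\alpha}$ all of these are absorbed into $\mathcal Z$. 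This produces the claimed equivalence with $N=N(d,k,\alpha,\delta,\mathrm{C}_2(\Psi))$.

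\textbf{Part (2).} Set $h:=\Psi^{-1}f\in H^\gamma_{p,\theta}(\Omega)$, so $\|h\|_{H^\gamma_{p,\theta}(\Omega)}=\|f\|_{\Psi H^\gamma_{p,\theta}(\Omega)}$; it suffices to bound $|h|^{(\theta/p)}_{k,\alpha}$ by this norm. I would argue pointwise: fix $x_0\in\Omega$, pick $n\in\bZ$ with $\ee^{n-1}\le\trho(x_0)\le\ee^n$ (so $\ee^n\simeq_d\rho(x_0)$), and using $|\nabla\trho|\lesssim1$, $\trho\simeq\rho$, fix a dimensional $c\in(0,1/2)$ with $B(x_0,c\,\ee^n)\subset\Omega$ and $\trho\in[\ee^{n-2},\ee^{n+2}]$ there. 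With $\eta\in C_c^\infty(\bR_+)$, $\eta\equiv1$ on $[\ee^{-2},\ee^2]$, and $\eta_{(n)}(x):=\eta(\ee^{-n}\trho(x))1_\Omega(x)$, we have $\eta_{(n)}\equiv1$ on $B(x_0,c\,\ee^n)$ and, by Lemma~\ref{21.05.20.3}.(2), $\ee^{n\theta}\|(\eta_{(n)}h)(\ee^n\cdot)\|_{H^\gamma_p(\bR^d)}^p\lesssim_{\cI}\|h\|_{H^\gamma_{p,\theta}(\Omega)}^p$. The rescaled function $v(z):=(\eta_{(n)}h)(\ee^nz)$ coincides with $h(\ee^nz)$ on $B(\ee^{-n}x_0,c)$, and the Sobolev--H\"older embedding $H^\gamma_p(\bR^d)\hookrightarrow C^{k,\alpha}(\bR^d)$ (valid since $\alpha\in(0,1)$ and $k+\alpha\le\gamma-d/p$) gives $\ee^{n\theta/p}\|v\|_{C^{k,\alpha}(\bR^d)}\lesssim_{\cI,k,\alpha}\|h\|_{H^\gamma_{p,\theta}(\Omega)}$. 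Scaling back yields $\rho(x_0)^{\theta/p+i}|D^ih(x_0)|\lesssim\|h\|_{H^\gamma_{p,\theta}(\Omega)}$ for $i\le k$ and $\rho(x_0)^{\theta/p+k+\alpha}|x_0-y|^{-\alpha}|D^kh(x_0)-D^kh(y)|\lesssim\|h\|_{H^\gamma_{p,\theta}(\Omega)}$ for $|x_0-y|<c\,\ee^n$; since $x_0$ is arbitrary, $h\in C^k_{\mathrm{loc}}(\Omega)$, $\sum_{i\le k}\sup_\Omega|\rho^{\theta/p+i}D^ih|\lesssim\|h\|_{H^\gamma_{p,\theta}(\Omega)}$, and for $c\,\ee^n\le|x_0-y|<\rho(x_0)/2$ (where $\rho(y)\simeq\rho(x_0)$) the triangle inequality with this zeroth-order bound gives the same H\"older bound. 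Hence the localized H\"older supremum (the quantity $B_0$ of Part~(1) for $D^kh$, $\Psi\equiv1$) is $\lesssim\|h\|_{H^\gamma_{p,\theta}(\Omega)}$, and Part~(1) with $\Psi\equiv1$ applied to $h$ upgrades this to $|h|^{(\theta/p)}_{k,\alpha}\lesssim_{\cI,k,\alpha}\|h\|_{H^\gamma_{p,\theta}(\Omega)}=\|f\|_{\Psi H^\gamma_{p,\theta}(\Omega)}$.

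\textbf{Main obstacle.} The only conceptual point is the near/far dichotomy for the weighted H\"older seminorm in Part~(1); the genuine work is the ensuing Leibniz bookkeeping, where several families of terms (top, weight-transfer, lower-order, in both directions of the equivalence) must each be shown to be absorbed into $\mathcal Z$ with constants depending only on $d,k,\alpha,\delta,\mathrm{C}_2(\Psi)$. Part~(2) is then routine once the ``fattened'' cutoff $\eta_{(n)}$ is arranged to equal $1$ on a ball of radius $\simeq\rho(x_0)$, so that the rescaled Sobolev embedding outputs exactly the weight $\ee^{n\theta/p}\simeq\rho(x_0)^{\theta/p}$.
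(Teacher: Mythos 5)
Your Part~(2) argument is essentially the same as the paper's: reduce to $\Psi\equiv1$, pick $n$ with $\ee^n\simeq\rho(x_0)$, apply the rescaled Sobolev--H\"older embedding to a cutoff which equals $1$ on a ball around $x_0$ of radius $\simeq\rho(x_0)$ (the paper uses $\sum_{|n-n_0|\leq B}\zeta_{0,(n)}$ where you use a single fattened $\eta_{(n)}$ controlled via Lemma~\ref{21.05.20.3}.(2), but these are interchangeable), handle $c\ee^n\le|x_0-y|<\rho(x_0)/2$ by the triangle inequality against the zeroth-order bound, and then invoke Part~(1) with $\Psi\equiv1$.

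For Part~(1) the paper gives no proof (``we leave the proof to the reader''), and your reconstruction — Leibniz to match the zeroth-order sums, a near/far dichotomy to localize the H\"older seminorm, then Leibniz again to transfer between $D^k(\Psi^{-1}f)$ and $\Psi^{-1}D^kf$ inside the local quotient — is sound. One small correction: your intermediate localization lemma, stated as $A\simeq_{d,\alpha,\beta}\sup_\Omega|\rho^{\beta-\alpha}g|+B_0$, fails in the $\gtrsim$ direction. Indeed with $g=\trho^{-\beta}$ one has $\trho^{\beta}g\equiv1$, so $A=0$ and $B_0=0$, yet $\sup_\Omega|\rho^{\beta-\alpha}g|\simeq\sup_\Omega\rho^{-\alpha}=+\infty$ as soon as $\partial\Omega\neq\emptyset$. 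What your dichotomy actually proves is $A\lesssim\sup_\Omega|\rho^{\beta-\alpha}g|+B_0$ and $B_0\lesssim\sup_\Omega|\rho^{\beta-\alpha}g|+A$, i.e.\ $\sup_\Omega|\rho^{\beta-\alpha}g|+A\simeq\sup_\Omega|\rho^{\beta-\alpha}g|+B_0$ — and since you apply it ``modulo an additive $\mathcal Z$'' with $\sup_\Omega|\rho^{\delta+k}D^k(\Psi^{-1}f)|\lesssim\mathcal Z$, the application is correct as written. Fixing the lemma's statement to the symmetric form eliminates the only flaw.
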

		\begin{proof}
			(1)	This result follows from a direct calculation and the definition of regular Harnack functions.			
			Hence, we omit the proof.
			
			(2)	We only need to prove the statement for $\Psi\equiv 1$, and the result for this case is covered in \cite[Theorem 4.3]{Lo1}. 
			We include a proof here for the reader's convenience.
			
			For $f\in H_{p,\theta}^{\gamma}(\Omega)$, the Sobolev embedding theorem implies
			\begin{align}\label{220614409}
				\left\|\big(f\zeta_{0,(n)}\big)(\ee^n\,\cdot\,)\right\|_{C^{k,\alpha}}\leq N\left\|\big(f\zeta_{0,(n)}\big)(\ee^n\,\cdot\,)\right\|_{H_p^{\gamma}}<\infty\,,
			\end{align}
			where $N=N(d,p,\gamma,k,\alpha)$.
			Hence $f$ belongs to $C^k_{\mathrm{loc}}(\Omega)$.
			For $x\in\Omega$, take $n_0\in\bZ$ such that $\ee^{n_0-1}\leq \rho(x)\leq \ee^{n_0}$.
			If $|x-y|<\frac{\rho(x)}{2}$, then $\ee^{n_0-2}\leq \rho(y)\leq \ee^{n_0+2}$.
			Take constants $A$ and $B$ depending only on $d$ such that $A^{-1}\rho\leq \trho \leq A\rho$, and $\sum_{|n|\leq B}\zeta_0\big(\ee^nt\big)\equiv 1$ for all $\frac{1}{A\ee^2}\leq t\leq A\ee^2$.
			Then we have
			$$
			\sum_{|n-n_0|\leq B}\zeta_{0,(n)}\equiv 1\quad \text{on}\quad U_{n_0}:=\left\{y\,:\,\ee^{n_0-2}\leq \rho(y)\leq \ee^{n_0+2}\right\}.
			$$
			By $B(x,\rho(x)/2)\subset U_{n_0}$ and \eqref{220614409}, we have
			\begin{alignat*}{2}
				&&&\sum_{i=0}^k\left(\rho(x)^{\theta/p+i}\left|D^if(x)\right|\right)+\rho(x)^{\theta/p+k+\alpha}\sup_{y:|y-x|<\frac{\rho(x)}{2}}\frac{\left|D^kf(x)-D^kf(y)\right|}{|x-y|^{\alpha}}\\
				&\lesssim_N &&\,\ee^{n_0\theta/p}\bigg(\sum_{i=0}^k\left|D^i\big(f(\ee^{n_0}\,\cdot\,)\big)(x)\right| \\
				&&&\qquad\qquad\qquad +\sup_{\ee^{-{n_0}}y\in U_{n_0}}\frac{\left|D^k\big(f(\ee^{n_0}\,\cdot\,)\big)(x)-D^k\big(f(\ee^{n_0}\,\cdot\,)\big)(y)\right|}{|x-y|^{\alpha}}\bigg)\\
				&\leq &&\sum_{|n-n_0|\leq B}\ee^{n_0\theta/p}\left\|(f\zeta_{0,(n)})(\ee^{n}\,\cdot\,)\right\|_{C^{k,\alpha}}\\
				&\lesssim_N &&\left(\sum_{n\in\bZ}\ee^{n\theta}\left\|(f\zeta_{0,(n)})(\ee^{n}\,\cdot\,)\right\|_{H_{p}^{\gamma}}^p\right)^{1/p}\,,
			\end{alignat*}
			where $N=N(d,p,\gamma,\theta,k,\delta)$.
			By (1) of this proposition, the proof is completed.
		\end{proof}

		\subsection{Solvability of the Poisson equation}\label{0043}
			The goal of this subsection is to prove the following theorem:
		
		\begin{thm}\label{21.09.29.1}
			Let $\Omega$ be an open set admitting the Hardy inequality \eqref{hardy} and $\psi$ be a superharmonic Harnack function on $\Omega$, with its regularization $\Psi$.
			Then for any $p\in(1,\infty)$, $\mu\in (-1/p,1-1/p)$, and $\gamma\in\bR$, the following assertion holds:
			For any $\lambda\geq 0$ and $f\in \Psi^{\mu}H_{p,d+2p-2}^{\gamma}(\Omega)$, the equation
			\begin{align}\label{220613956}
				\Delta u-\lambda u=f
			\end{align}
			has a unique solution $u$ in $\Psi^{\mu}H_{p,d-2}^{\gamma+2}(\Omega)$.
			Moreover, we have
			\begin{align}\label{220606433}
				\|u\|_{\Psi^{\mu}H_{p,d-2}^{\gamma+2}(\Omega)}+\lambda\|u\|_{\Psi^{\mu}H_{p,d+2p-2}^{\gamma}(\Omega)}\leq N\|f\|_{\Psi^{\mu}H_{p,d+2p-2}^{\gamma}(\Omega)},
			\end{align}
			where $N=N(d,p,\gamma,\mu,\mathrm{C}_0(\Omega),\mathrm{C}_2(\Psi),\mathrm{C}_3(\psi,\Psi))$.
		\end{thm}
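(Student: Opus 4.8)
Everything reduces to the a priori estimate \eqref{220606433} for $u\in C_c^\infty(\Omega)$. Once this is available it extends, by density of $C_c^\infty(\Omega)$ (Lemma~\ref{21.09.29.4}.(1),(5)) and the boundedness of $\Delta-\lambda$ on the relevant spaces (Lemma~\ref{21.05.20.3}), to every $u$ in $\Psi^{\mu}H_{p,d-2}^{\gamma+2}(\Omega)$ (also in $\Psi^{\mu}H_{p,d+2p-2}^{\gamma}(\Omega)$ when $\lambda>0$; any solution automatically lies in the latter since $\lambda u=\Delta u-f$ and $\Delta u\in\Psi^{\mu}H_{p,d+2p-2}^{\gamma}(\Omega)$). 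Uniqueness is then immediate. For existence with $f\in C_c^\infty(\Omega)$ one uses Lemma~\ref{21.05.25.3}: its weak solution — the limit of the approximants $(R_{\lambda,n}f)1_{\Omega_n}$ constructed there — is seen to lie in $\Psi^{\mu}H_{p,d-2}^{\gamma+2}(\Omega)$ by the extended estimate after a routine mollification/exhaustion step, and a general $f\in\Psi^{\mu}H_{p,d+2p-2}^{\gamma}(\Omega)$ is handled by approximation in $C_c^\infty(\Omega)$ and \eqref{220606433} applied to differences. So I focus on \eqref{220606433} for test functions.

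\noindent\textbf{The zeroth-order estimate.} The one input that sees the geometry of $\Omega$ is Theorem~\ref{21.05.13.2}, applied with the superharmonic function $\phi:=\psi$ and exponent $c:=-\mu p$; its hypothesis $c\in(-p+1,1)$ is \emph{exactly} the admissible range $\mu\in(-1/p,1-1/p)$. Since $\Psi$ is a regularization of $\psi$ one has $\psi^{-\mu p}\simeq\Psi^{-\mu p}$, and as $\Psi^{\mu}$ is a regular Harnack function with $\mathrm{C}_2(\Psi^{\mu})$ depending only on $d,\mu,\mathrm{C}_2(\Psi)$ (Example~\ref{21.05.18.2}.(3)), Lemma~\ref{220512433}.(1) turns the integral inequality of Theorem~\ref{21.05.13.2} into
\[
\|u\|_{\Psi^{\mu}H_{p,d-2}^{0}(\Omega)}\leq N\,\|\Delta u-\lambda u\|_{\Psi^{\mu}H_{p,d+2p-2}^{0}(\Omega)}\qquad(u\in C_c^\infty(\Omega)),
\]
with $N=N(p,\mu,\mathrm{C}_0(\Omega),\mathrm{C}_2(\Psi),\mathrm{C}_3(\psi,\Psi))$.

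\noindent\textbf{Localization.} Extending the argument of \cite{Krylov1999-1} to the scale $\Psi^{\mu}H_{p,\theta}^{\gamma}$ (cf.\ Lemma~\ref{21.05.13.8}), I would prove, for all $\gamma\in\bR$, $\lambda\ge0$, $u\in C_c^\infty(\Omega)$, with $N$ \emph{not} depending on $\mathrm{C}_0(\Omega)$,
\[
\|u\|_{\Psi^{\mu}H_{p,d-2}^{\gamma+2}(\Omega)}+\lambda\|u\|_{\Psi^{\mu}H_{p,d+2p-2}^{\gamma}(\Omega)}\leq N\big(\|\Delta u-\lambda u\|_{\Psi^{\mu}H_{p,d+2p-2}^{\gamma}(\Omega)}+\|u\|_{\Psi^{\mu}H_{p,d-2}^{\gamma}(\Omega)}\big).
\]
Substituting $u=\Psi^{\mu}v$ converts $\Delta u-\lambda u=f$ into $\Delta v+b\cdot\nabla v+(c_0-\lambda)v=\Psi^{-\mu}f$ with $b:=2\mu\Psi^{-1}\nabla\Psi$, $c_0:=\Psi^{-\mu}\Delta(\Psi^{\mu})$, and the regular Harnack property makes $\rho b$, $\rho^2c_0$ and all their $\rho$-scaled derivatives bounded; one then decomposes $v=\sum_n\zeta_{0,(n)}v$ via \eqref{230130543}, rescales the $n$-th piece by $\ee^{n}$ so its support sits at unit distance from the boundary, applies the standard interior $L_p$-estimate on $\bR^d$ for an operator $\Delta+\hat b_n\cdot\nabla+\hat c_n-\Lambda$ with bounded smooth coefficients and $\Lambda:=\lambda\ee^{2n}\ge0$ (uniformly in $\Lambda$), and sums in $\ell^p$ with weights $\ee^{n\theta}$, $\theta=d-2$, using finite overlap and Lemma~\ref{21.05.20.3}. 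The cut-off commutators $[\Delta,\zeta_{0,(n)}]v$ are absorbed by the Euclidean interpolation $\|w\|_{H_p^{\gamma+1}}\le\varepsilon\|w\|_{H_p^{\gamma+2}}+N_\varepsilon\|w\|_{H_p^{\gamma}}$; crucially the leftover is $\|u\|_{\Psi^{\mu}H_{p,d-2}^{\gamma}(\Omega)}$, of the \emph{same} order $\gamma$, while the $\Lambda$-terms reassemble to $\lambda\|u\|_{\Psi^{\mu}H_{p,d+2p-2}^{\gamma}(\Omega)}$ on the left.

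\noindent\textbf{Closing the loop, and the main obstacle.} For $\gamma\ge0$ combine the previous two displays: bound $\|u\|_{\Psi^{\mu}H_{p,d-2}^{\gamma}}\le\varepsilon\|u\|_{\Psi^{\mu}H_{p,d-2}^{\gamma+2}}+N_\varepsilon\|u\|_{\Psi^{\mu}H_{p,d-2}^{0}}$ (valid since $0\le\gamma$), absorb the first term, and control $\|u\|_{\Psi^{\mu}H_{p,d-2}^{0}}$ by the zeroth-order estimate, noting $\|\Delta u-\lambda u\|_{\Psi^{\mu}H_{p,d+2p-2}^{0}}\le\|\Delta u-\lambda u\|_{\Psi^{\mu}H_{p,d+2p-2}^{\gamma}}$ (Lemma~\ref{21.05.20.3}.(1)); this gives \eqref{220606433} for $\gamma\ge0$, whence existence for $\gamma\ge0$. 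For $\gamma\le0$ I remove the leftover by duality: for $g\in C_c^\infty(\Omega)$ solve $\Delta w-\lambda w=g$ with $w\in\Psi^{-\mu}H_{p',d-2}^{-\gamma+2}(\Omega)$ and $\|w\|_{\Psi^{-\mu}H_{p',d-2}^{-\gamma+2}(\Omega)}\le N\|g\|_{\Psi^{-\mu}H_{p',d+2p'-2}^{-\gamma}(\Omega)}$ — the already-proven case at parameters $(p',-\mu,-\gamma)$, admissible since $-\mu\in(-1/p',1-1/p')$ and $-\gamma\ge0$ — and use self-adjointness of $\Delta-\lambda$ together with $u\in C_c^\infty(\Omega)$ to write $\langle u,g\rangle=\langle\Delta u-\lambda u,w\rangle$; invoking Lemma~\ref{21.09.29.4}.(2) (which pairs $\Psi^{\mu}H_{p,d-2}^{\gamma}$ with $\Psi^{-\mu}H_{p',d+2p'-2}^{-\gamma}$ and $\Psi^{\mu}H_{p,d+2p-2}^{\gamma}$ with $\Psi^{-\mu}H_{p',d-2}^{-\gamma}$) and Lemma~\ref{21.05.20.3}.(1) yields $\|u\|_{\Psi^{\mu}H_{p,d-2}^{\gamma}(\Omega)}\le N\|\Delta u-\lambda u\|_{\Psi^{\mu}H_{p,d+2p-2}^{\gamma}(\Omega)}$, and feeding this into the localization estimate (with $\gamma<0$) gives \eqref{220606433} for $\gamma<0$. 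This covers all $\gamma\in\bR$ with $N=N(d,p,\gamma,\mu,\mathrm{C}_0(\Omega),\mathrm{C}_2(\Psi),\mathrm{C}_3(\psi,\Psi))$, and existence and uniqueness for every $\gamma$ follow as above. I expect the localization step to be the delicate part: transporting Krylov's half-space computation to an arbitrary Hardy domain with the weight $\Psi^{\mu}$ folded into lower-order coefficients, keeping the constants uniform in $\lambda$ and independent of $\mathrm{C}_0(\Omega)$, and — most importantly — arranging that the error term carries regularity exactly $\gamma$ rather than $\gamma+1$, so that it is eliminated by the zeroth-order estimate (for $\gamma\ge0$) and the duality bootstrap (for $\gamma<0$) rather than re-introducing the top-order norm.
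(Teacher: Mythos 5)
Your proposal is correct, but the way it passes from $\gamma=0$ to $\gamma<0$ is genuinely different from the paper's. You and the paper share the same two technical pillars: the zeroth-order integral estimate from Theorem~\ref{21.05.13.2} applied with $\phi:=\psi$, $c:=-\mu p$ (and the range $c\in(-p+1,1)$ is exactly the hypothesis $\mu\in(-1/p,1-1/p)$, as you say), and a localization lemma in the scale $\Psi^{\mu}H_{p,\theta}^{\gamma}(\Omega)$ with a low-order error term --- which is precisely the paper's Lemma~\ref{21.05.13.8}, proved via the dyadic decomposition \eqref{230130543} and rescaling of the full-space estimate. For $\gamma\geq 0$ both arguments are essentially the same (the paper simply takes $s=0$ in Lemma~\ref{21.05.13.8}, which is what your interpolation-and-absorb step accomplishes less directly). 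For $\gamma<0$, however, the paper's Lemma~\ref{21.11.12.1} (Step~2) runs an inductive \emph{lifting} argument: it decomposes $f=f^0+\sum_i D_if^i$ via Lemma~\ref{22.02.16.1}, solves the problem for each piece at one order higher, and corrects the resulting commutator errors, descending one step in $\gamma$ per iteration. You instead use \emph{duality}: solve the adjoint problem at parameters $(p',-\mu,-\gamma)$ (admissible since $-\mu\in(-1/p',1-1/p')$ and $-\gamma>0$ falls in the already-settled range), then write $\langle u,g\rangle=\langle\Delta u-\lambda u,w\rangle$ for $u\in C_c^\infty(\Omega)$, and invoke the duality pairings $\Psi^{\mu}H_{p,d-2}^{\gamma}\leftrightarrow\Psi^{-\mu}H_{p',d+2p'-2}^{-\gamma}$ and $\Psi^{\mu}H_{p,d+2p-2}^{\gamma}\leftrightarrow\Psi^{-\mu}H_{p',d-2}^{-\gamma}$ of Lemma~\ref{21.09.29.4}.(2); the index bookkeeping checks out. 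Your duality route is shorter and cleaner here, exploiting the self-adjointness of $\Delta-\lambda$ and the reflexivity of the spaces, but it makes the negative-$\gamma$ a priori estimate depend on the positive-$\gamma$ \emph{existence} theory; the paper's lifting route is more technical but self-contained, works one induction step at a time without invoking a solved dual problem, and is indifferent to whether the operator is self-adjoint --- a robustness that matters if one wants to adapt the argument to nonsymmetric or variable-coefficient operators, which the author notes as a direction for evolution equations.
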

		Here, $\mathrm{C}_0(\Omega)$ is the constant in \eqref{hardy}, and $\mathrm{C}_2(\Psi)$ and $\mathrm{C}_3(\psi,\Psi)$ are the constants in Definition \ref{21.10.14.1}.
		
		In Theorem~\ref{21.09.29.1}, trivial examples of $\psi$ and $\Psi$ are given by $\psi=\Psi=1_{\Omega}$.
		Another example of $\psi$ is given in Example \ref{220912411}, which is constructed  from the Green function and is valid for any domain admitting the Hardy inequality.

		\begin{remark}\label{220617}
			In Theorem \ref{21.09.29.1}, the spaces $\Psi^{\mu} H_{p,d-2}^{\gamma+2}(\Omega)$ and $\Psi^{\mu} H_{p,d+2p-2}^{\gamma}(\Omega)$ do not depend on the particular choice of $\Psi$ among regularizations of $\psi$ (see Remark \ref{21.10.05.2}).
			If we take $\Psi$ to be $\widetilde{\psi}$, the regularization of $\psi$ constructed in Lemma \ref{21.05.27.3}.(1),  
			then Theorem \ref{21.09.29.1} can be reformulated in terms of $\psi$.
			Indeed, $\mathrm{C}_2(\widetilde{\psi})$ and $\mathrm{C}_3(\psi,\widetilde{\psi})$ depend only on $d$ and $\mathrm{C}_1(\psi)$, and therefore the constant $N$ in \eqref{220606433} depends only on $d$, $p$, $\gamma$, $\mu$, $\mathrm{C}_0(\Omega)$, and $\mathrm{C}_1(\psi)$.
			Additionally, for the case $\gamma\in\bZ$, equivalent norms of $\widetilde{\psi}^{\,\mu}H_{p,d-2}^{\gamma+2}(\Omega)$ and $\widetilde{\psi}^{\,\mu}H_{p,d+2p-2}^{\gamma}(\Omega)$ are established in Lemma \ref{220512433}, and they can also be reformulated in terms of $\psi$.
		\end{remark}
		
		\begin{remark}
			If $\mu\notin (-1/p,1-1/p)$, then Theorem \ref{21.09.29.1} fails in general, as pointed out in \cite[Remark 4.3]{Krylov1999-1}.
			To illustrate this, consider the equation
			\begin{align}\label{240122407}
			\Delta u=f\quad \text{in}\quad \Omega:=(0,\pi)
			\end{align}
			and put $\psi(x)=\Psi(x)=\sin x$ and $\gamma=0$.
			
			Let $\mu\geq 1-1/p$, and let $f\in C_c^{\infty}(\Omega)$ with $f\leq 0$, so that $f\in \Psi^{\mu}L_{p,d+2p-2}(\Omega)$.
			We assume that there exists a solution $u_1\in \Psi^{\mu}H_{p,d-2}^2(\Omega)$ of \eqref{240122407}.
			Then this $u_1$ belongs to $H_{p,d-2}^2(\Omega)$.
			Let $u_0$ be the classical solution of \eqref{240122407} with the boundary condition $u(0)=u(\pi)=0$.
			Then $u_0\in H_{p,d-2}^2(\Omega)$.
			By Theorem \ref{21.09.29.1}, \eqref{240122407} has a unique solution, and therefore $u_0\equiv u_1$.
			However, $u_0\notin \Psi^{\mu}H_{p,d-2}^2(\Omega)$ for all $\mu\geq 1-1/p$, since $u_0\simeq \sin x$.
			This is a contradiction. 
			Therefore there exists no solution $u\in \Psi^{\mu}H_{p,d-2}^2(\Omega)$ of \eqref{240122407}.
			
			If $\mu< -1/p$, then both $0$ and $1_{\Omega}$ belong to $\Psi^{\mu}H^2_{p,d-2}(\Omega)$ (see Lemma \ref{220512433}).
			Therefore \eqref{240122407} with $f=0$ has at least two solutions in $\Psi^{\mu}H^2_{p,d-2}(\Omega)$.
			
			Consider the case $\mu=-1/p$.
			For $n\in\bN$, take $\zeta_n\in C_c^{\infty}(\Omega)$ such that
			$$
			1_{\left[\frac{2}{n},\pi-\frac{2}{n}\right]}\leq \zeta_n \leq 1_{\left[\frac{1}{n},\pi-\frac{1}{n}\right]}\quad\text{and}\quad \left|D^k\zeta_n\right|\leq N(k)n^{k}\,.
			$$
			By putting $u:=\zeta_n$, one sees that no constant $N$ satisfies \eqref{220606433}.
		\end{remark}
		
		\begin{example}\label{220912411}
			Let $\Omega\subset \bR^d$ be a domain admitting the Hardy inequality.
			We denote by $G_{\Omega}:\Omega\times \Omega\rightarrow [0,+\infty]$ the Green function associated with the Poisson equation. (For the definition and existence of $G_\Omega$, see \cite[Definition 4.1.3]{AG}, \cite[Theorems 4.1.2 and 5.3.8]{AG}, and \cite[Theorem 2]{AA}.)
			We claim that, for any fixed $x_0\in\Omega$, $\phi_0:=G_{\Omega}(x_0,\,\cdot\,)\wedge 1$ is a superharmonic Harnack function on $\Omega$.
			It is worth noting that $\phi_0$ is the smallest positive classical superharmonic function, up to constant multiples (see \cite[Lemma 4.1.8]{AG}).
			That is, if $\phi$ is a positive classical superharmonic function on $\Omega$, then there exists $N_0=N(\phi,\Omega,x_0)>0$ such that $\phi_0\leq N_0\phi$ in $\Omega$.
			
			Note that $G_{\Omega}(x_0,\,\cdot\,)$ is a positive classical superharmonic function on $\Omega$, while harmonic in $\Omega\setminus \{x_0\}$.
			This implies that $\phi_0$ is a classical superharmonic function on $\Omega$ (see Lemma \ref{21.05.18.1}.(1)).
			
			For $x\in\Omega$, set $B(x):=B\big(x,\rho(x)/8\big)$.
			If $|x-x_0|>\rho(x)/4$, then $G_{\Omega}(x_0,\,\cdot\,)$ is harmonic on $B\big(x,\rho(x)/4\big)$.
			By the Harnack inequality, we have
			\begin{align*}
				\sup_{B(x)}\phi_0=\Big(\sup_{y\in B(x)}G_{\Omega}(x_0,y)\Big)\wedge 1\lesssim_d \Big(\inf_{y\in B(x)}G_{\Omega}(x_0,y)\Big)\wedge 1=\inf_{B(x)}\phi_0\,.
			\end{align*}
			If $|x-x_0|\leq \rho(x)/4$, then $\rho(x)\leq \frac{4}{3}\rho(x_0)$, which implies that $B(x)\subset B\big(x_0,\rho(x_0)/2\big)$.
			By Lemma \ref{21.04.23.3}.(3), there exists $\epsilon_0\in(0,1]$ such that $G(x_0,\,\cdot\,)\geq \epsilon_0$ on $B\big(x_0,\rho(x_0)/2\big)$.
			Therefore, we have
			\begin{align*}
				\sup_{B(x)}\phi_0&\leq 1\leq \epsilon_0^{-1}\inf_{B(x)}\phi_0\,.
			\end{align*}
			Consequently, $\phi_0$ is a superharmonic Harnack function on $\Omega$.
		\end{example}
		
		To prove Theorem \ref{21.09.29.1}, we make use of Lemmas \ref{21.05.13.8} and \ref{21.11.12.1}.
		These lemmas are based on a localization argument, in which $\Omega$ is an arbitrary domain and $\Psi$ is an arbitrary regular Harnack function.
		Theorem \ref{21.09.29.1} will be proved after the proof of Lemma \ref{21.11.12.1}.
		
		\begin{lemma}[Higher-order estimates]\label{21.05.13.8}
			Let $p\in (1,\infty)$, $\gamma,\,s\in\bR$, $\theta\in\bR$, and $\Psi$ be a regular Harnack function.
			Then there exists a constant $N=N(d,p,\theta,\gamma,\mathrm{C}_2(\Psi),s)>0$ such that 
			the following assertion holds:
			Let $\lambda\geq 0$ and suppose that $u,\,f\in\cD'(\Omega)$ satisfy \eqref{220613956}.
			Then
			\begin{align}\label{21.09.30.1}
				\| u\|_{\Psi H^{\gamma+2}_{p,\theta}(\Omega)}+\lambda\| u\|_{\Psi  H^{\gamma}_{p,\theta+2p}(\Omega)}\leq N\left(\| u\|_{ \Psi H^{s}_{p,\theta}(\Omega)}+\|f\|_{\Psi H^{\gamma}_{p,\theta+2p}(\Omega)}\right)\,.
			\end{align}
		\end{lemma}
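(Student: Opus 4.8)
The plan is to localize with the dyadic partition $\{\zeta_{0,(n)}\}$ built into the norm of $H_{p,\theta}^{\gamma}(\Omega)$, to treat each localized piece after rescaling by $\ee^{n}$ via a resolvent bound for $\Delta-\mu$ on $\bR^{d}$ that is uniform in $\mu\ge 0$, and then to remove the lower‑order term $\|u\|_{\Psi H_{p,\theta}^{\gamma+1}(\Omega)}$ by iterating the resulting inequality downward in the regularity index. Throughout I would keep all inequalities valued in $[0,+\infty]$, so that no a priori finiteness of the norms has to be assumed. As a preliminary whole‑space ingredient I would record that for every $\mu\ge 0$ and every tempered distribution $w$ on $\bR^{d}$,
\[
\|w\|_{H_p^{\gamma+2}(\bR^d)}+\mu\,\|w\|_{H_p^{\gamma}(\bR^d)}\le N(d,p,\gamma)\bigl(\|(\Delta-\mu)w\|_{H_p^{\gamma}(\bR^d)}+\|w\|_{H_p^{\gamma}(\bR^d)}\bigr).
\]
For $\mu\ge 1$ this is a Mikhlin statement, since $\xi\mapsto(1+|\xi|^{2})/(|\xi|^{2}+\mu)$ and $\xi\mapsto\mu/(|\xi|^{2}+\mu)$ are Mikhlin symbols uniformly in $\mu\ge 1$ and $(1-\Delta)=m_{1}(D)(\mu-\Delta)$, $\mu=m_{2}(D)(\mu-\Delta)$; for $0\le\mu<1$ it follows from $(1-\Delta)w=(\mu-\Delta)w+(1-\mu)w$. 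Crucially no compact support is required.

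Next I would pass to $v:=\Psi^{-1}u$, which solves $\Delta v-\lambda v=\tilde g$ with $\tilde g:=\Psi^{-1}f-2\,\Psi^{-1}\nabla\Psi\cdot\nabla v-\Psi^{-1}(\Delta\Psi)v$. Writing $\Psi^{-1}\nabla\Psi=\trho^{-1}a$ and $\Psi^{-1}\Delta\Psi=\trho^{-2}b$, the bounds defining the regular Harnack functions $\Psi$ and $\Psi^{-1}$ (Definition~\ref{21.10.14.1}.(2), Example~\ref{21.05.18.2}.(3)) together with those for $\trho$ give $|a|_{k}^{(0)}+|b|_{k}^{(0)}\lesssim 1$ for all $k$, whence by Lemma~\ref{21.05.20.3}.(5),(3),(4),(1),
\[
\|\tilde g\|_{H_{p,\theta+2p}^{\gamma}(\Omega)}\lesssim \|\Psi^{-1}f\|_{H_{p,\theta+2p}^{\gamma}(\Omega)}+\|v\|_{H_{p,\theta}^{\gamma+1}(\Omega)}=\|f\|_{\Psi H_{p,\theta+2p}^{\gamma}(\Omega)}+\|u\|_{\Psi H_{p,\theta}^{\gamma+1}(\Omega)}.
\]

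For the localization I would multiply the equation for $v$ by $\zeta_{0,(n)}$, rewrite $2\nabla\zeta_{0,(n)}\cdot\nabla v=2\,\mathrm{div}(v\nabla\zeta_{0,(n)})-2v\Delta\zeta_{0,(n)}$, and rescale by $\ee^{n}$; with $\hat w_{n}(y):=(\zeta_{0,(n)}v)(\ee^{n}y)$, $\hat\zeta_{n}(y):=\zeta_{0,(n)}(\ee^{n}y)$ and $\hat\eta_{n}(y):=\eta(\ee^{-n}\trho(\ee^{n}y))$ for a fixed $\eta\in C_c^{\infty}(\bR_+)$ with $\eta\equiv1$ on $[\ee^{-1},\ee]$, the cutoffs have $n$‑uniformly bounded derivatives, the rescaled divergence and zeroth‑order terms become $O(1)$, and $\hat w_{n}$ solves on $\bR^{d}$
\[
(\Delta-\lambda \ee^{2n})\hat w_{n}=\ee^{2n}\hat\zeta_{n}\,\tilde g(\ee^{n}\cdot)+2\,\mathrm{div}\bigl(v(\ee^{n}\cdot)\nabla\hat\zeta_{n}\bigr)-v(\ee^{n}\cdot)\Delta\hat\zeta_{n}.
\]
Applying the resolvent bound with $\mu=\lambda\ee^{2n}$, estimating the divergence term in $H_p^{\gamma}$ by $\|\hat\eta_{n}v(\ee^{n}\cdot)\|_{H_p^{\gamma+1}}$ (using $\hat\eta_{n}\equiv1$ on the supports of $\hat\zeta_{n}$, $\nabla\hat\zeta_{n}$, $\Delta\hat\zeta_{n}$), raising to the power $p$, multiplying by $\ee^{n\theta}$, summing over $n$, and invoking Lemma~\ref{21.05.20.3}.(1),(2) to identify the sums with weighted norms, I arrive at the \emph{base estimate}, valid for every $\gamma\in\bR$ with $N=N(d,p,\theta,\gamma,\mathrm{C}_2(\Psi))$:
\[
\|u\|_{\Psi H_{p,\theta}^{\gamma+2}(\Omega)}+\lambda\,\|u\|_{\Psi H_{p,\theta+2p}^{\gamma}(\Omega)}\le N\bigl(\|f\|_{\Psi H_{p,\theta+2p}^{\gamma}(\Omega)}+\|u\|_{\Psi H_{p,\theta}^{\gamma+1}(\Omega)}\bigr).
\]
Finally, to reach \eqref{21.09.30.1} I would iterate: we may assume its right‑hand side is finite and $s\le\gamma+1$ (otherwise \eqref{21.09.30.1} is immediate from the base estimate and Lemma~\ref{21.05.20.3}.(1)); picking $k\in\bN$ with $\gamma+1-k\le s$, one has $u\in\Psi H_{p,\theta}^{\gamma+1-k}(\Omega)$, and applying the base estimate in turn with $\gamma$ replaced by $\gamma-k,\dots,\gamma-1$ — bounding $\|f\|_{\Psi H_{p,\theta+2p}^{\gamma-j}(\Omega)}\le N\|f\|_{\Psi H_{p,\theta+2p}^{\gamma}(\Omega)}$ at each step by Lemma~\ref{21.05.20.3}.(1) — raises the regularity one unit at a time, giving $\|u\|_{\Psi H_{p,\theta}^{\gamma+1}(\Omega)}\le N(\|f\|_{\Psi H_{p,\theta+2p}^{\gamma}(\Omega)}+\|u\|_{\Psi H_{p,\theta}^{s}(\Omega)})$; one last use of the base estimate then yields \eqref{21.09.30.1}, with the constant now depending also on $k$, hence on $s$.

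The main obstacle will be the third step: one must check that the $\ee^{n}$‑rescaling lines up the factors $\ee^{2n}$ and $\lambda\ee^{2n}$ with the weight shift $\theta\mapsto\theta+2p$, that the rescaled cutoffs and all coefficients of the localized equation are controlled uniformly in $n$, and — most importantly — that no compact support is ever needed, since the slab‑like supports of $\zeta_{0,(n)}$ rescale to unbounded sets; this is precisely why the whole‑space ingredient is phrased as a Fourier‑multiplier/resolvent estimate rather than as interior elliptic regularity. Using the divergence form of the commutator keeps that term at order $\gamma+1$ in $u$, which is what makes the iteration close, and working with $[0,+\infty]$‑valued inequalities throughout avoids any circular absorption argument.
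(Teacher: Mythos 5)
Your proposal follows essentially the same route as the paper's proof: both localize with the same dyadic partition $\{\zeta_{0,(n)}\}$, rescale by $\ee^n$, apply a whole-space resolvent estimate for $\Delta-\mu$ uniform in $\mu\ge 0$ (the paper cites \cite[Theorems 4.3.8, 4.3.9]{Krylov2008} instead of invoking Mikhlin directly, and reduces to the case $\gamma=0$ by conjugating with $(1-\Delta)^{\gamma/2}$, but the content is identical), sum over $n$ using Lemma~\ref{21.05.20.3}.(1)--(3) to obtain the base estimate with a $\|u\|_{\Psi H_{p,\theta}^{\gamma+1}(\Omega)}$ error, and then iterate downward in the regularity index to reach a general $s$. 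The only cosmetic difference is that you first write the equation for $v=\Psi^{-1}u$ and then localize, whereas the paper works directly with $v_n:=\zeta_{0,(n)}\Psi^{-1}u$ and collects the commutator terms $\widetilde f_n$ in one display; also the paper measures the commutator $2\nabla\zeta\cdot\nabla v$ directly in $H^{\gamma}_p$ (via Lemma~\ref{21.05.20.3}.(5), (4)) rather than via the divergence-form rewriting, which yields the same $\gamma+1$ order contribution.
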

		
		\begin{proof}
			We denote $\Phi=\Psi^{-1}$ so that $\mathrm{C}_2(\Phi)$ depends only on $d$ and $\mathrm{C}_2(\Psi)$.
			
			\textbf{Step 1.} First, we consider the case $s\geq \gamma+1$. 
			One can certainly assume that 
			$$
			\|\Phi u\|_{ H^{s}_{p,\theta}(\Omega)}+\|\Phi f\|_{ H^{\gamma}_{p,\theta+2p}(\Omega)}<\infty\,,
			$$
			for if not, there is nothing to prove.
			Since
			\begin{align*}
				\|\Phi u\|_{H_{p,\theta}^{\gamma+1}(\Omega)}\lesssim_{d,p,s,\gamma} \|\Phi u\|_{H_{p,\theta}^{s}(\Omega)}
			\end{align*}
			(see Lemma \ref{21.05.20.3}.(1)), it suffices to prove for $s=\gamma+1$.
			Put
			\begin{align*}
				v_n(x)=\zeta_0\big(\ee^{-n}\trho(\ee^nx)\big)\Phi(\ee^nx) u(\ee^nx)\,.
			\end{align*}
			Since
			$$
			\sum_{n\in\bZ}\ee^{n\theta}\left\|v_n\right\|_{H_p^{\gamma+1}(\bR^d)}^p=\|\Phi u\|_{H_{p,\theta}^{\gamma+1}(\Omega)}^p<\infty\,,
			$$
			we have $v_n\in H_p^{\gamma+1}(\bR^d)$.
			Observe that
			\begin{align}\label{2301261222}
				\Delta v_n-\ee^{2n}\lambda v_n=\widetilde{f}_n\quad \text{in}\quad \bR^d\,,
			\end{align}
			where
			\begin{align*}
				\widetilde{f}_n(x):=\,&\ee^{2n}\zeta_{0,(n)}(\ee^n x)\big(\Phi f\big)(\ee^n x)-\ee^{2n}\zeta_{0,(n)}(\ee^n x)\big(\Phi\Delta u\big)(\ee^n x)+\Delta v_n(x)\\
				=\,&\Big[\,\ee^{2n}\zeta_{0,(n)}\Big(\Phi f
				+2\big(\nabla 	u\cdot\nabla\Phi\big)+(\Delta\Phi)u\Big)
				\\
				&\,\,\,\, +\ee^n\big(\zeta_0'\big)_{(n)}\Big(2\big(\nabla\trho\cdot \nabla(\Phi u)\big)+(\Delta\trho)\Phi u\Big)				+\big(\zeta_0''\big)_{(n)}|\nabla\trho|^2\Phi u
				\,\Big](\ee^nx)\,.
			\end{align*}
			Here, $\big(\zeta_0'\big)_{(n)}$ and $\big(\zeta_0''\big)_{(n)}$ are defined in the same way as in \eqref{230130543}. 
			Make use of Lemmas \ref{21.05.20.3}.(1) - (3) and \ref{21.09.29.4}.(3) to obtain
			\begin{align}
					&\sum_{n\in\bZ}\ee^{n\theta}\big\|\widetilde{f}_n\big\|_{H^{\gamma}_p(\bR^d)}^p\nonumber\\
					\lesssim_N &\left\|\Phi f\right\|^p_{H_{p,\theta+2p}^{\gamma}(\Omega)}
					+\left\|2\big(\nabla 	u\cdot\nabla\Phi\big)+(\Delta\Phi)u\right\|^p_{H_{p,\theta+2p}^{\gamma}(\Omega)}
					\label{22.04.15.1}\\
					&+\left\|2\big(\nabla\trho\cdot \nabla(\Phi u)\big)+(\Delta\trho)\Phi u\right\|^p_{H_{p,\theta+p}^{\gamma}(\Omega)}
					+\left\||\nabla\trho|^2\Phi u\right\|^p_{H_{p,\theta}^{\gamma}(\Omega)}
					\nonumber\\
					\lesssim_N & \|\Phi f\|^p_{H_{p,\theta+2p}^{\gamma}(\Omega)}
					+\|\Phi u\|^p_{H^{\gamma+1}_{p,\theta}(\Omega)}<\infty\,,\nonumber
			\end{align}
			where $N=N(d,p,\gamma,\theta,\mathrm{C}_2(\Psi))$.
			This implies that for any $n\in\bZ$, $\tilde{f}_n\in H^{\gamma}_p(\bR^d)$.
			
			By \eqref{2301261222} and that $v_n\in H_p^{\gamma+1}(\bR^d)$ and $\widetilde{f}_n\in H_p^{\gamma}(\bR^d)$, we have 
			\begin{align*}
				\begin{gathered}
					V_n:=(1-\Delta)^{\gamma/2}v_n\in H_p^1(\bR^d)\quad,\quad F_n:=(1-\Delta)^{\gamma/2}\widetilde{f}_n\in L_p(\bR^d)\,,\\
					\Delta V_n-(\ee^{2n}\lambda+1)V_n=F_n-V_n\,.
				\end{gathered}
			\end{align*}
			It is implied by classical results for the Poisson equation in $\bR^d$ (see, \textit{e.g.}, \cite[Theorem 4.3.8, Theorem 4.3.9]{Krylov2008})
			that
				\begin{align*}
					&\|v_n\|_{H_p^{\gamma+2}(\bR^d)}+\ee^{2n}\lambda\|v_n\|_{H_p^{\gamma}(\bR^d)}=\left\|V_n\right\|_{H_p^2(\bR^d)}+\ee^{2n}\lambda\left\|V_n\right\|_{L_p(\bR^d)}\nonumber\\
					\leq\,& \left\|\Delta V_n\right\|_{L_p(\bR^d)}+(\ee^{2n}\lambda+1)\left\|V_n\right\|_{L_p(\bR^d)}\lesssim_{d,p} \|F_n-V_n\|_{L_p(\bR^d)}\\
					\leq\,&\|\widetilde{f}_n\|_{H_p^{\gamma}(\bR^d)}+\|v_n\|_{H_p^{\gamma}(\bR^d)}\,.\nonumber
				\end{align*}
			Combine this with 
			\eqref{22.04.15.1} to obtain that
			\begin{align*}
					&\|\Phi u\|_{H^{\gamma+2}_{p,\theta}(\Omega)}^p+\lambda^p\|\Phi u\|_{H^{\gamma}_{p,\theta+2p}(\Omega)}^p= \sum_{n\in\bZ}\ee^{n\theta}\Big(\|v_n\|_{H^{\gamma+2}_p(\bR^d)}^p+(\ee^{2n}\lambda)^p\|v_n\|_{H^{\gamma}_p(\bR^d)}^p\Big)\\
					\lesssim_N\,& \sum_{n\in\bZ}\ee^{n\theta}\left(\|v_n\|_{H^{\gamma}_p(\bR^d)}^p+\|\tilde{f}_n\|_{H^{\gamma}_p(\bR^d)}^p\right)\lesssim_N\|\Phi u\|_{H^{\gamma+1}_{p,\theta}(\Omega)}^p+\|\Phi f\|^p_{H_{p,\theta+2p}^{\gamma}(\Omega)}\,.
			\end{align*}
			Therefore the case $s=\gamma+1$ is proved.
			Consequently, \eqref{21.09.30.1} holds for all $s\geq \gamma+1$.

			\textbf{Step 2.}
			For $s<\gamma+1$, take $k\in\bN$ such that $\gamma+1-k\leq s<\gamma+2-k$.
			By the result in Step 1, \eqref{21.09.30.1} holds for $(\gamma,s)$ replaced by $(\gamma,\gamma+1)$, $(\gamma-1,\gamma)$, ..., $(\gamma-k,\gamma+1-k)$.
			Therefore we have
			\begin{align*}
				&\|\Phi u\|_{H^{\gamma+2}_{p,\theta}(\Omega)}+\lambda\|\Phi u\|_{H^{\gamma}_{p,\theta+2p}(\Omega)}\lesssim_N \|\Phi u\|_{H^{\gamma+1}_{p,\theta}(\Omega)}+\|\Phi f\|_{H_{p,\theta+2p}^{\gamma}(\Omega)}\\
				\lesssim_N \,&\,\,\,\,\cdots\,\,\,\,\lesssim_N \|\Phi u\|_{H^{\gamma-k+1}_{p,\theta}(\Omega)}+\|\Phi f\|_{H_{p,\theta+2p}^{\gamma}(\Omega)}\,.
			\end{align*}
			Since $\|\Phi u\|_{H^{\gamma-k+1}_{p,\theta}(\Omega)}\lesssim \|\Phi u\|_{H^{s}_{p,\theta}(\Omega)}$ (see Lemma \ref{21.05.20.3}.(1)), the proof is completed.
		\end{proof}
		
		\begin{lemma}\label{21.11.12.1}
				Let $p\in (1,\infty)$ and $\theta\in\bR$, and let $\Psi$ be a regular Harnack function.
				Let $\lambda\geq 0$ and suppose that there exists $\gamma\in\bR$ such that the following holds: 
			\begin{itemize}
				\item[]For any $f\in \Psi H_{p,\theta+2p}^{\gamma}(\Omega)$, there exists a unique solution $u$ of equation \eqref{220613956} in $\Psi H_{p,\theta}^{\gamma+2}(\Omega)$.
				For this solution, we have
				\begin{align}\label{220606837}
					\|u\|_{\Psi H_{p,\theta}^{\gamma+2}(\Omega)}+\lambda\|u\|_{\Psi H_{p,\theta+2p}^{\gamma}(\Omega)}\leq N_{\gamma}\|f\|_{\Psi H_{p,\theta+2p}^{\gamma}(\Omega)}\,,
				\end{align}
				where $N_{\gamma}$ is a constant independent of $f$ and $u$.
			\end{itemize}
			Then for all $s\in\bR$, the following holds:
			\begin{itemize}
				\item[]
				For any $f\in \Psi H_{p,\theta+2p}^{s}(\Omega)$, there exists a unique solution $u$ of equation \eqref{220613956} in $\Psi H_{p,\theta}^{s+2}(\Omega)$.
				For this solution, we have
				\begin{align}\label{22.04.01.1}
					\|u\|_{\Psi H_{p,\theta}^{s+2}(\Omega)}+\lambda\|u\|_{\Psi H_{p,\theta+2p}^{s}(\Omega)}\leq N_s\|f\|_{\Psi H_{p,\theta+2p}^{s}(\Omega)}\,,
				\end{align}
				where $N_s=N(d,p,\gamma,\theta,\mathrm{C}_2(\Psi),N_{\gamma},s)$.
			\end{itemize}
		\end{lemma}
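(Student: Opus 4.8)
The idea is to deduce the conclusion from two elementary manipulations of ``solvability–and–estimate'' statements of exactly the shape appearing in the hypothesis — call them Operation (A) ``raise the smoothness index'' and Operation (B) ``duality'' — applied to the hypothesis in the order (A), (B), (A), (B). For Operation (A): suppose the hypothesis statement holds at some index $\gamma_0$ for data $(q,\nu,\Phi)$ ($q\in(1,\infty)$, $\nu\in\bR$, $\Phi$ a regular Harnack function, constant $M$). Then it holds at every $s\ge\gamma_0$ for $(q,\nu,\Phi)$, with constant depending only on $d,q,\nu,\gamma_0,s,\mathrm{C}_2(\Phi),M$. Indeed, for $f\in\Phi H_{q,\nu+2q}^{s}(\Omega)$ with $s\ge\gamma_0$ one has $f\in\Phi H_{q,\nu+2q}^{\gamma_0}(\Omega)$ by Lemma~\ref{21.05.20.3}.(1), so the $\gamma_0$-statement gives $u\in\Phi H_{q,\nu}^{\gamma_0+2}(\Omega)$ with $\|u\|_{\Phi H_{q,\nu}^{\gamma_0+2}(\Omega)}\lesssim\|f\|_{\Phi H_{q,\nu+2q}^{\gamma_0}(\Omega)}\lesssim\|f\|_{\Phi H_{q,\nu+2q}^{s}(\Omega)}$; feeding $u$ into the a priori estimate of Lemma~\ref{21.05.13.8} (target index $s$, lower-order index $\gamma_0+2$) upgrades this to $\|u\|_{\Phi H_{q,\nu}^{s+2}(\Omega)}+\lambda\|u\|_{\Phi H_{q,\nu+2q}^{s}(\Omega)}\lesssim\|u\|_{\Phi H_{q,\nu}^{\gamma_0+2}(\Omega)}+\|f\|_{\Phi H_{q,\nu+2q}^{s}(\Omega)}\lesssim\|f\|_{\Phi H_{q,\nu+2q}^{s}(\Omega)}$, and uniqueness at index $s$ reduces to uniqueness at $\gamma_0$ since two solutions in $\Phi H_{q,\nu}^{s+2}(\Omega)\subset\Phi H_{q,\nu}^{\gamma_0+2}(\Omega)$ differ by a $\Phi H_{q,\nu}^{\gamma_0+2}(\Omega)$-element killed by $\Delta-\lambda$. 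So Operation (A) is just Lemma~\ref{21.05.13.8} plus the embedding.

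\textbf{Operation (B).} The $\gamma_0$-statement for $(q,\nu,\Phi)$ implies the $(-\gamma_0-2)$-statement for $(q',\nu'-2q',\Phi^{-1})$, where $\tfrac1q+\tfrac1{q'}=1$ and $\tfrac{\nu}{q}+\tfrac{\nu'}{q'}=d$; note $\Phi^{-1}$ is again a regular Harnack function by Example~\ref{21.05.18.2}.(3). Set $Y:=\Phi H_{q,\nu+2q}^{\gamma_0}(\Omega)$ and let $Z$ be $\Phi H_{q,\nu}^{\gamma_0+2}(\Omega)\cap\Phi H_{q,\nu+2q}^{\gamma_0}(\Omega)$ normed by $\|\cdot\|_{\Phi H_{q,\nu}^{\gamma_0+2}(\Omega)}+\lambda\|\cdot\|_{\Phi H_{q,\nu+2q}^{\gamma_0}(\Omega)}$; for a solution $w$ of $\Delta w-\lambda w=h$ with $h\in Y$, membership $w\in Z$ is automatic once $w\in\Phi H_{q,\nu}^{\gamma_0+2}(\Omega)$, because $\lambda w=\Delta w-h$. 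Using the mapping property $\|\Delta w\|_{\Phi H_{q,\nu+2q}^{\gamma_0}(\Omega)}\lesssim\|w\|_{\Phi H_{q,\nu}^{\gamma_0+2}(\Omega)}$ (from Lemma~\ref{21.05.20.3}.(3)–(5) and Lemma~\ref{21.09.29.4}.(3)), the hypothesis says exactly that $\Delta-\lambda\colon Z\to Y$ is a bounded bijection, hence a topological isomorphism with $\|(\Delta-\lambda)^{-1}\|\le M$ by the open mapping theorem. Taking Banach-space transposes, and using that $\Delta-\lambda$ equals its distributional transpose on $C_c^\infty(\Omega)$ — which is dense in $Z$ and in $Y$ by Lemma~\ref{21.09.29.4}.(1),(5), so that the abstract pairings restrict to the distributional one — we get that $\Delta-\lambda\colon Y^{*}\to Z^{*}$ is an isomorphism. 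By Lemma~\ref{21.09.29.4}.(2), $Y^{*}=\Phi^{-1}H_{q',\nu'-2q'}^{-\gamma_0}(\Omega)$, and $Z^{*}$ contains $\bigl(\Phi H_{q,\nu}^{\gamma_0+2}(\Omega)\bigr)^{*}=\Phi^{-1}H_{q',\nu'}^{-\gamma_0-2}(\Omega)$ with norm-$\le1$ inclusion; restricting the isomorphism to this subspace yields, for each $g\in\Phi^{-1}H_{q',\nu'}^{-\gamma_0-2}(\Omega)$, a unique $v\in\Phi^{-1}H_{q',\nu'-2q'}^{-\gamma_0}(\Omega)$ with $\Delta v-\lambda v=g$ and $\|v\|_{\Phi^{-1}H_{q',\nu'-2q'}^{-\gamma_0}(\Omega)}\lesssim\|g\|_{\Phi^{-1}H_{q',\nu'}^{-\gamma_0-2}(\Omega)}$. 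The term $\lambda\|v\|_{\Phi^{-1}H_{q',\nu'}^{-\gamma_0-2}(\Omega)}$ of the estimate is free, from $\lambda v=\Delta v-g$ and $\|\Delta v\|_{\Phi^{-1}H_{q',\nu'}^{-\gamma_0-2}(\Omega)}\lesssim\|v\|_{\Phi^{-1}H_{q',\nu'-2q'}^{-\gamma_0}(\Omega)}$; this is the $(-\gamma_0-2)$-statement for $(q',\nu'-2q',\Phi^{-1})$.

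\textbf{Assembly, constants, and the main difficulty.} Apply (A) to the hypothesis: the statement holds at every $s\ge\gamma$ for $(p,\theta,\Psi)$. Apply (B) to the hypothesis: it holds at index $-\gamma-2$ for $(p',\theta'-2p',\Psi^{-1})$ with $\theta'=p'(d-\theta/p)$. Apply (A) there: it holds at every $\tau\ge-\gamma-2$ for $(p',\theta'-2p',\Psi^{-1})$. Apply (B) to each such $\tau$: since $(p')'=p$, $(\Psi^{-1})^{-1}=\Psi$, and the weight arithmetic gives $(\theta'-2p')^{\ast}-2p=\theta$ (with $\ast$ the dual weight relative to $p'$), this produces the statement at every index $-\tau-2\le\gamma$ for $(p,\theta,\Psi)$; together with the first step this covers all $s\in\bR$ for $(p,\theta,\Psi)$, which is the lemma. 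Tracking constants through the two applications of (A) (constants from Lemma~\ref{21.05.13.8} and the preceding constant) and of (B) (constant is the preceding one, up to the equivalence constants of Lemma~\ref{21.09.29.4}.(2)) gives $N_s$ depending only on $d,p,\gamma,\theta,\mathrm{C}_2(\Psi),N_\gamma,s$. I expect the only real work to be Operation (B): matching the transpose of the solution operator with $\Delta-\lambda$ acting distributionally on the dual spaces, keeping the weight bookkeeping straight ($\theta\mapsto\theta'$ via $\theta/p+\theta'/p'=d$, then the shift by $2p'$, together with the parallel shifts of smoothness and weight dictated by the scaling of $\Delta$), handling the $\lambda$-term correctly — which is what forces the natural solution space to be the intersection $Z$ rather than a single $H$-space — and checking density of $C_c^\infty(\Omega)$ in $Z$ via Lemma~\ref{21.09.29.4}.(5) so that all the abstract pairings are the distributional ones.
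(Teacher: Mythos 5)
Your proposal is correct, and it takes a genuinely different route from the paper's proof. You and the paper agree on the upward direction $s\ge\gamma$: both use Lemma~\ref{21.05.20.3}.(1) to embed $\Psi H_{p,\theta+2p}^{s}(\Omega)\hookrightarrow\Psi H_{p,\theta+2p}^{\gamma}(\Omega)$, solve at index $\gamma$, and bootstrap with Lemma~\ref{21.05.13.8}; this is your Operation (A) and the paper's Step~1. For $s<\gamma$ the two arguments diverge. The paper proceeds by a one-step downward induction (Step~2): it invokes Lemma~\ref{22.02.16.1} to write $f=f^0+\sum_i D_if^i$ with the $f^\alpha$ one degree smoother and the weight shifted by $-p$, solves at index $s_0+1$ for $f^0$ and for each $\trho^{-1}f^i$, assembles $v=v^0+\sum D_i(\trho\, v^i)$, computes the commutator $\Delta v-\lambda v-f$, corrects for it, and reads off the estimate. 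You instead dualize: noting that, for fixed $\lambda$, the hypothesis says $\Delta-\lambda\colon Z\to Y$ is a Banach-space isomorphism with $Z:=\Psi H_{p,\theta}^{\gamma+2}(\Omega)\cap\Psi H_{p,\theta+2p}^{\gamma}(\Omega)$, you transpose to get an isomorphism $Y^*\to Z^*$, identify the transpose with the distributional $\Delta-\lambda$ via density of $C_c^\infty(\Omega)$ (Lemma~\ref{21.09.29.4}.(1),(5)), identify $Y^*$ and the relevant subspace of $Z^*$ via Lemma~\ref{21.09.29.4}.(2), and recover the $\lambda$-term from $\lambda v=\Delta v-g$ and the mapping property of $\Delta$ (Lemma~\ref{21.09.29.4}.(3)). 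Your weight bookkeeping checks out: dualizing the index-$\gamma$ statement for $(p,\theta,\Psi)$ produces the index-$(-\gamma-2)$ statement for $(p',\theta'-2p',\Psi^{-1})$ with $\theta/p+\theta'/p'=d$; raising with (A) and dualizing again returns to $(p,\theta,\Psi)$ at indices $s=-\tau-2\le\gamma$, which, together with (A) applied directly, exhausts all of $\bR$. The constants you track collapse to $N(d,p,\gamma,\theta,\mathrm{C}_2(\Psi),N_\gamma,s)$ as in the paper (and, as you note in passing, the open mapping theorem is actually superfluous since the hypothesized a priori estimate already bounds the inverse). What each approach buys: the paper's decomposition argument is entirely elementary, needs only the constructive operators of Lemma~\ref{22.02.16.1}, and stays inside a single $(p,\theta,\Psi)$ throughout; your duality argument is shorter and more conceptual, trading the explicit commutator computation for the abstract transpose machinery, but it requires careful handling of the intersection space $Z$ when $\lambda>0$ and relies on the duality and density facts of Lemma~\ref{21.09.29.4}.
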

		
		\begin{proof}
			To prove the uniqueness of solutions, let us assume that $\overline{u}\in \Psi H_{p,\theta}^{s+2}(\Omega)$ satisfies $\Delta \overline{u}-\lambda \overline{u}=0$.
			By Lemma \ref{21.05.13.8}, $\overline{u}$ belongs to $\Psi H_{p,\theta}^{\gamma+2}(\Omega)$. 
			By the assumption of this lemma, the zero distribution is the unique solution in $\Psi H_{p,\theta}^{\gamma+2}(\Omega)$ for the equation $\Delta u-\lambda u=0$.
			Consequently, $\overline{u}$ is also the zero distribution, and the uniqueness of solutions is proved.
			Thus, it remains to show the existence of solutions and estimate \eqref{22.04.01.1}.
			
			\textbf{Step 1.} We first consider the case $s> \gamma$.
			Let $f\in\Psi H^{s}_{p,\theta+2p}(\Omega)$.
			Since $\Psi H^s_{p,\theta+2p}(\Omega)\subset \Psi H^{\gamma}_{p,\theta+2p}(\Omega)$, $f$ belongs to $\Psi H^{\gamma}_{p,\theta+2p}(\Omega)$, and hence there exists a solution $u\in \Psi H^{\gamma+2}_{p,\theta}(\Omega)$ of equation \eqref{220613956}.
			It follows from Lemma \ref{21.05.13.8}, \eqref{220606837}, and Lemma \ref{21.05.20.3}.(1) that
			\begin{align*}
				&\left\|u\right\|_{\Psi H^{s+2}_{p,\theta}(\Omega)}+\lambda\left\|u\right\|_{\Psi H^{s}_{p,\theta+2p}(\Omega)}\lesssim_N\left\|u\right\|_{\Psi H^{\gamma+2}_{p,\theta}(\Omega)}+\left\|f\right\|_{\Psi H^{s}_{p,\theta+2p}(\Omega)}\\
				\leq \,& N_{\gamma}\|f\|_{\Psi H_{p,\theta+2p}^{\gamma}(\Omega)}+\|f\|_{\Psi H_{p,\theta+2p}^s(\Omega)}\lesssim_N (N_{\gamma}+1)\left\|f\right\|_{\Psi H^{s}_{p,\theta+2p}(\Omega)}\,,
			\end{align*}
			where $N=N(d,p,\theta,\gamma,\mathrm{C}_2(\Psi),s)$.
			Therefore $u$ belongs to $\Psi H_{p,\theta}^{s+2}(\Omega)$, and the proof is completed.
			
			\textbf{Step 2.} Consider the case $s<\gamma$.
			Since the case $s\geq\gamma$ is proved in Step 1, by mathematical induction, it is sufficient to show that if this lemma holds for $s=s_0+1$, then this also holds for $s=s_0$.
			
			Let us assume that this lemma holds for $s=s_0+1$. 
			For $f\in \Psi H_{p,\theta+2p}^{s_0}(\Omega)$, by Lemma \ref{22.02.16.1}, there exists $f^0\in \Psi H_{p,\theta+2p}^{s_0+1}(\Omega)$ and $f^1,\,\ldots,\,f^d\in \Psi H_{p,\theta+p}^{s_0+1}(\Omega)$	such that $f=f^0+\sum_{i=1}^dD_if^i$ and
			\begin{align}\label{220506335}
				\left\|f^0\right\|_{\Psi H_{p,\theta+2p}^{s_0+1}(\Omega)}+\sum_{i=1}^d\left\|\trho^{\,-1}f^i\right\|_{\Psi H_{p,\theta+2p}^{s_0+1}(\Omega)}
				\leq N\|f\|_{\Psi H_{p,\theta+2p}^{s_0}(\Omega)}\,,
			\end{align}
			where $N=N(d,p,\theta,s_0,\mathrm{C}_2(\Psi))$.
			By the assumption that this lemma holds for $s=s_0+1$, there exist $v^0,\,\cdots,\,v^d\in \Psi H_{p,d-2}^{s_0+3}(\Omega)$ such that
			\begin{align*}
				\Delta v^0-\lambda v^0=f^0\quad\text{and}\quad \Delta v^i-\lambda v^i=\trho^{\,-1} f^i\quad\text{for }i=1,\,\ldots,\,d\,,
			\end{align*}
			and
			\begin{alignat}{2}
				&&&\sum_{i=0}^d\left(\left\|v^i\right\|_{\Psi H^{s_0+3}_{p,\theta}(\Omega)}+\lambda\left\|v^i\right\|_{\Psi H^{s_0+1}_{p,\theta+2p}(\Omega)}\right)\nonumber\\
				&\leq&&N_{s_0+1}\bigg(\left\|f^0\right\|_{\Psi H^{s_0+1}_{p,\theta+2p}(\Omega)}+\sum_{i=1}^d\left\|\trho^{\,-1} f^i\right\|_{\Psi H^{s_0+1}_{p,\theta+2p}(\Omega)}\bigg)\label{21.09.30.100}\\
				&\lesssim_N\,&&N_{s_0+1}\|f\|_{\Psi H_{p,\theta+2p}^{s_0}(\Omega)}\,,\nonumber
			\end{alignat}
			where the last inequality follows from \eqref{220506335}. 
			Set $v=v^0+\sum_{i=1}^dD_i\big(\trho v^i\big)$, and observe that
			$$
			\Delta v-\lambda v=f +\sum_{i=1}^dD_i\big(\Delta(\trho v^i)-\trho\Delta v^i\big)\,.
			$$
			By Lemmas \ref{21.05.20.3} and \ref{21.09.29.4}.(3), we have
			\begin{alignat*}{2}
			&\left\|D_i\left(\Delta(\trho v^i)-\trho\Delta v^i\right)\right\|_{\Psi H^{s_0+1}_{p,\theta+2p}(\Omega)}\lesssim_N \|\Delta(\trho v^i)-\trho\Delta v^i\|_{\Psi H^{s_0+2}_{p,\theta+p}(\Omega)}\\
				\leq \,& \big\|\big(D^2\trho\big) v^i\big\|_{\Psi H^{s_0+2}_{p,\theta+p}(\Omega)}+\big\|\big(D\trho\big) Dv^i\big\|_{\Psi H^{s_0+2}_{p,\theta+p}(\Omega)}\lesssim_N \|v^i\|_{\Psi H^{s_0+3}_{p,\theta}(\Omega)}<\infty\,,
			\end{alignat*}
			where $N=N(d,p,\theta,s_0,\mathrm{C}_2(\Psi))$.
			By the assumption that this lemma holds for $s=s_0+1$, there exists $w\in \Psi H^{s_0+3}_{p,\theta}(\Omega)$ such that
			$$
			\Delta w-\lambda w=\sum_{i=1}^dD_i\big(\Delta(\trho v^i)-\trho\Delta v^i\big)\quad(=\Delta v-\lambda v-f)\,.
			$$
			This $w$ satisfies
				\begin{alignat}{2}
					\|w\|_{\Psi H^{s_0+3}_{p,\theta}(\Omega)}+\lambda\|w\|_{\Psi H^{s_0+1}_{p,\theta+2p}(\Omega)}&\leq&& N_{s_0+1}\sum_{i=1}^d\left\|D_i\left(\Delta(\trho v^i)-\trho\Delta v^i\right)\right\|_{\Psi H^{s_0+1}_{p,\theta+2p}(\Omega)}\nonumber\\
					&\lesssim_N &&N_{s_0+1}\sum_{i=1}^d\|v^i\|_{\Psi H^{s_0+3}_{p,\theta}(\Omega)}\,.\label{21.09.30.200}
				\end{alignat}
			Set $u=v-w=v^0+\sum_{i=1}^d D_i(\trho v^i)-w$.
			Then $u$ satisfies $\Delta u-\lambda u = f$.
			Moreover, by \eqref{21.09.30.100} and \eqref{21.09.30.200}, we obtain \eqref{22.04.01.1} for $s=s_0$.
		\end{proof}

		\begin{proof}[Proof of Theorem \ref{21.09.29.1}]
			By Lemma \ref{21.11.12.1}, it suffices to prove for $\gamma=0$.
			
			\textbf{\textit{A priori} estimates.} Let $u\in\Psi^{\mu}H_{p,d-2}^2(\Omega)$ and $\Delta u-\lambda u\in\Psi^{\mu}L_{p,d+2p-2}(\Omega)$.
			By Lemma \ref{21.05.13.8}, we obtain
			\begin{align}\label{220506424}
				\begin{split}
					&\|u\|_{\Psi^{\mu}H_{p,d-2}^{2}(\Omega)}+\lambda\|u\|_{\Psi^{\mu}L_{p,d+2p-2}(\Omega)}\\
					\lesssim_N&\|u\|_{\Psi^{\mu}L_{p,d-2}(\Omega)}+\|\Delta u-\lambda u\|_{\Psi^{\mu}L_{p,d+2p-2}(\Omega)}<\infty\,,
				\end{split}
			\end{align}
			where $N=N(d,p,\mu,\mathrm{C}_2(\Psi))$.
			By \eqref{220506424} and Lemma \ref{21.09.29.4}.(5), whether $\lambda=0$ or $\lambda>0$, there exists $u_n\in C_c^{\infty}(\Omega)$ such that
			$$
			\lim_{n\rightarrow\infty}\Big(\|u-u_n\|_{\Psi^{\mu}H_{p,d-2}^{2}(\Omega)}+\lambda\|u-u_n\|_{\Psi^{\mu} L_{p,d+2p-2}(\Omega)}\Big) =0\,.
			$$
			This implies that
			$$
			\lim_{n\rightarrow \infty}\big\|\big(\Delta-\lambda\big)(u-u_n)\big\|_{\Psi^{\mu}L_{p,d+2p-2}(\Omega)}= 0\,.
			$$
			Since $\Psi$ is a regularization of the superharmonic Harnack function $\psi$, Theorem \ref{21.05.13.2} and Lemma \ref{220512433} imply
				\begin{align}\label{221213517}
				\begin{aligned}
					&\|u_n\|_{\Psi^{\mu}L_{p,d-2}(\Omega)}\simeq_N\int_{\Omega}|u_n|^p\psi^{-\mu p}\rho^{-2}\dd x\\
					\lesssim_{N}\,&\int_{\Omega}|\Delta u_n-\lambda u_n|^p \psi^{-\mu p}\rho^{2p-2} \dd x\simeq_N \|\Delta u_n-\lambda u_n\|_{\Psi^{\mu}L_{p,d+2p-2}(\Omega)}\,,
				\end{aligned}
				\end{align}
			where $N=N(d,p,\mu,\mathrm{C}_0(\Omega),\mathrm{C}_2(\Psi),\mathrm{C}_3(\psi,\Psi))$.
			By letting $n\rightarrow \infty$, we obtain \eqref{221213517} for $u$ instead of $u_n$.
			By combining this with \eqref{220506424}, we have
			\begin{align}\label{220506452}
					&\| u\|_{\Psi^{\mu}H_{p,d-2}^{2}(\Omega)}+\lambda\| u\|_{\Psi^{\mu}L_{p,d+2p-2}(\Omega)}\nonumber\\
					\lesssim_N\,&\|u\|_{\Psi^{\mu}L_{p,d-2}(\Omega)}+\|\Delta u-\lambda u\|_{\Psi^{\mu}L_{p,d+2p-2}(\Omega)}\\
					\lesssim_N\,&\|\Delta u-\lambda u\|_{\Psi^{\mu}L_{p,d+2p-2}(\Omega)}\,.\nonumber
			\end{align}
			Note that estimate \eqref{220506452} also implies uniqueness of solutions.
			
			\textbf{Existence of solutions.}
			Let $f\in \Psi^{\mu}L_{p,d+2p-2}(\Omega)$.
			Since $C_c^{\infty}(\Omega)$ is dense in $\Psi^{\mu}L_{p,d+2p-2}(\Omega)$, there exists $f_n\in C_c^{\infty}(\Omega)$ such that $f_n\rightarrow f$ in $\Psi^{\mu}L_{p,d+2p-2}(\Omega)$.
			Lemmas \ref{21.05.25.3} and \ref{220512433} yield that for each $n\in\bN$, there exists $u_n\in\Psi^{\mu}L_{p,d-2}^2(\Omega)$ such that $\Delta u_n-\lambda u_n=f_n$.
			By Lemma \ref{21.05.13.8}, $u_n\in\Psi^{\mu}H_{p,d-2}^2(\Omega)$.
			Since $f_n\rightarrow f$ in $\Psi^{\mu} L_{p,d+2p-2}(\Omega)$, it follows from \eqref{220506452} that
			$$
			\|u_n-u_m\|_{\Psi^{\mu}H_{p,d-2}^2(\Omega)}\leq N\|f_n-f_m\|_{\Psi^{\mu}L_{p,d+2p-2}}\rightarrow 0
			$$
			as $n,\,m\rightarrow \infty$.
			Therefore there exists $u\in\Psi^{\mu}H_{p,d-2}^2(\Omega)$ such that $u_n$ converges to $u$ in $\Psi^{\mu}H_{p,d-2}^2(\Omega)$.
			Since $u_n$ and $f_n$ converge to $u$ and $f$ in the sense of distributions, respectively (see Lemma \ref{21.09.29.4}.(2)), $u$ is a solution of equation \eqref{220613956}.
		\end{proof}
		
		We end this subsection with a global uniqueness result.
		
		\begin{thm}[Global uniqueness]\label{220530526}
			Let $\Omega$ be an open set admitting the Hardy inequality \eqref{hardy}.
			For each $i=1,\,2$, let $\Psi_i$ be a regularization of a superharmonic Harnack function, $p_i\in(1,\infty)$, $\gamma_i\in\bR$, and $\mu_i\in (-1/p_i,1-1/p_i)$.
			Let $f\in \bigcap_{i=1,2}\Psi_i^{\mu_i} H_{p_i,d+2p_i-2}^{\gamma_i}(\Omega)$, and for each $i=1,\,2$, let $u^{(i)}\in\Psi_i^{\mu_i} H_{p_i,d-2}^{\gamma_i+2}(\Omega)$ be a solution of \eqref{220613956}.
			Then $u^{(1)}=u^{(2)}$ in $\cD'(\Omega)$.
		\end{thm}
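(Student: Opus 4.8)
The plan is to build one sequence $\{f_n\}\subset C_c^\infty(\Omega)$ that approximates $f$ \emph{simultaneously} in $\Psi_1^{\mu_1}H_{p_1,d+2p_1-2}^{\gamma_1}(\Omega)$ and in $\Psi_2^{\mu_2}H_{p_2,d+2p_2-2}^{\gamma_2}(\Omega)$, to attach to each $f_n$ the \emph{same} distribution $u_n$ (the weak solution of $\Delta u_n=f_n$ furnished by Lemma~\ref{21.05.25.3} with $\lambda=0$), and to verify that this one $u_n$ lies in both $\Psi_1^{\mu_1}H_{p_1,d-2}^{\gamma_1+2}(\Omega)$ and $\Psi_2^{\mu_2}H_{p_2,d-2}^{\gamma_2+2}(\Omega)$ and converges there to $u^{(1)}$ and to $u^{(2)}$ respectively. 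Since both of these spaces embed continuously into $\cD'(\Omega)$, uniqueness of the distributional limit of $\{u_n\}$ then forces $u^{(1)}=u^{(2)}$.

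First I record the set-up. Put $X_i:=\Psi_i^{\mu_i}H_{p_i,d-2}^{\gamma_i+2}(\Omega)$ and $X_i':=\Psi_i^{\mu_i}H_{p_i,d+2p_i-2}^{\gamma_i}(\Omega)$; by Example~\ref{21.05.18.2}.(3) each $\Psi_i^{\mu_i}$ is a regular Harnack function, so the lemmas quoted below apply with $\Psi=\Psi_i^{\mu_i}$. Since $\mu_i\in(-1/p_i,1-1/p_i)$ and $f\in X_i'$, Theorem~\ref{21.09.29.1} (with $\lambda=0$) shows that $\Delta u=f$ has a unique solution in $X_i$, which therefore must be $u^{(i)}$, and it also gives $\|w\|_{X_i}\le N_i\|g\|_{X_i'}$ for the solution $w$ of $\Delta w=g$, $g\in X_i'$. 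Because $f\in X_1'\cap X_2'$, Lemma~\ref{21.09.29.4}.(5) yields $\{f_n\}\subset C_c^\infty(\Omega)$ with $\|f-f_n\|_{X_1'}+\|f-f_n\|_{X_2'}\to 0$.

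The technical heart is to show $u_n\in X_i$ for $i=1,2$. Let $\psi_i^0$ be a classical superharmonic function with $\psi_i^0=\psi_i$ almost everywhere (Lemma~\ref{240315329}), so that $\psi_i^0\simeq\Psi_i$ a.e. Applying estimate~\eqref{220613103} of Lemma~\ref{21.05.25.3} with $\phi=\psi_i^0$, $c=-\mu_i p_i\in(-p_i+1,1)$ and $p=p_i$, and using Lemma~\ref{220512433}.(1) to identify $\int_\Omega|u_n|^{p_i}\psi_i^{-\mu_i p_i}\rho^{-2}\dd x$ with a constant multiple of $\|u_n\|_{\Psi_i^{\mu_i}L_{p_i,d-2}(\Omega)}^{p_i}$, I get $u_n\in\Psi_i^{\mu_i}L_{p_i,d-2}(\Omega)$: the right-hand side $\int_\Omega|f_n|^{p_i}\psi_i^{-\mu_i p_i}\rho^{2p_i-2}\dd x$ is finite since $f_n$ has compact support in $\Omega$ and $\psi_i^{-\mu_i p_i}\in L_{1,\mathrm{loc}}(\Omega)$ by Lemma~\ref{21.04.23.5}.(1). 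Then Lemma~\ref{21.05.13.8}, applied with $\lambda=0$, $\Psi=\Psi_i^{\mu_i}$, $\theta=d-2$, $\gamma=\gamma_i$ and $s=0$, upgrades this to $u_n\in X_i$, because $\|f_n\|_{X_i'}<\infty$.

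It remains to pass to the limit. Fix $i$: both $u_n,u_m$ lie in $X_i$ and $\Delta(u_n-u_m)=f_n-f_m\in X_i'$, so the estimate recalled above gives $\|u_n-u_m\|_{X_i}\le N_i\|f_n-f_m\|_{X_i'}\to 0$; hence $u_n\to w^{(i)}$ in $X_i$ for some $w^{(i)}\in X_i$. Since $X_i$ embeds continuously into $\cD'(\Omega)$ (Lemma~\ref{21.09.29.4}.(2) bounds $|\langle\,\cdot\,,\zeta\rangle|$ by the $X_i$-norm for each fixed $\zeta\in C_c^\infty(\Omega)$), we have $u_n\to w^{(i)}$ and $\Delta u_n=f_n\to f$ in $\cD'(\Omega)$, whence $\Delta w^{(i)}=f$ and, by the uniqueness in $X_i$, $w^{(i)}=u^{(i)}$. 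Thus the single sequence $\{u_n\}$ converges in $\cD'(\Omega)$ to $u^{(1)}$ and to $u^{(2)}$, so $u^{(1)}=u^{(2)}$. The delicate step is exactly the one flagged above — that the \emph{one} weak solution $u_n$ produced by Lemma~\ref{21.05.25.3}, which is intrinsic to $f_n$ and not attached to any particular weighted space, simultaneously belongs to $X_1$ and $X_2$; the rest is a routine assembly of the cited estimates.
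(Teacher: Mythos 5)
Your proof is correct and follows essentially the same route as the paper's: approximate $f$ by a single sequence $\{f_n\}\subset C_c^\infty(\Omega)$ simultaneously in both data spaces via Lemma~\ref{21.09.29.4}.(5), attach to each $f_n$ the common weak solution $u_n$ from Lemma~\ref{21.05.25.3}, place $u_n$ in both weighted $L_p$-spaces via \eqref{220613103} and Lemma~\ref{220512433}, upgrade regularity with Lemma~\ref{21.05.13.8}, and pass to the limit in $\cD'(\Omega)$. The only cosmetic difference is that the paper applies the \emph{a priori} estimate of Theorem~\ref{21.09.29.1} directly to $\Delta(u_n-u^{(i)})=f_n-f$ to get $u_n\to u^{(i)}$ in $X_i$ at once, whereas you first prove $\{u_n\}$ is Cauchy in $X_i$ and then invoke uniqueness to identify the limit; both are valid and equally routine.
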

		
		\begin{proof}
			By Lemma \ref{21.09.29.4}.(5), there exist $\{f_n\}\subset  C_c^{\infty}(\Omega)$ such that
			$
			f_n\rightarrow f$ in $\bigcap_{i=1,2} \Psi_i^{\mu_i} H_{p_i,d+2p_i-2}^{\gamma_i}(\Omega)
			$.
			Since $\{f_n\}\subset  C_c^{\infty}(\Omega)$, Lemmas \ref{21.05.25.3} and \ref{220512433} yield that for each $n\in\bN$, there exists $u_n\in \bigcap_{i=1,2}\Psi_i^{\mu_i} L_{p_i,d-2}(\Omega)$ such that $\Delta u_n-\lambda u_n=f_n$.
			Lemma \ref{21.05.13.8} yields that $u_n\in \bigcap_{i=1,2}\Psi_i^{\mu_i} H_{p_i,d-2}^{\gamma_i+2}(\Omega)$.
			Since
			$$
			(\Delta-\lambda)\big(u_n-u^{(1)}\big)=(\Delta-\lambda)\big(u_n-u^{(2)}\big)=f_n-f\,,
			$$
			For each $i=1,\,2$, Theorem \ref{21.09.29.1} implies that $u_n\rightarrow u^{(i)}$ in $\Psi_i^{\mu_i} H_{p_i,d-2}^{\gamma_i+2}(\Omega)$, and by Lemma \ref{21.09.29.4}.(2),  this convergences also holds in $\cD'(\Omega)$.
			Therefore $u^{(1)}=u^{(2)}=\lim_{n\rightarrow \infty}u_n$ in $\cD'(\Omega)$.
		\end{proof}

		
		\mysection{Application I - Domain with fat exterior or thin exterior}\label{app.}
		
		In this section, we introduce applications of the results in Section \ref{0040} to domains satisfying fat exterior or thin exterior conditions.
		The notions of the fat exterior and thin exterior are closely related to the geometry of a domain $\Omega$, namely the Hausdorff dimension and the Aikawa dimension of $\Omega^c$.
		
		For a set $E\subset \bR^d$, the Hausdorff dimension of $E$ is defined as 
		$$
		\dim_{\cH}(E):=\inf\big\{\lambda\geq 0\,:\,H^{\lambda}_{\infty}(E)=0\big\}\,.
		$$
		Here,
		$$
		\cH_{\infty}^{\lambda}(E):=\inf \Big\{\sum_{i\in\bN}r_i^{\lambda}\,:\,E\subset \bigcup_{i\in\bN}B(x_i,r_i)\quad\text{where }x_i\in E\text{ and }r_i>0\Big\}\,.
		$$
		The Aikawa dimension of $E$, denoted by $\dim_{\cA}(E)$, is defined by the infimum of $\beta\geq 0$ for which
		$$
		\sup_{p\in E,\,r>0}\frac{1}{r^{\beta}}\int_{B_r(p)}\frac{1}{d(x,E)^{d-\beta}}\dd x< \infty\,,
		$$
		with the convention $\frac{1}{0}=+\infty$.
		
		\begin{remark}\label{22.02.24.1}\,\,
			\begin{enumerate}
				\item While the Aikawa dimension is defined via integration, it is equivalent to a dimension defined via a covering property, known as the Assouad dimension (see \cite[Theorem 1.1]{LT}).
				
				\item  For any $E\subset \bR^d$, $\dim_{\cH}(E)\leq \dim_{\cA}(E)$, although equality does not hold in general (see \cite[Section 2.2]{lehr}).
				However, if $E$ is Ahlfors regular, for example, if $E$ is self-similar, such as the Cantor set or the Koch snowflake, then $\dim_{\cH}(E)=\dim_{\cA}(E)$; see \cite[Lemma 2.1]{lehr} and \cite[Theorem 4.14]{Mattila}.
			\end{enumerate} 
		\end{remark}
		 
		Koskela and Zhong \cite{KZ} established the dimensional dichotomy results for domains admitting the Hardy inequality, in terms of the Hausdorff and Minkowski dimensions.
		Their result can be expressed in terms of the Hausdorff and Aikawa dimensions, as shown in \cite[Theorem 5.3]{lehr}.
		
		\begin{prop}[Theorem 5.3 of \cite{lehr}]\label{22.02.07.1}
			Suppose a domain $\Omega\subset \bR^d$ admits the Hardy inequality.
			Then there is a constant $\epsilon>0$ such that for each $p\in\partial\Omega$ and $r>0$, either
			$$
			\dim_{\cH}\big(\Omega^c\cap \overline{B}(p,4r)\big)\geq d-2+\epsilon\quad\text{or}\quad \dim_{\cA}\big(\Omega^c\cap \overline{B}(p,r)\big)\leq d-2-\epsilon\,.
			$$
		\end{prop}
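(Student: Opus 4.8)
Since this statement is quoted from \cite{lehr} (where it is in turn derived from Koskela and Zhong \cite{KZ}), the plan is only to indicate how the two are combined. The starting point is the scale-wise dichotomy of \cite{KZ} characterising domains admitting the $L_2$-Hardy inequality \eqref{hardy}: there is $\epsilon>0$, depending only on $d$ and $\mathrm{C}_0(\Omega)$, such that for every $q\in\partial\Omega$ and every $s>0$, \emph{either} $\cH_{\infty}^{d-2+\epsilon}\big(\Omega^c\cap\overline{B}(q,2s)\big)\gtrsim s^{d-2+\epsilon}$ (``fat at $(q,s)$''), \emph{or} $\big|\{z\in B(q,s):d(z,\partial\Omega)<t\}\big|\lesssim (t/s)^{2+\epsilon}\,s^{d}$ for all $0<t<s$ (``thin at $(q,s)$''), with implied constants independent of $q$ and $s$. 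Everything else is bookkeeping: the integral definition of $\dim_{\cA}$ recalled above is used directly, and the identification $\dim_{\cA}=\dim_{\mathrm{Assouad}}$ from Remark~\ref{22.02.24.1} is not needed.

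Fix $p\in\partial\Omega$, $r>0$ and set $F:=\Omega^c\cap\overline{B}(p,r)$. If $|\Omega^c\cap\overline{B}(p,4r)|>0$ then $\dim_{\cH}\big(\Omega^c\cap\overline{B}(p,4r)\big)=d\geq d-2+\epsilon$ and the first alternative of the statement holds; so assume $|\Omega^c\cap\overline{B}(p,4r)|=0$, so that every point of $F$ lies in $\partial\Omega$. Apply the dichotomy of \cite{KZ} to each pair $(q,s)$ with $q\in F$ and $0<s\leq r$. If ``fat at $(q_0,s_0)$'' holds for even one such pair, then $\overline{B}(q_0,2s_0)\subset\overline{B}\big(p,|q_0-p|+2s_0\big)\subset\overline{B}(p,3r)\subset\overline{B}(p,4r)$, whence $\cH_{\infty}^{d-2+\epsilon}\big(\Omega^c\cap\overline{B}(p,4r)\big)>0$ and therefore $\dim_{\cH}\big(\Omega^c\cap\overline{B}(p,4r)\big)\geq d-2+\epsilon$, giving the first alternative. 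This inclusion, which only requires $|q_0-p|+2s_0\leq 3r$, is the sole reason for the factor $4$.

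It remains to treat the case in which ``thin at $(q,s)$'' holds for \emph{every} $q\in F$ and \emph{every} $0<s\leq r$, where I claim $\dim_{\cA}(F)\leq d-2-\epsilon$. Fix $\beta>d-2-\epsilon$; by the integral characterisation it suffices to bound $s^{-\beta}\int_{B(q,s)}d(z,F)^{-(d-\beta)}\dd z$ uniformly over $q\in F$, $s>0$. For $q\in F$ and $0<s\leq r$, since $F\subset\partial\Omega$ the inclusion $\{d(\cdot,F)<t\}\subset\{d(\cdot,\partial\Omega)<t\}$ and ``thin at $(q,s)$'' give $g(t):=\big|\{z\in B(q,s):d(z,F)<t\}\big|\lesssim t^{2+\epsilon}s^{d-2-\epsilon}$ for $0<t<s$; splitting $B(q,s)$ along $\{d(\cdot,F)<s\}$ and integrating the near part by parts against $g$,
$$
\int_{B(q,s)}d(z,F)^{-(d-\beta)}\dd z\;\lesssim\;s^{d-2-\epsilon}\int_0^{s}t^{\,1+\epsilon-(d-\beta)}\dd t\;+\;s^{-(d-\beta)}\big|B(q,s)\big|\;\lesssim\;s^{\beta},
$$
the $t$-integral converging at $0$ precisely because $\beta>d-2-\epsilon$. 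For $s>r$ one reduces to this: split $B(q,s)$ into $B(q,3r)$ and its complement; on the complement $F\subset\overline{B}(q,2r)$ forces $d(z,F)\geq|z-q|/3$, so that part contributes $\lesssim\int_{3r}^{s}\varrho^{\,\beta-1}\dd\varrho\lesssim s^{\beta}$, while $\int_{B(q,3r)}d(z,F)^{-(d-\beta)}\dd z\lesssim r^{\beta}$ by covering $F$ by boundedly many balls of radius $r/2$ centred on $F$, applying the estimate above on each, and using $d(\cdot,F)^{-(d-\beta)}\lesssim r^{-(d-\beta)}$ on the rest of $B(q,3r)$. Hence $\dim_{\cA}(F)\leq d-2-\epsilon$, the second alternative; taking the $\epsilon$ in the statement to be the constant from \cite{KZ} makes a single $\epsilon$ serve both alternatives.

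The only delicate point — hence the main obstacle — is to have the \cite{KZ} dichotomy in exactly this quantitative shape, in particular that the ``thin'' side is an upper level-set (Minkowski-type) estimate with exponent $2+\epsilon$ \emph{strictly larger} than $2$. That sign is precisely what makes the Stieltjes integral above converge for all $\beta>d-2-\epsilon$, rather than only for $\beta>d-2+\epsilon$, and hence what lets the ``thin'' alternative feed the \emph{upper} Aikawa bound; one should also note that the ``thin'' estimate may be localised to the neighbourhood of $\Omega^c\cap B(q,2s)$ if the global form stated above is inconvenient. The remaining ingredients — the inclusion placing the localised ``fat'' piece inside $\overline{B}(p,4r)$, the distribution-function computation, and the covering reduction for large $s$ — are routine. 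Complete details are in \cite[Theorem~5.3]{lehr}, building on \cite{KZ}.
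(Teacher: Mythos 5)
The paper does not prove this proposition: it is quoted as ``Theorem 5.3 of \cite{lehr}'' and immediately followed by a pointer to further references, so there is no internal argument to compare your sketch against. Your reconstruction is a faithful outline of how Lehrb\"ack derives the dichotomy from Koskela--Zhong, and the bookkeeping you supply checks out. The reduction to $|\Omega^c\cap\overline{B}(p,4r)|=0$ correctly places $F:=\Omega^c\cap\overline{B}(p,r)$ inside $\partial\Omega$, so $d(\cdot,F)\geq d(\cdot,\partial\Omega)$ and the tube estimate transfers from $\partial\Omega$ to $F$; the inclusion $\overline{B}(q_0,2s_0)\subset\overline{B}(p,3r)\subset\overline{B}(p,4r)$ for $q_0\in F$, $s_0\leq r$ is what accounts for the factor $4$; and the distribution-function computation converting the codimension-$(2+\epsilon)$ level-set bound into the Aikawa integral bound is correct, the integral $\int_0^s t^{1+\epsilon-(d-\beta)}\,\dd t$ converging at $0$ exactly for $\beta>d-2-\epsilon$ (you also implicitly use $\beta<d$ for the tail and $\beta>0$ for the $s>r$ reduction, both harmless: $\beta<d$ is automatic from $\beta\to d-2-\epsilon$, and $\beta>0$ follows for $d\geq3$ with $\epsilon<1$). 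One point a complete writeup should make explicit is the case $d=2$: there the ``thin'' alternative cannot ever hold, since $B(q,t)\subset B(q,s)\cap\{d(\cdot,\partial\Omega)<t\}$ gives $t^{2}\lesssim t^{2+\epsilon}s^{-\epsilon}$, which fails as $t\to 0$, so the dichotomy always produces the fat alternative and the (vacuously unsatisfiable) conclusion $\dim_{\cA}\leq -\epsilon$ never needs to be delivered. The only ingredient not verified here is the precise quantitative shape of the Koskela--Zhong dichotomy at each scale, in particular that the tube exponent is $2+\epsilon$ with $\epsilon>0$; you flag this yourself as the essential obstacle, and that is indeed the one thing to pin down in \cite{KZ}/\cite{lehr} before relying on the sketch.
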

	 
		We refer readers to \cite{kinnunen2021,ward} for a deeper discussion of this dimensional dichotomy. 
		
		In view of Proposition \ref{22.02.07.1}, we consider domains $\Omega\subset \bR^d$ that satisfy one of the following conditions:
		\begin{enumerate}
			\item(Fat exterior) There exist $\epsilon\in(0,1)$ and $c>0$ such that
			\begin{align}\label{220617253}
				\cH^{d-2+\epsilon}_{\infty}\big(\Omega^c\cap \overline{B}(p,r)\big)\geq c\,r^{d-2+\epsilon}\quad\text{for all }p\in \partial\Omega\,\,,\,\,\,\,r>0\,.
			\end{align}
			
			\item(Thin exterior) $\dim_{\cA}(\Omega^c)<d-2$.
		\end{enumerate}
		These two conditions have been studied extensively; we review previous studies on these conditions, in particular those related to the Hardy inequality, in Sections \ref{fatex} and \ref{0062}.
		
		In this section and Section \ref{app2.} we construct superharmonic functions in various domains $\Omega\subset \bR^d$ that are comparable to $d(\,\cdot\,,\partial\Omega)^{\alpha}$ for some $\alpha$.
		This type of superharmonic function ensures the validity of the following statement for all $p\in(1,\infty)$ and suitable $\theta$ (see Lemma \ref{220617557}):
		\begin{statement}[$\Omega,p,\theta$]\label{22.02.19.1}
			For any $\lambda\geq 0$ and $\gamma\in\bR$, if $f\in H_{p,\theta+2p}^{\gamma}(\Omega)$, then the equation
				\begin{align}\label{240319417}
					\Delta u-\lambda u=f
				\end{align}
				has a unique solution $u$ in $H^{\gamma+2}_{p,\theta}(\Omega)$.
				Moreover, we have
				\begin{align}\label{2205241155}
					\|u\|_{H^{\gamma+2}_{p,\theta}(\Omega)}+\lambda\|u\|_{H_{p,\theta+2p}^{\gamma}(\Omega)}\leq N_1\|f\|_{H_{p,\theta+2p}^{\gamma}(\Omega)}\,,
				\end{align}
				where $N_1$ is a constant independent of $u$, $f$, and $\lambda$.
			\end{statement}
		
		\begin{lemma}\label{220617557}
			Let $\Omega$ admit the Hardy inequality \eqref{hardy}, and suppose that there exists a superharmonic function $\psi$ and constants $\alpha\in\bR$ and $M>0$ satisfying
			\begin{align}\label{231215620}
				M^{-1}\rho^{\alpha}\leq \psi\leq M\rho^{\alpha}\,.
			\end{align}
			Then Statement \ref{22.02.19.1} $(\Omega,p,\theta)$ holds for all $p\in(1,\infty)$ and $\theta\in\bR$ with 
			\begin{alignat*}{2}
				d-2-(p-1)\alpha<&\,\,\theta<\quad d-2+\alpha&&\quad\text{if}\quad \alpha>0\,;\\
				d-2+\alpha\quad <&\,\,\theta<d-2-(p-1)\alpha&&\quad\text{if}\quad \alpha<0\,.
			\end{alignat*}
			Moreover, $N_1$ in \eqref{2205241155} depends only on $d,\,p,\,\gamma,\,\theta,\,\mathrm{C}_0(\Omega),\,\alpha$ and $M$ (in \eqref{231215620}).
		\end{lemma}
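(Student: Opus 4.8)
The plan is to deduce the lemma directly from the main solvability theorem, Theorem~\ref{21.09.29.1}, by using the hypothesised $\psi$ as the superharmonic Harnack function there and then rewriting the $\Psi^{\mu}$-weighted spaces as ordinary spaces $H_{p,\ast}^{\gamma}(\Omega)$ through the exponent bookkeeping of Remark~\ref{21.10.05.2}. The only real content is to verify that the admissible parameter range $\mu\in(-1/p,1-1/p)$ of Theorem~\ref{21.09.29.1} matches exactly the stated range of $\theta$, and that the constants depend only on the quantities listed.

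\textbf{Step 1 ($\psi$ is a superharmonic Harnack function).} For $x\in\Omega$ and $y\in B(x,\rho(x)/2)$ one has $\rho(x)/2\le\rho(y)\le 3\rho(x)/2$, so $\rho^{\alpha}$ satisfies the defining inequality of a Harnack function with constant $3^{|\alpha|}$; hence, by \eqref{231215620}, $\psi$ is a Harnack function with $\mathrm{C}_1(\psi)$ depending only on $\alpha$ and $M$. Since $\psi>0$ (as $M^{-1}\rho^{\alpha}\le\psi$) and $\psi$ is superharmonic by hypothesis, $\psi$ is a superharmonic Harnack function. Fix a regularization $\Psi$ of $\psi$; by Example~\ref{21.05.18.2}.(3) and Lemma~\ref{21.05.27.3} we may take $\Psi=\trho^{\,\alpha}$ with $\trho$ as in Lemma~\ref{21.05.27.3}.(1), so that $\mathrm{C}_2(\Psi)$ depends only on $d,\alpha$ and, using $\Psi\simeq\trho^{\,\alpha}\simeq\rho^{\alpha}\simeq\psi$, $\mathrm{C}_3(\psi,\Psi)$ depends only on $d,\alpha,M$. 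For any $\sigma\in\bR$ the function $\Psi^{\sigma}$ is then a regular Harnack function and a regularization of $\rho^{\alpha\sigma}$, with all comparison constants controlled by $d,\alpha,M,\sigma$.

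\textbf{Step 2 (matching the parameters).} Given $p\in(1,\infty)$ and $\theta$ in the stated range, set $\mu:=(d-2-\theta)/(\alpha p)$. The condition $\mu\in(-1/p,1-1/p)$ is equivalent to $-1<(d-2-\theta)/\alpha<p-1$; multiplying through by $\alpha$ — which reverses the inequalities precisely when $\alpha<0$ — this becomes exactly
\begin{align*}
d-2-(p-1)\alpha<\theta<d-2+\alpha\quad(\alpha>0),\qquad d-2+\alpha<\theta<d-2-(p-1)\alpha\quad(\alpha<0),
\end{align*}
i.e. the hypothesis on $\theta$. With this $\mu$ one has $d+2p-2-\mu\alpha p=\theta+2p$ and $d-2-\mu\alpha p=\theta$, so Remark~\ref{21.10.05.2} (applied to $\Psi^{\mu}$, a regularization of $\rho^{\mu\alpha}$) together with Lemma~\ref{21.09.29.4}.(4) gives
\begin{align*}
\Psi^{\mu}H_{p,d+2p-2}^{\gamma}(\Omega)=H_{p,\theta+2p}^{\gamma}(\Omega),\qquad \Psi^{\mu}H_{p,d-2}^{\gamma+2}(\Omega)=H_{p,\theta}^{\gamma+2}(\Omega),
\end{align*}
with norm-equivalence constants depending only on $d,p,\gamma,\theta,\alpha,M$.

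\textbf{Step 3 (conclusion).} Fix $\lambda\ge 0$, $\gamma\in\bR$, and $f\in H_{p,\theta+2p}^{\gamma}(\Omega)=\Psi^{\mu}H_{p,d+2p-2}^{\gamma}(\Omega)$. Since $\Omega$ admits \eqref{hardy} and $\psi$ is a superharmonic Harnack function with regularization $\Psi$ and $\mu\in(-1/p,1-1/p)$, Theorem~\ref{21.09.29.1} provides a unique $u\in\Psi^{\mu}H_{p,d-2}^{\gamma+2}(\Omega)=H_{p,\theta}^{\gamma+2}(\Omega)$ solving \eqref{240319417} with $\|u\|_{\Psi^{\mu}H_{p,d-2}^{\gamma+2}}+\lambda\|u\|_{\Psi^{\mu}H_{p,d+2p-2}^{\gamma}}\le N\|f\|_{\Psi^{\mu}H_{p,d+2p-2}^{\gamma}}$, where $N$ depends only on $d,p,\gamma,\mu,\mathrm{C}_0(\Omega),\mathrm{C}_2(\Psi),\mathrm{C}_3(\psi,\Psi)$. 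Rewriting the three norms by Step 2 yields \eqref{2205241155}, and since $\mu$, $\mathrm{C}_2(\Psi)$, $\mathrm{C}_3(\psi,\Psi)$, and the equivalence constants are all functions of $d,p,\gamma,\theta,\alpha,M$ alone, $N_1$ depends only on $d,p,\gamma,\theta,\mathrm{C}_0(\Omega),\alpha,M$. I do not expect a serious obstacle; the one delicate point is the sign bookkeeping in Step 2, and ensuring that $\mu$ is \emph{determined} by $(d,p,\theta,\alpha)$ rather than being an extra free parameter, so that the final constant acquires no hidden dependence.
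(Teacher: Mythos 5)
Your proof is correct and takes essentially the same route as the paper: use $\psi$ as the superharmonic Harnack function in Theorem~\ref{21.09.29.1} with $\Psi=\trho^{\,\alpha}$, and convert the $\Psi^{\mu}$-weighted spaces back to $H_{p,\theta}^{\gamma}(\Omega)$ via Lemma~\ref{21.05.20.3}.(3). The paper's proof is terse and leaves the choice $\mu=(d-2-\theta)/(\alpha p)$ and the sign-sensitive verification that $\mu\in(-1/p,1-1/p)$ matches the stated $\theta$-range implicit; you have simply carried out that bookkeeping in full, and your computations (including the exponent $\theta-\alpha\mu p$ in the weight conversion) are accurate.
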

		\begin{proof}
		Observe that $\psi$ is a superharmonic Harnack function, and $\Psi:=\trho^{\,\alpha}$ is a regularization of $\psi$.
		For this $\Psi$, the constants $\mathrm{C}_2(\Psi)$ and $\mathrm{C}_3(\Psi,\psi)$ can be chosen to depend only on $d$, $\alpha$, and $M$.
		In addition, Lemma \ref{21.05.20.3}.(3) implies that for any $p\in(1,\infty)$ and $\gamma,\,\theta\in\bR$ there exists $N=N(d,p,\gamma,\alpha,\mu,M)$ such that $
		\|f\|_{\Psi^{\mu}H_{p,\theta}^{\gamma}(\Omega)}\simeq_N \|f\|_{H_{p,\theta-\alpha\mu}^{\gamma}(\Omega)}$ for all $f\in \cD'(\Omega)$.
		Therefore the proof is concluded by applying Theorem \ref{21.09.29.1} with $\Psi:=\trho^{\,\alpha}$.
		\end{proof}
	 	
		We collect basic properties of classical superharmonic functions, which are used in this section and Section \ref{app2.}. 
		\begin{lemma}\label{21.05.18.1}
			Let $\Omega$ be an open set in $\bR^d$.
			\begin{enumerate}
				\item Let $\phi_1,\,\phi_2$ be classical superharmonic functions in $\Omega$. Then $\phi_1\wedge \phi_2$ is also a classical superharmonic function on $\Omega$.
				
				\item Let $\{\phi_{\alpha}\}$ be a family of positive classical superharmonic functions in $\Omega$.
				Then $\phi:=\inf_{\alpha}\phi_{\alpha}$ is a superharmonic function on $\Omega$.
				
				\item For each $i=1,\,2$, let $\Omega_i$ be an open set in $\bR^d$ and $\phi_i$ be a classical superharmonic function in $\Omega_i$.
				Suppose that 
				\begin{alignat*}{2}
					&\liminf_{x\rightarrow x_1,x\in\Omega_2}\phi_2(x)\geq \phi_1(x_1)\quad &&\text{for all}\quad  x_1\in \Omega_1\cap \partial\Omega_2\,;\\
					&\liminf_{x\rightarrow x_2,x\in\Omega_1}\phi_1(x)\geq \phi_2(x_2)\quad &&\text{for all}\quad  x_2\in \Omega_2\cap \partial\Omega_1\,.
				\end{alignat*}
				Then the function
				\begin{align*}
					\phi(x):=
					\begin{cases}
						\phi_1(x)&\quad x\in\Omega_1\setminus\Omega_2\\
						\phi_1(x)\wedge \phi_2(x) &\quad x\in \Omega_1\cap \Omega_2\\
						\phi_2(x)&\quad x\in\Omega_2\setminus\Omega_1\\
					\end{cases}
				\end{align*}
				is also a classical superharmonic function in $\Omega_1\cup\Omega_2$.
				
			\end{enumerate}
		\end{lemma}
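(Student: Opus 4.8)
The plan is to treat the three parts separately, throughout using Remark~\ref{240316310} (classical superharmonicity is a local property) and Lemma~\ref{240315329} (a function is superharmonic in the distributional sense iff it agrees almost everywhere with a classical superharmonic one).

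\emph{Part~(1).} I would check the three defining conditions of Definition~\ref{21.01.19.1}.(2) for $\phi_1\wedge\phi_2$ directly. Lower semicontinuity is inherited from $\phi_1,\phi_2$. For the sub-mean-value inequality, fix $x\in\Omega$ and $r>0$ with $\overline{B}_r(x)\subset\Omega$, relabel so that $\phi_1(x)\le\phi_2(x)$, whence
\[
(\phi_1\wedge\phi_2)(x)=\phi_1(x)\ge\frac{1}{|B_r(x)|}\int_{B_r(x)}\phi_1\dd y\ge\frac{1}{|B_r(x)|}\int_{B_r(x)}(\phi_1\wedge\phi_2)\dd y ;
\]
finally, $\phi_1\wedge\phi_2\le\phi_1\not\equiv+\infty$ on each connected component. (This is classical; see \cite{AG}.)

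\emph{Part~(2).} Fixing one index $\alpha_0$, we have $0\le\phi\le\phi_{\alpha_0}$ and $\phi_{\alpha_0}\in L_{1,\mathrm{loc}}(\Omega)$ by Lemma~\ref{21.04.23.3}.(2), so $\phi\in L_{1,\mathrm{loc}}(\Omega)$. Then I would invoke the classical regularization theorem for the infimum of a family of superharmonic functions that is locally bounded below (the Fundamental Convergence Theorem; see \cite{AG}): the lower semicontinuous regularization $\widehat{\phi}$ is a classical superharmonic function on $\Omega$, it is $\not\equiv+\infty$ on each component since $\widehat{\phi}\le\phi\le\phi_{\alpha_0}$, and $\widehat{\phi}=\phi$ outside a polar (hence Lebesgue-null) set. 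Thus $\phi=\widehat{\phi}$ almost everywhere, and Lemma~\ref{240315329} yields that $\phi$ is superharmonic. The only non-bookkeeping ingredient is the reduction from an arbitrary family to a countable one with the same regularized infimum, which is the content of the cited theorem.

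\emph{Part~(3).} Put $\Omega:=\Omega_1\cup\Omega_2$ (the set on which $\phi$ is defined). By Remark~\ref{240316310} it suffices to produce, around each point of $\Omega$, a neighbourhood on which $\phi$ is classical superharmonic; by symmetry I may take $x_0\in\Omega_1$ and a ball $B\subset\Omega_1$, on which $\phi$ equals $w:=\phi_1$ on $B\setminus\Omega_2$ and $w:=\phi_1\wedge\phi_2$ on $B\cap\Omega_2$. Lower semicontinuity of $w$ on $B$ follows from that of $\phi_1,\phi_2$, from the elementary inequality $\liminf_n\min(a_n,b_n)\ge\min(\liminf_na_n,\liminf_nb_n)$, and from the hypothesis $\liminf_{z\to y,\,z\in\Omega_2}\phi_2(z)\ge\phi_1(y)$ at the interface points $y\in B\cap\partial\Omega_2\subset\Omega_1\cap\partial\Omega_2$. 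For the sub-mean-value property I would use the equivalent ``domination by harmonic functions'' characterization of \cite[Theorem 3.2.2]{AG} quoted in Remark~\ref{240316310}: given a ball $\overline{B'}\subset B$ and $h$ harmonic on $B'$, continuous on $\overline{B'}$, with $h\le w$ on $\partial B'$, I must show $h\le w$ on $B'$. Since $\phi_1$ is superharmonic on $B'$ and $h\le w\le\phi_1$ on $\partial B'$, superharmonicity gives $h\le\phi_1$ on $B'$, so $h\le w$ on $B'\setminus\Omega_2$. Next, $\phi_2-h$ is classical superharmonic on the bounded open set $B'\cap\Omega_2$, and a case analysis of $\partial(B'\cap\Omega_2)\subseteq(\partial B'\cup\partial\Omega_2)\cap\overline{B'}$ shows $\liminf_{z\to y,\,z\in B'\cap\Omega_2}(\phi_2-h)(z)\ge0$ for every such boundary point $y$: at $y\in\partial B'\cap\Omega_2$ use lower semicontinuity of $\phi_2$ together with $h\le w\le\phi_2$ on $\partial B'$; at $y\in(\partial B'\cup B')\cap\partial\Omega_2$ use the interface hypothesis together with $h\le\phi_1$ and continuity of $h$. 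The minimum principle for superharmonic functions on a bounded open set then gives $\phi_2\ge h$ on $B'\cap\Omega_2$, hence $h\le\phi_1\wedge\phi_2=w$ there. Thus $w$ is classical superharmonic on $B$, hence on all of $\Omega_1$; by symmetry the analogous function on $\Omega_2$ is classical superharmonic, and since the two descriptions agree on $\Omega_1\cap\Omega_2$, the glued function $\phi$ is classical superharmonic on $\Omega$.

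I expect the interface analysis in Part~(3) to be the main obstacle: one must correctly identify the boundary of $B'\cap\Omega_2$ relative to $B'$ and verify the boundary liminf inequality separately for interface points lying in $\Omega_2$, in $\partial\Omega_2$, and on $\partial B'$, and then apply the minimum principle on a possibly irregular bounded open set. Parts~(1) and~(2) are routine once the cited potential-theoretic facts are in hand.
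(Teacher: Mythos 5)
Your proof is correct. The paper itself does not give arguments for this lemma; it simply observes that (1) follows from the definition, cites \cite[Theorem 3.7.5]{AG} (the Fundamental Convergence Theorem) for (2), and cites \cite[Corollary 3.2.4]{AG} (the pasting/gluing lemma for superharmonic functions) for (3). Your Part~(1) matches the paper's observation. For Part~(2) you invoke exactly the theorem the paper cites, with the extra (correct) bookkeeping that the l.s.c.\ regularization agrees with $\inf_\alpha\phi_\alpha$ off a polar set, hence almost everywhere, so Lemma~\ref{240315329} applies; this is the intended argument. For Part~(3) you essentially reprove the cited gluing result from first principles: localize to a ball $B\subset\Omega_1$, check lower semicontinuity across the interface $B\cap\partial\Omega_2$ via the hypothesis, and verify the dominated-by-harmonics characterization by combining $h\le\phi_1$ on $B'$ with a minimum-principle argument for $\phi_2-h$ on $B'\cap\Omega_2$, carefully splitting $\partial(B'\cap\Omega_2)$ into the parts on $\partial B'$ and on $\partial\Omega_2$. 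That case analysis is complete and the minimum principle on the (possibly irregular, possibly disconnected) bounded open set $B'\cap\Omega_2$ applies componentwise, so the argument closes. The only difference from the paper is that you supply the proofs the paper delegates to \cite{AG}; the underlying ideas coincide with what those references contain.
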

		For Lemma \ref{21.05.18.1}, (1) follows from the definition of classical superharmonic functions; (2) can be found in \cite[Theorem 3.7.5]{AG}; and (3) follows from \cite[Corollary 3.2.4]{AG}.

		\subsection{Domain with fat exterior: Harmonic measure decay property}\label{fatex}
		This subsection begins with a discussion of the relation between condition \eqref{220617253}, classical potential theory, and the Hardy inequality (see Lemma \ref{240320301} and Remark \ref{24032030232}).
		
		We first recall some notions from classical potential theory.
		For a bounded open set $U\subset\bR^d$, $d\geq 2$, and a bounded Borel function $F$ on $\partial U$, the Perron-Wiener-Brelot solution (abbreviated as `PWB solution') of the equation
		\begin{align}\label{2208021143}
			\Delta u=0\quad\text{in}\,\,\,U\quad;\quad u=F\quad\text{on}\,\,\,\partial U
		\end{align}
		is defined as
		\begin{equation}\label{2301051056}
			\begin{aligned}
				u(x):=\inf\Big\{\phi(x)\,:\,\text{$\phi$ is a superharmonic function on $U$ and}\quad &\\
				\text{$\underset{y\rightarrow z,y\in U}{\liminf}\,\phi(y)\geq F(z)$ for all $z\in\partial U$}&\,\Big\}\,.
			\end{aligned}
		\end{equation}
		This $u$ is harmonic on $U$. 
		However, $\lim_{y\rightarrow z}u(y)=F(z)$ does not hold in general even when $F\in C(\partial U)$.
		For basic properties of PWB solutions, we refer readers to \cite{AG}.
		
		For a Borel set $E\subset \partial U$, $w(\,\cdot\,,U,E)$ denotes the PWB solution $u$ of equation \eqref{2208021143} with $F:=1_E$. 
		This $w$ is called the \textit{harmonic measure} of $E$ over $U$.

		We fix an arbitrary open set $\Omega\subset \bR^d$ (not necessarily bounded), $d\geq 2$.
		For $p\in\partial\Omega$ and $r>0$, we denote
		$$
		w(\,\cdot\,,p,r)=w\big(\,\cdot\,,\Omega\cap B_r(p),\Omega\cap \partial B_r(p)\big)
		$$ 
		(see Figure \ref{230113736} below); note that $\Omega\cap \partial B_r(p)$ is a relatively open subset of $\partial\big(\Omega\cap B_r(p)\big)$.

		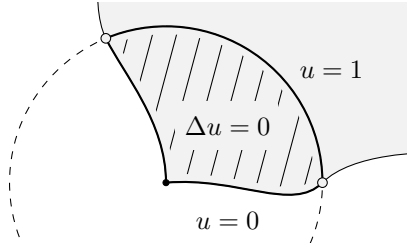
\begin{figure}[h]
			\begin{tikzpicture}[> = Latex]
				\begin{scope}
					\begin{scope}[scale=0.8]
						\clip (-4,-1.5) rectangle (4,2.5);
						
						\begin{scope}[shift={(0.5,0)}]
							\fill[gray!10] (-0.5,-0.5) .. controls +(0,1) and +(0.3,-0.6) ..(-1.5,1.9) .. controls +(-0.3,0.6) and +(0.2,-0.2) .. (-1.7,4) -- (4,4) -- (4,0) .. controls +(-0.2,0) and +(0.5,0.5) .. (2.1,-0.5) .. controls +(-0.5,-0.5) and +(1.2,0.1)..(-0.5,-0.5);
							
						\end{scope}

						\begin{scope}[shift={(0.5,0)}]
							\clip (-0.5,-0.5) .. controls +(0,1) and +(0.3,-0.6) ..(-1.5,1.9) arc (acos(-5/13):0:2.6)  .. controls +(-0.5,-0.5) and +(1.2,0.1)..(-0.5,-0.5);
							\foreach \i in {-4,-3.6,...,3}
							{\draw (\i,-2.8)--(\i+1.5,2.8);}
							\path[fill=gray!10] (-0.35,0.05) rectangle (1.35,0.85);
							
							\draw[gray!10,line width=6pt] (2.1,-0.5) arc (0:acos(-5/13):2.6);
							\draw[gray!10,line width=6pt] (2.1,-0.5) .. controls +(-0.5,-0.5) and  +(1.2,0.1)..	(-0.5,-0.5) .. controls +(0,1) and +(0.3,-0.6) ..(-1.5,1.9);
							
						\end{scope}
						
						\begin{scope}[shift={(0.5,0)}]
							\draw[dashed] (-0.5,-0.5) circle (2.6);
							\draw
							(-0.5,-0.5) .. controls +(0,1) and +(0.3,-0.6) ..(-1.5,1.9) .. controls +(-0.3,0.6) and +(0.2,-0.2) .. (-1.7,4);
							\draw
							(4,0) .. controls +(-0.2,0) and +(0.5,0.5) .. (2.1,-0.5) .. controls +(-0.5,-0.5) and  +(1.2,0.1)..	(-0.5,-0.5);

							\draw[ line width=0.8pt] (2.1,-0.5) arc (0:acos(-5/13):2.6);
							\draw[line width=0.8pt] (2.1,-0.5) .. controls +(-0.5,-0.5) and  +(1.2,0.1)..	(-0.5,-0.5) .. controls +(0,1) and +(0.3,-0.6) ..(-1.5,1.9);
							\draw[fill=gray!10] (2.1,-0.5) circle (0.08);
							\draw[fill=gray!10] (-1.5,1.9) circle (0.08);
							\draw[fill=black] (-0.5,-0.5) circle (0.05);
						\end{scope}

					\end{scope}
					
					\begin{scope}[shift={(0.4,0)}]
						\draw (0.42,0.35) node  {$\Delta u=0$} ;
						\draw (1.8,1.1) node (1) {$u=1$};
						\draw (0.42,-0.9) node (0) {$u=0$};
						
					\end{scope}

				\end{scope}
			\end{tikzpicture}
			\caption{$u:=w(\,\cdot\,,p,r)$}\label{230113736}
		\end{figure}
		
			For convenience, based on Lemma \ref{220621237}, we consider $w(\,\cdot\,,p,r)$ to be continuous in $\Omega\cap \overline{B}(p,r)$ with $w(x,p,r)=1$ for $x\in\Omega\cap \partial B(p,r)$.

		\begin{lemma}\label{220621237}\,\,
			
			\begin{enumerate}
				\item $w(\,\cdot\,,p,r)$ is harmonic in $\Omega\cap B_r(p)$ with values in $[0,1]$.
				
				\item For any $x_0\in \Omega\cap \partial B_r(p)$, $w(x,p,r)\rightarrow 1$ as $x\rightarrow x_0$ from within $x\in\Omega\cap B_r(p)$.
				
				\item For any $0<r<R$ and $N_0\geq 0$, if $w(\cdot,p,R)\leq N_0$ on $\Omega\cap \partial B_r(p)$, then $w(\cdot,p,R)\leq N_0 w(\cdot,p,r)$ in $\Omega\cap B_r(p)$.
			\end{enumerate}
		\end{lemma}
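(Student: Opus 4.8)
The plan is to obtain all three statements directly from the definition \eqref{2301051056} of the PWB solution together with standard properties of harmonic measure collected in \cite{AG}; the key point throughout is that $w(\,\cdot\,,p,r)$ is, by construction, an infimum over a Perron family of superharmonic functions on the bounded open set $U:=\Omega\cap B_r(p)$, with boundary data $F=1_E$ for $E:=\Omega\cap\partial B_r(p)\subset\partial U$.

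For (1), I would note that $F=1_E$ is a bounded Borel function on $\partial U$, so the PWB solution is well defined and harmonic on $U$; this is a standard fact (see \cite[Chapter 6]{AG}). The bound $0\le w\le 1$ follows from the two obvious barriers: the constant function $1$ lies in the Perron upper class defining \eqref{2301051056}, giving $w\le 1$, while the constant function $0$ is $\le$ every member of that class since each member has $\liminf\ge F\ge 0$ at every boundary point and is superharmonic hence satisfies the minimum principle, giving $w\ge 0$.

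For (2), the claim is that $E=\Omega\cap\partial B_r(p)$ consists of regular boundary points of $U$. Here I would use the exterior cone (or exterior ball) criterion for regularity: at a point $x_0\in\Omega\cap\partial B_r(p)$, the complement of $U$ contains a punctured neighborhood portion of $\bR^d\setminus B_r(p)$, which contains an exterior cone (indeed an exterior ball) at $x_0$, so $x_0$ is a regular boundary point in the sense of the Dirichlet problem. Since $F=1_E$ is continuous at $x_0$ relative to $\partial U$ (it equals $1$ on a relative neighborhood of $x_0$ in $\partial U$, because $\partial U=(\Omega\cap\partial B_r(p))\cup(\partial\Omega\cap\overline B_r(p))$ and these two pieces are relatively separated near $x_0$), the general boundary-regularity theorem for PWB solutions (\cite[Theorem 6.6.x]{AG}) gives $\lim_{x\to x_0,\,x\in U}w(x,p,r)=F(x_0)=1$.

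For (3), I would argue by the comparison principle for harmonic functions via the Perron construction. Fix $0<r<R$ and suppose $w(\,\cdot\,,p,R)\le N_0$ on $\Omega\cap\partial B_r(p)$. Both $w(\,\cdot\,,p,R)$ and $N_0\,w(\,\cdot\,,p,r)$ are harmonic on $U=\Omega\cap B_r(p)$. I claim $N_0\,w(\,\cdot\,,p,r)$ belongs to the Perron upper class \eqref{2301051056} that defines $w(\,\cdot\,,p,R)$ restricted to $U$: indeed it is superharmonic (nonnegative multiple of a harmonic function) on $U$, and at each $z\in\partial U$ we must check $\liminf_{x\to z}N_0\,w(x,p,r)\ge \big(\text{the boundary data of }w(\,\cdot\,,p,R)\text{ on }\partial U\big)$. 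On $\partial\Omega\cap\overline B_r(p)$ that data is $0$ and $w(\,\cdot\,,p,r)\ge 0$, so the inequality holds; on $\Omega\cap\partial B_r(p)$, by part (2) $w(x,p,r)\to 1$, so $N_0\,w(x,p,r)\to N_0\ge w(\,\cdot\,,p,R)$ there by hypothesis. Hence by the defining infimum, $w(\,\cdot\,,p,R)\le N_0\,w(\,\cdot\,,p,r)$ on $\Omega\cap B_r(p)$. (Strictly, one should phrase this as comparing the two functions on $U$ using that $w(\,\cdot\,,p,R)$ is the PWB solution on the larger set restricted to $U$, whose boundary behavior on $\partial U$ is dominated as above; alternatively one invokes \cite[Corollary 3.2.4 or the maximum principle in Chapter 3]{AG} to conclude $w(\,\cdot\,,p,R)-N_0\,w(\,\cdot\,,p,r)\le 0$ from the fact that this difference is subharmonic on $U$ with nonpositive boundary $\limsup$.)

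The main obstacle is part (2): verifying boundary regularity at points of $\Omega\cap\partial B_r(p)$ and, more delicately, that $F=1_E$ is continuous there relative to $\partial U$ — this requires observing that the two pieces $\Omega\cap\partial B_r(p)$ and $\partial\Omega\cap\overline B_r(p)$ of $\partial U$ do not accumulate on each other near an interior-of-$\Omega$ sphere point, which is where the local geometry (an exterior ball from $\bR^d\setminus B_r(p)$) is genuinely used. Parts (1) and (3) are then routine applications of the Perron definition and the maximum principle.
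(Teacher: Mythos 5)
Your proposal is correct and takes essentially the same route as the paper: for (1) and (2) the paper simply cites \cite[Chapter 6]{AG} where you supply the details (exterior-ball regularity at points of $\Omega\cap\partial B_r(p)$ and local constancy of $1_E$ there), and for (3) the paper invokes \cite[Theorem 6.3.6]{AG} to identify $w(\cdot,p,R)|_{U_r}$ as the PWB solution on $U_r$ with boundary data $w(\cdot,p,R)|_{\partial U_r}$ and then uses monotonicity of the Perron construction, which is exactly your primary argument (the restriction theorem you say one ``should'' invoke is precisely the step the paper makes explicit). The only cosmetic caveat is that your alternative maximum-principle phrasing needs the refined maximum principle that tolerates a polar exceptional set of irregular boundary points; your main Perron-class comparison avoids this and matches the paper.
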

	
	\begin{proof}
	(1) and (2) are the basic properties of $w(\,\cdot\,,p,r)$ which can be found in \cite[Chapter 6]{AG}.
	Therefore we only prove (3).
	
		For convenience, denote $U_R:=\Omega\cap B_R(p)$ and $U_r:=\Omega\cap B_r(p)$, and consider $w(\cdot,p,R):=1_{\Omega\cap \partial B_R(p)}$ on $\partial U_R$.
		It follows from \cite[Theorem 6.3.6]{AG} that $w(x,p,R)|_{U_r}$ is the PWB solution of \eqref{2208021143} for $U:=U_r$ and $F:=w(\cdot,p,R)|_{\partial U_r}$.
		One can observe that 
		$$
		\partial U_r \setminus \big(\Omega\cap \partial B_r(p)\big) \subset (\partial\Omega)\cap B_R(p)\subset \partial U_R\,,
		$$
		which implies that $w(x,p,R)=1_{\Omega\cap \partial B_R(p)}(x)=0$ for $x\in \partial U_r \setminus \big(\Omega\cap \partial B_r(p)\big)$.
		Since $w(x,p,R)\leq N_0$ in $\Omega\cap \partial B_r(p)$, we have $w(\cdot,p,R)|_{\partial U_r}\leq N_01_{\Omega\cap \partial B_r(p)}$.
		By the definition of PWB solution \eqref{2301051056}, $w(\cdot,p,R)\leq N_0 w(\cdot,p,r)$ on $U_r:=\Omega \cap B_r(p)$.
	\end{proof}

		\begin{defn}\label{2301021117}
			A domain $\Omega$ is said to satisfy the \textit{local harmonic measure decay property} with exponent $\alpha>0$ (abbreviated as `$\mathbf{LHMD}(\alpha)$'), if there exists a constant $M_{\alpha}>0$ depending only on $\Omega$ and $\alpha$ such that
			\begin{align}\label{21.08.03.1111}
				w(x,p,r)\leq M_{\alpha}\left(\frac{|x-p|}{r}\right)^{\alpha}\quad\text{for all}\,\,\, x\in \Omega\cap B(p,r)
			\end{align}
			whenever $p\in\partial \Omega$ and $r>0$.
		\end{defn}
		
	\begin{remark}\label{240307323}
	The notion of $\mathbf{LHMD}$ is closely related to the H\"older continuity of PWB solutions.
	Let $\Omega$ be a bounded domain.
	For $F\in C(\partial \Omega)$, we denote by $H_\Omega F$ the PWB solution $u$ of equation \eqref{2208021143} with $U:=\Omega$.
	$H_\Omega F$ is called a classical solution if $\lim_{y\rightarrow z}H_\Omega F(y)=F(z)$ for all $z\in \partial\Omega$.
	Aikawa \cite[Theorem 2, Theorem 3]{aikawa2002} established the following results: Let $0<\alpha<1$.
	\begin{enumerate}
		\item If $H_\Omega F$ is the classical solution for any $F\in C(\partial\Omega)$ and 
		\begin{align}\label{2403051139}
		\sup_{F\in C^{0,\alpha}(\partial\Omega),F\not\equiv 0}\frac{\|H_\Omega F\|_{C^{0,\alpha}(\Omega)}}{\|F\|_{C^{0,\alpha}(\partial\Omega)}}<\infty\,,
		\end{align}
	then $\Omega$ satisfies $\mathbf{LHMD}(\alpha)$.
				
		\item Conversely, if $\Omega$ satisfies $\mathbf{LHMD}(\beta)$ for some $\beta>\alpha$, then $H_\Omega F$ is the classical solution for any $F\in C(\partial\Omega)$, and \eqref{2403051139} holds.
	\end{enumerate}
\end{remark}

		\begin{lemma}\label{22.02.18.4}
			Let $\Omega$ be a bounded domain, and let $\alpha>0$. 
			Suppose that there exist constants $r_0,\,\widetilde{M}\in(0,\infty)$ such that
			\begin{align}\label{21.08.03.1}
				w(x,p,r)\leq \widetilde{M}\left(\frac{|x-p|}{r}\right)^{\alpha}\quad\text{for all}\quad x\in \Omega\cap B(p,r)
			\end{align}
			whenever $p\in\partial\Omega$ and $0<r\leq r_0$.
			Then $\Omega$ satisfies $\mathbf{LHMD}(\alpha)$, where $M_{\alpha}$ in \eqref{21.08.03.1} depends only on $\alpha,\,\widetilde{M}$ and $\mathrm{diam}(\Omega)/r_0$.
		\end{lemma}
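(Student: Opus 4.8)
The plan is to deduce \eqref{21.08.03.1111} for \emph{all} $r>0$ from the small-scale estimate \eqref{21.08.03.1}, using only that harmonic measures lie in $[0,1]$ together with the comparison principle in Lemma~\ref{220621237}.(3). Write $D:=\mathrm{diam}(\Omega)$ and note $D=\mathrm{diam}(\overline{\Omega})$. I would split the range of $r$ into three pieces. For $r>D$: since $p\in\partial\Omega\subset\overline{\Omega}$, every $x\in\overline{\Omega}$ satisfies $|x-p|\le D<r$, so $\Omega\cap\partial B_r(p)=\emptyset$; hence $w(\cdot,p,r)$ is the PWB solution with boundary data $1_{\emptyset}=0$, i.e.\ $w(\cdot,p,r)\equiv 0$, and \eqref{21.08.03.1111} is trivial. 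For $0<r\le r_0\wedge D$ the estimate is exactly the hypothesis \eqref{21.08.03.1}. Thus it remains to treat $r_0<r\le D$, which is nonempty only when $r_0<D$.

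For this main case, fix $p\in\partial\Omega$ and $r_0<R\le D$, and set $u:=w(\cdot,p,R)$. Since $0\le u\le 1$ on $\Omega\cap B_R(p)$ (Lemma~\ref{220621237}.(1)), in particular $u\le 1$ on $\Omega\cap\partial B_{r_0}(p)$, so Lemma~\ref{220621237}.(3) with $(r,R,N_0)=(r_0,R,1)$ gives $w(\cdot,p,R)\le w(\cdot,p,r_0)$ on $\Omega\cap B_{r_0}(p)$. Consequently, for $x\in\Omega\cap B_{r_0}(p)$, the hypothesis \eqref{21.08.03.1} at scale $r_0$ yields
\[
w(x,p,R)\le w(x,p,r_0)\le \widetilde{M}\Big(\tfrac{|x-p|}{r_0}\Big)^{\alpha}=\widetilde{M}\Big(\tfrac{R}{r_0}\Big)^{\alpha}\Big(\tfrac{|x-p|}{R}\Big)^{\alpha}\le \widetilde{M}\Big(\tfrac{D}{r_0}\Big)^{\alpha}\Big(\tfrac{|x-p|}{R}\Big)^{\alpha},
\]
while for $x\in\Omega\cap B_R(p)$ with $|x-p|\ge r_0$ one simply bounds $w(x,p,R)\le 1\le (R/r_0)^{\alpha}(|x-p|/R)^{\alpha}\le (D/r_0)^{\alpha}(|x-p|/R)^{\alpha}$. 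Taking $M_{\alpha}:=\max(\widetilde{M},1)\,\max(1,D/r_0)^{\alpha}$ then gives \eqref{21.08.03.1111} uniformly over the three ranges, with $M_{\alpha}$ depending only on $\alpha$, $\widetilde{M}$, and $\mathrm{diam}(\Omega)/r_0$.

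There is essentially no serious obstacle here: unlike the general (possibly unbounded) setting, a bounded domain has only a bounded number of dyadic scales between $r_0$ and $\mathrm{diam}(\Omega)$, and in fact a single application of the comparison Lemma~\ref{220621237}.(3) suffices, with the scale mismatch absorbed into the factor $(D/r_0)^{\alpha}$; no iteration is needed. The only points requiring a little care are the degenerate large-radius case $r>\mathrm{diam}(\Omega)$ (where the harmonic measure vanishes identically) and checking that Lemma~\ref{220621237}.(3) is invoked only when its hypothesis $r_0<R$ holds, which is precisely the remaining range.
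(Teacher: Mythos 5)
Your proposal is correct and follows essentially the same route as the paper's proof: zero out the case $r>\mathrm{diam}(\Omega)$, invoke the hypothesis for $r\le r_0$, and in the middle range $r_0<r\le\mathrm{diam}(\Omega)$ use the trivial bound $w\le 1$ together with a single application of Lemma~\ref{220621237}.(3) to reduce to scale $r_0$, absorbing the factor $(\mathrm{diam}(\Omega)/r_0)^{\alpha}$ into $M_\alpha$. The only difference is that you spell out the two sub-cases $|x-p|<r_0$ and $|x-p|\ge r_0$ explicitly, while the paper states them more tersely; the logic and the resulting constant are the same.
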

	
		\begin{proof}
			Let $p\in\partial\Omega$.
			If $r>\mathrm{diam}(\Omega)$, then $\Omega\cap \partial B(p,r)=\emptyset$, which implies that $w(\,\cdot\,,p,r)\equiv 0$.
			In addition, by the assumption of this lemma, we do not need to pay attention to the case of $r\leq r_0$.
			Therefore, we only consider the case of $r_0<r\leq \mathrm{diam}(\Omega)$.
			
			For $r_0<r\leq \mathrm{diam}(\Omega)$, it follows from Lemmas \ref{220621237}.(1) and (3) that $w(x,p,r)\leq 1$ in general, and $w(x,p,r)\leq w(x,p,r_0)$ if $|x-p|<r_0$.
			By \eqref{21.08.03.1} and that $r_0<r\leq \mathrm{diam}(\Omega)$, we have
			\begin{align*}
			w(x,p,r)\leq \max\big(\widetilde{M},1\big)\left(\frac{\mathrm{diam}(\Omega)}{r_0}\right)^{\alpha}\left(\frac{|x-p|}{r}\right)^{\alpha}\qquad \text{for all}\quad x\in\Omega\cap B(p,r)\,.
			\end{align*}
			The proof is completed.
			\end{proof}
		
		We finally introduce the relation between \eqref{220617253} and the local harmonic measure decay property.

		\begin{lemma}\label{240320301}
			Let $\Omega\subset\bR^d$ be a domain.
			\begin{enumerate}
				\item The following conditions are equivalent:
				\begin{enumerate}
					\item There exists $\epsilon>0$ such that the fat exterior condition \eqref{220617253} holds.
					
					\item There exists $\alpha>0$ such that $\mathbf{LHMD}(\alpha)$ holds.
					
					\item There exists $\epsilon_0>0$ such that
					\begin{align}\label{22.02.22.1}
						\inf_{p\in\partial\Omega,r>0}\frac{\mathrm{Cap}\big(\Omega^c\cap \overline{B}(p,r),B(p,2r)\big)}{\mathrm{Cap}\big( \overline{B}(p,r),B(p,2r)\big)}\geq \epsilon_0>0\,.
					\end{align}
				Here, $\mathrm{Cap}\big(K,B\big)$ is the capacity of a compact set $K\subset B$ relative to an open ball $B$, defined as follows: 
				\begin{align}\label{230324942}
					\mathrm{Cap}(K,B):=\inf\left\{\|\nabla f\|_2^2\,:\,f\in C_c^{\infty}(B)\,\,\,,\,\,\, f\geq 1\,\,\text{on}\,\,K\right\}\,.
				\end{align}
				\end{enumerate}
				In particular, constants $(c,\epsilon)$ in  \eqref{220617253}, $(\alpha,M_\alpha)$ in \eqref{21.08.03.1111}, and $\epsilon_0$ in \eqref{22.02.22.1} depend only on $d$ and on each other.
				
				\item If \eqref{22.02.22.1} holds, then $\Omega$ admits the Hardy inequality \eqref{hardy}, where $\mathrm{C}_0(\Omega)$ depends only on $d$ and $\epsilon_0$.
			\end{enumerate}
		\end{lemma}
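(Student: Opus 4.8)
The statement is essentially classical; the plan is to assemble it from potential theory while tracking constants. I would prove Part (1) by establishing (a) $\Leftrightarrow$ (c) and (b) $\Leftrightarrow$ (c), and deduce Part (2) from (c). All three conditions are invariant under dilations, and the Newtonian capacity obeys $\mathrm{Cap}(\lambda K,\lambda B)=\lambda^{d-2}\mathrm{Cap}(K,B)$ with $\mathrm{Cap}(\overline{B}_r,B_{2r})\simeq_d r^{d-2}$, so once each implication is carried out with a $d$-dependent constant, the asserted mutual dependence of $(c,\epsilon)$, $(\alpha,M_\alpha)$, and $\epsilon_0$ is automatic.

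For (a) $\Rightarrow$ (c) I would use the easy half of the Frostman comparison between Hausdorff content and capacity: taking a Frostman measure $\mu$ on $K:=\Omega^c\cap\overline{B}_r(p)$ with $\mu(K)\gtrsim_d\cH^{d-2+\epsilon}_{\infty}(K)$ and $\mu(B_\rho(x))\le\rho^{d-2+\epsilon}$, a dyadic splitting of the Newtonian potential $U^{\mu}$ gives $U^{\mu}\lesssim_{d,\epsilon}r^{\epsilon}$ everywhere, so $r^{-\epsilon}\mu$ is, up to a $(d,\epsilon)$-constant, admissible for the capacity and $\mathrm{Cap}(K,B_{2r}(p))\gtrsim_{d,\epsilon}r^{-\epsilon}\cH^{d-2+\epsilon}_{\infty}(K)\gtrsim_{d,c,\epsilon}r^{d-2}\simeq_d\mathrm{Cap}(\overline{B}_r(p),B_{2r}(p))$, which is \eqref{22.02.22.1}. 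For (c) $\Rightarrow$ (b) I would run the dyadic iteration on harmonic measure behind the quantitative Wiener criterion, using Lemma \ref{220621237}. Fix $p\in\partial\Omega$ and $R>0$; for $r>0$ write $K_r:=\Omega^c\cap\overline{B}_r(p)$ and let $R_{K_{r/2}}^{B_r(p)}$ be the capacitary potential of $K_{r/2}$ relative to $B_r(p)$. Extending $w(\cdot,p,r)$ by $0$ on $\Omega^c$ and comparing with $1-R_{K_{r/2}}^{B_r(p)}$ through the maximum principle gives $w(\cdot,p,r)\le 1-R_{K_{r/2}}^{B_r(p)}$ on $\Omega\cap B_r(p)$; since $G_{B_r(p)}(x,y)\gtrsim_d r^{2-d}$ for $x,y\in B_{r/2}(p)$ (see \cite{AG}), the capacitary potential satisfies $R_{K_{r/2}}^{B_r(p)}\gtrsim_d r^{2-d}\mathrm{Cap}(K_{r/2},B_r(p))\gtrsim_d\epsilon_0$ on $B_{r/2}(p)$ by \eqref{22.02.22.1}, so $w(\cdot,p,r)\le 1-c_0$ on $\Omega\cap B_{r/2}(p)$ with $c_0=c_0(d,\epsilon_0)\in(0,1)$. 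Iterating this one-step decay across the scales $R,R/2,R/4,\dots$ via Lemma \ref{220621237}.(3) yields $w(\cdot,p,R)\le(1-c_0)^k$ on $\Omega\cap B_{R2^{-k}}(p)$, and choosing $k$ with $R2^{-k-1}\le|x-p|<R2^{-k}$ gives $\mathbf{LHMD}(\alpha)$ with $\alpha=\log_2\frac{1}{1-c_0}$ and $M_\alpha=M_\alpha(d,\epsilon_0)$.

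For the remaining passage (b) $\Rightarrow$ (c) I would cite Aikawa's characterization of the local harmonic measure decay by a capacity density condition (\cite{aikawa2002,Aikawa2009}; see also Remark \ref{240307323}), and to close the loop (c) $\Rightarrow$ (a) I would invoke Lewis's self-improvement theorem \cite{lewis}: a uniform $2$-capacity density condition improves to a uniform $q$-capacity density condition for some $q=q(d,\epsilon_0)\in(1,2)$, after which the comparison between $q$-capacity and Hausdorff content of codimension $q$ produces \eqref{220617253} with $\epsilon=2-q$; this is exactly the circle of equivalences developed in \cite{lehr}. Part (2) is a further consequence of \eqref{22.02.22.1}: uniform $2$-fatness of $\Omega^c$ implies the $2$-Hardy inequality \eqref{hardy} with constant depending only on $d$ and $\epsilon_0$, which I would quote from \cite{lewis}; a self-contained route is to self-improve \eqref{22.02.22.1} to uniform $q$-fatness with $q<2$ and then run the pointwise-potential/maximal-function proof of Hardy's inequality, the constraint $q<2$ being exactly what renders the relevant maximal operator bounded on $L^2$. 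The main obstacle is therefore not the elementary chain (a) $\Rightarrow$ (c) $\Rightarrow$ (b), which the paper's own Lemma \ref{220621237} and classical Green-function bounds handle, but the reverse passages and Part (2): these rest on the nontrivial self-improvement of capacity density conditions, which I would use as a black box rather than reprove.
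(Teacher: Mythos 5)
Your plan is correct, and it is more self-contained than what the paper actually does: the paper's ``proof'' of this lemma is a bare list of citations (Lewis \cite[Theorem 1]{lewis} and Aikawa \cite[Theorem B]{aikawa1997} for $\textrm{(a)}\Leftrightarrow\textrm{(c)}$, Ancona \cite[Lemma 3, Theorem 1]{AA} for $\textrm{(b)}\Leftrightarrow\textrm{(c)}$ and for Part (2)), whereas you reprove two of the four implications from scratch and cite only the genuinely deep ones. Your $\textrm{(a)}\Rightarrow\textrm{(c)}$ via Frostman's lemma and the dyadic potential bound, and your $\textrm{(c)}\Rightarrow\textrm{(b)}$ via the one-step decay $w(\cdot,p,r)\le 1-c_0$ on $\Omega\cap B_{r/2}(p)$ obtained from the Green-function lower bound $G_{B_r(p)}\gtrsim_d r^{2-d}$ on $B_{r/2}(p)\times B_{r/2}(p)$ plus the comparison $w(\cdot,p,r)\le 1-R_{K_{r/2}}^{B_r(p)}$, followed by iteration with Lemma \ref{220621237}.(3), are both standard and carried out correctly; together with your citation of Lewis's self-improvement for $\textrm{(c)}\Rightarrow\textrm{(a)}$ and the classical $\textrm{(b)}\Rightarrow\textrm{(c)}$, the cycle $\textrm{(a)}\Rightarrow\textrm{(c)}\Rightarrow\textrm{(b)}\Rightarrow\textrm{(c)}\Rightarrow\textrm{(a)}$ is closed. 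The constant-tracking via scale invariance is also the right observation. One small caveat: for $\textrm{(b)}\Rightarrow\textrm{(c)}$ you point to \cite{aikawa2002,Aikawa2009} and Remark \ref{240307323}, but those results relate $\mathbf{LHMD}$ to H\"older solvability of the Dirichlet problem rather than directly to the capacity density condition; the implication you want is in Ancona \cite[Lemma 3]{AA}, which is what the paper uses, so you should cite that instead. Part (2) via Lewis (or the self-improvement plus maximal-function route) is fine and agrees with the standard literature the paper defers to.
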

		In this lemma, the equivalence between conditions (a) and (c) was established by Lewis \cite[Theorem 1]{lewis} and Aikawa \cite[Theorem B]{aikawa1997} (see, \textit{e.g.}, \cite[Theorem 7.22]{kinnunen2021} for a simplified version).
		Additionally, the equivalence between (b) and (c), as well as Lemma \ref{240320301}(2), were established by Ancona \cite[Lemma 3, Theorem 1]{AA}.
		
		\begin{remark}\label{24032030232}\,
			
		\begin{enumerate}
			\item \eqref{22.02.22.1} is called the capacity density condition.
				For domains $\Omega$ in $\bR^2$, \eqref{22.02.22.1} holds if and only if $\Omega$ admits the Hardy inequality \eqref{hardy} (see Ancona \cite[Theorem 2]{AA}).		
			
			\item A well-known sufficient condition for \eqref{22.02.22.1} is the volume density condition:
			\begin{align*}
				\inf_{p\in\partial\Omega,r>0}\frac{|\Omega^c\cap \overline{B}(p,r)|}{| \overline{B}(p,r)|}\geq \epsilon_1>0
			\end{align*}
			(see, \textit{e.g.}, \cite[Example 6.18]{kinnunen2021}).
			For a deeper discussion of the capacity density condition, we refer readers to \cite{kinnunen2021,Kinhardy,lewis} and the references therein.
		\end{enumerate}
		\end{remark}
		 
		In view of this discussion, we consider domains satisfying $\mathbf{LHMD}(\alpha)$ for some $\alpha>0$, instead of \eqref{220617253}.
		This condition is implied by geometric conditions introduced in Section \ref{app2.}, and the exponent $\alpha$ reflects the corresponding geometric condition; see Theorem \ref{22.02.18.3}.
		In the rest of this subsection, we construct appropriate superharmonic functions related to $\alpha$ (see Lemma \ref{220617557}).
		These constructions will play a crucial role in Section \ref{app2.}.

		\begin{thm}\label{21.11.08.1}
			Let $\Omega$ satisfy $\mathbf{LHMD}(\alpha)$, $\alpha>0$.
			Then for any $\beta\in(0,\alpha)$, there exists a superharmonic function $\phi$ in $\Omega$ such that 
			$$
			N^{-1}\rho(x)^\beta\leq \phi(x)\leq N\rho(x)^\beta
			$$
			for all $x\in\Omega$, where $N=N(\alpha,\beta,M_{\alpha})>0$.
		\end{thm}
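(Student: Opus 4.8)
The plan is to build $\phi$ by patching together, over a dyadic family of boundary balls, the harmonic-measure functions $w(\cdot,p,r)$ whose decay is controlled by $\mathbf{LHMD}(\alpha)$, and then to take an infimum so as to exploit Lemma \ref{21.05.18.1}.(2). Concretely, fix $\beta\in(0,\alpha)$ and set $\gamma=(\alpha-\beta)/2>0$ (or any exponent strictly between $0$ and $\alpha-\beta$). For each $p\in\partial\Omega$ and each dyadic radius $r=2^k$, $k\in\bZ$, consider the function
\begin{align*}
\phi_{p,r}(x):=c_1\,r^{\beta}\Big(1\wedge \big(w(x,p,r)\big)^{\beta/\alpha}\Big),
\end{align*}
extended by $c_1 r^\beta$ outside $B_r(p)$, where $c_1$ is a small absolute constant to be fixed. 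The point of the exponent $\beta/\alpha<1$ is that, by the $\mathbf{LHMD}(\alpha)$ bound $w(x,p,r)\le M_\alpha(|x-p|/r)^\alpha$, we get $\phi_{p,r}(x)\lesssim c_1 M_\alpha^{\beta/\alpha}|x-p|^\beta$; so on $B_r(p)$, $\phi_{p,r}$ is comparable to $|x-p|^\beta$ from above, while being $\simeq r^\beta$ away from $p$. Since $t\mapsto t^{\beta/\alpha}$ is concave and increasing on $[0,1]$ and $w(\cdot,p,r)$ is a bounded harmonic function with values in $[0,1]$, the composition $w^{\beta/\alpha}$ is superharmonic on $\Omega\cap B_r(p)$; truncating at the constant $1$ keeps it superharmonic (Lemma \ref{21.05.18.1}.(1)), and the gluing across $\partial B_r(p)$ is harmless because $w(\cdot,p,r)\to 1$ at $\Omega\cap\partial B_r(p)$ (Lemma \ref{220621237}.(2)) so the patched function of Lemma \ref{21.05.18.1}.(3) is again classical superharmonic on $\Omega$.

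Next define
\begin{align*}
\phi(x):=\inf\big\{\phi_{p,r}(x)\,:\,p\in\partial\Omega,\ r=2^k,\ k\in\bZ\big\}.
\end{align*}
By Lemma \ref{21.05.18.1}.(2) (a family of positive classical superharmonic functions has superharmonic infimum), $\phi$ is superharmonic on $\Omega$, provided we check $\phi>0$, which follows from the lower bound below; and $\phi\not\equiv+\infty$ trivially. The two-sided estimate $N^{-1}\rho(x)^\beta\le\phi(x)\le N\rho(x)^\beta$ is the crux and is proved pointwise. For the upper bound: given $x\in\Omega$, pick $p\in\partial\Omega$ with $|x-p|=\rho(x)$ and choose the dyadic $r$ with $\rho(x)\le r<2\rho(x)$; then $\phi(x)\le\phi_{p,r}(x)\lesssim c_1 M_\alpha^{\beta/\alpha}|x-p|^\beta=c_1 M_\alpha^{\beta/\alpha}\rho(x)^\beta$, which gives the upper bound with a constant depending only on $\alpha,\beta,M_\alpha$ (and the harmless factor from $r<2\rho(x)$). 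For the lower bound: fix any $p\in\partial\Omega$ and $r=2^k$; if $|x-p|\ge r$ then $\phi_{p,r}(x)=c_1 r^\beta\ge c_1\rho(x)^\beta$ because $\rho(x)\le|x-p|$... wait, that is the wrong direction, so the lower bound needs the genuine input: one uses a Harnack-type lower bound for $w(\cdot,p,r)$ on the "inner" region, or more cleanly, one replaces $\inf$ over all $(p,r)$ by observing that when $|x-p|<r$ one has $\rho(x)\le|x-p|$ and must show $\phi_{p,r}(x)\gtrsim\rho(x)^\beta$. This forces a restriction: only radii $r$ with $r\le$ (some fixed multiple of) $\mathrm{diam}(\Omega)$ matter (for $r$ too large $w(\cdot,p,r)\equiv 0$ and $\phi_{p,r}\equiv c_1 r^\beta$ is huge, no problem), and for the remaining range one invokes the elementary interior lower bound $w(x,p,r)\ge c(d)$ whenever $|x-p|\le r/2$ and $B(x,\rho(x)/2)$ is reasonably positioned — this is where the boundary Harnack / barrier comparison implicit in $\mathbf{LHMD}$ enters.

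The main obstacle is exactly this lower bound $\phi(x)\gtrsim\rho(x)^\beta$: unlike the upper bound, $\mathbf{LHMD}(\alpha)$ only gives decay of $w$, not a matching lower estimate, so one cannot directly bound $w(x,p,r)$ from below for all competitors $(p,r)$ in the infimum. The fix I would pursue is to note that $\phi_{p,r}(x)\ge c_1 r^\beta\cdot\big(\inf_{\Omega\cap B_{r/2}(p)}w(\cdot,p,r)\big)^{\beta/\alpha}\wedge c_1 r^\beta$ and that for $r$ comparable to $\rho(x)$ (the only competitors capable of being small, since $r\le|x-p|$ is impossible when... ) — more carefully, to prune the infimum: if $|x-p|\ge r$, $\phi_{p,r}(x)=c_1 r^\beta$, and since there exists a competitor with $r\simeq\rho(x)$ realizing the upper bound, the infimum is dominated by that one; so it suffices to lower-bound $\phi_{p,r}(x)$ for the $(p,r)$ with $r\le|x-p|+$(small), i.e. $r\lesssim\rho(x)$. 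For such $r$, $x$ is at distance $\ge\rho(x)\gtrsim r$ from $\partial\Omega$, hence $B(x,r/4)\subset\Omega$, and a standard interior Harnack chain from $\Omega\cap\partial B_r(p)$ (where $w=1$) down to $x$ — whose length is bounded in terms of $|x-p|/r\lesssim 1$ and $\rho(x)/r\gtrsim 1$ — yields $w(x,p,r)\ge c(d)>0$, so $\phi_{p,r}(x)\ge c_1 c(d)^{\beta/\alpha} r^\beta\gtrsim\rho(x)^\beta$. Assembling the pruning argument with this Harnack-chain lower bound, choosing $c_1$ small enough that all upper estimates carry the stated constant, and finally recording that by Lemma \ref{240315329} the resulting a.e.-defined $\phi$ agrees with a classical superharmonic representative completes the proof, with $N$ depending only on $\alpha,\beta,M_\alpha$ (and $d$, absorbed into $M_\alpha$'s dependence via Lemma \ref{240320301}).
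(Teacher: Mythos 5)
Your construction follows the same overall strategy as the paper --- build, for each $p\in\partial\Omega$ and each dyadic scale $r$, a classical superharmonic piece from the harmonic measure $w(\cdot,p,r)$, use $\mathbf{LHMD}(\alpha)$ to control it from above, and take an infimum to invoke Lemma \ref{21.05.18.1}.(2) --- and your observation that the concave composition $w^{\beta/\alpha}$ is superharmonic is a tidy alternative to the paper's direct computation for the upper bound. But the lower bound $\phi\gtrsim\rho^\beta$ is where the argument breaks, and it breaks in two places. First, the infimum as you have written it, over all $p\in\partial\Omega$ and all $k\in\bZ$, is identically zero: for any $x\in\Omega$, any $p$, and any dyadic $r<|x-p|$, your definition gives $\phi_{p,r}(x)=c_1 r^\beta$, which tends to $0$ as $r\to 0$. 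You sense trouble (``wait, that is the wrong direction'') but your subsequent pruning proposes to lower-bound precisely the competitors with $r\lesssim\rho(x)\leq|x-p|$ by an interior Harnack chain, which cannot work: for those competitors $\phi_{p,r}(x)$ is already the constant $c_1 r^\beta$ and there is no $w$ to chain. The paper avoids this collapse by defining $\phi_p(x)$ as an infimum only over scales $r_0^k>|x-p|$, so that every competitor automatically has $r\geq|x-p|\geq\rho(x)$.

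Second, and more fundamentally, even if you restrict the infimum to $r>|x-p|$, the concave-power pieces $c_1 r^\beta\, w(x,p,r)^{\beta/\alpha}$ vanish wherever $w$ does, and the bound $\phi_{p,r}(x)\gtrsim\rho(x)^\beta$ would require a matching lower estimate $w(x,p,r)\gtrsim(\rho(x)/r)^\alpha$. This is not available: $\mathbf{LHMD}(\alpha)$ is a one-sided decay hypothesis and gives no lower bound on harmonic measure whatsoever (and indeed none holds in general). The key device you are missing is the paper's affine shift: one sets $\phi_{p,k}(x)=r_0^{k\beta}\big((1-\eta)w(x,p,r_0^k)+\eta\big)$, with $r_0\in(0,1)$ chosen so that $M_\alpha r_0^\alpha<r_0^\beta$ and then $\eta>0$ small enough that $(1-\eta)M_\alpha r_0^\alpha+\eta\leq r_0^\beta$. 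Adding the constant $\eta$ keeps the piece superharmonic and, crucially, gives the unconditional lower bound $\phi_{p,k}\geq\eta r_0^{k\beta}\geq\eta|x-p|^\beta$ at every relevant scale with no estimate on $w$ from below at all; the smallness of $\eta$ still preserves the dyadic matching $\phi_{p,k}\leq r_0^{(k+1)\beta}$ on $\Omega\cap\overline{B}(p,r_0^{k+1})$ needed both for the gluing across scales via Lemma \ref{21.05.18.1}.(3) and for reducing the infimum over $k$ on each annulus to a finite minimum, so that $\phi_p$ is a genuine classical superharmonic function.
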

		
		Before proving Theorem \ref{21.11.08.1}, we first state the following corollaries:
		
		\begin{thm}\label{22.02.19.3}
			Let $\Omega\subset\bR^d$ satisfy $\mathbf{LHMD}(\alpha)$, $\alpha>0$.
			For any $p\in(1,\infty)$ and $\theta\in\bR$ satisfying
			\begin{align}\label{240127120}
			d-2-(p-1)\alpha<\theta<d-2+\alpha\,,
			\end{align}
			Statement \ref{22.02.19.1} $(\Omega,p,\theta)$ holds.
			In addition, $N_1$ (in \eqref{2205241155}) depends only on $d$, $p$, $\gamma$, $\theta$, $\alpha$, $M_{\alpha}$.
		\end{thm}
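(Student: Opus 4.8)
The plan is to feed the weight produced by Theorem~\ref{21.11.08.1} into the abstract solvability result Lemma~\ref{220617557}, after first checking that $\mathbf{LHMD}(\alpha)$ forces the Hardy inequality on $\Omega$. For the latter, I would invoke Lemma~\ref{240320301}.(1): since $\Omega$ satisfies $\mathbf{LHMD}(\alpha)$, it satisfies the capacity density condition \eqref{22.02.22.1} with some $\epsilon_0$ depending only on $d,\alpha,M_\alpha$; then Lemma~\ref{240320301}.(2) gives \eqref{hardy} for $\Omega$ with $\mathrm{C}_0(\Omega)$ depending only on $d,\epsilon_0$, hence only on $d,\alpha,M_\alpha$.

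Next, fix $p\in(1,\infty)$ and $\theta$ with $d-2-(p-1)\alpha<\theta<d-2+\alpha$. I would choose $\beta\in(0,\alpha)$ with $\beta>\max\{\,0,\ \theta-(d-2),\ (d-2-\theta)/(p-1)\,\}$; the two strict inequalities on $\theta$ force this maximum to be strictly less than $\alpha$, so such a $\beta$ exists, and it can be taken to be an explicit function of $d,p,\theta,\alpha$. With this $\beta$, Theorem~\ref{21.11.08.1} supplies a superharmonic function $\phi$ on $\Omega$ with $N^{-1}\rho^{\beta}\le\phi\le N\rho^{\beta}$, where $N=N(\alpha,\beta,M_\alpha)$.

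Then I would apply Lemma~\ref{220617557} with $\psi:=\phi$ and exponent $\beta$ in place of $\alpha$. Since $\beta>0$, the lemma yields Statement~\ref{22.02.19.1} $(\Omega,p,\theta)$ for every $\theta$ with $d-2-(p-1)\beta<\theta<d-2+\beta$, a range that contains our $\theta$ by the choice of $\beta$; and the constant $N_1$ in \eqref{2205241155} produced by the lemma depends only on $d,p,\gamma,\theta,\mathrm{C}_0(\Omega),\beta$ and on the constant $M=N(\alpha,\beta,M_\alpha)$ in the two-sided bound for $\phi$. Tracing the two preceding steps, all of these reduce to $d,p,\gamma,\theta,\alpha,M_\alpha$, which is the claimed dependence.

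I do not expect a genuine obstacle; the proof is essentially bookkeeping. The only point needing care is the interval matching: the theorem asserts an \emph{open} interval for $\theta$, and this is precisely the increasing union over $\beta\uparrow\alpha$ of the open intervals delivered by Lemma~\ref{220617557}, so each interior $\theta$ is covered by some admissible $\beta$ while the endpoints are (necessarily) excluded. A secondary point is simply to confirm that $\beta$ — and hence every constant derived from it — may be chosen as a function of $d,p,\theta,\alpha$ alone, so the final constant takes exactly the advertised form.
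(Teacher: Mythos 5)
Your proposal is correct and follows essentially the same route as the paper: choose $\beta\in(0,\alpha)$ so that $\theta$ lies inside the smaller interval $(d-2-(p-1)\beta,\,d-2+\beta)$, invoke Theorem~\ref{21.11.08.1} for a superharmonic weight comparable to $\rho^\beta$, use Lemma~\ref{240320301} for the Hardy inequality, and conclude via Lemma~\ref{220617557}. The explicit lower bound $\beta>\max\{0,\theta-(d-2),(d-2-\theta)/(p-1)\}$ and the dependence bookkeeping are correct and match the paper's (implicit) choice.
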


		\begin{remark}
		The Poisson equation \eqref{240319417} does not explicitly include any boundary condition. 
		However, Theorem \ref{22.02.19.3} can be interpreted as implicitly enforcing the zero Dirichlet boundary condition $u|_{\partial\Omega}\equiv0$.
		This interpretation is supported by the following observations:
		\begin{itemize}
			\item $C_c^{\infty}(\Omega)$ is dense in $H^{\gamma}_{p,\theta+2p}(\Omega)$.
			\item For $f\in C_c^{\infty}(\Omega)$, the solution $u$ given by Theorem \ref{22.02.19.3} belongs to $H_{p,\theta}^{\gamma+2}(\Omega)$ for any $p\in(1,\infty)$, $\theta$ in \eqref{240127120}, and $\gamma\in\bR$ (see Theorem \ref{220530526}).
		\end{itemize}
		In addition, by taking appropriate $p$, $\theta$, and $\gamma>0$, it follows from Proposition \ref{220512537} that this $u$ is continuous in $\Omega$ and $u\rightarrow 0$ as $\rho(x)\rightarrow 0$.
		\end{remark}

		\begin{proof}[Proof of Theorem \ref{22.02.19.3}]
			Take $\beta\in(0,\alpha)$ such that
			$$
			d-2-(p-1)\beta<\theta<d-2+\beta.
			$$
			It follows from Theorem \ref{21.11.08.1} that there exists a superharmonic function $\phi$ such that $\phi\simeq_N\rho^\beta$, where $N=N(\alpha,\,\beta,\,M_{\alpha})$.
			Lemma \ref{240320301} yields that $\Omega$ admits the Hardy inequality \eqref{hardy}, where $\mathrm{C}_0(\Omega)$ can be chosen to depend only on $d,\alpha$ and $M_{\alpha}$ (in \eqref{21.08.03.1111}). 
			Therefore, the proof is completed by Lemma \ref{220617557}.
		\end{proof}

		\begin{proof}[Proof of Theorem \ref{21.11.08.1}]
			The following construction is a combination of \cite[Theorem 1]{AA} and \cite[Lemma 2.1]{KenigToro}.
			Recall that $M_{\alpha}$ is the constant in \eqref{21.08.03.1111}, and $\beta<\alpha$.
			Take $r_0\in(0,1)$ small enough to satisfy $M_{\alpha} r_0^{\alpha} < r_0^{\beta}$,  and take $\eta\in(0,1)$ small enough to satisfy
			$$
			(1-\eta)M_{\alpha} r_0^{\alpha}+\eta\leq r_0^{\beta}\,.
			$$
			For $w(x,p,r)$, we shall need only the following properties (see Lemma \ref{220621237} and Definition \ref{2301021117}):
			\begin{align*}
					&\text{$w(\cdot,p,r)$ is a classical superharmonic function on $\Omega\cap B_r(p)$}\,;\\
					&\text{$w(\cdot,p,r)=1$ in $\Omega\cap \partial B_r(p)$}\,;\\
					&\text{$0\leq w(\cdot,p,r)\leq M_{\alpha}r_0^{\alpha}$ in $\Omega\cap B(p,r_0r)$}\,.
			\end{align*}
			For $p\in\partial\Omega$ and $k\in\bZ$, put
			$$
			\phi_{p,k}(x)=r_0^{k\beta}\big((1-\eta)\, w(x,p,r_0^k)+\eta\big)\,.
			$$
			Then $\phi_{p,k}$ is a classical superharmonic function on $\Omega\cap B(p,r_0^k)$,
			\begin{alignat*}{2}
				&\quad\,\, \phi_{p,k}\leq  r_0^{(k+1)\beta}\qquad\,\,&&\text{on}\quad \Omega\cap \overline{B}(p,r_0^{k+1})\,,\\
				&\quad\,\, \phi_{p,k}=r_0^{k\beta}\qquad\qquad &&\text{on}\quad \Omega\cap \partial B(p,r_0^k)\,,\\
				&\eta\cdot r_0^{k\beta}\leq \phi_{p,k}\leq r_0^{k\beta}\quad\,\,&&\text{on}\quad \Omega\cap B(p,r_0^k)\,.
			\end{alignat*}
			For $p\in\partial\Omega$ and $x\in\Omega$, define
			$$
			\phi_p(x)=\inf\{\phi_{p,k}(x)\,:\,|x-p|< r_0^{k}\}.
			$$
			
			If we prove the following:
			\begin{align}
				&\text{$\phi_p$ is a classical superharmonic function on $\Omega$ ;}\label{221230124}\\
				&\eta|x-p|^{\beta}\leq \phi_p(x)\leq r_0^{-\beta}|x-p|^{\beta}\,,\label{221230125}
			\end{align}
			then $\phi:=\inf_{p\in\partial\Omega}\phi_p$ is superharmonic in $\Omega$ (see Lemma \ref{21.05.18.1}.(2)) and satisfies
			$$
			\eta \rho(x)^{\beta}\leq \phi(x) \leq r_0^{-\beta}\rho(x)^{\beta}\,.
			$$
			This completes the proof.

			\textbf{-} \eqref{221230124} \textbf{:} 
			We only need to prove that for each $k_0\in\bZ$, $\phi_p$ is a classical superharmonic function on $U_{k_0}:=\{x\in\Omega\,:\,r_0^{k_0+2}<|x-p|<r_0^{k_0}\}$ (see Remark \ref{240316310}).
			For $x\in U_{k_0}$, put
			\begin{align*}
				v_{p,k_0}(x)=
				\begin{cases}
					\phi_{p,k_0}(x) &\text{if}\quad r_0^{k_0+1}\leq |x-p|< r_0^{k_0}\\
					\phi_{p,k_0}(x)\wedge \phi_{p,k_0+1}(x)&\text{if}\quad r_0^{k_0+2}<|x-p|< r_0^{k_0+1}\,.
				\end{cases}
			\end{align*}
			Since $\phi_{p,k_0}\leq \phi_{p,k_0+1}$ in $\Omega\cap \partial B(p,r_0^{k_0+1})$, Lemma \ref{21.05.18.1}.(4) implies that $v_{p,k_0}$ is a classical superharmonic function on $U_{k_0}$.
			Observe that
			$$
			\phi_p(x)=v_{p,k_0}(x)\wedge \inf\{\phi_{p,k}(x)\,:\,k\leq k_0-1\}.
			$$
			Moreover, if $\eta\,r_0^{k\beta}\geq r_0^{k_0\beta}$ then
			$$
			v_{p,k_0}(x)\leq \phi_{p,k_0}(x)\leq r_0^{k_0\beta}\leq \eta\,r_0^{k\beta}\leq \phi_{p,k}(x)\,.
			$$
			Therefore
			\begin{align*}
				\phi_p(x)=v_{p,k_0}(x)\wedge \inf\{\phi_{p,k}(x)\,:\, k\leq k_0-1\quad\text{and}\quad \eta\, r_0^{k\beta}\leq r_0^{k_0\beta}\}\,,
			\end{align*}
			which implies that on $U_{k_0}$, $\phi_p$ is the minimum of finitely many classical superharmonic functions.
			Consequently, by Lemma \ref{21.05.18.1}.(1), $\phi_p$ is a classical superharmonic function on $U_{k_0}$.
			
			\textbf{-} \eqref{221230125} \textbf{:} 
			Let $x\in\Omega$ satisfy $r_0^{k_0+1}\leq |x-p|< r_0^{k_0}$, $k_0\in\bZ$.
			Since
			$$
			\phi_{p,k_0}(x)\leq r_0^{k_0\beta}\,\,,\quad\text{and}\quad \phi_{p,k}(x)\geq \eta r_0^{k\beta}\geq \eta r_0^{k_0\beta}\quad\text{for all}\quad k\leq k_0\,,
			$$
			we obtain that $\eta\,r_0^{k_0\beta}\leq \phi_p(x)\leq r_0^{k_0\beta}$.
			This implies \eqref{221230125}.
		\end{proof}
		
		We end this subsection by providing two corollaries of Theorem \ref{22.02.19.3}.
		
			\begin{corollary}\label{240120415}
			Let $\Omega$ satisfy $\mathbf{LHMD}(\alpha)$ with $\alpha\in(0,1]$, and assume that there exists $M\geq 0$ such that $\int_\Omega\rho(x)^M\dd x=:K_M<\infty$.
			Consider the equation
			\begin{align}\label{231228234}
				\Delta u-\lambda u=f_0+\sum_{i=1}^dD_if_i\quad \text{in}\,\,\,\, \Omega\quad;\quad u=0\quad \text{on}\,\,\,\,\partial\Omega\,,
			\end{align}
			where $f_0,\,f_1,\,\ldots,\,f_d$ are measurable functions in $\Omega$ such that
			\begin{align*}
				|f_0|\lesssim \rho^{-2+\alpha}\,\,,\,\,\,\,|f_1|+\cdots+|f_d|\lesssim \rho^{-1+\alpha}\,.
			\end{align*}
			Then for any $0<\beta<\alpha$, equation \eqref{231228234} has a unique solution $u$ in $C^{0,\beta}(\overline{\Omega})$.
			In addition, we have
			\begin{align}\label{231228253}
				\sup_\Omega\rho^{-\beta}|u|+[u]_{C^{0,\beta}(\Omega)}\lesssim_N \sup_{\Omega}\Big(\rho^{2-\alpha}|f_0|+\rho^{1-\alpha}|f_1|+\cdots +\rho^{1-\alpha}|f_d|\Big)=:N_F\,,
			\end{align}
			where $N$ depends only on $d$, $\alpha$, $M_\alpha$, $\beta$, $M$, and $K_M$.
		\end{corollary}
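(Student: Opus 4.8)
The plan is to deduce Corollary~\ref{240120415} from the weighted $L_p$-solvability result Theorem~\ref{22.02.19.3} together with the Sobolev--H\"older embedding Proposition~\ref{220512537}, using a large integrability exponent $p$ and a weight exponent $\theta$ sitting at the lower edge of the admissible interval \eqref{240127120}. Two elementary consequences of the hypothesis $\int_\Omega\rho^M\dd x<\infty$ will be used repeatedly: first, $D_\Omega:=\sup_{x\in\Omega}\rho(x)<\infty$, since $\rho(x_0)=R$ implies $B(x_0,R)\subset\Omega$ and hence $\int_\Omega\rho^M\dd x\gtrsim R^{d+M}$; second, consequently $\int_\Omega\rho^s\dd x\le D_\Omega^{\,s-M}\int_\Omega\rho^M\dd x<\infty$ for every $s\ge M$.

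For existence, fix $\beta\in(0,\alpha)$, choose $p$ larger than $\max\{\,d/(1-\beta),\ (M+2-\alpha)/\alpha,\ (d+M)/(\alpha-\beta)\,\}$, and set $\theta:=d+M-\alpha p$. A short check shows that $\theta$ satisfies \eqref{240127120} (the left inequality amounts to $M>\alpha-2$, which holds since $\alpha\le1\le M+2$; the right one amounts to $p>(M+2-\alpha)/\alpha$), that $\alpha p+\theta-d=M\ge0$, and that $\theta/p+\beta\le0$. Writing $f:=f_0+\sum_{i\ge1}D_if_i$ and using Lemma~\ref{220512433}.(1) and (2) together with $|f_0|\le N_F\rho^{-2+\alpha}$ and $|f_i|\le N_F\rho^{-1+\alpha}$, one gets $\|f\|_{H^{-1}_{p,\theta+2p}(\Omega)}^p\lesssim N_F^p\int_\Omega\rho^{\alpha p+\theta-d}\dd x=N_F^p\int_\Omega\rho^M\dd x<\infty$. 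Theorem~\ref{22.02.19.3} (applied with $\gamma=-1$) then yields a unique $u\in H^{1}_{p,\theta}(\Omega)$ solving $\Delta u-\lambda u=f$ in $\cD'(\Omega)$, with $\|u\|_{H^{1}_{p,\theta}(\Omega)}\lesssim N_F(\int_\Omega\rho^M\dd x)^{1/p}$. Since $\beta\le1-d/p$, Proposition~\ref{220512537}.(2) with $\Psi\equiv1$ and $k=0$ gives $|u|^{(\theta/p)}_{0,\beta}\lesssim\|u\|_{H^{1}_{p,\theta}(\Omega)}$, and Proposition~\ref{220512537}.(1) unpacks the left-hand side as the sum of $\sup_\Omega\rho^{\theta/p}|u|$ and a quantity controlling $\rho(x)^{\theta/p+\beta}\,|u(x)-u(y)|/|x-y|^\beta$ over $|x-y|<\rho(x)/2$. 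Because $\rho\le D_\Omega$ and $\theta/p+\beta\le0$, the first piece gives $\sup_\Omega\rho^{-\beta}|u|<\infty$ and in particular $|u(x)|\lesssim\rho(x)^\beta\to0$ as $x\to\partial\Omega$; the second piece, combined with this pointwise bound through a three-case chaining over $|x-y|<\rho(x)/2$, $|x-y|<\rho(y)/2$, and $|x-y|\ge\tfrac12\max(\rho(x),\rho(y))$, gives $[u]_{C^{0,\beta}(\Omega)}<\infty$. Thus $u$ extends to an element of $C^{0,\beta}(\overline\Omega)$ vanishing on $\partial\Omega$, and collecting the constants produces \eqref{231228253}.

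For uniqueness, suppose $v\in C^{0,\beta}(\overline\Omega)$ solves the homogeneous equation with $v|_{\partial\Omega}=0$; then $|v(x)|\le[v]_{C^{0,\beta}(\overline\Omega)}\rho(x)^\beta$, so $v\in L_{p',\theta'}(\Omega)=H^{0}_{p',\theta'}(\Omega)$ once we pick $p'$ large and $\theta':=d+M-\beta p'$, for which $\beta p'+\theta'-d=M$ and (as before) $\theta'$ lies in \eqref{240127120}. The uniqueness part of Theorem~\ref{22.02.19.3} applied with $\gamma=-2$ --- for which the zero distribution is a solution in $H^{0}_{p',\theta'}(\Omega)$ --- then forces $v\equiv0$; alternatively one invokes the global uniqueness Theorem~\ref{220530526}.

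I expect the only real obstacle to be the bookkeeping of the parameter inequalities: one must fit $\theta$ strictly inside the open interval \eqref{240127120} while simultaneously arranging $\alpha p+\theta-d\ge M$ (so the weighted $L_p$-norm of the source is finite), keeping $1-d/p\ge\beta$ (for the embedding), and $\theta/p+\beta\le0$ (to convert weighted bounds into unweighted ones via $\rho\le D_\Omega$); the assumptions $\alpha\in(0,1]$ and $M\ge0$ are precisely what make this system solvable for all large $p$. The remaining technical point --- passing from the weighted H\"older seminorm $|u|^{(\theta/p)}_{0,\beta}$ to the genuine seminorm $[u]_{C^{0,\beta}(\Omega)}$ --- is the routine chaining argument indicated above and causes no difficulty once $D_\Omega<\infty$ is in hand.
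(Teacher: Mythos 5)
Your proof is correct, and it follows the same two-step strategy as the paper: (i) observe that $\int_\Omega\rho^M\dd x<\infty$ forces $D_\Omega<\infty$, (ii) choose parameters $p,\theta$ in the admissible range \eqref{240127120} so that the weighted $L_p$-norm of the source is $\lesssim N_F(\int_\Omega\rho^M)^{1/p}$, solve via Theorem~\ref{22.02.19.3}, and then transfer the bound to H\"older spaces via Proposition~\ref{220512537}. Where you diverge is in the choice of weight: you take $p$ \emph{strictly} larger than $(d+M)/(\alpha-\beta)$ and set $\theta:=d+M-\alpha p$, which makes the H\"older weight exponent $\theta/p+\beta\le0$, and you then need the three-case chaining argument to pass from the weighted seminorm $\sup_x\rho(x)^{\theta/p+\beta}\sup_{|y-x|<\rho(x)/2}|u(x)-u(y)|/|x-y|^\beta$ to the genuine $[u]_{C^{0,\beta}(\Omega)}$. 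The paper instead takes $p$ \emph{exactly equal} to $(d+M)/(\alpha-\beta)$ and sets $\theta:=-p\beta$ (note that these are just the endpoint case of your choice: $d+M-\alpha p=-\beta p$ when $p(\alpha-\beta)=d+M$), so that $\theta/p+\beta=0$; then $|u|^{(\theta/p)}_{0,\beta}$ is \emph{literally} $\sup_\Omega\rho^{-\beta}|u|+[u]_{C^{0,\beta}(\Omega)}$ and no chaining is needed. For uniqueness the paper is also more elementary: it observes that $\int_\Omega\rho^M\dd x<\infty$ forces $\rho(x)\to0$ as $|x|\to\infty$, hence $u(x)\to0$, and then the classical maximum principle on $\Omega\cap B_R$ applies; your route through the weighted Sobolev uniqueness of Theorem~\ref{22.02.19.3} with $\gamma=-2$ (or Theorem~\ref{220530526}) is valid but heavier machinery than needed. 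Both approaches buy the same result; the paper's parameter choice is tuned to avoid the chaining step, and its uniqueness argument is self-contained via the maximum principle.
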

		
		\begin{proof}
			We first mention that the assumption $\int_\Omega\rho(x)^M\dd x<\infty$ implies that the function $\rho$ is bounded; moreover, $\lim_{|x|\rightarrow \infty}\rho(x)= 0$.
			This implies that if LHS in \eqref{231228253} is finite, then $u\in C^{0,\beta}(\overline{\Omega})$.
			
			\textbf{- Uniqueness of solutions.} 
			If $\Omega$ is bounded, then the uniqueness of solutions directly follows from the maximum principle.
			Consider the case when $\Omega$ is unbounded, and let $u\in C^{0,\beta}(\overline{\Omega})$ satisfies \eqref{231228234} for $f_0=\ldots=f_d\equiv 0$.
			Since $\lim_{|x|\rightarrow \infty}\rho(x)=0$, the conditions on $u$ imply that $\lim_{|x|\rightarrow \infty}u(x)=0$.
			Combining this with the maximum principle, we have
			$$
			\sup_{\Omega}|u|=\lim_{R\rightarrow \infty}\sup_{\Omega\cap B_R}|u|=\lim_{R\rightarrow\infty }\sup_{\partial (\Omega\cap B_R)}|u|=0\,.
			$$
			Therefore, the uniqueness of solutions is follows.
						
			\textbf{- Existence of solutions and \eqref{231228253}.}
			For $\beta\in(0,\alpha)$, put $p:=\frac{d+M}{\alpha-\beta}$ so that $\beta\leq 1-\frac{d}{p}$ and $\theta:=-p\beta$ satisfies \eqref{240127120}.
			Observe that
			\begin{align*}
			\|F\|_{H_{p,\theta+2p}^{-1}(\Omega)}^p&\lesssim_{p,d,\beta} \int_{\Omega}\bigg(|\rho^{2-\beta}f_0|^p+\sum_{i=1}^d|\rho^{1-\beta}f_i|^p\bigg)\rho^{-d}\dd x\\
			&\leq  \big(N_F\big)^p\int_\Omega\rho(x)^M\dd x<\infty\,,
			\end{align*}
			where the last inequality follows from that $p(\alpha-\beta)=d+M$.
			Theorem \ref{22.02.19.3} provides a solution $u\in H_{p,\theta}^1(\Omega)$ to equation \eqref{231228234} with 
			$$
			\|u\|_{H_{p,\theta}^1(\Omega)}\lesssim_{N} \|F\|_{H_{p,\theta+2p}^{-1}(\Omega)}^p\lesssim_{N}\Big(\int_\Omega\rho^M\dd x\Big)^{1/p}N_F\,,
			$$
			where $N=N(d,\alpha,M_\alpha,\beta)$.
			Proposition \ref{220512537} implies $|u|_{0,\beta}^{(-\beta)}\lesssim_{d,\alpha,\beta} \|u\|_{H_{p,\theta}^1(\Omega)}$, and therefore we obtain \eqref{231228253}.
			By the comment at the first in this proof, we have $u\in C^{0,\beta}(\overline{\Omega})$.
		\end{proof}
		
		The following corollary gives an unweighted $L_p$-solvability result when $p$ is close to $2$.
		We note that similar results for various equations can be found in the literature, such as \cite{KilKos1994}, utilizing the reverse H\"older inequality.
		We present Corollary \ref{230210356} here because its proof does not rely on the reverse H"older inequality; instead, it relies on the weighted solvability result (Theorem \ref{22.02.19.3}).
		This theorem also gives an $L_p$ estimate invariant under dilation (see \eqref{230203814}).
		
		We denote by $\mathring{W}_p^1(\Omega)$ the closure of $C_c^{\infty}(\Omega)$ in $W_p^1(\Omega)$.		
		
		\begin{corollary}\label{230210356}
			Let $\Omega$ satisfy \eqref{22.02.22.1} and 
			\begin{equation}\label{2304044112}
				\begin{aligned}
					\lambda\geq 0\quad \text{if}\quad D_{\Omega}<\infty\quad\text{and}\quad  	\lambda> 0\quad \text{if}\quad D_{\Omega}=\infty\,,
				\end{aligned}	
			\end{equation}
			where $D_{\Omega}:=\sup_{x\in\Omega}d(x,\partial\Omega)$.
			Then there exists $\epsilon\in(0,1)$ depending only on $d$, $\epsilon_0$ (in \eqref{22.02.22.1}) such that for any $p\in(2-\epsilon,2+\epsilon)$, the following holds:
			For any $f^0,\,\ldots,\,f^d\in L_p(\Omega)$, equation \eqref{231228234} has a unique solution $u$ in $\mathring{W}^{1}_{p}(\Omega)$.
			Moreover, we have
			\begin{align}\label{230203814}
				\begin{split}
					\|\nabla u\|_{p}+\big(\lambda^{1/2}+D_\Omega^{-1}\big)\|u\|_{p}\lesssim_{d,p,\epsilon_0} \min\big(\lambda^{-1/2},D_\Omega\big)\|f^0\|_{p}+\sum_{i=1}^d\|f^i\|_p\,.
				\end{split}
			\end{align}
		\end{corollary}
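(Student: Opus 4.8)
The plan is to derive the whole statement from the weighted solvability result Theorem~\ref{22.02.19.3}, choosing the weight exponent so that the solution space becomes $\mathring{W}^1_p(\Omega)$; the one genuinely delicate point, treated by an interior--resolvent correction, is the zeroth order term $f^0$ in the unbounded case. To begin, by Lemma~\ref{240320301} the capacity density condition \eqref{22.02.22.1} yields both the Hardy inequality \eqref{hardy} and $\mathbf{LHMD}(\alpha)$ for some $\alpha=\alpha(d,\epsilon_0)>0$ with $M_\alpha=M_\alpha(d,\epsilon_0)$; moreover the capacity density condition self--improves (see the references in Subsection~\ref{fatex}), so there is $p_*=p_*(d,\epsilon_0)>2$ for which the $p$--Hardy inequality $\int_\Omega|u/\rho|^p\dd x\lesssim_{d,\epsilon_0}\int_\Omega|\nabla u|^p\dd x$, $u\in C_c^\infty(\Omega)$, holds for $p\in(p_*',p_*)$. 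I would then fix $\epsilon\in(0,1)$, depending only on $d,\epsilon_0$, so small that for $p\in(2-\epsilon,2+\epsilon)$ one has $d-p\in(d-2-(p-1)\alpha,\,d-2+\alpha)$ (equivalently $\tfrac{p-2}{p-1}<\alpha$ and $p<2+\alpha$, both automatic near $p=2$) and $p\in(p_*',p_*)$. For such $p$, Lemma~\ref{220512433}.(1) gives $\|u\|_{H^1_{p,d-p}(\Omega)}^p\simeq\int_\Omega|\nabla u|^p\dd x+\int_\Omega|u/\rho|^p\dd x$; combined with the $p$--Hardy inequality this yields the continuous embedding $\mathring{W}^1_p(\Omega)\hookrightarrow H^1_{p,d-p}(\Omega)$ (needed for uniqueness), and, together with $\|u\|_p\le D_\Omega\|u/\rho\|_p$ when $D_\Omega<\infty$ and the density of $C_c^\infty(\Omega)$ (Lemma~\ref{21.09.29.4}.(1)), the reverse embedding $H^1_{p,d-p}(\Omega)\hookrightarrow\mathring{W}^1_p(\Omega)$ when $D_\Omega<\infty$. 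Also $\|u\|_{H^1_{p,d-p}(\Omega)}\gtrsim\|\nabla u\|_p+D_\Omega^{-1}\|u\|_p$, and by $L_{p,d}(\Omega)=L_p(\Omega)$, $\|h\|_{L_{p,d+p}(\Omega)}^p\simeq\int_\Omega|h|^p\rho^p\dd x$ and Lemma~\ref{220512433}.(2), $\|f^0+\sum_{i\ge1}D_if^i\|_{H^{-1}_{p,d+p}(\Omega)}\lesssim(\int_\Omega|f^0|^p\rho^p\dd x)^{1/p}+\sum_{i\ge1}\|f^i\|_p$.

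Theorem~\ref{22.02.19.3} applied with $\gamma=-1$, $\theta=d-p$ now provides a unique $u\in H^1_{p,d-p}(\Omega)\cap H^{-1}_{p,d+p}(\Omega)$ solving \eqref{231228234}, with
$$
\|\nabla u\|_p+D_\Omega^{-1}\|u\|_p+\lambda\|u\|_{H^{-1}_{p,d+p}(\Omega)}\ \lesssim_{d,p,\epsilon_0}\ \Bigl(\int_\Omega|f^0|^p\rho^p\dd x\Bigr)^{1/p}+\sum_{i\ge1}\|f^i\|_p .
$$
Since $\bigl[H^{-1}_{p,d+p}(\Omega),H^1_{p,d-p}(\Omega)\bigr]_{1/2}=H^0_{p,d}(\Omega)=L_p(\Omega)$ (interpolation of the weighted Bessel spaces, cf.\ Remark~\ref{230215958}), one has $\|v\|_p\lesssim\|v\|_{H^{-1}_{p,d+p}(\Omega)}^{1/2}\|v\|_{H^1_{p,d-p}(\Omega)}^{1/2}$; this both recovers the term $\lambda^{1/2}\|u\|_p$ (as $\lambda^{1/2}\|u\|_p\lesssim(\lambda\|u\|_{H^{-1}_{p,d+p}})^{1/2}\|u\|_{H^1_{p,d-p}}^{1/2}$) and, via Lemma~\ref{21.09.29.4}.(5), shows the constructed $u$ lies in $\mathring{W}^1_p(\Omega)$ (a common $C_c^\infty$--approximant for both norms becomes Cauchy in $W^1_p(\Omega)$). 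When $D_\Omega<\infty$ this already yields \eqref{230203814}, since $\int_\Omega|f^0|^p\rho^p\dd x\le D_\Omega^p\|f^0\|_p^p$; uniqueness in $\mathring{W}^1_p(\Omega)$ follows from that in Theorem~\ref{22.02.19.3} through $\mathring{W}^1_p(\Omega)\hookrightarrow H^1_{p,d-p}(\Omega)$.

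The main obstacle is the $f^0$ term when $D_\Omega=\infty$ (so $\lambda>0$): then $\int_\Omega|f^0|^p\rho^p\dd x$ may be infinite, while \eqref{230203814} demands the coefficient $\lambda^{-1/2}$ on $\|f^0\|_p$. By linearity I would split $f^0=f^0 1_{\{\rho\le\lambda^{-1/2}\}}+f^0 1_{\{\rho>\lambda^{-1/2}\}}=:g'+g$. For $g'$ one has $\int_\Omega|g'|^p\rho^p\dd x\le\lambda^{-p/2}\|f^0\|_p^p$, so the preceding argument gives a solution $u_{g'}$ with $\|\nabla u_{g'}\|_p+\lambda^{1/2}\|u_{g'}\|_p\lesssim\lambda^{-1/2}\|f^0\|_p$. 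For $g$, supported at distance $\ge\lambda^{-1/2}$ from $\partial\Omega$, set $\bar u:=(\Delta-\lambda)^{-1}(g1_\Omega)$ on $\bR^d$ (so $\|\bar u\|_p\lesssim_{d,p}\lambda^{-1}\|g\|_p$ and $\|\nabla\bar u\|_p\lesssim_{d,p}\lambda^{-1/2}\|g\|_p$ by standard whole--space resolvent bounds) and choose $\chi\in C^\infty(\bR^d)$ with $\chi\equiv1$ on $\{\rho>\lambda^{-1/2}\}$, $\chi\equiv0$ on $\{\rho<\lambda^{-1/2}/2\}$, $|\nabla\chi|\lesssim\lambda^{1/2}$, $|D^2\chi|\lesssim\lambda$. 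Then $\chi\bar u\in\mathring{W}^1_p(\Omega)$ and $(\Delta-\lambda)(\chi\bar u)=g+E$ in $\Omega$, where $E:=2\nabla\chi\cdot\nabla\bar u+(\Delta\chi)\bar u$ is supported in $\{\lambda^{-1/2}/2<\rho<\lambda^{-1/2}\}$ with $\|E\|_p\lesssim\|g\|_p\le\|f^0\|_p$, so that $\int_\Omega|E|^p\rho^p\dd x\le\lambda^{-p/2}\|f^0\|_p^p$. Solving $(\Delta-\lambda)u_E=E$ in $\mathring{W}^1_p(\Omega)$ as above, the function $u_g:=\chi\bar u-u_E\in\mathring{W}^1_p(\Omega)$ solves $(\Delta-\lambda)u_g=g$ and satisfies $\|\nabla u_g\|_p+\lambda^{1/2}\|u_g\|_p\lesssim\lambda^{-1/2}\|f^0\|_p$ (using $\|\nabla(\chi\bar u)\|_p+\lambda^{1/2}\|\chi\bar u\|_p\lesssim\lambda^{-1/2}\|g\|_p$). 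Adding $u_{g'}+u_g$ to the $f^i$--part (handled directly by Theorem~\ref{22.02.19.3}, with its $\lambda^{1/2}\|u\|_p$ contribution again obtained by interpolation) and taking the smaller of the bounds $D_\Omega\|f^0\|_p$ and $\lambda^{-1/2}\|f^0\|_p$ yields \eqref{230203814}. As the paper notes, the dilation--invariance of \eqref{230203814} could instead be used to normalize $\lambda\in\{0,1\}$ at the outset; the correction argument for $g$ is the only nonroutine step, the rest being weight bookkeeping and the cited self--improvement of the capacity density condition.
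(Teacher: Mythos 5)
Your proof is correct and follows the paper's overall architecture --- $\mathbf{LHMD}$ plus the self-improved $p$-Hardy inequality to choose $\epsilon$, then the inequality $\|v\|_p\lesssim\|v\|_{H^{-1}_{p,d+p}(\Omega)}^{1/2}\|v\|_{H^1_{p,d-p}(\Omega)}^{1/2}$ to produce the $\lambda^{1/2}\|u\|_p$ term and to place the solution in $\mathring{W}_p^1(\Omega)$ --- but your treatment of the $f^0$ term when $D_\Omega=\infty$ (hence $\lambda>0$) is genuinely different from the paper's. The paper (Step 2.1) sets $v:=(\Delta-1)^{-1}(\trho^{\,-1}f^0)$, uses $\trho\, v$ as the main part, and corrects the error $\widetilde f=-2\sum_iD_i(vD_i\trho)+v\Delta\trho$ by a second application of the weighted solvability; the point is that $\|\trho^{\,-1}f^0\|_{L_{p,d+p}(\Omega)}\simeq\|f^0\|_p$ and $\widetilde f$ is divergence-form data, so all weights balance without leaving the $H^\gamma_{p,\theta}$ framework. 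You instead split $f^0$ along the level set $\{\rho=\lambda^{-1/2}\}$: the near-boundary piece $g'$ already satisfies $\int_\Omega|g'|^p\rho^p\le\lambda^{-p/2}\|f^0\|_p^p$, while the far piece $g$ is handled via the whole-space resolvent $(\Delta-\lambda)^{-1}$ and a cutoff $\chi$ at scale $\lambda^{-1/2}$, whose commutator error $E$ is supported where $\rho\simeq\lambda^{-1/2}$ and is fed back into the weighted solvability. Both arguments work; the paper's stays entirely inside the weighted-Sobolev machinery (no whole-space resolvent estimate is ever invoked), while your route is more explicit about the length scale $\lambda^{-1/2}$ and avoids the $\trho$-conjugation bookkeeping. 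Two small precision points: the interpolation identity $[H^{-1}_{p,d+p}(\Omega),H^1_{p,d-p}(\Omega)]_{1/2}=L_p(\Omega)$ is better attributed to \cite[Proposition 2.4]{Lo1}, or simply replaced by the direct Cauchy--Schwarz estimate \eqref{240112315} which the paper proves in two lines (Remark \ref{230215958} concerns Lipschitz domains and does not contain the needed statement); and you should say explicitly that $\chi$ with $|\nabla\chi|\lesssim\lambda^{1/2}$, $|D^2\chi|\lesssim\lambda$ is built from the regularized distance $\trho$, and that $\chi\bar u\in\mathring{W}_p^1(\Omega)$ requires an additional cutoff at infinity followed by mollification, since $\chi\bar u$ is generally not compactly supported when $\Omega$ is unbounded even though it vanishes near $\partial\Omega$.
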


		\begin{proof}
			We begin by recalling the following two results under the capacity density condition \eqref{22.02.22.1}:
			\begin{itemize}
				\item[(a)] By Lemma \ref{240320301}.(1), there exists $\alpha\in(0,1)$ such that $\Omega$ satisfies $\mathbf{LHMD}(\alpha)$.
				By Theorem \ref{22.02.19.3}, Statement \ref{22.02.19.1} $(\Omega,p,d-p)$ holds for $p\in(2-\alpha_1,2+\alpha_1)$, and $N_1$ (in \eqref{2205241155}) depends only on $d,\,p,\,\gamma,\,\epsilon_1$.
				
				\item[(b)] It is implied by \cite[Theorem 1, Theorem 2]{lewis} (or see \cite[Theorem 3.7, Corollary 3.11]{Kinhardy}) that there exists $\alpha_2\in(0,1)$ depending only on $d$ and $\epsilon_0$ such that for any $p>2-\alpha_2$,
				\begin{align}\label{230213147}
					\int_{\Omega}\Big|\frac{u(x)}{\rho(x)}\Big|^p\dd x\leq N(d,p,\epsilon_0)\int_{\Omega}|\nabla u|^p\dd x\quad \forall\,\, u\in C_c^{\infty}(\Omega)\,.
				\end{align}
			\end{itemize}
			Put $0<\epsilon<\min(\alpha_1,\alpha_2)$ and consider $p\in (2-\epsilon,2+\epsilon)$.
			
			\textbf{Step 1. Uniqueness of solutions.}
			Since Statement \ref{22.02.19.1} $(\Omega,p,d-p)$ holds, it suffices to show that $\mathring{W}_p^1(\Omega)\subset H_{p,d-p}^1(\Omega)$.
			For any $u\in \mathring{W}_p^1(\Omega)$, there exists a sequence $\{u_n\}_{n\in\bN}\subset C_c^{\infty}(\Omega)$ such that $u_n\rightarrow u$ in $W_p^1(\Omega)$.
			Applying Fatou’s lemma to \eqref{230213147} with $u_n\in C_c^{\infty}(\Omega)$ in place of $u$, we obtain that \eqref{230213147} also holds for $u\in \mathring{W}_p^1(\Omega)$.
			This implies that $u\in H_{p,d-p}^1(\Omega)$.

			\textbf{Step 2. Existence of solutions and estimate \eqref{230203814}.}
				In this Step, we use Lemma \ref{220512433}.(1), $D_{\Omega}^{\,-1}\|u\|_p\leq \|\rho^{-1}u\|_p$, and $\|\rho f\|_p\leq D_{\Omega}\|f\|_p$, without mentioning. Additionally, we also use the fact that
			\begin{align}\label{240112315}
				\|u\|_{H_{p,d}^\gamma(\Omega)}\lesssim_{p,d} \|u\|_{H_{p,d-p}^{\gamma+1}(\Omega)}^{1/2}\|u\|_{H_{p,d+p}^{\gamma-1}(\Omega)}^{1/2}\,,
			\end{align}
			which follows from that
			\begin{align*}
				&\qquad \sum_{n\in\bZ}\ee^{nd}\|\big(\zeta_{0,(n)}u\big)(\ee^n\cdot)\|_{H_p^\gamma}^p\\
				&\lesssim_{p,d}\sum_{n\in\bZ}\ee^{nd}\|\big(\zeta_{0,(n)}u\big)(\ee^n\cdot)\|_{H_p^{\gamma+1}}^{p/2}\|\big(\zeta_{0,(n)}u\big)(\ee^n\cdot)\|_{H_p^{\gamma-1}}^{p/2}\\
				&\leq \left(\sum_{n\in\bZ}\ee^{n(d-p)}\|\big(\zeta_{0,(n)}u\big)(\ee^n\cdot)\|_{H_p^{\gamma+1}}^p\right)^{1/2}\left(\sum_{n\in\bZ}\ee^{n(d+p)}\|\big(\zeta_{0,(n)}u\big)(\ee^n\cdot)\|_{H_p^{\gamma-1}}^p\right)^{1/2}\,.
			\end{align*}

			To prove the existence of solutions, it is enough to find a solution in $L_{p,d}(\Omega)\cap H_{p,d-p}^1(\Omega)$.
			Indeed, $L_{p,d}(\Omega)\cap H_{p,d-p}^1(\Omega)$ is continuously embedded into $W_p^1$, and 
			$C_c^{\infty}(\Omega)$ is dense in $L_{p,d}(\Omega)\cap H_{p,d-p}^1(\Omega)$ (see Lemma \ref{21.09.29.4}.(5)).

			Without loss of generality, we assume that $\lambda=0$ or $\lambda=1$ by dilation.
			Note that $\epsilon_0$ in \eqref{22.02.22.1} is invariant even if $\Omega$ is replaced by $r\Omega=\{rx\,:\,x\in\Omega\}$, for any $r>0$.
			
			\textbf{Step 2.1)} Consider the case $\lambda=1$. 
			Since Statement \ref{22.02.19.1} $(\Omega,p,d-p)$ holds, there exists $v\in H_{p,d-p}^2(\Omega)$ such that $\Delta v- v=\trho^{\,-1}f^0$ and 
			\begin{align}\label{240119538}
			\|v\|_{H_{p,d-p}^2(\Omega)}+\|v\|_{L_{p,d+p}(\Omega)} \lesssim_{d,p,\epsilon_0}\left\|\trho^{-1} f^0\right\|_{L_{p,d+p}(\Omega)}\simeq_{p,d} \|f^0\|_p\,.
			\end{align}
			By \eqref{240112315} and \eqref{240119538}, we have
			\begin{equation}\label{230403637}
				\begin{alignedat}{2}
					&\|v\|_{L_{p,d+p}(\Omega)}+\|v\|_{H_{p,d}^1(\Omega)}	\lesssim_{d,p}\|v\|_{H_{p,d-p}^2(\Omega)}+\|v\|_{L_{p,d+p}(\Omega)} \lesssim_{d,p,\epsilon_0} \|f^0\|_p\,.
				\end{alignedat}
			\end{equation}
			Put 
			$$
			\widetilde{f}:=f^0-\Delta(\trho v)+\trho v=-2\Big[\sum_{i= 1}^dD_i\big(vD_i\trho )\Big]+v\Delta \trho\,,
			$$
			and observe that
			\begin{align*}
				\begin{split}
					\big\|\widetilde{f}\big\|_{H_{p,d+p}^{-1}(\Omega)}					&\lesssim_{d,p}\|v\|_{L_{p,d}(\Omega)}\lesssim_{d,p} \|v\|_{H_{p,d}^1(\Omega)}\lesssim_{d,p,\epsilon_0}\left\|f^0\right\|_p\,,
				\end{split}
			\end{align*}
			where the first and third inequalities follow from Lemma \ref{220512433}.(2) and \eqref{230403637}, respectively.
			Since Statement \ref{22.02.19.1} $(\Omega,p,d-p)$ holds, there exists $w\in H_{p,d-p}^1(\Omega)$ such that
			$$
			\Delta w- w=\sum_{i= 1}^dD_if^i+\widetilde{f}
			$$
			and 
			\begin{align}\label{240122813}
			\|w\|_{H_{p,d-p}^1(\Omega)}+\|w\|_{H^{-1}_{p,d+p}(\Omega)}\lesssim_{d,p,\epsilon_0}\sum_{i= 1}^d\|f^i\|_{L_{p,d}(\Omega)}+\big\|\widetilde{f}\big\|_{H_{p,d+p}^{-1}(\Omega)}\lesssim \sum_{i=0}^d\|f^i\|_{p}\,.
			\end{align}
			Therefore, by \eqref{240112315} and \eqref{240122813}, we have
			\begin{equation}\label{2304041137}
				\begin{alignedat}{2}
					\|w\|_{L_{p,d}(\Omega)}+\|w\|_{H_{p,d-p}^1(\Omega)}	&\lesssim_{d,p}&&\|w\|_{H_{p,d-p}^1(\Omega)}+\|w\|_{H^{-1}_{p,d+p}(\Omega)} \lesssim_{d,p,\epsilon_0} \sum_{i\geq 0}\left\|f^i\right\|_p\,.
				\end{alignedat}
			\end{equation}
			Put $u=v\trho+w$.
			Then $u$ is a solution of equation \eqref{231228234} and satisfies
				\begin{alignat}{2}
					&&&\|u_x\|_p+(1+D_\Omega^{-1})\|u\|_p
					\lesssim_{d,p}\|u\|_{L_{p,d}(\Omega)}+\|u\|_{H_{p,d-p}^1(\Omega)}\label{230204149}\\
					&\lesssim_{d,p}&&\|w\|_{L_{p,d}(\Omega)}+\|w\|_{H_{p,d-p}^1(\Omega)}+\|v\|_{L_{p,d+p}(\Omega)}+\|v\|_{H_{p,d}^1(\Omega)}\lesssim_{d,p,\epsilon_0} \sum_{i\geq 0}\|f^i\|_p\,,\nonumber
				\end{alignat}
		 	where the last inequality follows from \eqref{230403637} and \eqref{2304041137}; note that \eqref{230204149} also implies that $u\in L_{p,d}(\Omega)\cap H_{p,d-p}^1(\Omega)$.
			
			\textbf{Step 2.2)} Consider the case $D_{\Omega}<\infty$, and observe that
			\begin{equation}	\label{230404924}
				\begin{alignedat}{2}
					\Big\|f^0+\sum_{i\geq 1}D_if^i\Big\|_{H_{p,d+p}^{-1}(\Omega)}&\lesssim_{d,p}&&\|f^0\|_{L_{p,d+p}(\Omega)}+\sum_{i\geq 1}\|f^i\|_{L_{p,d}(\Omega)}\\
					&\leq&& D_{\Omega}\|f^0\|_p+\sum_{i\geq 1}\|f^i\|_p<\infty\,.
				\end{alignedat}
			\end{equation}
			Since Statement \ref{22.02.19.1} $(\Omega,p,d-p)$ holds, there exists $\widetilde{u}\in H_{p,d-p}^1(\Omega)$ such that
			$$
			\Delta \widetilde{u}-\lambda \widetilde{u}=f^0+\sum_{i\geq 1}D_if^i\,,
			$$
			and
			\begin{align}\label{230404925}
				\|\widetilde{u}\|_{H_{p,d-p}^1(\Omega)}+\lambda \|\widetilde{u}\|_{H_{p,d+p}^{-1}(\Omega)}\lesssim \|f^0+\sum_{i\geq 1}D_if^i\|_{H_{p,d+p}^{-1}(\Omega)}\,.
			\end{align}
			By \eqref{240112315}, \eqref{230404924}, and \eqref{230404925}, we obtain that
				\begin{align}
					&\|\nabla\widetilde{u}\|_{L_p(\Omega)}+D_\Omega^{-1}\|\widetilde{u}\|_{L_p(\Omega)}+\lambda^{1/2}\|\widetilde{u}\|_{L_p(\Omega)}\label{230204150}\\
					\lesssim_{d,p}\,&\|\widetilde{u}\|_{H_{p,d-p}^1(\Omega)}+\lambda \|\widetilde{u}\|_{H_{p,d+p}^{-1}(\Omega)}\lesssim_{d,p,\epsilon_0} D_{\Omega}\|f^0\|_p+\sum_{i\geq 1}\|f^i\|_p\,.\nonumber
				\end{align}
			By \eqref{230204150}, we have $\widetilde{u}\in L_{p,d}(\Omega)\cap H_{p,d-p}^1(\Omega)$.
			
			\textbf{Step 2.3)}
			The existence of solutions is proved in Steps 2.1 and 2.2, for all $\lambda$ and $D_{\Omega}$ satisfying \eqref{2304044112}.
			For the cases where $D_{\Omega}=\infty$ and $\lambda=1$, and $D_{\Omega}<\infty$ and $\lambda=0$, estimate \eqref{230203814} is proved in \eqref{230204149} and \eqref{230204150}, respectively.
			Therefore, it suffices to prove estimate \eqref{230203814} in the remaining case where $D_{\Omega}<\infty$ and $\lambda=1$.
			Since $u$ in Step 2.1 and $\widetilde{u}$ in Step 2.2 are the same (due to the result in Step 1), \eqref{230203814} follows from \eqref{230204149} and \eqref{230204150}.
		\end{proof}

		\subsection{Domain with thin exterior: Aikawa dimension}\label{0062}
		The notion of the Aikawa dimension was originally introduced by Aikawa \cite{aikawa1991}.
		We recall its definition below.
		For a set $E\subset\bR^d$, the Aikawa dimension of $E$, denoted by $\dim_{\cA}(E)$, is defined as
		$$
		\dim_{\cA}(E)=\inf\Big\{\beta\geq 0\,:\,\sup_{p\in E,\,r>0}\frac{1}{r^\beta}\int_{B_{r}(p)}\frac{1}{d(y,E)^{d-\beta}}\dd y<\infty\Big\}
		$$
		with the convention $\frac{1}{0}=\infty$.
		
		In this subsection, we assume that $d\geq 3$, and that $\Omega$ satisfies
		$$
		\beta_0:=\dim_{\cA}(\Omega^c)<d-2\,.
		$$

		\begin{thm}\label{21.10.18.1}
			For a constant $\beta<d-2$, if there exists a constant $A_{\beta}$ such that
			\begin{align}\label{22.02.08.2}
				\sup_{p\in \Omega^c,\,r>0}\frac{1}{r^\beta}\int_{B_{r}(p)}\frac{1}{d(y,\Omega^c)^{d-\beta}}\dd y\leq A_{\beta}<\infty\,,
			\end{align}
			then the function
			$$
			\phi(x):=\int_{\bR^d}|x-y|^{-d+2}\rho(y)^{-d+\beta}\dd y
			$$
			is a superharmonic function on $\bR^d$ with $-\Delta\phi=N(d)\rho^{-d+\beta}$.
			Moreover, we have
			\begin{align}\label{21.11.10.4}
				N^{-1}\rho(x)^{-d+2+\beta}\leq \phi(x)\leq N \rho(x)^{-d+2+\beta}
			\end{align}
			for all $x\in\Omega$, where $N=N(d,\beta,A_\beta)$.
		\end{thm}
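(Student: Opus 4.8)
The plan is to verify directly that $\phi$ is superharmonic with the claimed Laplacian, and then to establish the two-sided bound \eqref{21.11.10.4} by splitting the defining integral into dyadic annuli around the point $x$ and using the Aikawa-type estimate \eqref{22.02.08.2}. First I would note that $\phi$ is (up to the normalizing constant $N(d)$) the Newtonian potential of the locally integrable, nonnegative density $g(y):=\rho(y)^{-d+\beta}$; indeed $\rho^{-d+\beta}$ is locally integrable precisely because $\beta<d$, which is guaranteed by $\beta<d-2$. Hence, in the distributional sense, $-\Delta\phi = N(d)\,g = N(d)\rho^{-d+\beta}\le 0$, so $\phi$ is superharmonic on $\bR^d$; more precisely, it agrees a.e.\ with a classical superharmonic function by Lemma \ref{240315329}. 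That $\phi\not\equiv+\infty$ will follow from the upper bound in \eqref{21.11.10.4}. The finiteness of the convolution kernel against $g$ near infinity is not an issue since we only claim \eqref{21.11.10.4} for $x\in\Omega$ and the relevant mass sits near $\partial\Omega$; I would make this rigorous by the annular decomposition below, where the far annuli contribute a convergent geometric-type series because the exponent $-d+2+\beta$ is negative.

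For the pointwise bounds, fix $x\in\Omega$ and write $r_0:=\rho(x)=d(x,\partial\Omega)$. The \emph{lower} bound is the easy direction: on the ball $B(x,r_0/2)$ we have $|x-y|^{-d+2}\simeq r_0^{-d+2}$ and $\rho(y)\le \rho(x)+|x-y|\le \tfrac32 r_0$, so $\rho(y)^{-d+\beta}\gtrsim r_0^{-d+\beta}$ (using $-d+\beta<0$), and integrating over $B(x,r_0/2)$ gives $\phi(x)\gtrsim r_0^{\,2}\cdot r_0^{-d+\beta}=r_0^{-d+2+\beta}$; the implied constant depends only on $d$. For the \emph{upper} bound I would pick a nearest boundary point $p\in\partial\Omega$ with $|x-p|=r_0$ and decompose
$$
\phi(x)=\int_{\bR^d}|x-y|^{-d+2}\rho(y)^{-d+\beta}\dd y=\sum_{j\ge 0}\int_{A_j}(\cdots)\dd y,\qquad A_j:=\{y:2^{j-1}r_0\le |x-y|<2^j r_0\},
$$
together with the innermost piece $|x-y|<r_0/2$. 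On $A_j$ one has $|x-y|^{-d+2}\simeq (2^j r_0)^{-d+2}$ and $A_j\subset B(p,2^{j+1}r_0)$, so by \eqref{22.02.08.2} applied at the point $p\in\Omega^c$ with radius $2^{j+1}r_0$,
$$
\int_{A_j}\rho(y)^{-d+\beta}\dd y\le \int_{B(p,2^{j+1}r_0)}d(y,\Omega^c)^{-d+\beta}\dd y\le A_\beta\,(2^{j+1}r_0)^{\beta}.
$$
Hence the $j$-th term is $\lesssim (2^j r_0)^{-d+2}(2^j r_0)^{\beta}=2^{j(-d+2+\beta)}r_0^{-d+2+\beta}$, and since $-d+2+\beta<0$ the series $\sum_j 2^{j(-d+2+\beta)}$ converges, giving $\phi(x)\lesssim_{d,\beta,A_\beta} r_0^{-d+2+\beta}$. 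The innermost region $|x-y|<r_0/2$ is handled exactly as in the lower bound and contributes a term of the same order.

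The main obstacle — and really the only subtle point — is controlling the contribution near the singularity of the density, i.e.\ making sure the estimate $\int_{B(p,\varrho)}d(y,\Omega^c)^{-d+\beta}\dd y\lesssim \varrho^{\beta}$ is legitimately available from \eqref{22.02.08.2} with the center taken at $p\in\Omega^c=\overline{\Omega^c}$ rather than at $x$; this is why I take $p$ to be a \emph{nearest boundary point} and enlarge each annulus $A_j$ to a ball centered at $p$. One should also check the borderline annulus $A_0$ (where $|x-y|$ is comparable to $r_0$ but $y$ may be much closer to $\partial\Omega$ than $x$ is): there $|x-y|^{-d+2}\simeq r_0^{-d+2}$ still, $A_0\subset B(p,2r_0)$, and \eqref{22.02.08.2} again bounds the $\rho^{-d+\beta}$-integral by $A_\beta (2r_0)^{\beta}$, so no separate argument is needed. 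Finally, combining the distributional computation with Lemma \ref{240315329} and Remark \ref{240316310} (superharmonicity is a local property) shows $\phi$, after modification on a null set, is a genuine classical superharmonic function on $\bR^d$, and in particular on $\Omega$, with $-\Delta\phi=N(d)\rho^{-d+\beta}$ and $\phi\simeq_{d,\beta,A_\beta}\rho^{-d+2+\beta}$ on $\Omega$, which is the assertion.
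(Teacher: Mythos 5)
Your argument reproduces the paper's proof essentially step by step: the same dyadic decomposition about $x$, the same enlargement of each shell to a ball centered at a nearest boundary point $p$ so that the Aikawa bound \eqref{22.02.08.2} applies, the same geometric-series summation (using $-d+2+\beta<0$), and the same identification of $-\Delta\phi$ through the Newtonian potential and Fubini. The one genuine error is the side remark that $\rho^{-d+\beta}$ is locally integrable ``precisely because $\beta<d$.'' That is false for general domains: if $\Omega$ is a half-space, then $\rho(y)^{-d+\beta}$ behaves like $|y_1|^{-d+\beta}$ near $\partial\Omega$, which is nonintegrable whenever $\beta\le d-1$, let alone $\beta<d-2$. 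The local integrability of $\rho^{-d+\beta}$ (and hence of $\phi$, which is needed to interpret $-\Delta\phi=N(d)\rho^{-d+\beta}$ distributionally and to apply Fubini) comes from the hypothesis \eqref{22.02.08.2} itself, applied to balls centered on $\Omega^c$ — it is precisely the statement that the singular set $\Omega^c$ is thin enough — not from the inequality $\beta<d$. There is also a sign slip (``$-\Delta\phi\le 0$'' should read $-\Delta\phi\ge 0$, equivalently $\Delta\phi\le 0$). With those two small corrections the proof is sound and coincides with the paper's.
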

		Before proving Theorem \ref{21.10.18.1}, we first look at the corollary of this theorem.
		
		\begin{corollary}\label{22.02.24.2}
			The Hardy inequality \eqref{hardy} holds in $\Omega$, where $\mathrm{C}_0(\Omega)$ depends only on $d$, $\beta_0$, $\{A_{\beta}\}_{\beta> \beta_0}$.
		\end{corollary}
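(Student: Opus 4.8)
The statement to prove is Corollary \ref{22.02.24.2}: that $\Omega$ admits the Hardy inequality \eqref{hardy} with $\mathrm{C}_0(\Omega)$ depending only on $d$, $\beta_0$ and the family $\{A_\beta\}_{\beta>\beta_0}$. The natural strategy is to deduce this from Theorem \ref{21.10.18.1}, which has just been established and produces, for each $\beta \in (\beta_0, d-2)$, a superharmonic function $\phi$ on $\bR^d$ with $\phi \simeq \rho^{-d+2+\beta}$ on $\Omega$ (comparison constant depending only on $d,\beta,A_\beta$). So I would fix any single $\beta$ with $\beta_0 < \beta < d-2$ — for instance $\beta := (\beta_0 + d - 2)/2$ — apply Theorem \ref{21.10.18.1} to obtain such a $\phi$, and then run the standard "superharmonic weight $\Rightarrow$ Hardy inequality" argument.

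**Key steps.** First, for $f \in C_c^\infty(\Omega)$ and the positive superharmonic $\phi$ (which we may take to be $C^\infty$, or pass to its mollifications $\phi^{(\epsilon)}$ as in Lemma \ref{21.04.23.3} and Lemma \ref{21.04.23.5} to justify the integrations by parts rigorously), write $\alpha := -d+2+\beta < 0$, so $\phi \simeq \rho^\alpha$ with $\alpha \in (-d+2+\beta_0,\,0)$. The cleanest route is to invoke Lemma \ref{03.30}.(3) — or rather its underlying computation — with exponent $c$ chosen so that $\phi^{c} \rho^{-2} \simeq \rho^{\alpha c - 2}$ matches the Hardy weight $\rho^{-2}$, i.e. $c = 0$; but with $c=0$ the superharmonicity is not used, so instead I recall that the genuine mechanism is: for a positive smooth superharmonic $\phi$ and $f \in C_c^\infty(\Omega)$, an integration by parts gives
\begin{align*}
\int_\Omega |f|^2 \frac{|\nabla\phi|^2}{\phi^2}\,\dd x
&= -\int_\Omega |f|^2\, \nabla\phi\cdot\nabla(\phi^{-1})\,\dd x\\
&= 2\int_\Omega f\,\phi^{-1}(\nabla f\cdot\nabla\phi)\,\dd x + \int_\Omega |f|^2\,\phi^{-1}\Delta\phi\,\dd x\\
&\le 2\Big(\int_\Omega|\nabla f|^2\,\dd x\Big)^{1/2}\Big(\int_\Omega|f|^2\frac{|\nabla\phi|^2}{\phi^2}\,\dd x\Big)^{1/2},
\end{align*}
using $\Delta\phi \le 0$ and Cauchy–Schwarz; hence $\int_\Omega |f|^2 |\nabla\phi|^2\phi^{-2}\,\dd x \le 4\int_\Omega|\nabla f|^2\,\dd x$. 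This is exactly Lemma \ref{03.30}.(1) with $p=2$, $c=0$. Then I use the pointwise lower bound $|\nabla\phi|/\phi \gtrsim \rho^{-1}$ on $\Omega$, which holds because $\phi$ is a regular Harnack function (Lemma \ref{21.05.27.3}, or directly: a positive superharmonic function comparable to $\rho^\alpha$ with $\alpha \neq 0$ has gradient of order $\rho^{\alpha-1}$) — more precisely, $\phi$ being comparable to $\rho^\alpha$ is a Harnack function, so its regularization controls $|\nabla \phi|$, but to get the \emph{lower} bound on $|\nabla\phi|$ one uses that $\phi$ is genuinely superharmonic with $-\Delta\phi = N(d)\rho^{-d+\beta} \gtrsim \rho^{\alpha - 2}$, which via the representation or an interior gradient estimate forces $|\nabla\phi|(x) \gtrsim \rho(x)\cdot\rho(x)^{\alpha-2} = \rho(x)^{\alpha - 1} \simeq \phi(x)/\rho(x)$. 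Combining, $\int_\Omega |f/\rho|^2\,\dd x \lesssim \int_\Omega |f|^2|\nabla\phi|^2\phi^{-2}\,\dd x \lesssim \int_\Omega |\nabla f|^2\,\dd x$ with constants depending only on $d,\beta,A_\beta$, and since $\beta$ was chosen as a fixed function of $\beta_0$ and $d$, the final constant depends only on $d$, $\beta_0$, and $A_\beta$ for that one $\beta > \beta_0$ — hence on $\{A_\beta\}_{\beta>\beta_0}$.

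**Main obstacle.** The routine part is the integration-by-parts identity (already packaged in Lemma \ref{03.30}) and the mollification bookkeeping needed to apply it when $\phi$ is merely superharmonic rather than smooth. The delicate point is establishing the lower bound $|\nabla\phi(x)| \gtrsim \phi(x)/\rho(x)$ uniformly on $\Omega$: the upper Harnack-type bound is standard, but the matching lower bound must come from the quantitative superharmonicity $-\Delta\phi \simeq \rho^{-d+\beta}$ in Theorem \ref{21.10.18.1}, e.g. by a Poisson-kernel / interior estimate on a ball $B(x,\rho(x)/2)$: there $\phi$ solves $-\Delta\phi = g$ with $g \simeq \rho(x)^{\alpha-2}$, so the Newtonian potential of $g$ over that ball already has gradient $\gtrsim \rho(x)^{\alpha-1}$ at the center, while the harmonic remainder is comparable to $\phi(x) \simeq \rho(x)^\alpha$ with controlled gradient; choosing the ball radius appropriately isolates the lower bound. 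I expect this local elliptic estimate to be the one genuinely new computation; everything else is assembly of the cited lemmas. As a cleaner alternative I would simply note that any positive superharmonic $\phi$ with $\phi \simeq \rho^\alpha$, $\alpha<0$, satisfies $\int_\Omega |f|^2\phi^{\alpha'}\rho^{-2}\,\dd x \lesssim \int_\Omega |f|^2 |\nabla\phi|^2 \phi^{\alpha'-2}\,\dd x$ for a suitable $\alpha'$ and invoke Lemma \ref{03.30}.(1) with $p = 2$ and $c = 2 - 2/\alpha \le 1$ chosen so that $\phi^{c-2}|\nabla\phi|^2 \gtrsim \rho^{-2}$ fails to need a separate gradient lower bound — but checking $c \in (-1,1)$ and that the weight powers balance is the arithmetic I would verify before committing to that route.
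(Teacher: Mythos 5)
Your overall strategy (fix one $\beta\in(\beta_0,d-2)$ and use the explicit superharmonic $\phi$ from Theorem \ref{21.10.18.1}) is the right starting point, but the route you take from there has a genuine gap, and the paper goes a different way precisely to avoid it. You apply Lemma \ref{03.30}.(1) with $p=2$, $c=0$ to get $\int_\Omega |f|^2|\nabla\phi|^2\phi^{-2}\,\dd x\le 4\int_\Omega|\nabla f|^2\,\dd x$, and then you need the pointwise lower bound $|\nabla\phi(x)|\gtrsim\phi(x)/\rho(x)\simeq\rho(x)^{\alpha-1}$ uniformly on $\Omega$. This lower bound is \emph{false} in general: $\phi\simeq\rho^\alpha$ together with $-\Delta\phi\simeq\rho^{\alpha-2}$ only controls $|\nabla\phi|$ from above, not from below. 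Concretely, take $\Omega^c=\{p,-p\}\subset\bR^d$ with $d\ge 3$ (so $\dim_\cA(\Omega^c)=0<d-2$); at the midpoint $x_0=0$ one has $\nabla\phi(0)=0$ by symmetry, while $\rho(0)^{\alpha-1}>0$. Your proposed repair via a local elliptic estimate on $B(x_0,\rho(x_0)/2)$ also fails at exactly this spot: if $-\Delta\phi$ is (approximately) constant on that ball, the Newtonian potential of $(-\Delta\phi)1_B$ has gradient \emph{zero} at the center by symmetry, and the harmonic remainder only satisfies the usual interior gradient \emph{upper} bound. So neither piece supplies a lower bound, and the same difficulty defeats your ``cleaner alternative'' with $c=2-2/\alpha$, since $\phi^{c-2}|\nabla\phi|^2\gtrsim\rho^{-2}$ still requires $|\nabla\phi|$ to be bounded below.

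The paper's proof sidesteps the gradient entirely by using the Allegretto--Piepenbrink-type inequality from \cite[Lemma 3.5.1]{BEL}: for smooth superharmonic $s>0$ and $f\in C_c^\infty$,
\begin{align*}
\int \frac{-\Delta s}{s}\,|f|^2\,\dd x\le \int|\nabla f|^2\,\dd x,
\end{align*}
which follows from expanding $\int|\nabla f-(f/s)\nabla s|^2\ge 0$ and integrating by parts; the $|\nabla s|^2/s^2$ terms cancel. Applying this with $s=\phi^{(\epsilon)}$ (mollified, as in Lemma \ref{21.04.23.5}) and using the \emph{explicit} identity $-\Delta\phi=N(d)\rho^{-d+\beta}$ from Theorem \ref{21.10.18.1} gives $(-\Delta\phi)/\phi\simeq\rho^{-d+\beta}/\rho^{-d+2+\beta}=\rho^{-2}$ pointwise, whence the Hardy inequality with the claimed dependence of $\mathrm{C}_0(\Omega)$. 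The essential point you should internalize is that the quantitative superharmonicity $-\Delta\phi\simeq\rho^{-2}\phi$ is strictly more information than $\phi\simeq\rho^\alpha$ being superharmonic, and the BEL inequality exploits exactly that extra information instead of $|\nabla\phi|$, which is why it closes the argument where your route cannot.
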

	
			Actually, this corollary follows from the more general result \cite[Theorem 3]{aikawa1991}, and its proof is based on Muckenhoupt's $A_p$ weight theory.
			If we restrict our attention to Corollary \ref{22.02.24.2}, the result can be proved differently, as follows:
	
		\begin{proof}[Proof of Corollary \ref{22.02.24.2}]
			The following inequality can be found in \cite[Lemma 3.5.1]{BEL}: If $f\in C_c^{\infty}(\bR^d)$ and $s>0$ is a smooth superharmonic function on a neighborhood of $\text{supp}(f)$, then
			\begin{align}\label{220829005500}
				\int_{\bR^d}\frac{-\Delta s}{s}|f|^2 \dd x\leq \int_{\bR^d} |\nabla f|^2\dd x\,.
			\end{align}
			(Its proof is based on integrating $\big|\nabla f -(f/s)\nabla s\big|^2$ and performing integration by parts.)
			Take any $\beta\in(\beta_0,d-2)$, and let $\phi$ be the function in Theorem \ref{21.10.18.1}, so that 
			\begin{align}\label{220913400}
				-\Delta \phi\geq N_1\rho^{-2}\phi>0
			\end{align}
			where $N_1=N(d,\beta,A_{\beta})>0$.
			Fix $f\in C_c^{\infty}(\Omega)$.
			For $0<\epsilon<d\big(\text{supp}(f),\partial\Omega\big)$, let $\phi^{(\epsilon)}$ be the mollification of $\phi$ in \eqref{21.04.23.1}.
			Observe that
			$$
			-\Delta\big(\phi^{(\epsilon)}\big)\geq N_1^{-1}\big(\rho^{-2}\phi\big)^{(\epsilon)}\geq N_1^{-1}(\rho+\epsilon)^{-2}\phi^{(\epsilon)}\quad\text{on}\quad \bR^d\,,
			$$
			where $N_1$ is in \eqref{220913400}.
			By appling the monotone convergence theorem to \eqref{220829005500} with $s=\phi^{(\epsilon)}$(see Lemma \ref{21.04.23.5}.(2)), we obtain \eqref{hardy} with $\mathrm{C}_0(\Omega)=N_1$.
		\end{proof}

		\begin{thm}\label{22.02.19.300}
			For any $p\in(1,\infty)$ and $\theta\in\bR$ satisfying
			$$
			\beta_0<\theta<(d-2-\beta_0)p+\beta_0\,,
			$$
			Statement \ref{22.02.19.1} $(\Omega,p,\theta)$ holds.
			In addition, $N_1$ in \eqref{2205241155} depends only on $d$, $p$, $\gamma$, $\theta$, $\beta_0$, $\{\cA_{\beta}\}_{\beta>\beta_0}$.
		\end{thm}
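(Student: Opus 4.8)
The plan is to reduce Theorem~\ref{22.02.19.300} to Lemma~\ref{220617557} by exhibiting a suitable superharmonic function comparable to a power of the distance function. First I would observe that the hypothesis $\beta_0:=\dim_{\cA}(\Omega^c)<d-2$ means that for every $\beta\in(\beta_0,d-2)$ there is a finite constant $A_\beta$ such that \eqref{22.02.08.2} holds. Fixing such a $\beta$, Theorem~\ref{21.10.18.1} produces a superharmonic function $\phi$ on $\bR^d$ (hence on $\Omega$) satisfying $N^{-1}\rho^{-d+2+\beta}\leq \phi\leq N\rho^{-d+2+\beta}$ with $N=N(d,\beta,A_\beta)$. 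Thus \eqref{231215620} holds with the exponent $\alpha=\alpha_\beta:=-(d-2-\beta)$, which is negative and can be taken to be any value in the open interval $(-(d-2-\beta_0),0)$ by varying $\beta$ over $(\beta_0,d-2)$.

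Next I would verify the Hardy inequality for $\Omega$: this is exactly Corollary~\ref{22.02.24.2}, which gives \eqref{hardy} with $\mathrm{C}_0(\Omega)$ depending only on $d$, $\beta_0$, and $\{A_\beta\}_{\beta>\beta_0}$. Now Lemma~\ref{220617557} applies with $\alpha=\alpha_\beta<0$: Statement~\ref{22.02.19.1}~$(\Omega,p,\theta)$ holds for all $p\in(1,\infty)$ and all $\theta$ with
\[
d-2+\alpha_\beta<\theta<d-2-(p-1)\alpha_\beta,
\]
and $N_1$ depends only on $d,p,\gamma,\theta,\mathrm{C}_0(\Omega),\alpha_\beta,M$, i.e.\ ultimately only on $d,p,\gamma,\theta,\beta,A_\beta$. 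Substituting $\alpha_\beta=-(d-2-\beta)$, this interval becomes
\[
\beta<\theta<(d-2-\beta)(p-1)+(d-2)=(d-2-\beta)p+\beta.
\]

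Finally I would take the union over $\beta\in(\beta_0,d-2)$. A given pair $(p,\theta)$ with $\beta_0<\theta<(d-2-\beta_0)p+\beta_0$ lies in the interval $\big(\beta,\,(d-2-\beta)p+\beta\big)$ for some $\beta$ sufficiently close to $\beta_0$: indeed both endpoints $\beta\mapsto\beta$ and $\beta\mapsto (d-2-\beta)p+\beta=(d-2)p-\beta(p-1)$ are continuous in $\beta$, the left endpoint increases from $\beta_0$ and the right endpoint decreases to $(d-2-\beta_0)p+\beta_0$ as $\beta\searrow\beta_0$, so by choosing $\beta$ close enough to $\beta_0$ we keep $\theta$ strictly between them (using that the original strict inequalities give room). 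For that $\beta$, Statement~\ref{22.02.19.1}~$(\Omega,p,\theta)$ holds with constant depending only on $d,p,\gamma,\theta,\beta_0,\{A_\beta\}_{\beta>\beta_0}$, completing the proof.

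The only genuinely delicate point is the elementary interval bookkeeping in the last paragraph — making sure that the admissible $\theta$-range for a fixed $\beta$, taken in union over $\beta\in(\beta_0,d-2)$, exhausts the claimed range $\beta_0<\theta<(d-2-\beta_0)p+\beta_0$, with the dependence of constants tracked correctly (one must fix a single $\beta$ for each $(p,\theta)$ so that $A_\beta$ enters, rather than letting $\beta\to\beta_0$). Everything else is a direct citation of Theorem~\ref{21.10.18.1}, Corollary~\ref{22.02.24.2}, and Lemma~\ref{220617557}; no new analysis is required, since the construction of the superharmonic weight and the Hardy inequality have already been done.
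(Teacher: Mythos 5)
Your proof is correct and follows the paper's own route: choose $\beta\in(\beta_0,d-2)$ with $\beta<\theta<(d-2-\beta)p+\beta$, invoke Corollary~\ref{22.02.24.2} for the Hardy inequality and Theorem~\ref{21.10.18.1} for a superharmonic $\phi\simeq\rho^{-d+2+\beta}$, then conclude by Lemma~\ref{220617557} with $\alpha=-(d-2-\beta)<0$. One small wording slip in your endpoint bookkeeping: since $\beta\mapsto(d-2-\beta)p+\beta=(d-2)p-\beta(p-1)$ is decreasing, the right endpoint \emph{increases} (not decreases) to $(d-2-\beta_0)p+\beta_0$ as $\beta\searrow\beta_0$; this does not affect the validity of the argument.
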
 
	
		\begin{proof}
			Take $\beta\in(\beta_0,d-2)$ satisfying
			$$
			\beta<\theta<(d-2-\beta)p+\beta\,.
			$$
			By Corollary \ref{22.02.24.2} and Theorem \ref{21.10.18.1}, $\Omega$ admits the Hardy inequality \eqref{hardy}, and there exists a superharmonic function $\phi$ satisfying $\phi\simeq \rho^{-d+2+\beta}$.
			Therefore by Lemma \ref{220617557}, the proof is completed.
		\end{proof}
		
		\begin{remark}
		Theorem \ref{22.02.19.300} deals with the Poisson equation in $\Omega\subset \bR^d$, $d\geq 3$.
		Moreover, this theorem can also be interpreted as establishing solvability of the Poisson equation $\Delta u-\lambda u=f$ in $\bR^d$, particularly when $f$ blows up near a set $E$ with $\dim_\cA(E)<d-2$.
		In other words, if $u\in H_{p,\theta}^2(\Omega)$ and $f\in L_{p,\theta+2p}(\Omega)$ satisfy equation \eqref{240319417}, then
		$$
		\int_{\bR^d} u(\Delta \phi-\lambda \phi)\dd x=\int_{\bR^d} f\phi\dd x\quad\text{for all}\quad \phi\in C_c^{\infty}(\bR^d)\,.
		$$
		We leave the proof to the reader, noting that one may use the test functions $\phi_k:=\phi\sum_{|n|\leq k}\zeta_{0,(n)}\in C_c^{\infty}(\Omega)$, where $\zeta_{0,(n)}$ is the function defined in \eqref{230130543}.
		\end{remark}

		\begin{proof}[Proof of Theorem \ref{21.10.18.1}]
			We first prove \eqref{21.11.10.4}.
			For a fixed $x\in \bR^d$, put
			$$
			I_j=\int_{E_j}|x-y|^{-d+2}\rho(y)^{-d+\beta}dy\quad\text{for}\,\,\,\,j=0,\,1,\,\ldots\,,
			$$
			where $E_0:=B\big(x,2^{-1}\rho(x)\big)$ and $E_j:=B\big(x,2^{j-1}\rho(x)\big)\setminus B\big(x,2^{j-2}\rho(x)\big)$ for $j=1,\,2,\,\ldots$.
			Then $\phi(x)=\sum_{j\in\bN_0}I_j$.
			If $y\in E_0$ then $\frac{1}{2}\rho(x)\leq \rho(y)\leq 2 \rho(x)$, which implies 
			\begin{align}\label{2401243591}
			I_0\simeq_{d,\beta} \rho(x)^{-d+\beta}\int_{B(x,\rho(x)/2)}|x-y|^{-d+2}\dd y\simeq_d \rho(x)^{-d+2+\beta}\,.
			\end{align}
			For $I_j$, $j\geq 1$, take $p_x\in \partial\Omega$ such that $|x-p_x|=\rho(x)$, and observe that
			\begin{alignat}{2}
				I_j\,&\lesssim_d&&\, \big(2^{j}\rho(x)\big)^{-d+2}\int_{B(p_x,2^j\rho(x))}\rho(y)^{-d+\beta}\dd y\leq N\big(2^{j}\rho(x)\big)^{-d+2+\beta}\,,\label{240124359}
			\end{alignat}
			where $N=N(d,\beta,A_{\beta})$.
			\eqref{2401243591} and \eqref{240124359} imply \eqref{21.11.10.4}.
			
			To prove that $-\Delta \phi=N(d)\,\phi$ in the sense of distributions, recall that 
			$$
			-\Delta_x\Big(|x-y|^{-d+2}\Big)=N(d)\,\delta_0(x-y)
			$$
			in the sense of distributions, where $\delta_0(\cdot)$ is the Dirac delta distribution.
			By \eqref{22.02.08.2} and $\phi\simeq \rho^{-d+2+\beta}$, $\phi$ is locally integrable in $\bR^d$.
			Therefore we obtain that for any $\zeta\in C_c^{\infty}(\bR^d)$, by the Fubini theorem,
			\begin{align*}
				\int_{\bR^d}\phi(x)\big(-\Delta\zeta\big)(x)\dd x\,&=\int_{\bR^d}\Big(\int_{\bR^d}|x-y|^{-d+2}(-\Delta\zeta)(x)\dd x\Big)\rho(y)^{-d+\beta}\dd y\\
				&=N(d) \int_{\bR^d}\zeta(y)\rho(y)^{-d+\beta}\dd y\,.
			\end{align*}
		\end{proof}

		
		\mysection{Application II - Various domains with fat exterior}\label{app2.}
		
		This section is devoted to results concerning the exterior cone condition, convex domains, and the exterior Reifenberg condition.
		Each of these conditions implies the fat exterior condition.
		
		Throughout this section, we consider a domain $\Omega\subsetneq \bR^d$, $d\geq 2$.

		\subsection{Exterior cone condition and exterior line segment condition}\label{0071}
		
		\begin{defn}[Exterior cone condition]
			For $\delta\in [0,\frac{\pi}2)$ and $R\in (0,\infty]$, a domain $\Omega\subset\bR^d$ is said to satisfy the \textit{exterior $(\delta,R)$-cone condition} if for every $p\in\partial\Omega$, there exists a unit vector $e_p\in\bR^d$ such that
			\begin{align}\label{21.08.03.4}
				\{x\in B_R(p)\,:\,(x-p)\cdot e_p\geq |x-p|\cos\delta\}\subset \Omega^c\,. 
			\end{align}
		\end{defn}
 		
		Note that LHS of \eqref{21.08.03.4} is the result of translating and rotating the set
		$$
		\{x=(x_1,\ldots,x_d)\in B_R(0)\,:\,x_1\geq|x|\cos\delta\}\,.
		$$
		
		The exterior $(0,R)$-cone condition is often referred to as \textit{exterior $R$-line segment condition}, since if $\delta=0$, then LHS in \eqref{21.08.03.4} equals $\{p+re_p\,:\,r\in[0,R)\}$.
		See Figure~\ref{230212745} for illustrations of the exterior cone condition and the exterior line segment condition.
		\begin{figure}[h]\centering
			\begin{tikzpicture}[>=Latex]
				\begin{scope}[shift={(-4.5,0)}]
					\clip (-1.5,-1.5) rectangle (1.5,1.5);
					\begin{scope}[name prefix = p-]
						\coordinate (A) at (0,0.7);
						\coordinate (B) at (-1.4,0.35);
						\coordinate (C) at (0,-1.4);
						\coordinate (D) at (1.4,0.35);
					\end{scope}
					
					\begin{scope}
						\draw[fill=gray!20]
						(p-A) .. controls +(-0.5,{sqrt(3)*0.5}) and +(0,0.7)..
						(p-B) .. controls +(0,-0.7) and +(-0.5,0.5)..
						(p-C) .. controls +(0.5,0.5) and +(0,-0.7) ..
						(p-D) .. controls +(0,0.7) and +(0.5,{sqrt(3)*0.5}) .. (p-A);
					\end{scope}
					
				\end{scope}
				\node[align=center] at (-4.5,-2.34) {A. Lipschitz boundary\\condition};

				\begin{scope}
					\clip (-1.5,-1.7) rectangle (1.5,1.5);
					\begin{scope}[name prefix = p-]
						\coordinate (A) at (0,0.7);
						\coordinate (B) at (-1.4,0.35);
						\coordinate (C) at (0,-1.4);
						\coordinate (D) at (1.4,0.35);
					\end{scope}
					
					\begin{scope}
						\draw[fill=gray!20]
						(p-A) .. controls +(-0.5,{sqrt(3)*0.5}) and +(0,0.7)..
						(p-B) .. controls +(0,-1) and +(0,1.4)..
						(p-C) .. controls +(0,1.4) and +(0,-1) ..
						(p-D) .. controls +(0,0.7) and +(0.5,{sqrt(3)*0.5}) .. (p-A);
					\end{scope}
					\draw[dashed] (p-C) circle (0.25);
					
				\end{scope}
				\node[align=center] at (0,-2.4) {B. Exterior\\$(\frac{\pi}{3},\infty)$-cone condition};
				\node[align=center] at (0, -3.5) {(doesn't satisfy Lipschitz\\boundary condition)};
				
				\begin{scope}[shift={(4.5,0)}]
					\clip (-1.5,-1.7) rectangle (1.5,1.5);
					\begin{scope}[name prefix = p-]
						\coordinate (A) at (0,0.35);
						\coordinate (B) at (-1.4,0.45);
						\coordinate (C) at (0,-1.4);
						\coordinate (D) at (1.4,0.45);
					\end{scope}
					
					\begin{scope}
						\draw[fill=gray!20]
						(p-A) ..controls +(0,1.1) and +(0,0.9) ..
						(p-B) .. controls +(0,-1.1) and +(0,1.4)..
						(p-C) .. controls +(0,1.4) and +(0,-1.1) ..
						(p-D) .. controls +(0,0.9) and +(0,1.1) .. (p-A);
					\end{scope}
					\draw[dashed] (p-A) circle (0.25);
					\draw[dashed] (p-C) circle (0.25);
				\end{scope}
				\node[align=center] at (4.5,-2.37) {C. Exterior $\infty$-line\\segment condition};
				\node[align=center] at (4.5, -3.5) {(doesn't satisfy $(\delta,R)$-cone \\condition, $\forall\,\delta,R>0$)};
				
				%
				%
				%
				%
				
			\end{tikzpicture}
			\caption{Examples for the exterior cone condition}\label{230212745}
		\end{figure}
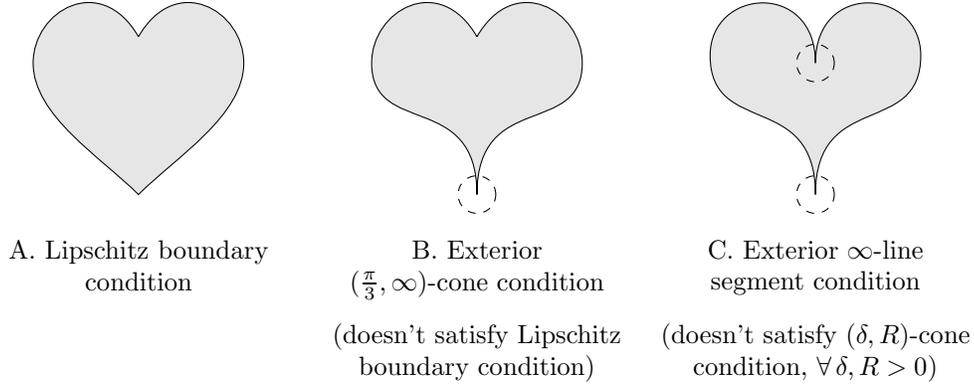

		\begin{example}\label{220816845}
			Suppose that for some constants $K,\,R\in(0,\infty]$, every $p\in\partial\Omega$ admits a function $f_p\in C(\bR^{d-1})$ satisfying
			\begin{align}
				|f_p(y')-f_p(z')|\leq K|y'-z'|\quad\text{for all}\quad y',\,z'\in\bR^{d-1}\,\,\,,\,\,\,\,\,\text{and} \qquad\,\,\, \label{209021056}\\
				\Omega\cap B_R(p)=\big\{y=(y',y_d)\in\bR^{d-1}\times \bR\,:\,y_d>f_p(y')\quad\text{and}\quad |y|<R\big\}\,,\label{209021055}
			\end{align}
			where $(y',y_d)=(y_1,\cdots,y_d)$ in \eqref{209021055} is an orthonormal coordinate system centered at $p$.
			Then $\Omega$ satisfies the exterior $(\delta,R)$-cone condition, where $\delta=\arctan (1/K)\in[0,\pi/2)$.
			
			In addition, if $f\in C(\bR^{d-1})$ satisfies \eqref{209021056} with $f$ in place of $f_p$, then the domain 
			$$
			\big\{(x',x_n)\in\bR^{d-1}\times \bR\,:\,x_n>f(x')\big\}
			$$
			satisfies the exterior $(\delta,\infty)$-cone condition, where $\delta=\arctan(1/K)$.
		\end{example}
		 
		For $\delta\in(0,\pi)$,	let $E_{\delta}:=\{\sigma\in \partial B_1(0)\,:\,\sigma_1>-\cos\delta\}$ (see Figure \ref{2304201145} below).
		By $\Lambda_\delta$, we denote the first Dirichlet eigenvalue of the spherical Laplacian on $E_{\delta}$.
		Alternatively, $\Lambda_{\delta}$ can be expressed as 
		\begin{align}\label{230212803}
			\Lambda_{\delta}=\inf_{f\in F_{\pi-\delta}}\frac{\int_0^{\pi-\delta}|f'(t)|^2(\sin t)^{d-2}  \dd t}{\int_0^{\pi-\delta}|f(t)|^2(\sin t)^{d-2} \dd t}\,,
		\end{align}
		where $F_{\pi-\delta}$ is the set of all non-zero Lipschitz continuous functions $f:[0,\pi-\delta]\rightarrow \bR$ such that $f(\pi-\delta)=0$ (see \cite{FH}).
		We also define
		$$
		\lambda_{\delta}:=-\frac{d-2}{2}+\sqrt{\Big(\frac{d-2}{2}\Big)^2+\Lambda_{\delta}}\,,
		$$
		and when $d=2$, we define $\lambda_0=\frac{1}{2}$.
		
		\begin{figure}[h]
			\begin{tikzpicture}[> = Latex]

				\begin{scope}
					\begin{scope}
						\clip (-0.95,-1.6)--(1.6,-1.6)--(1.6,1.6)--(-0.95,1.6);
						\draw[line width=0.6,fill=gray!40] (0,0) circle (1.5);
					\end{scope}
					\draw[line width=0.6,fill=gray!15] (-0.9*40/41,0) circle [x radius=1.2*9/41, y radius=1.2];
					
					\draw[fill=black] (0,0) circle (0.03);
					\draw[->,line width=0.4] (-2,0)--(2,0);
				\end{scope}

				\begin{scope}
					\draw[dashed] (0,0) -- (-1.2,1.6);
					\draw (-0.4,0.2) node  {$\delta$};
					\draw (-0.3,0) arc(180:180-atan(4/3):0.3);
				\end{scope}

			\end{tikzpicture}
			\caption{$E_{\delta}$}\label{2304201145}
		\end{figure}

		The following quantitative properties of $\Lambda_{\delta}$ and $\lambda_{\delta}$ are given in~\cite{BCG}:
		\begin{prop}\label{230212757}
			Let $\delta\in(0,\pi)$.
			
			\begin{enumerate}
				\item If $d=2$, then $\lambda_{\delta}=\sqrt{\Lambda_{\delta}}=\frac{\pi}{2(\pi-\delta)}>\frac{1}{2}$.
				\item If $d=4$, then $\lambda_{\delta}=-1+\sqrt{1+\Lambda_{\delta}}=\frac{\delta}{\pi-\delta}$.
				\item For $d\geq 3$,
				$$
				\Lambda_{\delta}\geq \left(\int_0^{\pi-\delta}(\sin t)^{-d+2}\Big(\int_0^t(\sin r)^{d-2}\dd r\Big)\,\dd t\right)^{-1}.
				$$
				Moreover, $\Lambda_{\pi/2}= d-1$, $\lim_{\delta\searrow 0}\Lambda_{\delta}=0$, and $\lim_{\delta\nearrow\pi}\Lambda_{\delta}=+\infty$.
			\end{enumerate}
		\end{prop}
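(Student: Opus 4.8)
The plan is to reduce everything to the one-dimensional Rayleigh quotient \eqref{230212803}, which is legitimate since the ground state of $E_\delta$ is invariant under the rotations fixing the $\sigma_1$-axis (a fact recorded via \cite{FH}): writing $t$ for the polar angle measured from $\sigma=e_1$, the set $E_\delta=\{\sigma_1>-\cos\delta\}$ becomes the interval $(0,\pi-\delta)$, the spherical Laplacian on axisymmetric functions becomes $(\sin t)^{-(d-2)}\big((\sin t)^{d-2}\,\cdot\,\big)'$, and the Dirichlet condition sits at $t=\pi-\delta$ while the pole $t=0$ carries only the natural (Neumann) condition coming from symmetry. Set $w(t):=(\sin t)^{d-2}$.

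For the lower bound in (3) I would run a weighted Hardy estimate: given a Lipschitz competitor $f$ with $f(\pi-\delta)=0$, write $f(t)=-\int_t^{\pi-\delta}f'$, apply Cauchy--Schwarz with weight $w$, integrate against $w(t)\,dt$, and swap the order of integration to obtain $\int_0^{\pi-\delta}|f|^2w\le C_\delta\int_0^{\pi-\delta}|f'|^2w$ with $C_\delta=\int_0^{\pi-\delta}(\sin s)^{-d+2}\big(\int_0^s(\sin r)^{d-2}dr\big)ds$; this $C_\delta$ is finite for $d\ge3$ because its integrand is $\asymp s$ near $0$ (the singularity $s^{-(d-2)}$ is killed by $\int_0^s r^{d-2}dr\asymp s^{d-1}$). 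Hence $\Lambda_\delta\ge C_\delta^{-1}$. The limit $\delta\nearrow\pi$ is then immediate: on $(0,\pi-\delta)$ with $\pi-\delta$ small the integrand of $C_\delta$ is $O(s)$ uniformly, so $C_\delta=O\big((\pi-\delta)^2\big)\to0$ and $\Lambda_\delta\ge C_\delta^{-1}\to\infty$. The value $\Lambda_{\pi/2}=d-1$ is handled separately: $E_{\pi/2}$ is the open hemisphere $\{\sigma_1>0\}$, the restriction of $x\mapsto x_1$ is a degree-one spherical harmonic so $-\Delta_{\bS^{d-1}}\sigma_1=(d-1)\sigma_1$, and since $\sigma_1>0$ on $E_{\pi/2}$ and vanishes on the equator it must be the ground state. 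For $\delta\searrow0$ I would instead exhibit a test function in \eqref{230212803}: take $f\equiv1$ away from the far endpoint and let $f$ drop from $1$ to $0$ across the collar $(\pi-\delta,\pi-\delta_0)$ for a fixed small $\delta_0$, using there the explicit $w$-harmonic profile (logarithmic for $d=3$, power-type for $d\ge4$); since $w(t)\asymp(\pi-t)^{d-2}$ near $\pi$ with $d-2\ge1$, the Dirichlet energy $\int|f'|^2w$ of this collar is precisely a shrinking $(d-1)$-dimensional radial capacity and tends to $0$ as $\delta\to0$ (this is where $d\ge3$ is used), while $\int|f|^2w\to\int_0^\pi(\sin t)^{d-2}dt>0$, giving $\Lambda_\delta\to0$.

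For (1) and (2) the key is the Liouville substitution $g:=f\,(\sin t)^{(d-2)/2}$, under which $\int_0^{\pi-\delta}|f|^2w=\int_0^{\pi-\delta}|g|^2$ and, after one integration by parts using $\cot^2 t-\csc^2 t=-1$, $\int_0^{\pi-\delta}|f'|^2w=\int_0^{\pi-\delta}\big(|g'|^2+(\beta^2-\beta)\csc^2 t\,|g|^2-\beta^2|g|^2\big)$ with $\beta=\tfrac{d-2}{2}$; the $\csc^2$-potential disappears exactly when $\beta\in\{0,1\}$, i.e.\ $d\in\{2,4\}$. For $d=2$ ($\beta=0$) there is no boundary term and the pole keeps a Neumann condition, so $\Lambda_\delta$ is the first eigenvalue of $-g''$ on an interval of length $\pi-\delta$ with mixed Neumann--Dirichlet conditions, namely $\big(\pi/(2(\pi-\delta))\big)^2$. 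For $d=4$ ($\beta=1$) the boundary term vanishes (its integrand is $g^2\cot t=O(t)$ at the pole), $g(0)=0$ is forced by boundedness of $f$, and $\Lambda_\delta=\inf\frac{\int(g')^2}{\int g^2}-1=\big(\pi/(\pi-\delta)\big)^2-1$ over Lipschitz $g$ with $g(0)=g(\pi-\delta)=0$, the extremal $g_*=\sin\frac{\pi t}{\pi-\delta}$ corresponding to the admissible $f_*=g_*/\sin t$, which one checks is $C^1$ on $[0,\pi-\delta]$. Substituting into $\lambda_\delta=-\tfrac{d-2}{2}+\sqrt{(\tfrac{d-2}{2})^2+\Lambda_\delta}$ gives $\lambda_\delta=\sqrt{\Lambda_\delta}=\pi/(2(\pi-\delta))$, which exceeds $\tfrac12$ since $\pi-\delta<\pi$, when $d=2$; and $\lambda_\delta=-1+\pi/(\pi-\delta)=\delta/(\pi-\delta)$ when $d=4$.

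The only genuinely soft step is the $\delta\searrow0$ limit, whose real content is the vanishing of point capacity in dimension $\ge3$; everything else is either a direct computation or a one-line positivity/monotonicity remark. The part most prone to slips is the substitution bookkeeping in (1)--(2): tracking the boundary term at the pole (harmless precisely because its coefficient $\beta$ vanishes for $d=2$ and $g^2\cot t=O(t)$ for $d=4$), getting the sign of the constant potential right, and — for $d=2$ — not forgetting that the axisymmetric reduction leaves a Neumann condition at $t=0$, which is exactly what turns $\pi$ into $\pi/2$ in the denominator and makes $\lambda_\delta>\tfrac12$.
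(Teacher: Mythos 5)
The paper does not prove Proposition \ref{230212757}: the statement is attributed to \cite{BCG} and no argument is given. Your proposal therefore supplies a genuinely self-contained derivation, and after checking the computations I believe it is correct. Briefly: the lower bound in (3) is the one-dimensional chain $f(t)=-\int_t^{\pi-\delta}f'(s)\,\dd s$, Cauchy--Schwarz with weight $w(t)=(\sin t)^{d-2}$, and Fubini, which produces exactly $C_\delta=\int_0^{\pi-\delta}(\sin s)^{-d+2}\bigl(\int_0^s(\sin r)^{d-2}\dd r\bigr)\dd s$; this is finite for $d\geq 3$ because the integrand is $O(s)$ at the pole, and the $\delta\nearrow\pi$ limit follows from $C_\delta=O\bigl((\pi-\delta)^2\bigr)$. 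The identification $\Lambda_{\pi/2}=d-1$ via the positive degree-one spherical harmonic $\sigma_1$ is right, and the $\delta\searrow 0$ limit is the vanishing of one-point capacity in dimension $d-1\geq 2$, carried by the $w$-harmonic transition profile; the only slip there is a reversed interval (the collar should read $(\pi-\delta_0,\pi-\delta)$), which does not affect the argument. For (1)--(2), the Liouville substitution $g=f(\sin t)^{(d-2)/2}$ does what you say: the identity for $\int|f'|^2w$ should carry the boundary term $-\beta[g^2\cot t]_0^{\pi-\delta}$ with $\beta=\tfrac{d-2}{2}$, which you correctly observe vanishes in the two cases $\beta\in\{0,1\}$ actually used; the $\csc^2$-potential disappears exactly for those $\beta$; the absence of any constraint at $t=0$ in the admissible class $F_{\pi-\delta}$ of \eqref{230212803} yields the mixed Neumann--Dirichlet eigenvalue for $d=2$; and for $d=4$ the extremal $g_*(t)=\sin\bigl(\pi t/(\pi-\delta)\bigr)$ corresponds to an admissible $f_*=g_*/\sin t\in C^\infty([0,\pi-\delta])$, so both inequalities close. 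Substituting into $\lambda_\delta=-\tfrac{d-2}{2}+\sqrt{(\tfrac{d-2}{2})^2+\Lambda_\delta}$ gives the stated formulas. As a comparison: the paper's citation buys brevity, whereas your argument makes visible the structural reason $d\in\{2,4\}$ admit closed-form eigenvalues, namely the cancellation of the Liouville potential, which the citation-only presentation leaves implicit.
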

		
		Note that when $d=3$, $\Lambda_{\delta}\geq \frac{1}{2}|\log\,\sin\frac{\delta}{2}|^{-1}$.

		\begin{remark}\label{220716637}
			For each $\delta>0$, there exists a function $F\in C\big(\overline{E_{\delta}}\big)\cap C^{\infty}(E_{\delta})$ such that
			$$
			F>0\quad\text{and}\quad   \Delta_\bS F+\Lambda_{\delta}F=0\quad\text{in}\,\,\,\,E_{\delta}\quad;\quad  F=0\quad\text{on}\quad \overline{E_{\delta}}\setminus E_{\delta}
			$$
			(see, \textit{e.g.}, \cite[Section 5]{FH}), where $\Delta_\bS$ is the spherical Laplacian on $\mathbb{S}^{d-1}$.
			Using the representation of the Laplacian on $\bR^d$ in spherical coordinates, 
			$$
			\Delta=D_{rr}+\frac{d-1}{r}D_r+\frac{1}{r^2}\Delta_{\bS}\,,
			$$
			the function
			$v_{\delta}(x):=|x|^{\lambda_{\delta}}F(x/|x|)$
			is a harmonic function on
			$$
			U_{\delta}:=\big\{y\in B_1(0)\,:\,y_1>-|y|\cos\delta \big\}\,,
			$$
			and vanishes on $\partial U_{\delta}\cap B_1(0)$.
		\end{remark}

		With the help of $\lambda_\delta$, we state the main results of this subsection.
		\begin{thm}\label{22.02.18.3}
			Let
			\begin{align*}
				\delta\in[0,\pi/2)\quad\text{if}\,\,\,\, d=2\quad;\quad\delta\in (0,\pi/2)\quad\text{if}\,\,\,\, d\geq 3\,,
			\end{align*}
			and let $\Omega\subset \bR^d$ satisfy the exterior $(\delta,R)$-cone condition, where
			$$
			R\in (0,\infty]\quad\text{if}\,\,\,\,\text{$\Omega$\, is\, bounded}\,\,,\,\,\,\, \text{and}\,\,\quad R=\infty\quad\text{if}\,\,\,\,\text{$\Omega$\, is\, unbounded.}
			$$
			Then $\Omega$ satisfies $\mathbf{LHMD}(\lambda_{\delta})$, where $M_{\lambda_{\delta}}$ in \eqref{21.08.03.1111} depends only on $d$, $\delta$, and also on $\mathrm{diam}(\Omega)/R$ if $\Omega$ is bounded.
		\end{thm}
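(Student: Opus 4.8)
The plan is to reduce the claim, via Lemma~\ref{22.02.18.4}, to a decay estimate for the harmonic measure on small balls, and to prove that estimate by comparing $\Omega$ with the ``model cone'' furnished by the exterior cone condition, from which the sharp exponent $\lambda_\delta$ is extracted by separation of variables. Concretely, if $\Omega$ is unbounded then $R=\infty$ and it suffices to show
\[
w(x,p,r)\leq M\left(\frac{|x-p|}{r}\right)^{\lambda_\delta}\qquad\text{for all }p\in\partial\Omega,\ r>0,\ x\in\Omega\cap B_r(p),
\]
with $M=M(d,\delta)$, which is exactly $\mathbf{LHMD}(\lambda_\delta)$; if $\Omega$ is bounded I would prove the same estimate only in the range $0<r\leq R$ and then apply Lemma~\ref{22.02.18.4} with $r_0:=R$ to obtain $\mathbf{LHMD}(\lambda_\delta)$, at the cost of letting $M$ depend in addition on $\mathrm{diam}(\Omega)/R$. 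In either case the estimate is immediate when $|x-p|\geq r/2$ since $0\leq w(x,p,r)\leq 1$, so from now on fix $p\in\partial\Omega$, $0<r\leq R$, and $x\in\Omega\cap B_r(p)$ with $|x-p|<r/2$.

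Let $e_p$ be the unit vector from \eqref{21.08.03.4}. Because $r\leq R$, the inclusion \eqref{21.08.03.4} gives $\Omega\cap B_r(p)\subset C_p\cap B_r(p)$, where
\[
C_p:=\{\,y\in\bR^d:(y-p)\cdot e_p<|y-p|\cos\delta\,\}
\]
is a blunt, hence Lipschitz, cone with vertex $p$. Let $w_{C_p}(\cdot,p,r)$ be the harmonic measure of $C_p\cap\partial B_r(p)$ in the domain $C_p\cap B_r(p)$; it is harmonic on $C_p\cap B_r(p)\supset\Omega\cap B_r(p)$, takes values in $[0,1]$, equals $1$ on $C_p\cap\partial B_r(p)\supset\Omega\cap\partial B_r(p)$, and vanishes on $\partial C_p\cap B_r(p)$. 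Comparing boundary values on $\partial(\Omega\cap B_r(p))$ and arguing with the maximum principle as in the proof of Lemma~\ref{220621237}.(3), one gets $w(\cdot,p,r)\leq w_{C_p}(\cdot,p,r)$ on $\Omega\cap B_r(p)$. Translating $p$ to $0$, rotating $e_p$ onto $-e_1$, and rescaling $r$ to $1$ (harmonic measure is invariant under these), it remains to prove, with $C:=\{y:y_1>-|y|\cos\delta\}$ so that $C\cap B_1(0)$ is the set $U_\delta$ of Remark~\ref{220716637},
\[
w_C(x,0,1)\leq M\,|x|^{\lambda_\delta}\qquad\text{for }x\in C\cap B_{1/2}(0),\qquad M=M(d,\delta).
\]

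The domain $C\cap B_1(0)$ is a spherical sector, so $w_C(\cdot,0,1)$ separates in polar coordinates. Let $\{\phi_k\}_{k\geq 0}$ be the $L^2(E_\delta)$-orthonormal Dirichlet eigenfunctions of the spherical Laplacian on $E_\delta=\{\sigma\in\partial B_1(0):\sigma_1>-\cos\delta\}$, with eigenvalues $0<\Lambda_\delta=\Lambda_\delta^{(0)}\leq\Lambda_\delta^{(1)}\leq\cdots$, and put $\lambda_\delta^{(k)}:=-\tfrac{d-2}{2}+\sqrt{(\tfrac{d-2}{2})^2+\Lambda_\delta^{(k)}}$, so that $\lambda_\delta^{(0)}=\lambda_\delta$ and $\lambda_\delta^{(k)}\to\infty$. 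By the polar form of $\Delta$ recalled in Remark~\ref{220716637}, each $x\mapsto|x|^{\lambda_\delta^{(k)}}\phi_k(x/|x|)$ is harmonic on $C$ and vanishes on $\partial C$, and matching the boundary data on $C\cap\partial B_1(0)$ yields
\[
w_C(x,0,1)=\sum_{k\geq 0}c_k\,|x|^{\lambda_\delta^{(k)}}\phi_k(x/|x|),\qquad c_k:=\int_{E_\delta}\phi_k\,.
\]
For $|x|\leq\tfrac12$ and $k\geq 1$ one has $|x|^{\lambda_\delta^{(k)}}\leq|x|^{\lambda_\delta}\,2^{-(\lambda_\delta^{(k)}-\lambda_\delta)}$, hence $w_C(x,0,1)\leq|x|^{\lambda_\delta}\sum_{k\geq 0}|c_k|\,\|\phi_k\|_{L^\infty(E_\delta)}\,2^{-(\lambda_\delta^{(k)}-\lambda_\delta)}=:M\,|x|^{\lambda_\delta}$, and $M<\infty$ depends only on $d$ and $\delta$ because $|c_k|\leq|E_\delta|^{1/2}$, $\|\phi_k\|_{L^\infty(E_\delta)}$ grows at most polynomially in $k$, and $\lambda_\delta^{(k)}$ grows like a positive power of $k$ by Weyl's law on $E_\delta$. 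Collecting the three steps proves the theorem.

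The substantive point is this last step. A one-shot barrier or a dyadic iteration of Lemma~\ref{220621237}.(3) only produces some exponent $\log_2(1/q)$ with $q=\sup_{C\cap\partial B_{1/2}(0)}w_C(\cdot,0,1)$, and already for a half-space (where $\lambda_{\pi/2}=1$) a short computation shows $q>2^{-\lambda_\delta}$, so these arguments are genuinely lossy. The reason is that, up to a constant multiple, the only positive harmonic function on $C$ homogeneous of degree $\lambda_\delta$ is $v_\delta(x)=|x|^{\lambda_\delta}F(x/|x|)$ from Remark~\ref{220716637}, which vanishes on $\partial C$ and therefore cannot dominate the value $1$ carried on the spherical cap; recovering exactly $\lambda_\delta$ forces either the full eigenfunction expansion above or, equivalently, the boundary Harnack principle for the Lipschitz domain $C\cap B_1(0)$ comparing $w_C(\cdot,0,1)$ with $v_\delta$, after which one must still check that all constants reduce to dependence on $d$ and $\delta$ alone (with $\mathrm{diam}(\Omega)/R$ entering only through Lemma~\ref{22.02.18.4} in the bounded case).
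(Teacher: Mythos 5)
Your reduction to the model cone — invoking Lemma~\ref{22.02.18.4} to restrict to $r\le R$, comparing $\Omega\cap B_r(p)$ with the blunt cone furnished by the exterior cone condition via the maximum principle, and then normalizing by translation/rotation/dilation to $U_\delta=C\cap B_1(0)$ — is exactly the paper's. Where you diverge is the key estimate on the model domain: you expand $w_C(\cdot,0,1)$ into the full Dirichlet eigenbasis on $E_\delta$ and bound the tail using $|x|\le\tfrac12$, while the paper instead applies Wu's boundary Harnack principle (Lemma~\ref{21.10.18.4}) on the Lipschitz domain $U_\delta$ to compare $w(\cdot,U_\delta,E_\delta)$ with the single barrier $v_\delta(x)=|x|^{\lambda_\delta}F_0(x/|x|)$, obtaining the sharp exponent from the first eigenmode alone. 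You explicitly acknowledge this BHP route as equivalent, and indeed both recover exactly $\lambda_\delta$; the BHP argument is shorter and avoids the analytical overhead of your expansion (you assert, without proof, the identification of the Perron solution with the eigenfunction series, polynomial growth of $\|\phi_k\|_{L^\infty(E_\delta)}$ via Sogge-type bounds, and Weyl-law growth of $\lambda_\delta^{(k)}$ — all true but non-trivial). Your expansion is arguably more self-contained (it bypasses the boundary Harnack black box) and, usefully, applies without any Lipschitz hypothesis; in particular it would also cover the $\delta=0$, $d=2$ case that your write-up does not explicitly address, where $U_0$ is a slit disc and the paper's BHP argument breaks down, forcing the paper to introduce the separate conformal map $z\mapsto z^2$ onto a half-disc. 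You should state that case and observe that for $d=2$, $\delta=0$ your expansion is elementary, with explicit eigenfunctions $\sin\big(\tfrac{k+1}{2}(\theta+\pi)\big)$ and $\lambda_\delta^{(k)}=\tfrac{k+1}{2}$, so the tail converges geometrically.
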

		
		Before proving Theorem \ref{22.02.18.3}, we present a corollary that directly follows from Theorems \ref{22.02.18.3} and \ref{22.02.19.3}.
		
		\begin{thm}\label{221026914}
			Let $p\in(1,\infty)$.
			Under the same assumption as in Theorem \ref{22.02.18.3}, if $\theta\in\bR$ satisfies
			$$
			-2-(p-1)\lambda_{\delta}<\theta-d<-2+\lambda_{\delta}\,,
			$$
			then Statement \ref{22.02.19.1} $(\Omega,p,\theta)$ holds.
			In addition, $N_1$ in \eqref{2205241155} depends only on $d$, $p$, $\theta$, $\gamma$, $\delta$, and also on $\mathrm{diam}(\Omega)/R$ if $\Omega$ is bounded.
		\end{thm}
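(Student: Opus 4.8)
The plan is to obtain Theorem~\ref{221026914} as an immediate consequence of the two results it is advertised to follow from, namely Theorem~\ref{22.02.18.3} and Theorem~\ref{22.02.19.3}, so the proof should be essentially a matter of chaining them and bookkeeping the constants. First I would invoke Theorem~\ref{22.02.18.3}: under the hypotheses on $d$, $\delta$, and $R$ (with $R=\infty$ forced in the unbounded case), the domain $\Omega$ satisfies $\mathbf{LHMD}(\lambda_{\delta})$, and the constant $M_{\lambda_{\delta}}$ in \eqref{21.08.03.1111} depends only on $d$, $\delta$, and additionally on $\mathrm{diam}(\Omega)/R$ when $\Omega$ is bounded. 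Note that $\lambda_{\delta}>0$ throughout the admissible range: for $d\ge 3$ we have $\delta\in(0,\pi/2)$ so $\Lambda_{\delta}>0$ and hence $\lambda_{\delta}>0$ by Proposition~\ref{230212757}, while for $d=2$ we have $\lambda_{\delta}=\tfrac{\pi}{2(\pi-\delta)}\ge \tfrac12$ (including $\delta=0$ by the convention $\lambda_0=1/2$). So the hypothesis $\alpha>0$ of Theorem~\ref{22.02.19.3} is met with $\alpha=\lambda_{\delta}$.

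Next I would simply observe that the condition $-2-(p-1)\lambda_{\delta}<\theta-d<-2+\lambda_{\delta}$ imposed in the statement is, after adding $d$, precisely
$$
d-2-(p-1)\lambda_{\delta}<\theta<d-2+\lambda_{\delta}\,,
$$
which is condition \eqref{240127120} of Theorem~\ref{22.02.19.3} with $\alpha=\lambda_{\delta}$. Therefore Theorem~\ref{22.02.19.3} applies and yields that Statement~\ref{22.02.19.1} $(\Omega,p,\theta)$ holds, i.e. for every $\lambda\ge 0$ and $\gamma\in\bR$ the equation $\Delta u-\lambda u=f$ with $f\in H^{\gamma}_{p,\theta+2p}(\Omega)$ has a unique solution $u\in H^{\gamma+2}_{p,\theta}(\Omega)$ satisfying estimate \eqref{2205241155}.

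Finally I would track the dependence of the constant $N_1$. By Theorem~\ref{22.02.19.3}, $N_1$ depends only on $d$, $p$, $\gamma$, $\theta$, $\alpha=\lambda_{\delta}$, and $M_{\lambda_{\delta}}$; and $\lambda_{\delta}$ is itself a function of $d$ and $\delta$ alone, while $M_{\lambda_{\delta}}$ depends only on $d$, $\delta$, and $\mathrm{diam}(\Omega)/R$ (the latter only in the bounded case), by Theorem~\ref{22.02.18.3}. Composing these dependencies, $N_1$ depends only on $d$, $p$, $\theta$, $\gamma$, $\delta$, and additionally on $\mathrm{diam}(\Omega)/R$ when $\Omega$ is bounded, as claimed. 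There is no substantive obstacle here; the entire content of the theorem has been pushed into Theorems~\ref{22.02.18.3} and \ref{22.02.19.3}, and the only care needed is to make sure the translation between the two parametrizations of the weight-exponent interval (writing $\theta-d$ versus $\theta$, with target weight $\theta$ in $H^{\gamma+2}_{p,\theta}$) is consistent, which it is.
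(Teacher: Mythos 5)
Your proposal is correct and follows exactly the paper's own route: the paper states Theorem~\ref{221026914} as a direct consequence of Theorems~\ref{22.02.18.3} and~\ref{22.02.19.3}, applying the latter with $\alpha=\lambda_\delta$. Your translation of the weight interval and your tracking of the constant dependencies both match what the paper intends.
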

		
		To prove Theorem \ref{22.02.18.3}, we use the boundary Harnack principle for Lipschitz domains.

		\begin{lemma}[see Theorem 1 of \cite{Wu}]\label{21.10.18.4}
			Let $D$ be a bounded Lipschitz domain, $A$ be a relatively open subset of $\partial D$, and $U$ be a subdomain of $D$ with $\partial U\cap \partial D\subset A$.
			Then there exists $N=N(D,A,U)>0$ such that if $u, v$ are positive harmonic functions in $D$, and vanish continuously on $A$, then
			$$
			\frac{u(x)}{v(x)}\leq N\frac{u(x_0)}{v(x_0)}\quad\text{for any}\quad x_0,\,x\in U\,.
			$$
		\end{lemma}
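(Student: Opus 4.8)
The statement is the boundary Harnack principle (BHP) for Lipschitz domains; here ``vanish on $A$'' is read as: $u$ and $v$ extend continuously to $D\cup A$ with $u=v=0$ on $A$. The plan is to follow the standard route through the non-tangential accessibility (NTA) structure of Lipschitz domains (Dahlberg, Jerison--Kenig, Caffarelli--Fabes--Mortola--Salsa, Wu). Since $u,v>0$ in $D$, interior Harnack inequalities along chains of balls inside the connected open set $U$ bound $u/v$ on any portion of $U$ lying at positive distance from $\partial D$, by $N\cdot u(x_0)/v(x_0)$ with $N$ depending on the inner Harnack diameter of $U$ inside $D$; so the only real issue is the behaviour of $u/v$ as $x\to\partial U\cap\partial D$. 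Because $\partial U\cap\partial D\subset A$ and $A$ is relatively open in $\partial D$, each such approach point $Q$ has a scale $r_Q>0$ with $\partial D\cap B(Q,2r_Q)\subset A$, so near $Q$ both $u$ and $v$ are positive harmonic in $D\cap B(Q,2r)$ and vanish on $\Delta(Q,2r):=\partial D\cap B(Q,2r)$ for $r\le r_Q$. Thus everything reduces to a \emph{local comparison}: there are $r_0>0$ and $N_0$, depending only on the Lipschitz character and diameter of $D$, such that for $Q\in\partial D$, $0<r\le r_0$, and $u,v>0$ harmonic in $D\cap B(Q,2r)$ vanishing continuously on $\Delta(Q,2r)$,
\[
N_0^{-1}\,\frac{u(A_r(Q))}{v(A_r(Q))}\ \le\ \frac{u(x)}{v(x)}\ \le\ N_0\,\frac{u(A_r(Q))}{v(A_r(Q))}\qquad (x\in D\cap B(Q,r/2)),
\]
where $A_r(Q)$ is an interior corkscrew point of scale $r$ at $Q$. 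Patching these over the finitely many relevant scales and combining with Harnack chains through $U$, while tracking the dependence on $d(\overline U\setminus A,\partial D\setminus A)$ and the inner geometry of $U$, gives the global statement with $N=N(D,A,U)$.

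\textbf{The two analytic ingredients.} Recall that a Lipschitz domain is NTA: it satisfies the interior and exterior corkscrew conditions and the Harnack chain condition, with constants controlled by its Lipschitz character; fix the points $A_r(Q)$ accordingly. First, \emph{Dahlberg's oscillation lemma}: if $w\ge0$ is harmonic in $D\cap B(Q,2s)$ and vanishes on $\Delta(Q,2s)$, then $\sup_{D\cap B(Q,s)}w\le(1-\lambda)\sup_{D\cap B(Q,2s)}w$ for some $\lambda\in(0,1)$ depending only on the NTA constants, hence $\sup_{D\cap B(Q,s)}w\le C(s/r)^{\alpha}\sup_{D\cap B(Q,r)}w$ for $0<s\le r\le r_0$. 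I would prove this by extending $w$ by $0$ to $B(Q,2s)$ (the extension is subharmonic since $w\ge0$), observing that it vanishes on the exterior corkscrew ball inside $B(Q,2s)\setminus D$, and applying the maximum principle: the harmonic measure of that ``hole'' is bounded below on $B(Q,s)$, so the subharmonic extension is pushed a definite fraction below its maximum there. Second, \emph{Carleson's estimate}: with the same hypotheses, $\sup_{D\cap B(Q,r)}w\le C\,w(A_r(Q))$. Normalizing $w(A_r(Q))=1$, the oscillation lemma shows $w$ (extended by $0$) is bounded and attains its supremum over $\overline{D\cap B(Q,3r/2)}$ at an interior point $x^\ast$; applying the oscillation lemma at the nearest boundary point of $x^\ast$ forces $d(x^\ast,\partial D)\ge cr$, and then an $O(1)$-length interior Harnack chain from $x^\ast$ to $A_r(Q)$ bounds $w(x^\ast)$ by a constant.

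\textbf{The comparison.} With the ``top'' corkscrew $y_0:=A_r(Q)$ (so $d(y_0,\partial D)\simeq r$), let $g$ be the Green function of $D\cap B(Q,2r)$ with pole $y_0$: then $g>0$ is harmonic in $D\cap B(Q,2r)\setminus\{y_0\}$, vanishes on $\Delta(Q,2r)$, and $g\simeq r^{2-d}$ on $\partial B(y_0,r/100)$. For $u$ normalized by $u(y_0)=1$, Carleson's estimate makes $u$ bounded on $D\cap B(Q,3r/2)$, so on $\partial B(y_0,r/100)$ one has $u\le C\le C'r^{d-2}g$; both sides vanish on $\Delta(Q,2r)$; on $D\cap\partial B(Q,3r/2)$ one multiplies $u$ by a fixed cutoff and absorbs the commutator error (the only notationally heavy step, handled by working in a slightly smaller ball). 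The maximum principle then gives $u\le C'r^{d-2}g$ on $D\cap B(Q,r)\setminus B(y_0,r/100)$, and interior Harnack covers the small ball, so $u\le C'r^{d-2}g$ on $D\cap B(Q,r)$; running the maximum principle from below (using the Harnack chain out of $y_0$) gives the reverse inequality, so $u\simeq r^{d-2}g$ on $D\cap B(Q,r)$ with constants depending only on the NTA constants. Doing the same for $v$ (normalized $v(y_0)=1$) and dividing yields $u/v\simeq 1$ on $D\cap B(Q,r)$; de-normalizing gives the displayed local comparison, and iterating it over dyadic scales in fact shows $u/v$ extends Hölder-continuously to $\partial D\cap B(Q,r_0)$ — more than we need.

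\textbf{Main obstacle.} The genuinely hard part is Carleson's estimate, and under it Dahlberg's oscillation lemma: these are the only points where the Lipschitz (NTA) geometry is really used, through the presence of a definite amount of exterior mass at every boundary point and scale. Once they are available, the Green-function comparison and the passage from the local statement to the global one on $U$ are bookkeeping with Harnack chains and cutoffs; the dependence of the final constant on $(D,A,U)$ comes precisely from the Lipschitz character and diameter of $D$, the relative position of $A$ in $\partial D$, and the inner Harnack diameter and boundary separation of $U$.
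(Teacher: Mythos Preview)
The paper does not prove this lemma at all: it is stated with the attribution ``see Theorem 1 of \cite{Wu}'' and then immediately used as a black box in the proof of Theorem~\ref{22.02.18.3}. So there is no ``paper's own proof'' to compare against.

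Your proposal is the standard and correct route to the boundary Harnack principle on Lipschitz (more generally NTA) domains: establish Dahlberg's boundary oscillation/decay lemma via the exterior corkscrew, upgrade it to Carleson's estimate using Harnack chains, then compare both $u$ and $v$ to the Green function with pole at the corkscrew point $A_r(Q)$ to get the local two-sided bound on $u/v$, and finally globalize over $U$ by a covering argument and interior Harnack chains. This is exactly the strategy in the Wu/Jerison--Kenig/Caffarelli--Fabes--Mortola--Salsa circle of papers that the citation points to, so your sketch is aligned with the intended reference rather than offering a genuinely different argument. One small note: the statement in the paper has a typo (``vanish on $E$'' should read ``vanish on $A$''), and you have interpreted it correctly. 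Also, in your Green-function comparison step the phrase ``multiplies $u$ by a fixed cutoff and absorbs the commutator error'' is a bit hand-wavy; the clean way is to work directly with the maximum principle on $D\cap B(Q,3r/2)\setminus B(y_0,r/100)$, using Carleson's estimate to bound $u$ on $D\cap\partial B(Q,3r/2)$ and the lower bound on $g$ coming from capacity/harmonic-measure estimates there --- no cutoff is needed.
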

		
		\begin{proof}[Proof of Theorem \ref{22.02.18.3}]
			By Lemma \ref{22.02.18.4}, it is sufficient to prove that there exists a constant $M>0$ such that 
			\begin{align*}
				w(x,p,r)\leq M\left(\frac{|x-p|}{r}\right)^{\lambda_{\delta}}\quad\text{for all}\,\,\,x\in\Omega\cap B(p,r)
			\end{align*}
			whenever $p\in\partial\Omega$ and $r\in(0,R)$.
			For any $p\in\partial\Omega$, there exists a unit vector $e_p\in\bR^d$ such that
			$$
			C_{p}:=\{y\in B_R(p)\,:\,(y-p)\cdot e_p\geq |y-p|\cos\delta\}\subset \Omega^c\,.
			$$
			Since
			$$
			\Omega\cap B_r(p)\subset B_r(p)\setminus C_{p}\quad\text{and}\quad \Omega\cap \partial B_r(p)\subset \partial B_r(p)\setminus C_{p}\,,
			$$
			we have
			\begin{align}\label{2301111002}
				w(x,p,r)\leq w\big(x,\,B_r(p)\setminus C_{p}\,,\,\partial B_r(p)\setminus C_{p}\,\big),
			\end{align}
			by directly applying the definition of $w(\cdot,p,r)$ (see \eqref{2301051056}).
			Consider the rotation $T$ satisfying $T(e_p)=(-1,0,\ldots,0)$, and set $T_0(x)=r^{-1}T(x-p)$. 
			Then
			\begin{align}\label{2301111003}
				w\big(x,\,B_r(p)\setminus C_{p}\,,\,\partial B_r(p)\setminus C_{p}\,\big)=w\big(T_0(x),U_{\delta},E_{\delta}\big),
			\end{align}
			where
			$$
			U_{\delta}=\{y\in B_1(0)\,:\,y_1> -|y|\cos\delta\}\,\,\,\,\text{and}\,\,\,\, E_{\delta}=\{y\in \partial B_1(0)\,:\,y_1>- |y|\cos\delta\}\,.
			$$
			By \eqref{2301111002} and \eqref{2301111003}, it is sufficient to show that there exists a constant $M>0$ depending only on $d$ and $\delta$ such that 
			\begin{align}\label{2304131242}
				w(x,U_{\delta},E_{\delta})\leq M|x|^{\lambda_{\delta}}\quad\text{for all}\quad x\in U_{\delta}\,,
			\end{align}
			
			\textbf{Case 1:} $\delta>0$\textbf{.}
			Put $v(x)=|x|^{\lambda_{\delta}}F_0(x/|x|)$
			where $F_0$ is the first Dirichlet eigenfunction of spherical Laplacian on $E_{\delta}\subset \partial B_1(0)$, with $\sup_{E_{\delta}} F_0=1$ (see Remark \ref{220716637}).
			Note that $U_{\delta}$ is a bounded Lipschitz domain, and $w(\,\cdot\,,U_{\delta},E_{\delta})$ and $v$ are positive harmonic functions in $U_{\delta}$, and vanish on $\partial U_{\delta}\cap B_{1}$.
			Applying Lemma \ref{21.10.18.4} with $D=U_{\delta}$, $A=(\partial U_{\delta})\cap B_{1}(0)$, and $U=U_{\delta}\cap B_{1/2}(0)$, we obtain that there exists a constant $N_0=N_0(d,\delta)>0$ such that 
			$$
			w(x,U_{\delta},E_{\delta})\leq N_0 v(x)\leq N_0|x|^{\lambda_{\delta}}\quad\text{for}\quad x\in U_{\delta}\cap B_{1/2}(0)\,.
			$$
			Therefore \eqref{2304131242} is obtained, where $M_0=\max \big(N_0, 2^{\lambda_0}\big)$.
			
			\textbf{Case 2:} $\delta=0$ and $d=2$\textbf{.} 
			We identify $\bR^2$ with $\bC$.
			Note 
			$$
			U_0=\{r\ee^{i\theta}\,:\,r\in(0,1),\,\theta\in(-\pi,\pi)\}\,\,,\,\,\,\, E_0=\{\ee^{i\theta}\,:\,\theta\in(-\pi,\pi)\}\,.
			$$
			Observe that a function $s$ is a classical superharmonic function on $U_0$ if and only if $s(z^2)$ is a classical superharmonic function on $B_{1}(0)\cap \bR_+^2$.
			By the definition of PWB solutions (see \eqref{2301051056}), we have
			$$
			w(z^2,U_0,E_0)=w\big(z,B_1(0)\cap \bR_+^2,\partial B_1(0)\cap \bR_+^2\big)\,.
			$$
			Since the map $z=(z_1,z_2)\mapsto z_1$ is harmonic on $B_1(0)\cap \bR^2_+$, by Lemma \ref{21.10.18.4} with $D=B_1(0)\cap \bR^2_+$, we obtain that
			\begin{align}\label{220906722}
				w\big(z,B_1(0)\cap \bR_+^2,\big(\partial B_1(0)\big)\cap \bR_+^2\big)\leq N |z|\quad\text{for}\,\,\,z\in B_{1/2}(0)\cap \bR_+^2\,,
			\end{align}
			where $N$ is an absolute constant.
			This completes the proof.
		\end{proof}

		\subsection{Convex domains}\label{convex}
		Recall that a set $E\subset \bR^d$ is said to be \textit{convex} if $(1-t)x+ty\in E$ for any $x,\,y\in E$ and $t\in [0,1]$.
		
		\begin{lemma}\label{2209071233}
			For an open set $\Omega\subset \bR^d$, $\Omega$ is convex if and only if for any $p\in\partial\Omega$, there exists a unit vector $e_p\in \bR^d$ such that
			\begin{align}\label{21.08.17.1}
				\Omega\subset \{x\,:\,(x-p)\cdot e_p<0\}=:U_p\,.
			\end{align}
		\end{lemma}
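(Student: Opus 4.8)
The plan is to recognize Lemma \ref{2209071233} as the supporting hyperplane theorem for open convex sets together with its converse, and to give a short self-contained argument for each implication, citing only the elementary ``line-segment principle'' of convexity.

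For the direction ``$\Omega$ convex $\Rightarrow$ \eqref{21.08.17.1}'', fix $p\in\partial\Omega$, so $p\in\overline\Omega\setminus\Omega$. First I would choose any $x_0\in\Omega$ and put $q_n:=p+\tfrac1n(p-x_0)$. I claim $q_n\notin\overline\Omega$ for all $n$: otherwise, writing $p=\tfrac{n}{n+1}q_n+\tfrac1{n+1}x_0$ and applying the line-segment principle for convex sets (a point on the half-open segment from an interior point to a closure point lies in the interior) with the interior point $x_0\in\Omega$, we would get $p\in\Omega$, a contradiction. Next, let $\pi_n$ be the unique nearest point of the closed convex set $\overline\Omega$ to $q_n$; the variational characterization of the projection onto a convex set gives $(x-\pi_n)\cdot(q_n-\pi_n)\le0$ for every $x\in\overline\Omega$. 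Since $p\in\overline\Omega$ we have $|q_n-\pi_n|=d(q_n,\overline\Omega)\le|q_n-p|=\tfrac1n|p-x_0|\to0$, hence $\pi_n\to p$; setting $v_n:=(q_n-\pi_n)/|q_n-\pi_n|$ (well defined as $q_n\notin\overline\Omega$) and passing to a subsequence with $v_n\to e_p$ for some unit vector $e_p$, letting $n\to\infty$ in $(x-\pi_n)\cdot v_n\le0$ yields $(x-p)\cdot e_p\le0$ for all $x\in\overline\Omega$. Finally, if $(x-p)\cdot e_p=0$ for some $x\in\Omega$, then $x+se_p\in\Omega$ for small $s>0$ while $(x+se_p-p)\cdot e_p=s>0$, a contradiction; hence $(x-p)\cdot e_p<0$ for all $x\in\Omega$, i.e. $\Omega\subset U_p$.

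For the converse, assume that for every $p\in\partial\Omega$ there is a unit vector $e_p$ with $\Omega\subset U_p$, and let $x,y\in\Omega$ with (WLOG) $x\neq y$. I would set $z_t:=(1-t)x+ty$ and $S:=\{t\in[0,1]:z_t\in\Omega\}$; since $t\mapsto z_t$ is continuous and $\Omega$ is open, $S$ is open in $[0,1]$ and $0,1\in S$. If $S\neq[0,1]$, let $a$ be the right endpoint of the connected component $[0,a)$ of $S$ containing $0$; then $a\in(0,1)$ and $z_a\notin\Omega$, while $z_a=\lim_{t\uparrow a}z_t\in\overline\Omega$, so $z_a\in\overline\Omega\setminus\Omega=\partial\Omega$. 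Applying the hypothesis at $p=z_a$ together with $x,y\in\Omega\subset U_{z_a}$ gives $(x-z_a)\cdot e_{z_a}<0$ and $(y-z_a)\cdot e_{z_a}<0$; but $x-z_a=-a(y-x)$ and $y-z_a=(1-a)(y-x)$ with $a,1-a>0$, so these say $(y-x)\cdot e_{z_a}>0$ and $(y-x)\cdot e_{z_a}<0$ at once, which is impossible. Hence $S=[0,1]$, so $z_t\in\Omega$ for all $t\in[0,1]$ and $\Omega$ is convex.

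The only real content is in the first implication, and the step I expect to need the most care is establishing that there are points of $\bR^d\setminus\overline\Omega$ arbitrarily close to a given $p\in\partial\Omega$ together with the passage to a subsequential limit of the projection directions $v_n$; everything there rests on the line-segment principle (equivalently, $\mathrm{int}(\overline\Omega)=\Omega$ for open convex $\Omega$), which can be proved in one line or quoted from a standard convex-analysis reference. The converse is then a soft connectedness argument once the supporting hyperplanes are in hand.
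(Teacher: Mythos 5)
Your proof is correct. The main difference from the paper is in the forward implication: the paper simply invokes the hyperplane separation theorem (citing Rudin, Theorem~3.4(a)) to separate the open convex set $\Omega$ from the singleton $\{p\}$, whereas you give a self-contained proof of the supporting hyperplane theorem via nearest-point projections $\pi_n$ onto $\overline\Omega$ from exterior points $q_n\to p$, together with a subsequential limit of the normalized directions $v_n$ and the final observation that openness of $\Omega$ upgrades the inequality $(x-p)\cdot e_p\le 0$ to a strict one. Your argument is more elementary and quotes no external theorem beyond the line-segment principle and the projection inequality; the paper's is shorter at the cost of an external citation. For the converse, both arguments are really the same connectedness idea: the paper forms $E:=\bigcap_{p\in\partial\Omega}U_p$, notes $\Omega\subset E$, $E\cap\partial\Omega=\emptyset$, and $E$ convex, then argues $E=\Omega$ because a segment from a point of $\Omega$ to a point of $E\setminus\Omega$ would have to hit $\partial\Omega$ inside $E$; you instead run the connectedness argument directly on the parameter interval $[0,1]$ of a fixed segment $[x,y]$ and derive the contradiction from two opposite sign conditions at the first exit point. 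Both are correct; yours is slightly more verbose but entirely explicit, and in particular makes no appeal to the unstated step that $E$ is connected.
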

	\begin{proof}
			Let $\Omega$ be a convex domain, and fix $p\in \partial\Omega$.
			Since the set $\{p\}$ is convex and disjoint from $\Omega$, the hyperplane separation theorem (see, \textit{e.g.}, \cite[Theorem 3.4.(a)]{rudin}) implies that there exists a unit vector $e_p\in\bR^d$ such that \eqref{21.08.17.1} holds.
			Conversely, suppose that for any $p\in\partial\Omega$, there exists a unit vector $e_p$ satisfying \eqref{21.08.17.1}.
			Then $E:=\bigcap_{p\in\partial\Omega}U_p$ is convex, $\Omega\subset E$, and $E\cap \partial\Omega=\emptyset$. 
			This implies $E=\Omega$; otherwise, $E\cap\partial\Omega\ne\emptyset$, which is a contradiction.
			This completes the proof.
	\end{proof}

		\begin{thm}\label{22.02.18.3111}
			Let $\Omega\subset \bR^d$ be a convex domain.
			Then $\Omega$ satisfies $\mathbf{LHMD}(1)$, where $M_1$ in \eqref{21.08.03.1111} depends only on $d$.
		\end{thm}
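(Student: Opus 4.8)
The plan is to reduce, as in the proof of Theorem \ref{22.02.18.3}, to an estimate on the harmonic measure $w(x,U_p,E_p)$ on the model half-space configuration, but this time exploiting that for a convex domain the exterior at every boundary point contains a whole half-space. By Lemma \ref{22.02.18.4} it suffices to bound $w(x,p,r)$ for $p\in\partial\Omega$ and $r>0$ by $M(|x-p|/r)$; since $\Omega$ is convex we may take $R=\infty$ in that lemma, so the whole range $r>0$ is allowed and no $\operatorname{diam}(\Omega)/R$ dependence appears. First I would invoke Lemma \ref{2209071233} to obtain, for each $p\in\partial\Omega$, a unit vector $e_p$ with $\Omega\subset U_p:=\{x:(x-p)\cdot e_p<0\}$. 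Then exactly as in \eqref{2301111002}, the monotonicity of the PWB solution in the domain gives
\begin{align*}
	w(x,p,r)\leq w\big(x,\,B_r(p)\cap U_p\,,\,\partial B_r(p)\cap U_p\,\big)\quad\text{for all }x\in\Omega\cap B_r(p)\,,
\end{align*}
and after the rescaling-rotation $T_0(x)=r^{-1}T(x-p)$ (with $T(e_p)=(-1,0,\ldots,0)$) the right side equals $w\big(T_0(x),\,B_1(0)\cap\bR^d_+\,,\,\partial B_1(0)\cap\bR^d_+\big)$, writing $\bR^d_+$ for the half $\{y_1>0\}$ after the rotation. So it remains to show there is $M=M(d)$ with
\begin{align*}
	w\big(y,\,B_1(0)\cap\bR^d_+\,,\,\partial B_1(0)\cap\bR^d_+\big)\leq M|y|\quad\text{for }y\in B_1(0)\cap\bR^d_+\,.
\end{align*}

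For this model estimate I would argue as in Case 2 of the proof of Theorem \ref{22.02.18.3}: the coordinate function $y\mapsto y_1$ is a positive harmonic function on the Lipschitz domain $D:=B_1(0)\cap\bR^d_+$ that vanishes on $(\partial\bR^d_+)\cap B_1(0)$, and $w(\,\cdot\,,D,\partial B_1(0)\cap\bR^d_+)$ is a positive harmonic function on $D$ vanishing on the same relatively open portion $(\partial\bR^d_+)\cap B_1(0)$ of $\partial D$. Applying the boundary Harnack principle (Lemma \ref{21.10.18.4}) with $A=(\partial\bR^d_+)\cap B_1(0)$ and $U=D\cap B_{1/2}(0)$ yields a constant $N_0=N_0(d)$ with $w(y,D,\partial B_1(0)\cap\bR^d_+)\leq N_0\,y_1\leq N_0|y|$ for $y\in D\cap B_{1/2}(0)$. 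For $|y|\geq 1/2$ one simply uses $w\leq 1\leq 2|y|$, so $M:=\max(N_0,2)$ works; note $M$ depends only on $d$ because the domain $D$, the set $A$, and the subdomain $U$ in Lemma \ref{21.10.18.4} are fixed model objects. Unwinding the reductions, $\mathbf{LHMD}(1)$ holds with $M_1$ depending only on $d$, and in particular independently of $\Omega$.

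The only genuinely delicate point is making sure the boundary Harnack principle is being applied to a \emph{bounded} Lipschitz domain whose data are universal; this is why I pass to the model domain $B_1(0)\cap\bR^d_+$ rather than to the unbounded half-space $U_p$ directly. One should also note that $D$ being Lipschitz and $w(\,\cdot\,,D,\partial B_1(0)\cap\bR^d_+)$ being a genuine (bounded) harmonic function vanishing continuously on $A$ are both standard facts about PWB solutions on such domains (Lemma \ref{220621237} and \cite{AG}); the dimension $d=2$ versus $d\geq 3$ causes no difficulty here, unlike in the exterior-cone case, because the exponent is exactly $1$ for every $d$. Everything else is a verbatim repetition of the localization/rescaling steps already carried out in the proof of Theorem \ref{22.02.18.3}, so I would keep the write-up short and refer back to that argument for the routine parts.
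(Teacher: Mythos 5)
Your proposal is correct and follows essentially the same route as the paper: reduce via the supporting hyperplane (Lemma \ref{2209071233}) and monotonicity of the PWB solution to the model domain $B_1(0)\cap\bR^d_+$, then apply the boundary Harnack principle (Lemma \ref{21.10.18.4}) against the linear coordinate function, exactly as the paper does by reusing the argument for \eqref{220906722}. The appeal to Lemma \ref{22.02.18.4} is superfluous (and slightly misstated, since that lemma is stated for bounded domains) but harmless, since you prove the bound uniformly in $r>0$ anyway.
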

		\begin{proof}		
		The argument used to obtain \eqref{220906722} also implies that for any $d\in\bN$,
		$$
		w\big(x,B_1(0)\cap \bR_+^d,(\partial B_1(0))\cap \bR_+^d\big)\leq N(d) |x|\quad \text{for all}\,\,\,x\in B_1(0)\cap \bR^d_+\,.
		$$
		By translation, dilation, and rotation, we obtain that for any convex domain $\Omega$ and $p\in\partial\Omega$, 
		$$
		w(x,p,r)\leq w\big(x,B_r(p)\cap U_p,\big(\partial B_r(p)\big)\cap U_p\big)\leq N(d)\frac{|x-p|}{r}
		$$
		for all $x\in B_r(p)\cap\Omega$, where $U_p$ denotes the set on RHS of \eqref{21.08.17.1}.
		This completes the proof.
		\end{proof}
		
		This result also implies that the Hardy inequality \eqref{hardy} holds in $\Omega$, where $\mathrm{C}_0(\Omega)$ depends only on $d$ (see Lemma \ref{240320301}).
		It is worth noting that Marcus, Mizel, and Pinchover \cite[Theorem 11]{MMP} proved that for a convex domain $\Omega$, \eqref{hardy} holds with $\mathrm{C}_0(\Omega)=4$, and that this constant cannot be improved.
		
		Combining Theorems \ref{22.02.19.3} and \ref{22.02.18.3111}, we obtain the following result:
		
		\begin{thm}\label{2208131026}
			Let $\Omega\subset \bR^d$ be a convex domain.
			For any $p\in(1,\infty)$ and $\theta\in\bR$ with 
			$$
			-p-1<\theta-d<-1\,,
			$$ 
			Statement \ref{22.02.19.1} $(\Omega,p,\theta)$ holds.
			In addition, $N_1$ in \eqref{2205241155} depends only on $d$, $p$, $\gamma$, $\theta$.
			In particular, $\Omega$ is not necessarily bounded, and $N_1$ is independent of $\Omega$.
		\end{thm}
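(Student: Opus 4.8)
The plan is to deduce the statement directly from the two results that precede it: Theorem~\ref{22.02.18.3111}, which says that every convex domain $\Omega\subset\bR^d$ satisfies $\mathbf{LHMD}(1)$ with the constant $M_1$ in \eqref{21.08.03.1111} depending only on $d$, and Theorem~\ref{22.02.19.3}, which converts $\mathbf{LHMD}(\alpha)$ into the solvability Statement~\ref{22.02.19.1} $(\Omega,p,\theta)$ on the weight range $d-2-(p-1)\alpha<\theta<d-2+\alpha$. So first I would apply Theorem~\ref{22.02.18.3111} to the given convex $\Omega$ to obtain $\mathbf{LHMD}(1)$, and then feed this into Theorem~\ref{22.02.19.3} with $\alpha=1$.

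With $\alpha=1$ the admissible range in Theorem~\ref{22.02.19.3} reads $d-2-(p-1)<\theta<d-1$, i.e.\ $d-1-p<\theta<d-1$, which is exactly the hypothesis $-p-1<\theta-d<-1$ of the statement. Hence for every such $p$ and $\theta$ and every $\gamma\in\bR$, equation \eqref{240319417} has a unique solution in $H^{\gamma+2}_{p,\theta}(\Omega)$ together with estimate \eqref{2205241155}. For the constant: Theorem~\ref{22.02.19.3} gives $N_1=N_1(d,p,\gamma,\theta,\alpha,M_\alpha)$; since here $\alpha=1$ is universal and $M_1$ depends only on $d$ by Theorem~\ref{22.02.18.3111}, we conclude $N_1=N_1(d,p,\gamma,\theta)$, in particular independent of the specific convex domain. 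I would also remark that no boundedness of $\Omega$ enters: neither Theorem~\ref{22.02.18.3111} nor the chain Theorem~\ref{21.11.08.1} $\to$ Lemma~\ref{240320301} $\to$ Lemma~\ref{220617557} $\to$ Theorem~\ref{21.09.29.1} behind Theorem~\ref{22.02.19.3} requires it.

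Thus the proof of Theorem~\ref{2208131026} itself is a two-line citation; the substantive content has already been carried out upstream, and the only place where genuine work occurs is the harmonic-measure decay bound $w(x,p,r)\lesssim_d |x-p|/r$ of Theorem~\ref{22.02.18.3111}. That bound rests on the separating-hyperplane characterization of convexity (Lemma~\ref{2209071233}): near each $p\in\partial\Omega$ the domain sits inside a half-space $U_p$, so $w(\cdot,p,r)$ is dominated by the harmonic measure of a spherical cap over a half-ball, which by the $z\mapsto z^{2}$ conformal trick in the plane and the boundary Harnack principle (Lemma~\ref{21.10.18.4}) applied with the linear harmonic function $z\mapsto z_1$ is $O(|x-p|/r)$. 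So I expect no real obstacle in assembling the present theorem — the $d$-uniformity of $M_1$, hence of $N_1$ and of the independence from $\Omega$, is the one point worth stating explicitly.
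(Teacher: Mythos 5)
Your proposal is correct and matches the paper's own argument exactly: the theorem is proved precisely by combining Theorem \ref{22.02.18.3111} (convex $\Rightarrow \mathbf{LHMD}(1)$ with $M_1=M_1(d)$) with Theorem \ref{22.02.19.3} at $\alpha=1$, which yields the weight range $d-1-p<\theta<d-1$, i.e.\ $-p-1<\theta-d<-1$, and the domain-independence of $N_1$ follows because $M_1$ depends only on $d$. One tiny correction to your parenthetical description of the upstream Theorem \ref{22.02.18.3111}: the $z\mapsto z^2$ conformal map plays no role there---the half-ball estimate comes directly from the boundary Harnack principle (Lemma \ref{21.10.18.4}) applied with the linear harmonic function $z\mapsto z_1$; the conformal trick is used only in Case~2 (exterior line segment, $d=2$) of the proof of Theorem \ref{22.02.18.3}.
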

 		
		\subsection{Exterior Reifenberg condition}\label{ERD}
		The notion of the vanishing Reifenberg condition was introduced by Reifenberg \cite{Reifcondition} and has since been extensively studied in the literature (see, \textit{e.g.}, \cite{Relliptic, CKL, KenigToro3, TT} and Section \ref{0003}.3 of this paper).
		The following definition appears in \cite{Relliptic, KenigToro3}:
		For $\delta\in(0,1)$ and $R>0$, a domain $\Omega\subset \bR^d$ is said to satisfy the $(\delta,R)$-\textit{Reifenberg condition}, if for every $p\in\partial\Omega$ and $r\in(0,R]$, there exists a unit vector $e_{p,r}\in\bR^d$ such that
		\begin{align}\label{22.02.26.41}
			\begin{split}
				&\Omega\cap B_r(p)\subset \{x\in B_r(p)\,:\,(x-p)\cdot e_{p,r}<\delta r\}\quad\text{and}\\
				&\Omega\cap B_r(p)\supset \{x\in B_r(p)\,:\,(x-p)\cdot e_{p,r}>-\delta r\}\,.
			\end{split}
		\end{align}
		In addition, $\Omega$ is said to satisfy the \textit{vanishing Reifenberg condition} if for every $\delta\in(0,1)$, there exists $R_{\delta}>0$ such that $\Omega$ satisfies the $(\delta,R_{\delta})$-Reifenberg condition.
		Note that the vanishing Reifenberg condition is strictly weaker than the $C^1$ boundary condition (see Examples \ref{220910305}.(2) and (3)).
				
		In this subsection, we introduce the totally vanishing exterior Reifenberg condition, which generalizes the vanishing Reifenberg condition.
		We also establish a solvability result for the Poisson equation in domains satisfying the totally vanishing exterior Reifenberg condition (see Theorem \ref{22.07.17.109}).
		
		\begin{defn}[Exterior Reifenberg condition]\label{2209151117}\,\,
			
			\begin{enumerate}
				
				\item
				By $\mathbf{ER}_{\Omega}$ we denote the set of all $(\delta,R)\in[0,1]\times\bR_+$ satisfying the following: For each $p\in\partial\Omega$, and each connected component $\Omega_{p,R}^{(i)}$ of $\Omega\cap B(p,R)$, there exists a unit vector $e_{p,R}^{(i)}\in\bR^d$ such that
				\begin{align}\label{22.02.26.4}
					\Omega_{p,R}^{(i)}\subset \{x\in B_R(p)\,:\,(x-p)\cdot e_{p,R}^{(i)}<\delta R\}\,.
				\end{align}
				By $\delta(R):=\delta_{\Omega}(R)$ we denote the infimum of $\delta$ such that $(\delta,R)\in \mathbf{ER}_{\Omega}$.
				
				\item For $\delta\in[0,1]$, we say that $\Omega$ satisfies the \textit{totally $\delta$-exterior Reifenberg condition} (abbreviate to `$\langle\mathrm{TER}\rangle_\delta$'), if there exist constants $0<R_{0}\leq R_{\infty}<\infty$ such that
				\begin{align}\label{220916111}
					\delta_{\Omega}(R)\leq \delta\quad\text{whenever}\quad R\leq R_0\,\,\,\text{or}\,\,\,R\geq R_\infty\,.
				\end{align}
				
				\item We say that $\Omega$ satisfies the \textit{totally vanishing exterior Reifenberg condition} (abbreviate to `$\langle\mathrm{TVER}\rangle$'), if
				$\Omega$ satisfies $\langle\mathrm{TER}\rangle_\delta$ for all $\delta\in(0,1]$. In other words,
				$$
				\lim_{R\rightarrow 0}\delta_{\Omega}(R)=\lim_{R\rightarrow \infty}\delta_{\Omega}(R)=0\,.
				$$
			\end{enumerate}
		\end{defn}
		 
		The main theorem in this subsection is concerned with domains satisfying $\langle\mathrm{TER}\rangle_\delta$ for sufficiently small $\delta>0$.
		The primary focus, however, lies in the condition $\langle\mathrm{TVER}\rangle$.
		For a comparison between the Reifenberg condition and $\langle\mathrm{TVER}\rangle$, see Figure \ref{230212856} and Example \ref{220910305} below.
			
		\begin{figure}[h]\centering
			\begin{tikzpicture}
				\begin{scope}[shift={(0,4.5)}]
					\begin{scope}[shift={(0,2)}]
						\draw (-5,-1) rectangle (5,0.6);
						\clip (-5,-1) rectangle (5,0.6);
						\draw[decoration={Koch, Koch angle=12, Koch order=4}] 
						decorate {(-5,-0) -- (-2,-0.2)};
						\draw[decoration={Koch, Koch angle=13.5, Koch order=4}] 
						decorate {(1.7,-0.3)--(-2,-0.2)};
						\draw[decoration={Koch, Koch angle=15, Koch order=4}] 
						decorate {(1.7,-0.3) -- (5,-0.6)};
						
					\end{scope}
					
					\begin{scope}[shift={(0,-0.5)}]
						\draw[dashed] (0,0) circle (0.35);
						\draw[->] (0,0.45) -- (0,1.25); 
						\draw (-5,-1) rectangle (5,0.6);
						\clip (-5,-1) rectangle (5,0.6);
						\draw[decoration={Koch, Koch angle=30, Koch order=4}] 
						decorate {(-5,-0.6) -- (-3,-0.3)};
						\draw[decoration={Koch, Koch angle=27.5, Koch order=4}] 
						decorate {(-1.8,-0.1)--(-3,-0.3)};
						\draw[decoration={Koch, Koch angle=21.25, Koch order=4}] 
						decorate {(-1.8,-0.1) -- (-0.3,0)};
						\draw[decoration={Koch, Koch angle=22.5, Koch order=4}] 
						decorate {(1.5,-0.1)--(-0.3,0)};
						\draw[decoration={Koch, Koch angle=28.75, Koch order=4}] 
						decorate {(3.5,-0.3)--(1.5,-0.1)};
						\draw[decoration={Koch, Koch angle=24.75, Koch order=4}] 
						decorate {(3.5,-0.3) -- (5,-0.6)};
						
						
					\end{scope}
					
				\end{scope}
				
				\begin{scope}[scale=0.7, shift={(-6.5,0)}]
					\draw[decorate, decoration={random steps,segment length=2, amplitude=0.5}, fill=gray!20]
					(2,0) arc(0:180:2) .. controls +(0,-1) and +(-0.5,0) .. (-1,-1.5)..controls +(0.5,0) and +(-0.5,0) .. (0,-1).. controls +(0.5,0) and +(-0.5,0) .. (1,-1.5)..controls +(0.5,0) and +(0,-1)..(2,0);
					
					\begin{scope}
						\draw[dashed] (0,2) circle (0.5);
						\draw[->] (0.4,2.6) -- (1,3.8);
							%
							%
							%
					\end{scope}
					
					\node[align=center] at (0,-3.5) {Vanishing\\Reifenberg condition};
				\end{scope}

				\begin{scope}[scale=0.7]
					\begin{scope}
						\draw[decorate, decoration={random steps,segment length=2, amplitude=0.5}, fill=gray!20]
						(2,0) arc(0:180:2);
						\fill[gray!20] (-2,0)--(0,-2) -- (2,0)--(0,2);
						\fill[white]
						(0,-2) ..controls +(0,2) and +(-0.5,0) .. (2,0)--(2,-2);
						\draw (0,-2) ..controls +(0,2) and +(-0.5,0) .. (2,0);
						\fill[white]
						(0,-2) ..controls +(0,2) and +(0.5,0) .. (-2,0)--(-2,-2);
						\draw (0,-2) (0,-2) ..controls +(0,2) and +(0.5,0) .. (-2,0);
					\end{scope}
					\draw[dashed] (0,2) circle (0.5);
					\draw[->] (0,2.8) -- (0,3.8); 
					
					\node[align=center] at (0,-3.5) {$\la \mathrm{TVER}\ra^\ast$\\(Definition \ref{221013228})};
					
				\end{scope}

				\begin{scope}[scale=0.7, shift={(6.5,0)}]
					\begin{scope}
						\draw[decorate, decoration={random steps,segment length=2, amplitude=0.5}, fill=gray!20]
						(2,0) arc(0:180:2);
						\fill[gray!20] (-2,0)--(0,-2) -- (2,0)--(0,2);
						\fill[white]
						(0,-2) ..controls +(0,2) and +(-0.5,0) .. (2,0)--(2,-2);
						\draw (0,-2) ..controls +(0,2) and +(-0.5,0) .. (2,0);
						\fill[white]
						(0,-2) ..controls +(0,2) and +(0.5,0) .. (-2,0)--(-2,-2);
						\draw (0,-2) (0,-2) ..controls +(0,2) and +(0.5,0) .. (-2,0);

						\draw[fill=white, shift={(0,0.5)}] (0,0) arc(0:180:0.5) .. controls +(0,-0.5) and +(-0.5,0) .. (0,0) .. controls +(0.5,0) and +(0,-0.5) .. (1,0) arc(0:180:0.5);
					\end{scope}
					\draw[dashed] (0,2) circle (0.5);
					\draw[->] (-0.4,2.6) -- (-1,3.8); 
					
					\node[align=center] at (0,-3.5) {$\la \mathrm{TVER}\ra$\\(Definition \ref{2209151117})};
					
				\end{scope}
			\end{tikzpicture}
			
			\caption{Totally vanishing exterior Reifenberg condition}\label{230212856}
		\end{figure}
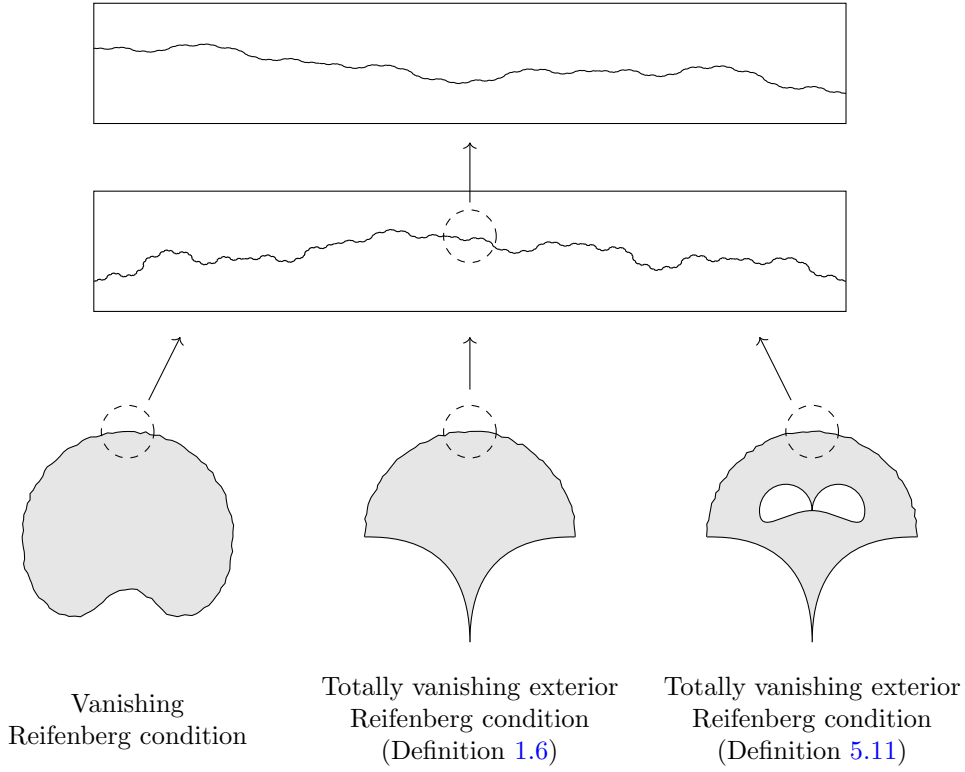
		
		\begin{lemma}\label{220918306}
			For any $R>0$, $\big(\delta(R),R\big)\in\mathbf{ER}_\Omega$. 
		\end{lemma}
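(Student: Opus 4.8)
The plan is to prove this by a compactness argument on the unit sphere, the only delicate point being the passage from a \emph{closed} half-space inclusion to the \emph{open} one demanded by the definition of $\mathbf{ER}_\Omega$.

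First I would record two elementary facts. \emph{Monotonicity}: if $(\delta,R)\in\mathbf{ER}_\Omega$ and $\delta\leq\delta'\leq 1$, then $(\delta',R)\in\mathbf{ER}_\Omega$, since $\{x\in B_R(p):(x-p)\cdot e<\delta R\}\subset\{x\in B_R(p):(x-p)\cdot e<\delta' R\}$. \emph{Nonemptiness}: $(1,R)\in\mathbf{ER}_\Omega$, because for any unit vector $e$ and any $x\in B_R(p)$ we have $(x-p)\cdot e\leq|x-p|<R$, so $\{x\in B_R(p):(x-p)\cdot e<R\}=B_R(p)$, which contains every component $\Omega_{p,R}^{(i)}$ trivially. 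Consequently $\delta(R)=\delta_\Omega(R)$ is a well-defined number in $[0,1]$, and the set $\{\delta:(\delta,R)\in\mathbf{ER}_\Omega\}$ is either $[\delta(R),1]$ or $(\delta(R),1]$; it remains only to exclude the second possibility, i.e.\ to show the infimum is attained.

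Next I would fix $R>0$, choose a sequence $\delta_n\downarrow\delta(R)$ with $(\delta_n,R)\in\mathbf{ER}_\Omega$, fix $p\in\partial\Omega$ and a connected component $\Omega_{p,R}^{(i)}$ of $\Omega\cap B(p,R)$ (the collection of such components depends only on $\Omega,p,R$, not on $\delta$), and for each $n$ pick a unit vector $e_n\in\bR^d$ with $\Omega_{p,R}^{(i)}\subset\{x\in B_R(p):(x-p)\cdot e_n<\delta_n R\}$. Since $\bS^{d-1}$ is compact, a subsequence of $(e_n)$ converges to a unit vector $e$, and then for every $x\in\Omega_{p,R}^{(i)}$,
$$
(x-p)\cdot e=\lim_{n\to\infty}(x-p)\cdot e_n\leq\lim_{n\to\infty}\delta_n R=\delta(R)\,R,
$$
so $\Omega_{p,R}^{(i)}\subset\{x\in B_R(p):(x-p)\cdot e\leq\delta(R)R\}$.

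Finally I would upgrade $\leq$ to $<$ using openness: $\Omega\cap B(p,R)$ is open, hence so is each of its connected components. If some $x_0\in\Omega_{p,R}^{(i)}$ had $(x_0-p)\cdot e=\delta(R)R$, then choosing $\varepsilon>0$ with $B_\varepsilon(x_0)\subset\Omega_{p,R}^{(i)}$, the point $y:=x_0+(\varepsilon/2)e$ would lie in $\Omega_{p,R}^{(i)}$ and yet satisfy $(y-p)\cdot e=\delta(R)R+\varepsilon/2>\delta(R)R$, contradicting the inclusion just established. Hence $(x-p)\cdot e<\delta(R)R$ throughout $\Omega_{p,R}^{(i)}$; setting $e_{p,R}^{(i)}:=e$ for every $p\in\partial\Omega$ and every component shows $(\delta(R),R)\in\mathbf{ER}_\Omega$. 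I expect this last openness step to be the only part requiring genuine thought; everything else is the monotonicity/nonemptiness bookkeeping above together with the compactness of $\bS^{d-1}$.
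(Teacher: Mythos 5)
Your proposal is correct and follows the paper's own route: take a minimizing sequence $\delta_n\downarrow\delta(R)$, extract a convergent subsequence of the unit vectors $e_n$ on the compact sphere $\bS^{d-1}$, and show the limiting unit vector works. Your final openness step—upgrading the non-strict inequality that survives the limit to the strict inequality required in \eqref{22.02.26.4} by perturbing a boundary-case point $x_0$ inside the open component—is in fact a detail the paper's proof passes over silently, and you are right that it is needed for rigor.
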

		\begin{proof}Take a sequence $\{\delta_n\}_{n\in\bN}$ such that $(\delta_n,R)\in \mathbf{ER}_\Omega$ and $\delta_n\rightarrow \delta(R)$ as $n\rightarrow \infty$.
			Since $(\delta_n,R)\in \mathbf{ER}_\Omega$, for any $p\in\partial\Omega$ and any connected component of $\Omega\cap B(p,R)$, denoted by $\Omega_{p,R}$, there exists a unit vector $e_n$ such that 
			\begin{align}\label{230115252}
				\Omega_{p,R}\subset \{x\in B_R(p)\,:\,(x-p)\cdot e_n<\delta_n R\}\,.
			\end{align}
			Since $\{e_n\}_{n\in\bN}\subset \partial B(0,1)$, there exists a subsequence $\{e_{n_k}\}_{k\in\bN}$ such that $e_p:=\lim_{k\rightarrow \infty}e_{n_k}$ exists in $\partial B(0,1)$.
			It is implied by \eqref{230115252} that 
			\begin{align*}
				\Omega_{p,R}\subset \{x\in B_R(p)\,:\,(x-p)\cdot e_p<\delta(R) R\}\,.
			\end{align*}
			Therefore $\big(\delta(R),R\big)\in \mathbf{ER}_\Omega$.
		\end{proof}

		\begin{example}\label{220910305}
			\,\,
			
			\begin{enumerate}
				\item
				If $\Omega$ satisfies the $(\delta,R_1)$-Reifenberg condition,
				then $\delta(R)\leq \delta$ for all $R\leq R_1$; indeed, the first line of \eqref{22.02.26.41} implies \eqref{22.02.26.4} with $e_{p,r}^{(i)}=e_{p,r}$.
				Moreover, if $\Omega$ is bounded, then Proposition \ref{220918248} implies that $\delta(R)\leq \mathrm{diam}(\Omega)/R$.
				Therefore, if $\Omega$ is a bounded domain satisfying the vanishing Reifenberg condition, then $\Omega$ also satisfies $\langle\mathrm{TVER}\rangle$.
				
				\item 
				By $\lambda_*(\bR^{d-1})$, we denote the little Zygmund class, which is the set of all $f\in C(\bR^{d-1})$ such that
				$$
				\lim_{h\rightarrow 0}\sup_{x\in\bR^{d-1}}\frac{|f(x+h)-2f(x)+f(x-h)|}{|h|}=0\,.
				$$
				For $f\in\lambda_*(\bR^{d-1})$, put
				$$
				\Omega=\{(x',x_d)\in\bR^{d-1}\times \bR\,:\,x_d>f(x')\}\,.
				$$
				Then, as mentioned in \cite[Example 1.4.3]{CKL} (see also \cite[Theorem 6.3]{RA}), $\Omega$ satisfies the vanishing Reifenberg condition, which implies $\lim_{R\rightarrow 0}\delta_{\Omega}(R)=0$.
				Moreover, Proposition \ref{220918248} implies that $\delta(R)\leq \frac{2\|f\|_{C(\bR^{d-1})}}{R}$. (Recall that $\|f\|_{C(\bR^{d-1})}<\infty$.)
				Therefore $\Omega$ satisfies $\langle\mathrm{TVER}\rangle$.
				
				\item Suppose that $\Omega$ is bounded, and for any $p\in\partial\Omega$ there exists $R>0$ and $f\in\lambda_{\ast}(\bR^{d-1})$ such that
				$$
				\Omega\cap B(p,R)=\big\{y=(y',y_n)\in\bR^{d-1}\times \bR\,:\,|y|<R\,\,\,\text{and}\,\,\,y_n>f(y')\big\}\,,
				$$
				where $(y',y_n)$ is an orthonormal coordinate system centered at $p$. Then $\Omega$ satisfies the vanishing Reifenberg condition, and therefore $\Omega$ satisfies $\langle\mathrm{TVER}\rangle$.
				
				\item Let $\Omega$ satisfy the exterior $R_0$-ball condition, \textit{i.e.}, there exists $R_0>0$ such that for any $p\in\partial\Omega$, there exists $q\in\bR^d$ satisfying $|p-q|=R_0$ and $B(q,R_0)\subset \Omega^c$.
				Then $\delta(R)\leq \frac{R}{2R_0}$, and therefore $\lim_{R\rightarrow 0}\delta(R)=0$.
				
				\item If a domain $\Omega$ is an intersection of domains satisfying the totally vanishing Reifenberg condition, then $\Omega$ satisfies $\langle\mathrm{TVER}\rangle$.
			\end{enumerate}
		All of the above examples remain valid if $\langle\mathrm{TVER}\rangle$ is replaced by $\langle\mathrm{TVER}\rangle^\ast$ as defined in Definition \ref{221013228}.
		\end{example}

		A sufficient condition for $\lim_{R\rightarrow \infty}\delta_{\Omega}(R)=0$ is that $\delta_{\Omega}(R)\lesssim 1/R$.
		We characterize domains $\Omega$ satisfying $\delta_{\Omega}(R)\lesssim 1/R$.
		
		\begin{prop}\label{220918248}
			\begin{align*}
				\sup_{R>0}R\,\delta_{\Omega}(R)=\sup_{p\in\partial\Omega}d\big(p,\partial(\Omega_{\mathrm{c.h.}})\big)\,,
			\end{align*}
			where $\Omega_{\mathrm{c.h.}}$ is the convex hull of $\Omega$, \textit{i.e.},
			$$
			\Omega_{\mathrm{c.h.}}:=\big\{(1-t)x+ty\,:\,x,\,y\in\Omega\,\,,\,\,t\in[0,1]\big\}\,.
			$$
		\end{prop}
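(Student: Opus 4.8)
The plan is to compare both sides with the single quantity
$$
\Phi(p):=\inf_{|e|=1}\ \sup_{x\in\Omega}(x-p)\cdot e\qquad(p\in\partial\Omega).
$$
First I would record an elementary reformulation of the left-hand side. For a connected component $S$ of $\Omega\cap B_R(p)$ one has $S\subset\{x\in B_R(p):(x-p)\cdot e<\delta R\}$ precisely when $(x-p)\cdot e<\delta R$ for every $x\in S$; passing to the infimum over admissible $\delta$, and noting that $(x-p)\cdot e\le|x-p|<R$ on $S$ forces $\delta\le 1$ automatically, one gets
$$
R\,\delta_\Omega(R)=\sup_{p\in\partial\Omega}\ \sup_S\ \inf_{|e|=1}\ \sup_{x\in S}(x-p)\cdot e,
$$
the inner supremum running over the components $S$ of $\Omega\cap B_R(p)$ (which exist since $p\in\partial\Omega$).

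The core of the proof is the identity $\Phi(p)=d\big(p,\partial(\Omega_{\mathrm{c.h.}})\big)$ for every $p\in\partial\Omega$; note $p\in\overline\Omega\subset\overline{\Omega_{\mathrm{c.h.}}}$, so $p$ is either in $\partial(\Omega_{\mathrm{c.h.}})$ or in $\mathrm{int}(\Omega_{\mathrm{c.h.}})$. For $\Phi(p)\ge d\big(p,\partial(\Omega_{\mathrm{c.h.}})\big)$ I would use only the two–point description of $\Omega_{\mathrm{c.h.}}$: if $\rho<d\big(p,\partial(\Omega_{\mathrm{c.h.}})\big)$ then $B_\rho(p)\subset\Omega_{\mathrm{c.h.}}$, so for any unit $e$ and any $\rho'<\rho$ we may write $p+\rho'e=(1-t)x+ty$ with $x,y\in\Omega$, whence $\rho'=(1-t)(x-p)\cdot e+t(y-p)\cdot e\le\sup_{z\in\Omega}(z-p)\cdot e$; letting $\rho'\uparrow\rho$ and $\rho\uparrow d\big(p,\partial(\Omega_{\mathrm{c.h.}})\big)$ gives the bound for every $e$. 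For the reverse inequality I would use that $\Omega_{\mathrm{c.h.}}$ is convex (being the chord set of the connected set $\Omega$): if $p\in\partial(\Omega_{\mathrm{c.h.}})$, a supporting hyperplane of $\Omega_{\mathrm{c.h.}}$ at $p$ yields a unit $e$ with $(z-p)\cdot e\le 0$ on $\Omega_{\mathrm{c.h.}}\supset\Omega$, so $\Phi(p)\le 0$; if $p\in\mathrm{int}(\Omega_{\mathrm{c.h.}})$, pick $q\in\partial(\Omega_{\mathrm{c.h.}})$ with $|q-p|=r:=d\big(p,\partial(\Omega_{\mathrm{c.h.}})\big)$ and $e:=(q-p)/r$; since $\overline{B_r(p)}\subset\overline{\Omega_{\mathrm{c.h.}}}$, the hyperplane through $q$ normal to $e$ supports $\overline{\Omega_{\mathrm{c.h.}}}$, so $(z-q)\cdot e\le 0$ on $\Omega$, hence $(z-p)\cdot e\le r$ on $\Omega$ and $\Phi(p)\le r$. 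Granting the identity, the inequality $\le$ of the Proposition follows at once: each component $S$ of $\Omega\cap B_R(p)$ lies in $\Omega$, so $\inf_{e}\sup_{x\in S}(x-p)\cdot e\le\Phi(p)=d\big(p,\partial(\Omega_{\mathrm{c.h.}})\big)$, and taking the two suprema bounds $R\,\delta_\Omega(R)$ by $\sup_{p\in\partial\Omega}d\big(p,\partial(\Omega_{\mathrm{c.h.}})\big)$ for every $R$.

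For the inequality $\ge$ I would fix $p\in\partial\Omega$ and any $x_0\in\Omega$, and let $S_R$ be the component of $\Omega\cap B_R(p)$ containing $x_0$ (for $R>|x_0-p|$); since $\Omega$ is open and connected it is path-connected, so any $x\in\Omega$ is joined to $x_0$ by a path in $\Omega$ which, being compact, lies in $B_R(p)$ for $R$ large, whence $\bigcup_R S_R=\Omega$. Then the functions $f_R(e):=\sup_{x\in S_R}(x-p)\cdot e$ are finite support functions of the bounded sets $S_R-p$, hence continuous on $\bS^{d-1}$, they increase pointwise to $f(e):=\sup_{x\in\Omega}(x-p)\cdot e$, and $m_R:=\min_{\bS^{d-1}}f_R$ is attained, say at $e_R$, and nondecreasing in $R$. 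The one genuine obstacle is that $\sup_R m_R=\sup_R\inf_e f_R(e)$ need not a priori equal $\inf_e\sup_R f_R(e)=\inf_e f(e)=\Phi(p)$; I would close this via compactness of $\bS^{d-1}$: choosing $R_k\uparrow\infty$ with $e_{R_k}\to e_\ast$, for $j\le k$ one has $f_{R_j}(e_{R_k})\le f_{R_k}(e_{R_k})=m_{R_k}\le\sup_R m_R$, so letting $k\to\infty$ (continuity of $f_{R_j}$) and then $j\to\infty$ (monotone convergence) gives $f(e_\ast)\le\sup_R m_R$, whence $\Phi(p)=\inf_e f(e)\le f(e_\ast)\le\sup_R m_R\le\inf_e f(e)=\Phi(p)$ and therefore $\sup_R m_R=\Phi(p)$. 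Since $m_R=\inf_e\sup_{x\in S_R}(x-p)\cdot e\le R\,\delta_\Omega(R)$, this yields $\sup_{R>0}R\,\delta_\Omega(R)\ge\Phi(p)=d\big(p,\partial(\Omega_{\mathrm{c.h.}})\big)$, and taking the supremum over $p\in\partial\Omega$ finishes the proof. (If $\Omega_{\mathrm{c.h.}}=\bR^d$, then $\partial(\Omega_{\mathrm{c.h.}})=\varnothing$, both sides are $+\infty$, and the same computations apply with $d(p,\varnothing)=+\infty$.)
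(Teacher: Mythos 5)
Your proof is correct and follows essentially the same route as the paper's: both reduce the left side to the support-type quantity $\sup_{p\in\partial\Omega}\inf_{|e|=1}\sup_{x\in\Omega}(x-p)\cdot e$ (which you name $\sup_p\Phi(p)$; the paper phrases it as ``for every $p$ there is a unit $e_p$ with $\Omega\subset\{(x-p)\cdot e_p<N_0\}$'') via an increasing exhaustion of $\Omega$ by components of $\Omega\cap B_R(p)$ and compactness of $\bS^{d-1}$, and then identify $\Phi(p)$ with $d(p,\partial(\Omega_{\mathrm{c.h.}}))$ using the supporting-hyperplane theorem. Your version is a bit tidier in isolating $\Phi$ and confronting the $\sup$/$\inf$ interchange head-on via $e_{R_k}\to e_*$, whereas the paper quantifies over thresholds $N_0$ and invokes Lemma~\ref{220918306} to realize $\delta(R)$ as a minimum. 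One small thing worth a line: your identity $R\,\delta_\Omega(R)=\sup_p\sup_S\inf_e\sup_{x\in S}(x-p)\cdot e$ tacitly uses that the right side is nonnegative (since $\delta_\Omega(R)\ge 0$ by definition); this is true because taking $x_n\in\Omega\cap B_R(p)$ with $x_n\to p$ and $S_n\ni x_n$ gives $\inf_e\sup_{x\in S_n}(x-p)\cdot e\ge -|x_n-p|\to 0$, and in any case the rest of your argument only uses the two one-sided bounds $m_R\le R\,\delta_\Omega(R)$ and $R\,\delta_\Omega(R)\le\sup_p\Phi(p)$, both of which are immediate regardless.
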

		
		\begin{remark}
			It follows from the definition of $\delta_{\Omega}(R)$ that $R\delta_{\Omega}(R)$ increases as $R\rightarrow \infty$.
			Therefore, if $\delta_{\Omega}(r_0)>0$ for some $r_0>0$, then $\delta_{\Omega}(R)\gtrsim 1/R$ as $R\rightarrow \infty$.
			As a result, by Proposition \eqref{220918248}, an equivalent condition for $\delta_{\Omega}(R)$ to exhibit minimal nontrivial decay (\textit{i.e.}, $\delta_{\Omega}(R)\simeq 1/R$) is $\sup_{p\in\partial\Omega}d\big(p,\partial(\Omega_{\mathrm{c.h.}})\big)<\infty$.
		\end{remark}
		
		\begin{proof}[Proof of Proposition \ref{220918248}]
			We only need to prove that for any $N_0>0$,
			\begin{align}\label{230104440}
				\sup_{R>0}R\,\delta_{\Omega}(R)\leq N_0\quad  \Longleftrightarrow \quad \sup_{p\in\partial\Omega}d\big(p,\partial(\Omega_{\mathrm{c.h.}})\big)\leq N_0\,.
			\end{align}
			
			\textbf{Step 1.}
			We first claim that LHS of \eqref{230104440} holds
			if and only if for any $p\in\partial\Omega$, there exists a unit vector $e_p$ such that
			\begin{align}\label{220918300}
				\Omega\subset \{x\in\bR^d\,:\,(x-p)\cdot e_p<N_0\}\,.
			\end{align}
			The `if' part is obvious.
			Hence, it remains to prove the `only if' part.
			Assume that LHS of \eqref{230104440} holds.
			Fix $p\in\partial\Omega$, and take $\{\widetilde{\Omega}_n\}_{n\in\bN}$ such that $\widetilde{\Omega}_n$ is a connected component of $\Omega\cap B_n(p)$, and 
			$$
			\widetilde{\Omega}_1\subset \widetilde{\Omega}_2\subset \cdots\,.
			$$
			(More precisely, let $\widetilde{\Omega}_1$ be any connected component of $\Omega\cap B_1(p)$.
			For each $n\in\bN$, since $\widetilde{\Omega}_n$ is a connected open subset of $\Omega\cap B_{n+1}(p)$, there exists a connected component $\widetilde{\Omega}_{n+1}$ of $\Omega\cap B_{n+1}(p)$ that contains $\widetilde{\Omega}_n$.)
			Since $\Omega$ is a domain, $\Omega$ is path connected.
			This implies that
			\begin{align}\label{2209201131}
				\bigcup_{n\in\bN}\widetilde{\Omega}_n=\Omega\,.
			\end{align}
			Since $R\delta(R)\leq N_0$, for each $n\in\bN$, there exists $e_n\in\partial B_1(0)$ such that
			\begin{align}\label{2209201230}
				\widetilde{\Omega}_n\subset \{x\in\bR^d\,:\,(x-p)\cdot e_n<n\delta_\Omega(n)\}\subset \{x\in\bR^d\,:\,(x-p)\cdot e_n<N_0\} 
			\end{align}
		(see Lemma \ref{220918306}).
			Since $\partial B_1(0)$ is compact, there exists a subsequence $\{e_{n_k}\}$ which converges to a certain point, $e_p\in\partial B_1(0)$.
			By \eqref{2209201131} and \eqref{2209201230}, we obtain that \eqref{220918300} holds for this $e_p$.
			
			\textbf{Step 2.}
			By \eqref{230104440}, it suffices to prove the following: For $p\in\partial\Omega$, \eqref{220918300} holds for some $e_p\in\partial B_1(0)$ if and only if $d\big(p,\partial(\Omega_{\mathrm{c.h.}})\big)\leq N_0$.

			To prove the `only if' part, suppose that \eqref{220918300} holds and observe that
			$$
			p\in \partial\Omega\subset\overline{\Omega_{\mathrm{c.h.}}}\subset \{x\in\bR^d\,:\,(x-p)\cdot e_p\leq N_0\}\,.
			$$
			Put
			$\alpha_0:=\sup\{\alpha\geq 0\,:\,p+\alpha e_p\in\overline{\Omega_{\mathrm{c.h.}}}\}$.
			Then $p+\alpha_0e_p\in \partial(\Omega_{\mathrm{c.h.}})$, and therefore $d\big(p,\partial(\Omega_{\mathrm{c.h.}})\big)\leq \alpha_0\leq N_0$.
			
			To prove the `if' part, suppose that there exists $q\in\partial(\Omega_{\mathrm{c.h.}})$ such that
			$$
			|p-q|=d\big(p,\partial(\Omega_{\mathrm{c.h.}})\big)\leq N_0\,.
			$$
			By Lemma \ref{2209071233} and the convexity of $\Omega_{\mathrm{c.h.}}$, there is a unit vector $\widetilde{e}_q$ such that
			$$
			\Omega_{\mathrm{c.h.}}\subset \{x\in\bR^d\,:\,(x-q)\cdot \widetilde{e}_q<0\}\,.
			$$
			This implies that for any $x\in\Omega\subset \Omega_{\mathrm{c.h.}}$, 
			$$
			(x-p)\cdot \widetilde{e}_q<(q-p)\cdot\widetilde{e}_q\leq |p-q|\leq  N_0\,.
			$$
			Therefore \eqref{220918300} holds with $e_p:=\widetilde{e}_q$.
		\end{proof}
		
		\begin{remark}
			It follows from Step 1 in the proof of Proposition \ref{220918248} that the proposition remains valid if we redefine $\delta_{\Omega}(R)$ as the infimum of all $\delta>0$ such that, for every $p\in\partial\Omega$, there exists a unit vector $e_{p,R}$ satisfying \eqref{230203624} with $r=R$.
		\end{remark}
		  
		We now state the main result of this subsection.
		We assume Theorem \ref{231217511} for the moment (it will be proved at the end of this subsection) and proceed to prove Theorem \ref{22.07.17.109}.
		
		\begin{thm}\label{231217511}
				For any $\epsilon\in(0,1)$, there exists $\delta>0$ depending only on $d$ and $\epsilon$ such that, if $\Omega$ satisfies $\langle\mathrm{TER}\rangle_\delta$, then $\Omega$ satisfies $\mathbf{LHMD}(1-\epsilon)$, with $M_{1-\epsilon}$ in \eqref{21.08.03.1111} depending only on $d$, $\epsilon$, $\delta$, and $R_{0}/R_{\infty}$ (where $R_0$ and $R_\infty$ are the constants in \eqref{220916111}).
		\end{thm}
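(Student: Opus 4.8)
The plan is to prove a one-scale decay estimate for the harmonic measures $w(\cdot,p,\rho)$ and then iterate it through Lemma~\ref{220621237}.(3), in the spirit of the proof of Theorem~\ref{22.02.18.3} but with the exterior cone replaced by a slightly fattened half-ball. The one-scale lemma I would establish is: there is a constant $C_1=C_1(d)>0$ such that if $\tau,\delta\in(0,1/4]$, $p\in\partial\Omega$, $\rho>0$ and $(\delta,\rho)\in\mathbf{ER}_\Omega$, then $w(x,p,\rho)\le C_1(\delta+\tau)$ for every $x\in\Omega\cap\partial B_{\tau\rho}(p)$. After translating and rescaling one may assume $\rho=1$, $p=0$; working on a fixed connected component $\Omega^{(i)}$ of $\Omega\cap B_1(0)$ with associated unit vector $e$ (rotated so that $e=e_1$), the inclusion \eqref{22.02.26.4} gives $\Omega^{(i)}\subset H_\delta:=B_1(0)\cap\{x_1<\delta\}$, hence $\Omega^{(i)}\cap\partial B_1(0)\subset \Gamma_\delta:=\partial B_1(0)\cap\{x_1<\delta\}$ while the rest of $\partial\Omega^{(i)}$ lies in $\overline{H_\delta}$. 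Letting $\phi:=w(\cdot,H_\delta,\Gamma_\delta)$ be the harmonic measure of $\Gamma_\delta$ in $H_\delta$, the definition of PWB solutions (using $\phi$ as an upper function, since $\phi$ is harmonic and nonnegative on $\Omega^{(i)}$, equals $1$ on the smooth piece $\Gamma_\delta$, and $\partial\Omega^{(i)}\setminus(\Omega^{(i)}\cap\partial B_1(0))\subset\partial\Omega$) yields $w(\cdot,0,1)\le\phi$ on $\Omega^{(i)}$. Now $\phi$ and $v(x):=\delta-x_1$ are positive harmonic in the bounded Lipschitz domain $H_\delta$ and vanish on the relatively open flat part $A_\delta:=\{x_1=\delta\}\cap B_1(0)$ of $\partial H_\delta$; applying the boundary Harnack principle (Lemma~\ref{21.10.18.4}) with $D=H_\delta$, $A=A_\delta$, $U=H_\delta\cap B_{1/2}(0)$ and normalizing at the corkscrew point $-e_1/4$ gives $\phi(x)\le N(d)\,v(x)\le N(d)(\delta+|x|)$ on $H_\delta\cap B_{1/2}(0)$, where $N(d)$ can be chosen independent of $\delta\in[0,1/4]$ because the triples $(H_\delta,A_\delta,H_\delta\cap B_{1/2}(0))$ share a uniform Lipschitz (indeed NTA) character. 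Taking $C_1:=N(d)$ and the supremum over $i$ proves the lemma.

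With the lemma in hand I would fix the parameters. Given $\epsilon\in(0,1)$, set $\tau:=\min\{(2C_1(d))^{-1/\epsilon},\,1/4\}$, take $\delta:=\tau$ for the threshold asserted in the statement (it depends only on $d$ and $\epsilon$), and put $\lambda:=2C_1(d)\tau=C_1(\delta+\tau)$. Then $\lambda<1$ and, by the choice of $\tau$, $\lambda\le\tau^{1-\epsilon}$, so with $\beta:=\log(1/\lambda)/\log(1/\tau)$ we have $\beta\ge 1-\epsilon$ and $\tau^\beta=\lambda$. If $\Omega$ satisfies $\langle\mathrm{TER}\rangle_\delta$ with constants $0<R_0\le R_\infty<\infty$, then by Lemma~\ref{220918306} and the downward monotonicity of $\mathbf{ER}_\Omega$ in the first variable, $(\delta,\rho)\in\mathbf{ER}_\Omega$ whenever $\rho\le R_0$ or $\rho\ge R_\infty$; hence the one-scale lemma gives $w(\cdot,p,\rho)\le\lambda$ on $\Omega\cap\partial B_{\tau\rho}(p)$ for all such $\rho$ and all $p\in\partial\Omega$.

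The last step is the iteration. Fix $p\in\partial\Omega$ and $r>0$, put $\rho_j:=\tau^j r$ and $a_j:=\sup_{\Omega\cap\partial B_{\rho_j}(p)}w(\cdot,p,r)$. Applying Lemma~\ref{220621237}.(3) once with radii $r>\rho_j$ and once with radii $\rho_j>\rho_{j+1}$ yields $a_{j+1}\le\lambda a_j$ whenever $\rho_j\le R_0$ or $\rho_j\ge R_\infty$, and $a_{j+1}\le a_j$ otherwise. The ``bad'' indices $j$ with $R_0<\rho_j<R_\infty$ number at most $K:=\lfloor\log_{1/\tau}(R_\infty/R_0)\rfloor+1$, so $a_n\le\lambda^{-K}\lambda^n$ for all $n\ge 0$. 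Given $x\in\Omega\cap B_r(p)$ with $x\neq p$, choose $n$ with $\rho_{n+1}<|x-p|\le\rho_n$ and use $w(x,p,r)\le a_n$ (or $a_{n-1}$ in the boundary case $|x-p|=\rho_n$) together with $\rho_{n+1}<|x-p|$, i.e. $\tau^{n\beta}<\tau^{-\beta}(|x-p|/r)^\beta$; since $\tau^{-\beta}=\lambda^{-1}$, $|x-p|/r\le 1$ and $\beta\ge 1-\epsilon$, this gives $w(x,p,r)\le\lambda^{-K-2}(|x-p|/r)^{1-\epsilon}$. This is exactly $\mathbf{LHMD}(1-\epsilon)$ with $M_{1-\epsilon}=\lambda^{-K-2}$, which depends only on $d$, $\epsilon$ (through $\tau$ and $\lambda$) and $R_\infty/R_0$, as claimed.

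The delicate point is the one-scale lemma — above all, obtaining the boundary Harnack constant $N(d)$ uniformly in $\delta\in(0,1/4]$, which reduces to the fact that the sliced balls $H_\delta$ have a uniformly bounded Lipschitz/NTA character, together with the correct use of the comparison principle for PWB solutions on each connected component of $\Omega\cap B_\rho(p)$. Once this is settled, the parameter selection and the telescoping iteration via Lemma~\ref{220621237}.(3) are routine.
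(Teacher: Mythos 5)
Your proposal is correct in outline and reaches the right conclusion, but it departs from the paper's proof at the crucial one-scale step, and that departure introduces a dependence you acknowledge but do not fully resolve. The paper's Lemma~\ref{21.08.24.1} builds an \emph{explicit} superharmonic barrier for each connected component $\Omega_{p,R}^{(i)}$: a radial profile $F_0(4R^{-1}|\cdot-q|)/F_0(2)$ centered at the shifted point $q=p+R(\delta+1/4)e^{(i)}_{p,R}$, patched to $1$ outside $B(p,R)$ via Lemma~\ref{21.05.18.1}. This gives $w(\cdot,p,R)\le M(d)\,\delta$ on $\Omega\cap B(p,\delta R)$ with a constant computed by hand, with no appeal to the boundary Harnack principle. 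You instead dominate $w(\cdot,p,\rho)$ by the harmonic measure $\phi=w(\cdot,H_\delta,\Gamma_\delta)$ of the sliced ball $H_\delta=B_1\cap\{x_1<\delta\}$ and then invoke Lemma~\ref{21.10.18.4} to compare $\phi$ with $\delta-x_1$. That works, \emph{but} Lemma~\ref{21.10.18.4} as stated gives a constant $N(D,A,U)$ attached to a fixed triple; to get $N(d)$ uniform over $\delta\in(0,1/4]$ you need a quantitative boundary Harnack principle in which $N$ depends only on the Lipschitz (or NTA) character of $H_\delta$. That is a standard and true strengthening, but it is not what the paper's Lemma~\ref{21.10.18.4} supplies, and supplying it is precisely the extra work the paper's barrier construction is designed to avoid. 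The remainder of your argument is sound and actually organizes the iteration a bit more cleanly than the paper: where the paper rescales so that $R_\infty=1$ and then treats five cases for $(r,|x-p|)$ relative to $R_0/R_\infty$ and $1$, you count the ``bad'' scales $j$ with $R_0<\tau^j r<R_\infty$ (at most $\lfloor\log_{1/\tau}(R_\infty/R_0)\rfloor+1$ of them) and absorb them into the constant; the two accountings are equivalent and both correctly make $M_{1-\epsilon}$ depend on $R_0/R_\infty$. You also decouple the scale factor $\tau$ from the flatness $\delta$, whereas the paper ties them ($\tau=\delta$); this is a harmless extra degree of freedom. So: same skeleton (one-scale decay plus Lemma~\ref{220621237}.(3) iteration), genuinely different engine for the one-scale estimate, and the only real gap is the uniform BHP constant, which you flag yourself; if you want to stay within the toolkit of the paper, replace your BHP argument with the explicit radial barrier of Lemma~\ref{21.08.24.1}.
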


		\begin{thm}\label{22.07.17.109}
			For any $p\in(1,\infty)$ and $\theta\in\bR$ with
			$-p-1<\theta-d<-1$, there exists $\delta>0$, depending only on $d$, $p$, and $\theta$, such that if $\Omega$ satisfies $\langle\mathrm{TER}\rangle_\delta$, then Statement \ref{22.02.19.1},$(\Omega,p,\theta)$ holds.
			In addition, $N_1$ in \eqref{2205241155} depends only on $d$, $p$, $\gamma$, $\theta$, and $R_0/R_\infty$ (where $R_0$ and $R_\infty$ are the constants in \eqref{220916111}).
			In particular, if $\Omega$ satisfies $\langle\mathrm{TVER}\rangle$, then Statement \ref{22.02.19.1} $(\Omega,p,\theta)$ holds for all $p\in(1,\infty)$ and $\theta\in\bR$ with $-p-1<\theta-d<-1$.
			
		\end{thm}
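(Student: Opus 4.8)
The plan is to obtain Theorem \ref{22.07.17.109} as a direct consequence of Theorem \ref{231217511} (the $\mathbf{LHMD}$ property for domains satisfying $\langle\mathrm{TER}\rangle_\delta$ with $\delta$ small) together with Theorem \ref{22.02.19.3} (weighted solvability under $\mathbf{LHMD}$). The only real work is to match the admissible parameter ranges, after which everything is bookkeeping of constants.

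First, given $p\in(1,\infty)$ and $\theta$ with $-p-1<\theta-d<-1$, I would choose $\alpha\in(0,1)$ so close to $1$ that
\[
\alpha>\theta-d+2\qquad\text{and}\qquad \alpha>\frac{d-2-\theta}{p-1}.
\]
This is possible because $\theta-d+2<1$ (from $\theta-d<-1$) and $\tfrac{d-2-\theta}{p-1}<1$ (from $\theta-d>-p-1$, i.e. $d-2-\theta<p-1$), so the two lower bounds are both $<1$ and the interval of admissible $\alpha$ in $(0,1)$ is non-empty. These two inequalities are exactly $d-2-(p-1)\alpha<\theta<d-2+\alpha$, which is the hypothesis \eqref{240127120} of Theorem \ref{22.02.19.3} for the exponent $\alpha$. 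Set $\epsilon:=1-\alpha\in(0,1)$; note $\epsilon$ depends only on $d,p,\theta$.

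Next, I would invoke Theorem \ref{231217511} with this $\epsilon$: there is $\delta>0$, depending only on $d$ and $\epsilon$ (hence only on $d,p,\theta$), such that whenever $\Omega$ satisfies $\langle\mathrm{TER}\rangle_\delta$, $\Omega$ satisfies $\mathbf{LHMD}(1-\epsilon)=\mathbf{LHMD}(\alpha)$, with the constant $M_\alpha$ in \eqref{21.08.03.1111} controlled by $d$, $\epsilon$, $\delta$, and the ratio $R_0/R_\infty$ from \eqref{220916111}. Applying Theorem \ref{22.02.19.3} with this $\alpha$ then yields Statement \ref{22.02.19.1} $(\Omega,p,\theta)$, the constant $N_1$ in \eqref{2205241155} depending only on $d,p,\gamma,\theta,\alpha,M_\alpha$, i.e. only on $d,p,\gamma,\theta$ and $R_0/R_\infty$. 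Finally, the $\langle\mathrm{TVER}\rangle$ case is immediate: such a domain satisfies $\langle\mathrm{TER}\rangle_\delta$ for every $\delta>0$, in particular for the $\delta$ produced above for each fixed pair $(p,\theta)$, so Statement \ref{22.02.19.1} $(\Omega,p,\theta)$ holds for all $p\in(1,\infty)$ and all $\theta$ with $-p-1<\theta-d<-1$.

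The main obstacle does not lie in this deduction at all — it is entirely contained in Theorem \ref{231217511}, whose proof (a Reifenberg-flatness barrier and harmonic-measure comparison argument, reduced to the decay already established in Theorem \ref{21.11.08.1} via Lemma \ref{22.02.18.4}) is the substantive step and is assumed here. Within the present argument the only point requiring care is the elementary verification above that the interval of admissible $\alpha$ is non-empty, which is precisely the translation of the hypothesis $-p-1<\theta-d<-1$; one should also keep track of the dependence of $\delta$ on $\epsilon$, and hence on $\theta$, so that for $\langle\mathrm{TVER}\rangle$ domains a single fixed domain indeed serves every admissible $(p,\theta)$.
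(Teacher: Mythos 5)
Your proposal is correct and takes essentially the same route as the paper: choose $\alpha=1-\epsilon$ so that $d-2-(p-1)\alpha<\theta<d-2+\alpha$ (which is precisely the paper's condition $-p-1+(p-1)\epsilon<\theta-d<-1-\epsilon$), invoke Theorem \ref{231217511} to get $\mathbf{LHMD}(1-\epsilon)$ with $\delta$ depending only on $d,\epsilon$ (hence $d,p,\theta$), and then apply Theorem \ref{22.02.19.3}. Your extra check that the admissible interval for $\alpha$ is non-empty is a harmless elaboration of what the paper leaves implicit, and the closing remark about $\langle\mathrm{TVER}\rangle$ is exactly the paper's reasoning.
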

		\begin{proof}
			Take $\epsilon\in(0,1)$ such that
			\begin{align}\label{231217531}
			-p-1+(p-1)\epsilon<\theta-d<-1-\epsilon\,,
			\end{align}
			and put $\delta$ as the constant in Theorem \ref{231217511} for this $\epsilon$.
			Consider a domain $\Omega$ satisfying $\langle\mathrm{TER}\rangle_\delta$.
			By Theorem \ref{231217511}, this $\Omega$ satisfies $\mathbf{LHMD}(1-\epsilon)$.
			Therefore Theorem \ref{22.02.19.3} and \eqref{231217531} imply that Statement \ref{22.02.19.1} $(\Omega,p,\theta)$ holds with $N_1=N(d,p,\gamma,\theta,R_0/R_\infty)$.
		\end{proof}
	
		\begin{remark}
		Kenig and Toro \cite[Lemma 2.1]{KenigToro} proved that, if a bounded domain satisfies the vanishing Reifenberg condition, then it also satisfies $\mathbf{LHMD}(1-\epsilon)$ for all $\epsilon\in(0,1)$.
		\end{remark}
 		
		To prove Theorem \ref{231217511}, we need the following lemma:
		
		\begin{lemma}\label{21.08.24.1}
			If $(\delta,R)\in\mathbf{ER}_{\Omega}$, then there exists a continuous function $w_{p,R}:\Omega\rightarrow (0,1]$ satisfying the following:
			\begin{enumerate}
				\item $w_{p,R}$ is a classical superharmonic function on $\Omega$\,.
				\item $w_{p,R}=1\text{ on }\{x\in\Omega\,:\,|x-p|>(1-\delta)R\}$\,.
				\item $w_{p,R}\leq M\delta$ in $\Omega\cap B(p,\delta R)$\,.
			\end{enumerate}
			Here, $M$ is a constant depending only on $d$ (in particular, independent of $\delta$).
		\end{lemma}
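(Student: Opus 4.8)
The plan is to build $w_{p,R}$ by gluing the constant $1$, on the part of $\Omega$ away from $p$, to a suitable harmonic measure on each connected component of $\Omega\cap B_R(p)$, and then to estimate that harmonic measure near $p$ by reflection. We may assume $0\le\delta<\delta_0$ for a small dimensional constant $\delta_0$, since for $\delta\ge\delta_0$ the constant $w_{p,R}\equiv1$ already satisfies (1), (2), and also (3) once $M\ge1/\delta_0$. Since $(\delta,R)\in\mathbf{ER}_\Omega$, every connected component $\Omega_{p,R}^{(i)}$ of $\Omega\cap B_R(p)$ admits a unit vector $e^{(i)}:=e_{p,R}^{(i)}$ with $\Omega_{p,R}^{(i)}\subset H^{(i)}:=\{x:(x-p)\cdot e^{(i)}<\delta R\}$ (Definition~\ref{2209151117}.(1)). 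Put $D^{(i)}:=B_{(1-\delta)R}(p)\cap H^{(i)}$, a bounded Lipschitz domain, and let $\omega^{(i)}$ be its harmonic measure with boundary data $1$ on the open spherical portion $\Sigma^{(i)}:=\partial B_{(1-\delta)R}(p)\cap H^{(i)}$ and $0$ on the open flat disk $F^{(i)}:=\{(x-p)\cdot e^{(i)}=\delta R\}\cap B_{(1-\delta)R}(p)$; then $\omega^{(i)}$ is harmonic on $D^{(i)}$ with $0<\omega^{(i)}<1$, continuous up to $\Sigma^{(i)}$ with value $1$ and up to $F^{(i)}$ with value $0$. Define
\[
w_{p,R}(x):=\begin{cases}1,&x\in\Omega,\ |x-p|\ge(1-\delta)R,\\[2pt]\omega^{(i)}(x),&x\in\Omega_{p,R}^{(i)}\cap B_{(1-\delta)R}(p).\end{cases}
\]
This is well defined on all of $\Omega$ (the two cases exhaust $\Omega$ and are mutually exclusive) and takes values in $(0,1]$, and (2) holds by construction.

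For (1), I would first check that, for each $i$, the function $v_i$ equal to $1$ on $\Omega\setminus\bigl(\Omega_{p,R}^{(i)}\cap B_{(1-\delta)R}(p)\bigr)$ and to $\omega^{(i)}$ on $\Omega_{p,R}^{(i)}\cap B_{(1-\delta)R}(p)$ is classical superharmonic on $\Omega$. This follows from Lemma~\ref{21.05.18.1}.(3) applied with $\Omega_1=\Omega$, $\phi_1\equiv1$, $\Omega_2=\Omega_{p,R}^{(i)}\cap B_{(1-\delta)R}(p)$, $\phi_2=\omega^{(i)}$ (note $\omega^{(i)}<1$, so the minimum on $\Omega_1\cap\Omega_2$ is $\omega^{(i)}$): its only nontrivial hypothesis is $\liminf_{x\to x_1,\,x\in\Omega_2}\omega^{(i)}(x)\ge1$ for $x_1\in\Omega\cap\partial\Omega_2$, and the key point is the topological fact that such $x_1$ must lie in the open spherical portion $\Sigma^{(i)}$, where $\omega^{(i)}$ is continuous with value $1$. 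Indeed, since a connected component of the open set $\Omega\cap B_R(p)$ is relatively closed in it, $\partial\Omega_{p,R}^{(i)}\cap\Omega\subset\partial B_R(p)$; combined with $\overline{\Omega_2}\subset\overline{B_{(1-\delta)R}(p)}$ this forces $x_1\in\partial B_{(1-\delta)R}(p)$ with $(x_1-p)\cdot e^{(i)}\le\delta R$, and the value $(x_1-p)\cdot e^{(i)}=\delta R$ is excluded because then a ball around $x_1\in\Omega$ would be contained in $\Omega_{p,R}^{(i)}$ yet would meet $\{(x-p)\cdot e^{(i)}>\delta R\}$, contradicting $\Omega_{p,R}^{(i)}\subset H^{(i)}$. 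Since the components are pairwise disjoint, $w_{p,R}=\inf_i v_i$, which is superharmonic on $\Omega$ by Lemma~\ref{21.05.18.1}.(2); the explicit piecewise formula shows directly that $w_{p,R}$ is continuous on $\Omega$, and a continuous function that is superharmonic in the sense of distributions is classical superharmonic (Lemma~\ref{240315329} together with the fact that a classical superharmonic function equals the limit of its solid averages).

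It remains to prove (3), which is the heart of the argument. Fix $x\in\Omega\cap B_{\delta R}(p)$; then $x$ lies in a unique $\Omega_{p,R}^{(i)}$ with $|x-p|<(1-\delta)R$, so $w_{p,R}(x)=\omega^{(i)}(x)$, and I claim $\omega^{(i)}(x)\le M\delta$ with $M=M(d)$. Because $\omega^{(i)}$ is harmonic on $D^{(i)}$ and vanishes continuously on the open disk $F^{(i)}$, the Schwarz reflection principle across the plane $\{(x-p)\cdot e^{(i)}=\delta R\}$ extends it to a harmonic function $\widetilde\omega$ on the open ``lens'' $D^{(i)}\cup F^{(i)}\cup D^{(i)\ast}$ (here $D^{(i)\ast}$ is the reflection of $D^{(i)}$), with $|\widetilde\omega|\le1$ and $\widetilde\omega\equiv0$ on $F^{(i)}$. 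For $\delta<\delta_0$ small enough (dimensionally) this lens contains $B_{3\delta R}(p)$, and every point of $B_{3\delta R}(p)$ lies at distance $\ge R/2$ from the boundary of the lens (which is contained in $\partial B_{(1-\delta)R}(p)\cup\partial B_{(1-\delta)R}(p^{\ast})$, $p^{\ast}$ being the reflection of $p$). Let $q_x$ be the orthogonal projection of $x$ onto $F^{(i)}$, so $|x-q_x|=\delta R-(x-p)\cdot e^{(i)}<2\delta R$ and the segment $[x,q_x]$ stays inside $B_{3\delta R}(p)$; the interior (Cauchy) estimate for harmonic functions then gives $|\nabla\widetilde\omega|\le 2C(d)/R$ along that segment, whence
\[
\omega^{(i)}(x)=\widetilde\omega(x)-\widetilde\omega(q_x)\le\frac{2C(d)}{R}\,|x-q_x|<4C(d)\,\delta .
\]
Taking $M:=\max\bigl(4C(d),\,1/\delta_0\bigr)$ completes the proof. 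The step I expect to be the main obstacle is precisely this bound with a constant independent of $\delta$: a naive affine or radial supersolution on $D^{(i)}$ does not dominate $\omega^{(i)}$ near the rim $\overline{\Sigma^{(i)}}\cap\overline{F^{(i)}}$, so instead of a comparison barrier one uses reflection across the flat face together with the interior gradient estimate, which also sidesteps the $\delta$-dependent geometry of $D^{(i)}$.
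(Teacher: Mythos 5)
Your proof is correct, but it takes a genuinely different route from the paper's. The paper builds, on each component $\Omega_{p,R}^{(i)}$, an \emph{explicit} radial barrier $W^{(i)}(x)=F_0\big(4R^{-1}|x-q|\big)/F_0(2)$ (with $F_0(t)=\log t$ for $d=2$ and $F_0(t)=1-t^{2-d}$ for $d\geq 3$), centered not at $p$ but at the displaced exterior point $q:=p+R(\delta+\tfrac14)e^{(i)}_{p,R}$. Since $B(q,R/4)$ lies in the exterior half-space $\{(x-p)\cdot e^{(i)}_{p,R}\geq\delta R\}$, $W^{(i)}$ is harmonic on the whole component, vanishes on $\partial B(q,R/4)$, and grows essentially linearly from that sphere; a one-line estimate of $F_0$ then gives $W^{(i)}\lesssim_d\delta$ on $\Omega\cap B(p,\delta R)$ and $W^{(i)}\geq1$ for $|x-p|\geq(1-\delta)R$, and $w_{p,R}$ is obtained from $W^{(i)}\wedge1$ by gluing with the constant $1$ via Lemma~\ref{21.05.18.1}. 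You instead solve a Dirichlet problem: you take $\omega^{(i)}$ to be the harmonic measure of the spherical cap over the half-ball $D^{(i)}$ and estimate it near $p$ by Schwarz reflection across the flat face together with the interior gradient bound on the resulting lens. Both arguments are valid and yield $M=M(d)$; the paper's is more elementary, avoiding PWB solvability and boundary regularity in $D^{(i)}$, the reflection principle, and the detour through Lemma~\ref{240315329} to upgrade a distributional superharmonic infimum back to a classical one. Your closing remark that a radial supersolution on $D^{(i)}$ ``does not dominate'' near the rim is accurate for functions radial about $p$, but the paper sidesteps this exactly by making the barrier radial about the shifted center $q$: with that shift a single radial comparison function does suffice, which is what makes the published argument shorter.
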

		\begin{proof}[Proof of Lemma \ref{21.08.24.1}]
			If $\delta>1/8$, then by putting $w_{p,R}\equiv 1$ and $M=8$, this lemma is proved.
			Therefore it suffices to consider the case $\delta\leq 1/8$.
			For a fixed $p\in\partial\Omega$, let $\big\{\Omega^{(i)}_{p,R}\big\}$ be the set of all connected components of $\Omega\cap B(p,R)$.
			For each $i$, take a unit vector $e_{p,R}^{(i)}$ satisfying \eqref{22.02.26.4}.
			Put
			\begin{align}\label{230116526}
				q=p+R(\delta+1/4)e^{(i)}_{p,R}\,,
			\end{align}
			so that 
			\begin{align}\label{220920414}
				|p-q|=R(\delta+1/4)\quad\text{and}\quad \Omega_{p,R}^{(i)}\cap B(q,R/4)\neq \emptyset
			\end{align}
			(see Figure \ref{230116528} below).
			
			\begin{figure}[h]
				\begin{tikzpicture}[>=Latex]
					
					\begin{scope}[scale=0.7]
						\clip (0,0) circle (3.3);		
						
						\begin{scope}[rotate=90]	
							\begin{scope}
								\clip[shift={(-0.15,0)}](-0.2,-{sqrt(10)}) .. controls +(0.1,0.5) and +(0,-.5) .. (0.3,-1.5) .. controls +(0,0.5) and +(0.1,-0.5) .. (0.06,-0.6) .. controls +(-0.1,0.5) and +(0,-0.5) .. (0,0) .. controls +(0,0.5) and +(-0.05,-0.5) .. (0.06,0.6).. controls +(0.05,0.5) and +(0,-0.4) .. (0.25,1.5).. controls +(0,+0.4) and +(-0.2,-0.75) .. (.25,{sqrt(10)}) arc({atan(3)}:270:{sqrt(11)});
								\clip (0,0) circle (3);
								\foreach \i in {-7,-6.5,...,0}	
								{\draw (\i,-3.5)--(\i+4,3.5);}
							\end{scope}
							
							\draw[decoration={Koch, Koch angle=20, Koch order=3}] 
							decorate {(0,0) -- (0.06,0.6)};
							\draw[decoration={Koch, Koch angle=15, Koch order=3}] 
							decorate {(0.25,1.5)--(0.06,0.6)};
							\draw[decoration={Koch, Koch angle=20, Koch order=3}] 
							decorate {(0.25,1.5) -- (0.25,{sqrt(10)})};
							\draw[decoration={Koch, Koch angle=24, Koch order=3}] 
							decorate {(0.06,-0.6)--(0,0)};
							\draw[decoration={Koch, Koch angle=20, Koch order=3}] 
							decorate {(0.06,-0.6) -- (0.3,-1.5)};
							\draw[decoration={Koch, Koch angle=15, Koch order=3}] 
							decorate {(0.3,-1.5) -- (-0.2,-{sqrt(10)})};
							
							\draw (0,0) circle ({sqrt(10)});

							\draw[fill=black] (0,0) circle (0.06);
							
							\draw (1.15,0) circle (0.75);
							\draw[fill=black] (1.15,0) circle (0.06);	

						\end{scope}
						

						
					\end{scope}
					
					\begin{scope}

						\fill[white] (-1,-1.2) circle (0.5);
						\draw (-1,-1.2) node {$\Omega_{p,R}^{(i)}$};
						
						\fill[white] (-0.14,-0.25) circle (0.23);
						
						\draw (-0.1,-0.25) node {$p$};
						
						\draw (-0.14,1) node {$q$};
						
						\draw[dashed] (-4,0.28) -- (0,0.28);
						\draw[dashed] (4,0.28) -- (0,0.28);	
						\draw[dashed] (4,0)-- (0,0) ;	
						\draw[->] (3,0)--(3,1.7);
						\draw (3.6,1) node {$e_{p,R}^{(i)}$};
					\end{scope}
				\end{tikzpicture}
				\caption{$q$ and $B(q,R/4)$  in \eqref{230116526}, \eqref{220920414}}\label{230116528}
			\end{figure}
			Put $W^{(i)}(x)=F_0\big(4R^{-1}|x-q|\big)/F_0(2)$, where
			\begin{align}\label{230104814}
			\begin{split}
			F_0(t)=\log(t)\quad\text{if}\quad d=2\quad;\quad F_0(t)=1-t^{2-d}\quad\text{if}\quad d\geq 3\,,
			\end{split}
			\end{align}
			so that $\Delta W^{(i)}=0$ on $\bR^d\setminus \{q\}$.
			Observe that
			\begin{alignat*}{2}
				0\leq W^{(i)}(x)\,&\leq M_0\big(4R^{-1}|x-q|-1\big)\quad&&\text{if}\,\,\,|x-q|\geq R/4\,;\\
				W^{(i)}(x)\,&\geq 1\quad&&\text{if}\,\,\,|x-q|\geq R/2\,,
			\end{alignat*}
			where $M_0$ is a constant depending only on $d$.
			By \eqref{220920414} and that $\delta<\frac{1}{8}$, for $x\in\Omega_{p,R}^{(i)}$,
			\begin{alignat*}{2}
				&\text{if\quad $|x-p|\leq\delta R$}\,\,,\,\,\,&&\text{then\quad$\frac{R}{4}\leq |x-q|\leq \frac{R}{4}+2\delta R$}\,;\\
				&\text{if\quad$|x-p|\geq (1-\delta)R$}\,\,,\,\,\,&&\text{then\quad$|x-q|\geq \frac{(3-8\delta)R}{4}\geq \frac{R}{2}$}\,.
			\end{alignat*}
			Therefore we obtain that
			\begin{alignat*}{2}
				 0\leq\, &W^{(i)}(x)\leq 8M_0\delta\quad&&\text{if}\,\,\,\,|x-p|\leq \delta R\,;\\
				 &W^{(i)}(x)\geq 1\quad &&\text{if}\,\,\,\,|x-p|\geq (1-\delta)R\,.
			\end{alignat*}
			Put
			\begin{align*}
				w_{p,R}(x)=
				\begin{cases}
					W^{(i)}(x)\wedge 1&\text{if}\,\,\,x\in\Omega^{(i)}_{p,R}\\
					1&\text{if}\,\,\,x\in\Omega\setminus B(p,R)\,.
				\end{cases}
			\end{align*}
			Then $w_{p,R}$ is continuous in $\Omega$, and satisfies (2) and (3) of this lemma.
			(1) of this lemma follows from \eqref{230104814} and Lemma \ref{21.05.18.1}.
		\end{proof}
		
		\begin{proof}[Proof of Theorem \ref{231217511}]
		Let $M>0$ be the constant in Lemma \ref{21.08.24.1}.
		For given $\epsilon\in (0,1)$, take small enough $\delta\in(0,1)$ such that $M\delta< \delta^{1-\epsilon}$.
		We assume that $\Omega$ satisfies \eqref{220916111} for this $\delta$.
		Using dilation and Lemma \ref{220918306}, without loss of generality, we assume that $(\delta,R)\in\mathbf{ER}_{\Omega}$ whenever $R\leq \widetilde{R}_0:=R_0/R_\infty\leq 1$ or $R\geq 1$.

		Note that for $(\delta,R)\in\mathbf{ER}_{\Omega}$, by
		Lemma~\ref{21.08.24.1} and the definition of the PWB solution \eqref{2301051056}, $w(\cdot,p,R)\leq M\delta\leq \delta^{1-\epsilon}$ on $\Omega\cap \partial B_{\delta R}(p)$.
		Therefore, by Lemma \ref{220621237}.(3),
		\begin{align}\label{240317424}
		w(\cdot,p,R)\leq \delta^{1-\epsilon}w(\cdot,p,\delta R)\quad\text{on}\quad \Omega\cap  B_{\delta R}(p)\,.
		\end{align}

		The proof is completed by establishing \eqref{21.08.03.1111} for $\alpha:=1-\epsilon$ and $M_{1-\epsilon}$ depending only on $\delta$ and $\widetilde{R}_0$.
		We prove \eqref{21.08.03.1111} by dividing $r$ and $|x-p|$ into the following five cases:
		
		\textbf{Case 1:} $r\leq \widetilde{R}_0$\textbf{.}
		Take $n_0\in\bN_0$ such that $\delta^{n_0+1}r\leq |x-p|< \delta^{n_0}r$.
		Since $(\delta,\delta^kr)\in \mathbf{ER}_{\Omega}$ for all $k\geq 0$, it follows from \eqref{240317424} that
		$$
		w(x,p,r) \leq \delta^{n_0(1-\epsilon)}w(x,p,\delta^{n_0} r)\leq \delta^{n_0(1-\epsilon)}\leq \bigg(\frac{|x-p|}{\delta \,r}\bigg)^{1-\epsilon}\,.
		$$
		
		\textbf{Case 2:} $|x-p|<\widetilde{R}_0< r\leq 1$\textbf{.}		
		By Lemmas \ref{220621237}.(1) and (3) and the result of Case 1, we have
		$$
		w(x,p,r)\leq w(x,p,\widetilde{R}_0)\lesssim_{\delta,\widetilde{R}_0} |x-p|^{1-\epsilon}\leq  \Big(\frac{|x-p|}{ r}\Big)^{1-\epsilon}
		$$
		
		\textbf{Case 3:} $\widetilde{R}_0\leq |x-p|<r\leq 1$\textbf{.}		
		It directly follows that
		$$
		w(x,p,r)\leq 1\leq \bigg(\frac{|x-p|}{\widetilde{R}_0\, r}\bigg)^{1-\epsilon}\,.
		$$
		
		\textbf{Case 4:} $|x-p|<1<r$\textbf{.}
		Take $n_0\in\bN_0$ such that $\delta^{n_0+1}r\leq 1< \delta^{n_0}r$.
		Then $(\delta,\delta^kr)\in \mathbf{ER}_{\Omega}$ for all $k=0,\,1,\,\ldots,\,n_0-1$.
		Therefore we have
		\begin{align*}
		w(x,p,r)\leq \delta^{n_0(1-\epsilon)}w(x,p,\delta^{n_0} r)\leq \delta^{n_0(1-\epsilon)}w(x,p,1)\lesssim_{\delta,\widetilde{R}_0} \delta^{n_0(1-\epsilon)}|x-p|^{1-\epsilon}\,,
		\end{align*}
		where the first inequality follows from \eqref{240317424}, the second follows from Lemmas \ref{220621237}.(1) and (3), and the last follows from the result in Cases 2 and 3.
		Since $\delta^{n_0}\leq  1/(\delta r)$, we have $w(x,p,r)\lesssim_{\delta,\widetilde{R}_0}\big(|x-p|/r\big)^{1-\epsilon}$.
		
		\textbf{Case 5:} $1\leq |x-p|<r$\textbf{.}		
		Take $n_0\in\bN_0$ such that $\delta^{n_0+1}r\leq |x-p|< \delta^{n_0}r$. Since $1<|x-p|$, we have $(\delta,\delta^kr)\in \mathbf{ER}_{\Omega}$ for all $k=0,\,1,\,\ldots,\,n_0-1$.
		This implies that
		$$
		w(x,p,r)\leq \delta^{n_0(1-\epsilon)}w(x,p,\delta^{n_0} r)\leq \delta^{n_0(1-\epsilon)}\leq \bigg(\frac{|x-p|}{\delta \,r}\bigg)^{1-\epsilon}\,.
		$$
		\end{proof}

		\appendix

		\mysection{Auxiliary results}\label{008}

		\begin{lemma}\label{21.04.23.4}
			Let $p\in(1,\infty)$ and $u\in C(\bR^d)$ satisfy \eqref{22.01.25.2}.
			
			\begin{enumerate}
				\item $|u|^{p/2-1}u\in W_2^1(\bR^d)$ and $D_i(|u|^{p/2-1}u)=\frac{p}{2}|u|^{p/2-1}(D_iu)1_{\{u\neq 0\}}$.
				
				\item  $|u|^p\in W_1^2(\bR^d)$ and 
				\begin{align}\label{240123949}
					\begin{split}
					D_i\big(|u|^p\big)\,&=p|u|^{p-2}uD_iu 1_{\{u\neq 0\}}\,\,;\\
					D_{ij}\big(|u|^p\big)\,&=\big(p|u|^{p-2}uD_{ij}u+p(p-1)|u|^{p-2}D_iuD_ju\big)\,1_{\{u\neq 0\}}\,.
					\end{split}	
			\end{align}
			\end{enumerate}
		\end{lemma}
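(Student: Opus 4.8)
The plan is to argue in two stages. The first stage establishes the a priori $L_{1}$-bound
$$
\int_{\{u\neq 0\}}|u|^{p-2}|Du|^{2}\dd x\;\le\;\frac{1}{p-1}\int_{\{u\neq 0\}}|u|^{p-1}|\Delta u|\dd x\;<\;\infty,
$$
which, together with the hypothesis in \eqref{22.01.25.2}, will furnish all the dominating functions needed afterward. The second stage identifies the distributional derivatives of $|u|^{p/2-1}u$ and $|u|^{p}$ by composing $u$ with smooth approximations of the scalar nonlinearities $t\mapsto|t|^{p/2-1}t$ and $t\mapsto|t|^{p}$ that vanish near $t=0$, differentiating by the classical chain rule on the open set $G:=\{x:u(x)\neq 0\}$ where $u\in C^{2}$, and letting the truncation parameter tend to zero via dominated convergence. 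The one structural fact used repeatedly is that $G$ and $\{|u|<\delta\}$ are open sets covering $\bR^{d}$, so any function that is $C^{k}$ on $G$ and identically zero on $\{|u|<\delta\}$ is $C^{k}$ on all of $\bR^{d}$; in particular every truncated object below will genuinely belong to $C^{2}_{c}(\bR^{d})$.

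\textbf{Stage 1.} Fix a nondecreasing $\chi\in C^{\infty}([0,\infty))$ with $\chi\equiv 0$ on $[0,1]$ and $\chi\equiv 1$ on $[2,\infty)$, and set $\chi_{\delta}(s):=\chi(s/\delta)$. On $G$ I consider the vector field $V_{\delta}:=\chi_{\delta}(|u|)\,|u|^{p-2}u\,Du$, extended by zero to $\bR^{d}$. Its coefficient $t\mapsto\chi_{\delta}(|t|)|t|^{p-2}t$ is smooth and vanishes on $[-\delta,\delta]$, and $\operatorname{supp}V_{\delta}\subset\{|u|\ge\delta\}$ is a compact subset of $G$; hence $V_{\delta}\in C^{1}_{c}(\bR^{d})$ and $\int_{\bR^{d}}\operatorname{div}V_{\delta}\dd x=0$. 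Computing $\operatorname{div}V_{\delta}$ on $G$ gives
$$
0=(p-1)\int_{G}\chi_{\delta}(|u|)|u|^{p-2}|Du|^{2}\dd x+\int_{G}\chi_{\delta}'(|u|)|u|^{p-1}|Du|^{2}\dd x+\int_{G}\chi_{\delta}(|u|)|u|^{p-2}u\,\Delta u\dd x.
$$
The first two integrands are nonnegative and, being continuous with compact support, integrable, while the last integral is bounded in modulus by $\int_{G}|u|^{p-1}|\Delta u|\dd x<\infty$. Discarding the nonnegative second term and letting $\delta\downarrow 0$ (so $\chi_{\delta}(|u|)\uparrow 1_{G}$, by monotone convergence) yields the displayed a priori bound, whose right-hand side is at most $C_{d}\int_{G}|u|^{p-1}|D^{2}u|\dd x<\infty$.

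\textbf{Stage 2.} For part (1), pick smooth odd $\Phi_{\delta}$ with $\Phi_{\delta}(t)=|t|^{p/2-1}t$ for $|t|\ge 2\delta$, $\Phi_{\delta}\equiv 0$ on $[-\delta,\delta]$, and $|\Phi_{\delta}'(t)|\le C_{p}|t|^{p/2-1}$ (feasible, since the interpolation on $\delta\le|t|\le 2\delta$ changes height $\sim\delta^{p/2}$ over length $\sim\delta$). Then $\Phi_{\delta}(u)\in C^{2}_{c}(\bR^{d})$ with $D_{i}(\Phi_{\delta}(u))=\Phi_{\delta}'(u)D_{i}u$ on $G$ and $0$ elsewhere, so $\int\Phi_{\delta}(u)D_{i}\varphi\dd x=-\int\Phi_{\delta}'(u)D_{i}u\,\varphi\dd x$ for $\varphi\in C_{c}^{\infty}(\bR^{d})$. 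As $\delta\downarrow 0$, $\Phi_{\delta}(u)\to|u|^{p/2-1}u$ uniformly, while $\Phi_{\delta}'(u)D_{i}u\,1_{G}\to\tfrac{p}{2}|u|^{p/2-1}D_{i}u\,1_{G}$ pointwise with $|\Phi_{\delta}'(u)D_{i}u|\le C_{p}\bigl(|u|^{p-2}|Du|^{2}\bigr)^{1/2}1_{G}\in L_{1}$ (using that $u$ has compact support and Stage 1, via Cauchy--Schwarz). Dominated convergence then identifies $D_{i}(|u|^{p/2-1}u)=\tfrac{p}{2}|u|^{p/2-1}(D_{i}u)1_{\{u\neq 0\}}$, which lies in $L_{2}$ by Stage 1, while $|u|^{p/2-1}u\in L_{2}$ since its square $|u|^{p}$ is bounded with compact support; hence $|u|^{p/2-1}u\in W^{1}_{2}(\bR^{d})$. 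Part (2) is entirely analogous: start from smooth even $F_{\delta}$ with $F_{\delta}(t)=|t|^{p}$ for $|t|\ge 2\delta$, $F_{\delta}\equiv 0$ on $[-\delta,\delta]$, $|F_{\delta}'(t)|\le C_{p}|t|^{p-1}$, $|F_{\delta}''(t)|\le C_{p}|t|^{p-2}$, so that $F_{\delta}(u)\in C^{2}_{c}(\bR^{d})$ and the classical chain rule gives $D_{i}(F_{\delta}(u))=F_{\delta}'(u)D_{i}u$ and $D_{ij}(F_{\delta}(u))=F_{\delta}''(u)D_{i}uD_{j}u+F_{\delta}'(u)D_{ij}u$ on $G$; after one, respectively two, integrations by parts against $\varphi\in C_{c}^{\infty}(\bR^{d})$, the limit $\delta\downarrow 0$ with $L_{1}$ majorants built from Stage 1 (controlling $|u|^{p-2}|Du|^{2}$), the hypothesis (controlling $|u|^{p-1}|D^{2}u|$), and the boundedness and compact support of $u$ (via $|u|^{p-1}|Du|\le\tfrac12|u|^{p}+\tfrac12|u|^{p-2}|Du|^{2}$) yields exactly \eqref{240123949}; since every term on the right of \eqref{240123949} then lies in $L_{1}$, we conclude $|u|^{p}\in W^{2}_{1}(\bR^{d})$.

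\textbf{Main obstacle.} The crux is Stage 1: one cannot integrate by parts directly on $\{u\neq 0\}$, whose boundary need not be regular, and $|u|^{p-2}|Du|^{2}$ is not a priori known to be integrable, so the computation must be localized away from the zero set. The cutoff $\chi_{\delta}(|u|)$ does precisely this and, crucially, makes $V_{\delta}$ a genuine $C^{1}_{c}(\bR^{d})$ field; the extra term it introduces (the one carrying $\chi_{\delta}'$) enters with the same sign as the quantity being estimated and so may simply be discarded, which is what makes the resulting bound uniform in $\delta$. Everything in Stage 2 is then routine, the only minor technical point being the construction of the profiles $\Phi_{\delta},F_{\delta}$ with the stated derivative bounds.
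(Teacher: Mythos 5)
Your proof is correct and follows essentially the same strategy as the paper's: truncate the scalar nonlinearity away from $u=0$, apply the classical chain rule on the open set $\{u\neq 0\}$ where $u\in C^2_{\mathrm{loc}}$, integrate by parts, and pass to the limit by monotone/dominated convergence. The only cosmetic difference is in how the truncation is arranged: the paper takes an increasing sequence $g_n\nearrow |s|^{p/2-1}1_{\{s\neq 0\}}$ vanishing near the origin and works with the exact antiderivatives $F_n=\int_0^{\cdot} g_n$ and $G_n=\int_0^{\cdot} g_n^2$, so that the integration by parts in Stage 1 produces no cross-term; you instead multiply the unsmoothed nonlinearity by a cutoff $\chi_\delta(|u|)$, which creates an extra $\chi_\delta'$ term that you correctly observe is nonnegative and may be discarded.
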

		
		\begin{proof}
			This proof is a variant of \cite[Lemma 2.17]{Krylov1999-1}.
			Take a sequence of nonnegative functions $\{g_n\}_{n\in\bN}\subset C(\bR)$ such that $g_n=0$ on a neighborhood of $0$ for each $n\in\bN$, and $g_n(s) \nearrow |s|^{p/2-1}1_{s\neq 0}$ for all $s\in\bR$.
			Put 
			\begin{align*}
				F_n(t):=\,\,\int^t_0g_n(s)\dd s\quad,\quad 	G_n(t):=\int_0^t\big(g_n(s)\big)^2\dd s\,.
			\end{align*}
			
			Recall assumption \eqref{22.01.25.2}, and denote $A=\sup|u|$.
			Since $0\leq g_n(s)\leq |s|^{p/2-1}$, the Lebesgue dominated convergence theorem implies that $F_n(t)\rightarrow \,\frac{2}{p}\,|t|^{p/2-1}t$ and $G_n(t)\rightarrow  \,\frac{1}{p-1}|t|^{p-2}t$
			uniformly for $t\in[-A,A]$.
			Furthermore, their absolute values increase as $n\rightarrow \infty$.
			Since $F_n\big(u(\,\cdot\,)\big)$ and $G_n\big(u(\,\cdot\,)\big)$ vanish on a neighborhood of $\{u=0\}$, they are supported in a compact subset of $\{u\neq 0\}$, and continuously differentiable with
			$$
			D_i\big(F_n(u)\big)=g_n(u)D_iu\,1_{\{u\neq 0\}}\quad\text{and}\quad D_i\big(G_n(u)\big)=\big(g_n(u)\big)^2D_iu\,1_{\{u\neq 0\}}\,.
			$$
			
			(1) Integrate by parts to obtain
			\begin{align*}
				\int_{\bR^d}|g_n(u)\nabla u \,1_{\{u\neq 0\}}|^2\dd x\,&=-\int_{\bR^d}G_n(u)\Delta u \,1_{\{u\neq 0\}}\dd x\\
				&\leq \frac{1}{p-1}\int_{\{u\neq 0\}}|u|^{p-1}|\Delta u|\dd x\,.
			\end{align*}
			By the monotone convergence theorem, we have $|u|^{p/2-1}|\nabla u|\in L_2(\bR^d)$.
			We denote $v=\frac{2}{p}|u|^{p/2-1}u$.
			For any $\zeta\in C_c^{\infty}(\bR^d)$, we have
			\begin{align*}
				-\int_{\bR^d} v\cdot D_i\zeta \dd x&=-\lim_{n\rightarrow\infty}\int_{\bR^d} F_n(u)\cdot D_i\zeta\dd x\\
				&=\lim_{n\rightarrow\infty}\int_{\{u\neq 0\}} g_n(u)D_iu\cdot \zeta\dd x=\int_{\{u\neq 0\}} |u|^{p/2-1}D_iu\cdot \zeta \dd x\,.
			\end{align*}
			Here, the first and the last equalities follow from the Lebesgue dominated convergence theorem, because 
			$|F_n(u)|\leq |v|$ and $|g_n(u)D_i u|\leq |u|^{p/2-1}|\nabla u|\in L_2(\bR^d)$.
			Therefore $v\in W_2^1(\bR^d)$ and $D_i v=|u|^{p/2-1}D_i u\,1_{\{u\neq 0\}}$.
			
			(2) It follows from (1) of this lemma that $|u|^p\in W_1^1(\bR^d)$ with $D_i\big(|u|^p\big)=p|u|^{p-2}u D_iu 1_{u\neq 0}$.
			For any $\zeta\in C_c^{\infty}(\Omega)$, we have
			\begin{align*}
				&\frac{1}{p-1}\int_{\{u\neq 0\}}|u|^{p-2}uD_iu \cdot D_j\zeta \dd x\\
				=&\,\lim_{n\rightarrow \infty}\int_{\bR^d} G_n(u)D_iu \cdot D_j\zeta \dd x\\
				=&\,-\lim_{n\rightarrow \infty}\int_{\bR^d} \Big(|g_n(u)|^2 D_iu D_ju+G_n(u)D_{ij}u\Big)\zeta \dd x\\
				=&\,-\int_{\{u\neq 0\}} \Big(|u|^{p-2}D_i u D_ju+\frac{1}{p-1}|u|^{p-2}u D_{ij}u1_{\{u\neq 0\}}\Big)\zeta \dd x\,.
			\end{align*}
			Here, the first and last equalities follow from the Lebesgue dominated convergence theorem, since $|G_n(u)|\leq \frac{1}{p-1}|u|^{p-1}$ and $|g_n(u)|\leq |u|^{p/2-1}$ (see the assumption on $u$ and (1) of this lemma). 
			Therefore $|u|^p\in W_1^2(\bR^d)$ with \eqref{240123949}.
\end{proof}

		\begin{lemma}\label{22.02.16.1}
	There exist linear maps 
	$$
	\Lambda_0\,:\,\Psi H_{p,\theta}^{\gamma}\rightarrow \Psi H_{p,\theta}^{\gamma+1}(\Omega)\quad\text{and}\quad \,\Lambda_1,\,\ldots,\,\Lambda_d:\Psi H_{p,\theta}^{\gamma}\rightarrow \Psi H_{p,\theta-p}^{\gamma+1}(\Omega)
	$$
	such that for any $f\in \Psi H_{p,\theta}^{\gamma}(\Omega)$, $f=\Lambda_0 f+\sum_{i=1}^dD_i(\Lambda_if)$ and 
	\begin{align}\label{2301021031}
		\begin{gathered}
			\|\Lambda_0 f\|_{\Psi H_{p,\theta}^{\gamma+1}(\Omega)}+\sum_{i=1}^d\|\Lambda_i f\|_{\Psi H_{p,\theta-p}^{\gamma+1}(\Omega)}\leq N\|f\|_{\Psi H_{p,\theta}^{\gamma}(\Omega)}\,,
		\end{gathered}
	\end{align}	
where $N=N(d,p,\gamma,\theta,\mathrm{C}_2(\Psi))$.
\end{lemma}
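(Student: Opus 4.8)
The plan is to construct the maps $\Lambda_i$ explicitly by solving an auxiliary elliptic-type problem at the level of the Bessel potential operators on $\bR^d$, and then to glue the pieces using the partition of unity $\{\zeta_{0,(n)}\}$ that underlies the definition of the weighted spaces. First I would reduce to the case $\Psi\equiv 1$: since $f\mapsto \Psi^{-1}f$ is an isometric isomorphism from $\Psi H^{\gamma}_{p,\theta}(\Omega)$ onto $H^{\gamma}_{p,\theta}(\Omega)$ (Definition \ref{220610533}), it suffices to build $\widetilde\Lambda_i$ on the scale $H^{\gamma}_{p,\theta}(\Omega)$ with $f=\widetilde\Lambda_0 f+\sum_i D_i(\widetilde\Lambda_i f)$ and the corresponding estimate, and then set $\Lambda_i f:=\Psi\,\widetilde\Lambda_i(\Psi^{-1}f)$; one must only check that $D_i$ commutes with multiplication by $\Psi$ up to the lower-order correction $(D_i\Psi)(\Psi^{-1}f)$, which is absorbed by Lemma \ref{21.09.29.4}.(3) (it maps $\Psi H^{\gamma+1}_{p,\theta-p}$-type norms correctly and only improves the weight). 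So the core is the scalar case.

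For the scalar case the idea is the standard one for these Krylov–Lototsky spaces: on $\bR^d$ one has the identity $g=(1-\Delta)^{-1}g-\sum_i D_i\big((1-\Delta)^{-1}D_i g\big)$, so the operators $\Lambda_0^{\bR^d}:=(1-\Delta)^{-1}$ and $\Lambda_i^{\bR^d}:=-(1-\Delta)^{-1}D_i$ raise smoothness by one and two respectively and are bounded on all $H^{\gamma}_p(\bR^d)$. To transplant this to $\Omega$ with the weight $\rho$, I would localize: write $f=\sum_n \zeta_{0,(n)}f$, rescale the $n$-th piece by $x\mapsto \ee^n x$ so that it lives at unit scale, apply $\Lambda_0^{\bR^d},\Lambda_i^{\bR^d}$ there, and rescale back. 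The rescaling by $\ee^n$ turns each application of $D_i$ into a factor $\ee^{-n}\simeq \trho^{-1}$, which is exactly the weight shift $\theta\mapsto\theta-p$ recorded in the statement. Summing the pieces, one must control the commutators $[\,\zeta_{0,(n)}\,,\,\Lambda^{\bR^d}_*]$ coming from the fact that $\Lambda^{\bR^d}_*$ is nonlocal; here I would use that $\sum_n \zeta_{0,(n)}\equiv 1$ together with the finite-overlap property of the supports (each point meets boundedly many $\zeta_{0,(n)}$, see \eqref{230130542}) and the rapid off-diagonal decay of the kernels of $(1-\Delta)^{-1}$ and $(1-\Delta)^{-1}D_i$, so that the interaction between the $n$-th and $m$-th dyadic annuli decays geometrically in $|n-m|$. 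This is precisely the mechanism already invoked for Lemma \ref{21.05.20.3} and its refinements in \cite{Lo1} and \cite[Appendix A.1]{Seo202304}, so I would cite that machinery rather than reprove it, and then assemble $\Lambda_i f$ as the resulting (norm-convergent) sum, obtaining \eqref{2301021031} with $N=N(d,p,\gamma,\theta)$ in the scalar case and $N=N(d,p,\gamma,\theta,\mathrm{C}_2(\Psi))$ after reinstating $\Psi$.

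The main obstacle is the bookkeeping of the nonlocal tails: one needs that applying $(1-\Delta)^{-1}$ (or $(1-\Delta)^{-1}D_i$) to a function supported in one dyadic shell $\{\ee^{n-1}\le\trho\le\ee^{n+1}\}$ produces something whose $H^{\gamma}_p$-mass, measured after cutting by $\zeta_{0,(m)}$ and rescaling, is summable over $m$ with a constant independent of $n$ — and that the double sum over $(n,m)$ converges. The cleanest way to dispatch this is not to redo the kernel estimates but to observe that the desired decomposition, with the stated weight shifts, is equivalent to the identity $\|f\|_{H^{\gamma}_{p,\theta}(\Omega)}\simeq \inf\{\|f_0\|_{H^{\gamma+1}_{p,\theta}(\Omega)}+\sum_i\|f_i\|_{H^{\gamma+1}_{p,\theta-p}(\Omega)}: f=f_0+\sum_i D_i f_i\}$, which is the special case $k=1$ of Lemma \ref{21.05.20.3}.(4) read as a duality/decomposition statement; concretely, \eqref{240121500} with $k=1$ gives $\|f\|_{H^{\gamma}_{p,\theta}(\Omega)}\simeq \sum_{i=0}^d\|D^i f\|_{H^{\gamma-1}_{p,\theta+p}(\Omega)}$, and dualizing this bounded inclusion (Lemma \ref{21.09.29.4}.(2)) produces the bounded surjection $(f_0,\dots,f_d)\mapsto f_0+\sum_i D_i f_i$ together with a bounded linear right inverse $f\mapsto(\Lambda_0 f,\dots,\Lambda_d f)$ with the required target spaces. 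I would therefore structure the proof around: (i) reduce to $\Psi\equiv1$; (ii) invoke \eqref{240121500} with $k=1$ and its dual form to get the bounded decomposition operator on $H^{\gamma}_{p,\theta}(\Omega)$; (iii) transport back through $\Psi$, controlling the $D_i\Psi$ correction by Lemma \ref{21.09.29.4}.(3); and (iv) collect constants.
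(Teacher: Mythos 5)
Your primary constructive route --- reduce to $\Psi\equiv 1$ via $f\mapsto\Psi^{-1}f$, use $L_0=(1-\Delta)^{-1}$ and $L_i=-D_i(1-\Delta)^{-1}$ on $\bR^d$, localize with $\zeta_{0,(n)}$, rescale, then transport back through $\Psi$ with a Leibniz correction --- is essentially the paper's proof. One refinement worth noting: the paper avoids the nonlocal-tail bookkeeping you flag by inserting a second cutoff $\zeta_{1,(n)}$ (equal to $1$ on $\mathrm{supp}\,\zeta_{0,(n)}$, supported in a slightly fatter annulus) in front of each $L_i$-piece and pushing the resulting $D\zeta_{1,(n)}$-terms into $\widetilde\Lambda_0$. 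Since $\zeta_{1,(n)}\zeta_{0,(n)}=\zeta_{0,(n)}$, the identity $f=\widetilde\Lambda_0 f+\sum_i D_i\widetilde\Lambda_i f$ is algebraically exact, the sums are locally finite, and no off-diagonal kernel decay is needed. Also, the $\Psi$-correction is $(D_i\Psi)\,\widetilde\Lambda_i(\Psi^{-1}f)$, not $(D_i\Psi)(\Psi^{-1}f)$ as you wrote; it is absorbed into $\Lambda_0$ and controlled by Lemma \ref{21.09.29.4}.(3).

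The ``cleanest way'' you propose at the end, however, has a genuine gap and should be discarded. Dualizing \eqref{240121500} with $k=1$ (together with Lemma \ref{21.09.29.4}.(2)) shows that $T:(f_0,\dots,f_d)\mapsto f_0+\sum_i D_i f_i$ is a bounded \emph{surjection} between the indicated spaces, and the open mapping theorem then yields the norm equivalence $\|f\|_{\Psi H^{\gamma}_{p,\theta}(\Omega)}\simeq\inf\{\dots\}$. But that is precisely Lemma \ref{220512433}.(2), which the paper derives \emph{from} Lemma \ref{22.02.16.1}, not the other way around --- so there is also a circularity concern. More fundamentally, a bounded surjection between Banach spaces admits a bounded \emph{linear} right inverse if and only if its kernel is complemented in the source space, and complementation is not automatic here: these scales are reflexive $L_p$-type spaces, and for $p\neq 2$ closed subspaces of such spaces need not be complemented. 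Duality plus open mapping hands you a decomposition with comparable norm for each $f$, but not a linear selection of it. The linearity of $\Lambda_0,\dots,\Lambda_d$ is exactly the extra content that the explicit $\zeta_{1,(n)}$-construction provides and the abstract argument cannot. Stick with your first route.
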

\begin{proof}
	\textbf{Step 1.} We first prove the case $\Psi\equiv 1$.
	Consider linear operators from $H_{p}^{\gamma}$ to $H_{p}^{\gamma+1}$ defined by $L_0:=(1-\Delta)^{-1}$ and $L_i:=-D_i(1-\Delta)^{-1}$ for $i=1,\,\ldots,\,d$.
	They satisfy that for any $g\in H_p^{\gamma}$,
	\begin{align}\label{2205101114}
		L_0g+\sum_{i=1}^dD_iL_ig=g\quad\text{and}\quad 
		\sum_{i=0}^d\|L_ig\|_{H_p^{\gamma+1}}\lesssim_{d,p,\gamma}\|g\|_{H_p^{\gamma}}\,.
	\end{align}
	We denote $\zeta_1(t)=\zeta_0(\ee^{-1}t)+\zeta_0(t)+\zeta_0(\ee t)$ and $\zeta_{1,(n)}(x):=\zeta_1\big(\ee^{-n}\trho(x)\big)$.
	Put
	\begin{align*}
		\widetilde{\Lambda}_0f(x):=\,&\sum_{n\in\bZ}\zeta_{1,(n)}(x)\, L_0\Big[\big(\zeta_{0,(n)}f\big)(\ee^n\cdot)\Big](\ee^{-n}x)\\
		&-\sum_{k=1}^d\sum_{n\in\bZ}\ee^n\big(D_k\zeta_{1,(n)}\big)(x)\, L_k\Big[\big(\zeta_{0,(n)}f\big)(\ee^n\cdot)\Big](\ee^{-n}x)\,,\\
		\widetilde{\Lambda}_if(x):=\,&\sum_{n\in\bZ}\ee^n\zeta_{1,(n)}(x)\, L_i\Big[\big(\zeta_{0,(n)}f\big)(\ee^n\cdot)\Big](\ee^{-n}x)\,,
	\end{align*}
	for $i=1,\,\ldots,\,d$.
	By \eqref{2205101114}, we have 
	\begin{align*}
		\widetilde{\Lambda}_0f+\sum_{i=1}^dD_i\widetilde{\Lambda}_if=\,&\sum_{n\in\bZ}\left(\zeta_{1,(n)}(\,\cdot\,)\times\Big[\big(L_0+\sum_{i=1}^dD_iL_i\big)\big[(\zeta_{0,(n)}f)(\ee^n\cdot)\big]\Big](\ee^{-n}\,\cdot\,)\right)\\
		=\,&\sum_{n\in\bZ}\Big[\zeta_{1,(n)}\zeta_{0,(n)}f\Big]=\sum_{n\in\bZ}\zeta_{0,(n)}f=f\,.
	\end{align*}
	In addition, we also obtain
	\begin{align}
			&\big\|\big(\zeta_{0,(n)}\widetilde{\Lambda}_0f\big)(\ee^n\cdot)\big\|_{H_{p}^{\gamma+1}}^p+\sum_{i=1}^d\ee^{-np}\big\|\big(\zeta_{0,(n)}\widetilde{\Lambda}_if\big)(\ee^n\cdot)\big\|_{H_{p}^{\gamma+1}}^p\label{2401021036}\\
			\lesssim_N&\sum_{i=0}^d\sum_{|k|\leq 2}\Big\|\big(\zeta_{0,(n)} \zeta_{1,(n+k)}\big)(\ee^{n}\cdot)\times  L_i\Big[\big(\zeta_{0,(n+k)}f\big)(\ee^{n+k}\cdot)\Big](\ee^{-k}\cdot)\Big\|_{H_p^{\gamma+1}}^p\nonumber\\
			&+\ee^{np}\sum_{i=0}^d\sum_{|k|\leq 2}\Big\|\big(\zeta_{0,(n)} D\zeta_{1,(n+k)}\big)(\ee^n\cdot)\times L_i\Big[\big(\zeta_{0,(n+k)}f\big)(\ee^{n+k}\cdot)\Big](\ee^{-k}\cdot)\Big\|_{H_p^{\gamma+1}}^p\nonumber\\
			\lesssim_N&\sum_{|k|\leq 2}\sum_{i=0}^d\Big\|L_i\Big[\big(\zeta_{0,(n+k)}f\big)(\ee^{n+k}\cdot)\Big]\Big\|_{H_p^{\gamma+1}}^p\lesssim_{d,p,\gamma}\sum_{|k|\leq 2}\big\|\big(\zeta_{0,(n+k)}f\big)(\ee^{n+k}\cdot)\big\|_{H_{p}^\gamma}^p\,,\nonumber
	\end{align}
	Here, the first and second inequalities follow from that
	\begin{align*}
	\big\|\big(\zeta_{0,(n)} \zeta_{1,(n+k)}\big)(\ee^{n}\cdot)\big\|_{C^m(\bR^d)}+\ee^n\big\|\big(\zeta_{0,(n)} \cdot D\zeta_{1,(n+k)}\big)(\ee^n\cdot)\big\|_{C^m(\bR^d)}\leq N(d,k,m)\,,
	\end{align*}
	where $N(d,k,l)=0$ if $|k|\geq 3$ (see \eqref{230130542} and recall that $\text{supp}(\zeta_1)\subset [\ee^{-2},\ee^2]$).
	\eqref{2401021036} implies \eqref{2301021031} for $\Psi\equiv 1$ and $\widetilde{\Lambda}_i$ instead of $\Lambda_i$.
	
	\textbf{Step 2.} 
	For $f\in \Psi H_{p,\theta}^{\gamma}$ (equivalently, $\Psi^{-1}f\in H_{p,\theta}^{\gamma}(\Omega)$), put
	\begin{align*}
		\Lambda_0f=\Psi \widetilde{\Lambda}_0(\Psi^{-1}f)-\sum_{k=1}^d\big(D_k\Psi\big)\cdot \widetilde{\Lambda}_k(\Psi^{-1}f)\quad;\quad \Lambda_if=\Psi \widetilde{\Lambda}_i(\Psi^{-1}f)\,.
	\end{align*}
	for $i=1,\,\cdots,\,d$.
	Then we have
	$$
	\Big(\Lambda_0+\sum_{i=1}^dD_i\Lambda_i\Big)f=\Psi\Big(\widetilde{\Lambda}_0+\sum_{i=1}^dD_i\widetilde{\Lambda}_i\Big)\big(\Psi^{-1}f\big)=f\,.
	$$
	Moreover, Lemma \ref{21.09.29.4}.(3) and \eqref{2401021036} imply that
	\begin{align*}
		&\|\Psi^{-1}\Lambda_0f\|_{H_{p,\theta}^{\gamma+1}(\Omega)}+\sum_{i=1}^d\|\Psi^{-1}\Lambda_if\|_{H_{p,\theta-p}^{\gamma+1}(\Omega)}\\
		\lesssim\,\,&\|\widetilde{\Lambda}_0(\Psi^{-1} f)\|_{H_{p,\theta}^{\gamma+1}(\Omega)}+\sum_{i=1}^d\|\widetilde{\Lambda}_i(\Psi^{-1}f)\|_{H_{p,\theta-p}^{\gamma+1}(\Omega)}\lesssim\|\Psi^{-1}f\|_{H_{p,\theta}^{\gamma}(\Omega)}\,.
	\end{align*}
	Therefore, the proof is completed.
\end{proof}

	\begin{lemma}\label{22.04.11.3}
	Let $\eta\in C_c^{\infty}(\bR^d)$ satisfy $\eta=1$ on $B_1(0)$ and $\mathrm{supp}(\eta)\subset B_2(0)$.
	For each $i\in\bN$, let $N(i)\in\bN$ be a constant satisfying
	$$
	\mathrm{supp}\Big(\sum_{|n|\leq i}\zeta_{0,(n)}\Big)\subset \big\{x\in\Omega\,:\,\big(N(i)/2\big)^{-1}\leq \rho(x)\leq N(i)/2\big\}\,.
	$$
	Let $\Lambda_i$, $\Lambda_{i,j}$, and $\Lambda_{i,j,k}$ be linear functionals on $\cD'(\Omega)$ defined as
	\begin{align*}
		\Lambda_{i}f:=\Big(\sum_{|n|\leq i}\zeta_{0,(n)}\Big)f\,\,\,,\,\,\,\,\Lambda_{i,j}f=\eta(j^{-1}\cdot\,)\Lambda_if\,\,\,,\,\,\,\,\Lambda_{i,j,k}f=\big(\Lambda_{i,j}f\big)^{(N(i)^{-1}k^{-1})}\,,
	\end{align*}
	where $\big(\Lambda_{i,j}f\big)^{(\epsilon)}$ is defined in the same way as in \eqref{21.04.23.1}.
	Then for any $p\in(1,\infty)$, $\gamma$,\,$\theta\in\bR$, and regular Harnack function $\Psi$, the following hold:
	\begin{enumerate}
		\item For any $f\in \cD'(\Omega)$, $\Lambda_{i,j,k}f\in C_c^{\infty}(\Omega)$\,.
		
		\item  For any $f\in \Psi H_{p,\theta}^{\gamma}(\Omega)$,
		\begin{align}
			&\sup_i\|\Lambda_if\|_{\Psi H_{p,\theta}^{\gamma}(\Omega)}\leq N_1\|f\|_{\Psi	H_{p,\theta}^{\gamma}(\Omega)}\nonumber\\
			&\,\sup_j\|\Lambda_{i,j}f\|_{\Psi H_{p,\theta}^{\gamma}(\Omega)}\leq N_2\|f\|_{\Psi H_{p,\theta}^{\gamma}(\Omega)}\label{2401021134}\\
			&\,\sup_k\|\Lambda_{i,j,k}f\|_{\Psi H_{p,\theta}^{\gamma}(\Omega)}\leq N_3\|f\|_{\Psi H_{p,\theta}^{\gamma}(\Omega)}\,,\nonumber
		\end{align}
		where $N_1$, $N_2$, $N_3$ are constants independent of $f$.
		
		\item For any $f\in \Psi H_{p,\theta}^{\gamma}(\Omega)$,
		\begin{align}\label{2401021135}
			\lim_{k\rightarrow \infty}\Lambda_{i,j,k}f=\Lambda_{i,j}f\,\,\,,\,\,\,\lim_{j\rightarrow \infty}\Lambda_{i,j}f=\Lambda_{i}f\,\,\,,\,\,\,\lim_{i\rightarrow \infty}\Lambda_i f=f\quad\text{in}\quad \Psi H_{p,\theta}^{\gamma}(\Omega)\,.
		\end{align}
	\end{enumerate}
\end{lemma}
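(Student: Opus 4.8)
The plan is to dispatch assertion~(1) by tracking supports, to reduce the uniform bounds in~(2) either to the pointwise-multiplier estimate Lemma~\ref{21.05.20.3}.(5) (for $\Lambda_i$ and $\Lambda_{i,j}$) or to a local comparison of $\|\cdot\|_{\Psi H^{\gamma}_{p,\theta}(\Omega)}$ with $\|\cdot\|_{H^{\gamma}_{p}(\bR^d)}$ on a fixed compact set together with the $L_1$-boundedness of mollification (for $\Lambda_{i,j,k}$), and to obtain the three limits in~(3) from these same tools combined with the convergence of the series defining the $\Psi H^{\gamma}_{p,\theta}$-norm and the usual convergence of mollifications in $H^{\gamma}_p(\bR^d)$. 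Throughout I would fix an integer $k_0>|\gamma|$, write $a_i:=\sum_{|n|\le i}\zeta_{0,(n)}\in C^{\infty}(\Omega)$, and use that multiplication by a real-valued function commutes with $\Psi^{-1}$, so that $\|\Lambda_i f\|_{\Psi H^{\gamma}_{p,\theta}(\Omega)}=\|a_i\Psi^{-1}f\|_{H^{\gamma}_{p,\theta}(\Omega)}$ and $\|\Lambda_{i,j}f\|_{\Psi H^{\gamma}_{p,\theta}(\Omega)}=\|\eta(j^{-1}\cdot)\,a_i\,\Psi^{-1}f\|_{H^{\gamma}_{p,\theta}(\Omega)}$. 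For~(1): by the choice of $N(i)$, $a_i$ is supported in the collar $V_i:=\{x\in\Omega:2/N(i)\le\rho(x)\le N(i)/2\}$, so $\eta(j^{-1}\cdot)\,a_i\in C_c^{\infty}(\Omega)$ and $\Lambda_{i,j}f=(\eta(j^{-1}\cdot)a_i)f$ is a distribution with compact support contained in $V_i\cap\overline{B_{2j}(0)}$; extending it by zero to $\bR^d$ and convolving with the $L_1$-normalised kernel $\kappa_{\epsilon_k}$ of radius $\epsilon_k:=N(i)^{-1}k^{-1}$ used in~\eqref{21.04.23.1} yields $\Lambda_{i,j,k}f\in C^{\infty}(\bR^d)$ supported in the $\epsilon_k$-neighbourhood of $\mathrm{supp}(\Lambda_{i,j}f)$, and since $\epsilon_k\le N(i)^{-1}<2/N(i)\le d(\mathrm{supp}(\Lambda_{i,j}f),\Omega^c)$ this neighbourhood is a compact subset of $\Omega$, hence $\Lambda_{i,j,k}f\in C_c^{\infty}(\Omega)$.

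For~(2), by \eqref{230130542} and $\trho\simeq\rho$ each $\zeta_{0,(n)}$ satisfies $\rho^{|\alpha|}|D^{\alpha}\zeta_{0,(n)}|\lesssim_{d,\alpha,\zeta}1$ on its support while only boundedly many of these functions are nonzero at any point of $\Omega$; hence $|a_i|^{(0)}_{k_0}\le N(d,k_0,\zeta)$ \emph{uniformly in $i$}, and Lemma~\ref{21.05.20.3}.(5) gives the first line of~\eqref{2401021134} with $N_1=N(\cI,\zeta)$. Since $\|\eta(j^{-1}\cdot)\|_{C^{k_0}(\bR^d)}\le\|\eta\|_{C^{k_0}(\bR^d)}$ for $j\ge1$ and $\rho\le N(i)/2$ on $\mathrm{supp}\,a_i$, the Leibniz rule gives $\sup_j|\eta(j^{-1}\cdot)a_i|^{(0)}_{k_0}<\infty$ (with a bound depending on $i$), and Lemma~\ref{21.05.20.3}.(5) again yields the second line of~\eqref{2401021134}. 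For the third line I would first record that for any compact $K\subset\Omega$ and any compactly supported distribution $g$ with $\mathrm{supp}\,g\subset K$ one has $\|g\|_{\Psi H^{\gamma}_{p,\theta}(\Omega)}\simeq_{K,\cI',\Psi}\|g\|_{H^{\gamma}_p(\bR^d)}$ --- only finitely many $\zeta_{0,(n)}$ meet $K$, on a neighbourhood of $K$ the functions $\Psi^{\pm1}$ and $\zeta_{0,(n)}$ are smooth with bounded derivatives, and dilation by the bounded factors $e^{n}$ is bounded on $H^{\gamma}_p(\bR^d)$ --- and then apply it with $K$ a fixed compact neighbourhood of $\mathrm{supp}(\eta(j^{-1}\cdot)a_i)$ containing every $\mathrm{supp}(\Lambda_{i,j,k}f)$ ($k\ge1$), together with $\|g\ast\kappa_{\epsilon_k}\|_{H^{\gamma}_p(\bR^d)}\le\|g\|_{H^{\gamma}_p(\bR^d)}$ (Young's inequality, as $(1-\Delta)^{\gamma/2}$ commutes with convolution), to get $\sup_k\|\Lambda_{i,j,k}f\|_{\Psi H^{\gamma}_{p,\theta}(\Omega)}\lesssim_{K}\|\Lambda_{i,j}f\|_{\Psi H^{\gamma}_{p,\theta}(\Omega)}\le N_2\|f\|_{\Psi H^{\gamma}_{p,\theta}(\Omega)}$.

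For~(3): for $\Lambda_if\to f$ write $\|f-\Lambda_if\|_{\Psi H^{\gamma}_{p,\theta}(\Omega)}=\|(1-a_i)\Psi^{-1}f\|_{H^{\gamma}_{p,\theta}(\Omega)}$ with $1-a_i=\sum_{|n|>i}\zeta_{0,(n)}$; since $\mathrm{supp}\,\zeta_{0,(m)}$ meets $\mathrm{supp}\,\zeta_{0,(n)}$ only for $|m-n|\le2$, the $m$-th summand of this norm vanishes for $|m|\le i-2$ and for the remaining $m$ is $\lesssim_{\cI,\zeta}e^{m\theta}\|\chi(e^{-m}\trho(e^m\cdot))(\Psi^{-1}f)(e^m\cdot)\|^p_{H^{\gamma}_p(\bR^d)}$ for a fixed nonnegative $\chi\in C_c^{\infty}(\bR_+)$ equal to $1$ on $\mathrm{supp}\,\zeta_0$ (the factor $(\zeta_{0,(m)}(1-a_i))(e^m\cdot)$ being a uniformly $C^{k_0}$-bounded multiplier on $H^{\gamma}_p(\bR^d)$), so by Lemma~\ref{21.05.20.3}.(2) the full $m$-series converges and its tail over $|m|>i-2$ tends to $0$. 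For $\Lambda_{i,j}f\to\Lambda_if$, $\Lambda_if-\Lambda_{i,j}f=(1-\eta(j^{-1}\cdot))\Lambda_if$ is supported in $V_i$, so only finitely many summands of its norm --- for a set of indices $m$ independent of $j$ --- are nonzero, the $m$-th being $e^{m\theta}\|(1-\eta(\lambda_m^{-1}\cdot))g_m\|^p_{H^{\gamma}_p(\bR^d)}$ with $\lambda_m:=j\,e^{-m}$ and $g_m:=(\zeta_{0,(m)}\Psi^{-1}\Lambda_if)(e^m\cdot)\in H^{\gamma}_p(\bR^d)$ fixed; as $\{1-\eta(\lambda^{-1}\cdot)\}_{\lambda\ge1}$ is uniformly bounded on $H^{\gamma}_p(\bR^d)$ and annihilates every $C_c^{\infty}$ function once $\lambda$ is large, a density argument gives $(1-\eta(\lambda_m^{-1}\cdot))g_m\to0$ in $H^{\gamma}_p(\bR^d)$ as $j\to\infty$. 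Finally $\Lambda_{i,j,k}f-\Lambda_{i,j}f=\Lambda_{i,j}f\ast\kappa_{\epsilon_k}-\Lambda_{i,j}f$ is supported in a fixed compact subset of $\Omega$, so by the local norm equivalence above and the convergence $g\ast\kappa_\epsilon\to g$ in $H^{\gamma}_p(\bR^d)$ (valid because $\Lambda_{i,j}f\in H^{\gamma}_p(\bR^d)$ by~(2) and that equivalence) it tends to $0$ in $\Psi H^{\gamma}_{p,\theta}(\Omega)$ as $k\to\infty$.

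The main obstacle is not any single deep step but the bookkeeping behind two items: the \emph{uniform} control of the multiplier seminorms $|a_i|^{(0)}_{k_0}$ (in $i$) and $|\eta(j^{-1}\cdot)a_i|^{(0)}_{k_0}$ (in $j$), and the local comparison $\|g\|_{\Psi H^{\gamma}_{p,\theta}(\Omega)}\simeq_{K}\|g\|_{H^{\gamma}_p(\bR^d)}$ for compactly supported $g$, which is the device transferring the Euclidean facts about mollification into the weighted spaces; getting the scaling in the latter right --- the bounded dilation factors $e^n$ and the behaviour of $\Psi$ on $K$ --- is the only slightly delicate point.
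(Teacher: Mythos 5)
Your proposal is correct and follows essentially the same strategy as the paper: the key device in both is the local equivalence of $\|\cdot\|_{\Psi H^{\gamma}_{p,\theta}(\Omega)}$ with $\|\cdot\|_{H^{\gamma}_{p}(\bR^{d})}$ on compact subsets of $\Omega$ (the paper's \eqref{2401021132} plus the smoothness of $\Psi^{\pm1}$ on compacts), which transfers the Euclidean cutoff and mollification facts into the weighted spaces. The only variation is cosmetic: for the uniform bounds on $\Lambda_{i}$ and $\Lambda_{i,j}$ in part (2) you invoke the pointwise-multiplier estimate Lemma~\ref{21.05.20.3}.(5) on $a_{i}$ and $\eta(j^{-1}\cdot)a_{i}$, whereas the paper estimates $\|f-\Lambda_{i}f\|$ directly via the tail of the defining series and then applies the local equivalence for $\Lambda_{i,j}$; both routes rely on the same scaling properties of $\zeta_{0,(n)}$ from \eqref{230130542}.
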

\begin{proof}
	(1) It follows directly from the properties of distributions.
	
	(2), (3) \textbf{Step 1: $\Lambda_i$.} 
	Let $f\in H_{p,\theta}^{\gamma}(\Omega)$.
	From \eqref{230130542}, one can observe that
	\begin{align*}
		\|f-\Lambda_i f\|_{\Psi H_{p,\theta}^{\gamma}(\Omega)}^p\lesssim_N \sum_{|n|\geq i-1}\ee^{n\theta}\big\|\big(\Psi^{-1}f\zeta_{0,(n)}\big)(\ee^n\cdot)\big\|_{H_p^{\gamma}}^p\leq \|f\|_{\Psi H_{p,\theta}^{\gamma}(\Omega)}^p\,,
	\end{align*}
	where $N=N(d,p,\gamma,\theta)$.
	Therefore we have
	$$
	\sup_i\|\Lambda_if\|_{\Psi H_{p,\theta}^{\gamma}(\Omega)}\leq N\|f\|_{\Psi H_{p,\theta}^{\gamma}(\Omega)}\quad\text{and}\quad \lim_{i\rightarrow\infty}\|f-\Lambda_if\|_{\Psi H_{p,\theta}^{\gamma}(\Omega)}=0\,.
	$$
	
	\textbf{Step 2: $\Lambda_{i,j}$.} 
	The definition of $H_{p,\theta}^\gamma(\Omega)$ implies that for any $A>1$, if $F\in \cD'(\Omega)$ or $F\in\cD'(\bR^d)$, and $F$ is supported in $\{x\in\Omega\,:\,A^{-1}\leq \rho(x)\leq A\}$, then
	\begin{align}\label{2401021132}
		\|F\|_{H_{p,\theta}^{\gamma}(\Omega)}\simeq_N\|F\|_{H_{p}^\gamma}\,,
	\end{align}
	where $N=N(d,p,\theta,\gamma,A)$.
	For each $i\in\bN$, $\Psi^{-1}\Lambda_if$ and $\Psi^{-1}\Lambda_{i,j}f$ are supported in
	$$
	\big\{x\in\Omega\,:\,N(i)^{-1}\leq \rho(x)\leq N(i)\big\}\,.
	$$
	Therefore $\Psi^{-1}\Lambda_i f\in H_p^\gamma$.
	Since $\Psi^{-1}\Lambda_{i,j}f=\eta(j^{-1}\,\cdot)\Psi^{-1}\Lambda_{i}f$, we obtain that 
	\begin{align*}
		\begin{gathered}
			\lim_{j\rightarrow \infty}\big\|\Psi^{-1}\Lambda_i f-\Psi^{-1}\Lambda_{i,j}f\big\|_{H_{p}^{\gamma}}= 0\quad \text{and}\quad 
			\big\|\Psi^{-1}\Lambda_{i,j}f\big\|_{H_{p}^{\gamma}}\lesssim_{N_2}\big\|\Psi^{-1}\Lambda_{i}f\big\|_{H_{p}^{\gamma}}\,,
		\end{gathered}
	\end{align*}
	where $N_2=N(d,p,\gamma,\theta,i,\eta)$.
	Thus, \eqref{2401021134} and \eqref{2401021135} for $\Lambda_{i,j}$ follow from \eqref{2401021132}.

	\textbf{Step 3: $\Lambda_{i,j,k}$.} 
	Put
	$$
	K_{i,j}=\{x\in\Omega\,:\,N(i)^{-1}\leq \rho(x)\leq N(i)\,,\,\,|x|\leq 2j\}\,,
	$$
	which is a compact subset of $\Omega$, and $\Lambda_{i,j}f$ and $\Lambda_{i,j,k}f$ are supported in there.
	Since $\Psi$ and $\Psi^{-1}$ belong to $C^{\infty}(\Omega)$, we obtain that
	\begin{align*}
		\|\Lambda_{i,j}f\|_{\Psi H_{p,\theta}^{\gamma}(\Omega)}:=\|\Psi^{-1} \Lambda_{i,j}f\|_{H_{p,\theta}^{\gamma}(\Omega)}\simeq_N \|\Psi^{-1} \Lambda_{i,j}f\|_{H_{p}^{\gamma}}\simeq_N\|\Lambda_{i,j}f\|_{H_{p}^{\gamma}},
	\end{align*}
	where $N=N(d,p,\gamma,\theta,i,j,\Psi)$; it also holds for $\Lambda_{i,j,k}f$ and $\Lambda_{i,j}f-\Lambda_{i,j,k}f$, instead of $\Lambda_{i,j}f$.
	
	Since $\Lambda_{i,j,k}f$ is a mollification of $\Lambda_{i,j}f$, we have
	\begin{align*}
		\|\Lambda_{i,j,k}f\|_{H_p^{\gamma}}\lesssim_{N_3} \|\Lambda_{i,j}f\|_{H_p^{\gamma}}\quad\text{and}\quad 
		\lim_{k\rightarrow \infty}\big\|\Lambda_{i,j}f-\Lambda_{i,j,k}f\big\|_{H_{p}^{\gamma}}=0\,,
	\end{align*}
	where $N_3=N(d,p,\theta,\gamma,\Psi,i,j,\eta)$.
\end{proof}

		\section*{Acknowledgment}
		The author wishes to express deep appreciation to Prof. Kyeong-Hun Kim for valuable comments and the referee for the careful reading.
		The author is sincerely grateful to Dr. Jin Bong Lee for the careful assistance in proofreading and editing this paper.

		
		
%
%

	\end{document}